\theoremstyle{definition}
\newtheorem{defi}{Definition}[section]
\newtheorem{exa}[defi]{Example}
\theoremstyle{remark}
\theoremstyle{plain}
\newtheorem*{thm*}{Theorem 7.12b}
\newtheorem*{fact}{Fact}
\newtheorem{lem}[defi]{Lemma}
\newtheorem{thm}[defi]{Theorem}
\newtheorem{cor}[defi]{Corollary}
\newtheorem{prop}[defi]{Proposition}
\newtheorem{rem}[defi]{Remark}
\title{Transfinite Topological Dynamics}
\author{Alessandro Della Corte\footnote{Mathematics Division, School of Sciences and Technology, University of Camerino (Camerino, MC, Italy). ORCID: 0000-0002-1782-0270. E-mail: alessandro.dellacorte@unicam.it (corresponding author).}, Marco Farotti\footnote{Doctoral School in Computer Sciences and Mathematics, University of Camerino (Camerino, MC,  Italy). ORCID: 0009-0001-5000-2827.}}
\date{}
\begin{document}
\maketitle

\begin{abstract}
\noindent We present a canonical extension of topological dynamics to transfinite iterations, which makes precise the idea of dynamical phenomena stabilizing at different time-scales. 

\noindent Specifically, consider a sequence of self-maps $F=\{f_n\}$ of a compact metric space $X$. If $F$ is finitely convergent, i.e.  $f_n(x)=f(x)$ for $n>N(x)$, the $f_n$-orbits exhibit an emergent poset structure. A maximal initial segment of this poset is isomorphic to a countable ordinal $\ge\omega$. The construction is canonical: every finitely convergent sequence induces, at each point, a unique maximal transfinite orbit that is independent of any finite initial segment of the sequence and invariant under step-by-step conjugacy at each $n$.
For $\lambda$ a countable limit ordinal, we study orbits, recurrence, limit sets and attractors at level $\lambda$, and the interplay of different ordinal levels. 
Moreover, we introduce the natural notion of transfinite conjugacy, that refines conjugacy of limit maps alone but is strictly weaker than step-by-step conjugacy. We describe a family of invariants of transfinite conjugacy that detect recurrence and attraction phenomena at each ordinal level. Particularizing to $\lambda=\omega$ recovers (and in some cases refines) classical results of topological dynamics.

\vspace{5mm}

\noindent \textbf{KEYWORDS}: Dynamical systems, Topological dynamics, Ordinal numbers, Transfinite dynamics.

\noindent \textbf{MSC2020}: 37B20, 37B25, 03E10, 06A06.

\end{abstract}

\section{Introduction}
\subsection{Dynamics beyond the limit evolution law}\label{1_1}

In discrete-time dynamics, the basic piece of information one is given is ``what happens next" to a system once its current state and evolution law are known. 
In topological dynamics, the state is typically a point in a compact metric space \(X\), while the evolution law is a continuous self-map of \(X\). 
Sometimes, however, one is led to consider not a single map, but an entire family \(F = \{f_n\}_{n\in\mathbb{N}}\) of evolution laws. 
In many cases, the sequence of dynamical laws \(f_n\) can converge to a limit map \(f\), yet exhibit emergent stages of evolution that cannot be recovered, even approximately, from \(f\) alone. 

To isolate the phenomenon in the simplest form, let us select, for the sequence $F$, a particularly simple form of convergence, that is $f_n(x)=f(x)$ for $n$ larger than a certain $N(x)$ (this is just pointwise convergence to $f$ with respect to the discrete topology on $X$). When looking at the dynamics of a point $x\in X$, one may see that a certain point $y$ appears eventually in all the orbits determined by the maps $f_n$, but it shows up further and further away in time when $n$ goes to infinity. In the simplest possible case, the point $y$ has the following property: 

\begin{itemize}\label{*}
    \item[ ] \textit{For every $k$, the point $y$ appears, for large $n$, only after the $k$th iteration of $f_n$ at $x$. \emph{(*)}}
\end{itemize}

Then it is natural to assign to \(y\) a generalized iteration order larger than any finite one. 
If \(y\) is the first point with property (*), its iteration order must be \(\omega\), the smallest ordinal exceeding all finite ordinals. 
A point that first appears, relative to \(y\), with property (*), must in turn have order \(\omega\cdot 2\) from the viewpoint of \(x\).
The starting point of this work is the realization that: 1. this assignment can be made precise in a natural way, and it extends unambiguously and canonically up to a certain countable ordinal, depending on local properties of the system at \(x\) (see Section~\ref{sec3}); 2. composition of iterations under this assignment obeys the ordinal addition law.
We can summarize this saying that a transfinite orbit emerges: a well-ordered set of iterates of $x$, whose order type is a countable ordinal. 

 Note that, in this framework, the basic piece of information we mentioned at the beginning, i.e., ``what happens next" to each given state of the system, is still provided, because every ordinal has a successor. This was the key motivation for focusing on \textit{well-ordered} initial segments, rather than just ordered (or partially ordered) ones. 
 
 Moreover, there can also be something (that is, iterations corresponding to limit ordinals) happening ``after" a certain ordered set of states, which nevertheless does not happen ``next after" any particular state. 

\begin{table}[H]\label{table_1}
\centering
\fbox{
\begin{tabularx}{0.85\textwidth}{cccccccc}
\arrayrulecolor{red}
\\
\cline{4-4}
$f_1(x),$ & $f_1^2(x),$  & \multicolumn{1}{c}{$\ \ldots$} & \multicolumn{1}{!{\color{red}\vline}c!{\color{red}\vline}}{$f_1^{k_1}(x)=y,$} & $\ldots$ & & & \\ 
\cline{4-5}
$f_2(x),$ & $f_2^2(x),$ & $\ldots$ & \multicolumn{1}{c|}{$\ \ldots \ldots\ldots$} & \multicolumn{1}{c|}{$f_2^{k_2}(x)=y,$} & $\ldots$ & & \\ 
\cline{5-5}
$\vdots$ & $\vdots$ & $\cdots$ & $\cdots\cdots\cdots$ & $\vdots$ & $\textcolor{red}{\cdots} $ & $\cdots$ & \\ \cline{7-7}
$f_n(x),$ & $f_n^2(x),$ & $\ldots $ & $\ldots\ldots\ldots$ & $\ldots$ &\multicolumn{1}{c|}{$\ \ldots$} & \multicolumn{1}{c|}{$f_n^{k_n}(x)=y,$} & $\ldots$ \\ 
\cline{7-7}
$\vdots$ & $\vdots$ & $\vdots$ & $\vdots$ &$\vdots$ &$\vdots$ & $\vdots$ & $\textcolor{red}{\cdots}$ \\
\multicolumn{8}{m{12cm}}{\upbracefill}\\
\multicolumn{8}{c}{\Large$\scriptstyle\{f\}^\beta(x)\, =\, y$}
\end{tabularx}}\caption{\small{Schematic representation of how an iteration of order $\beta\ge\omega$ emerges from the sequence of orbits generated by the maps $f_n$. Here $f_n$ converges finitely to $f$ (see Def. \ref{1}), so, for every $k\in\mathbb{N}$ and $x\in X$, one has $f_n^k(x)=f^k(x)$ for $n$ large enough. At the same time, the iteration $k_n$ at which the point $y$ is visited by $x$ diverges with $n$. The point $y$ belongs to the $f_n$-orbit of $x$ for every $n$, but every point in the $f$-orbit of $x$ appears ``before" $y$ if you go down enough in the table. The particular countable ordinal $\beta$ measures the depth of the time-scale at which $y$ is visited, starting from $x$, for the given sequence of maps.}}
\end{table}

Transfinite structures connected with topological dynamics have of course appeared before. 
Already in 1928, Birkhoff introduced the notion of the \emph{center} of a dynamical system \cite{birkhoff1928structure}, defined via transfinite induction and later generalized 
(\cite{mathias1996long,alseda1999structure}). 
Ordinal structures have also been considered to describe hierarchically recurrence (\cite{auslander1964generalized}) and in entropy theory (\cite{downarowicz2005entropy,mcgoff2011orders,DavidBurguet2012}). Transfinite extensions of discrete iterative processes also arise in combinatorial settings, for instance in infinite generalizations of classical games(\cite{evans2013transfinite,evans2015position,leonessi2021transfinite}). Here the ordinal hierarchy arises at the most elementary level, that of pointwise iteration itself. 
This leads naturally to the concepts of transfinite orbits and, more generally, of \emph{Transfinite Dynamical Systems} (TDSs). 
Our goal is to lay the foundations of the topological dynamics of such systems, beginning from their orbits and proceeding to recurrence, limit sets, attractors, and conjugacy. As a byproduct, in some cases (for instance see Remarks \ref{akin_ref}, \ref{__refin__}, \ref{anomaly} and Corollary \ref{akin_ref1}), the particular case $\lambda=\omega$  will slightly refine certain well-known results and constructions of classical topological dynamics.


\subsection{Outline of the Paper}
We provide an overview of the paper, to highlight the main results and describe the overall flow of the treatment.

\noindent \textbf{In Section \ref{sec3}}, the basic concepts are introduced, in particular transfinite iterations and transfinite dynamical system (TDS), proven to obey to the ordinal addition law for composition of iterations (Theorem \ref{lem_group}). Transfinite cycles are closed orbits of countable ordinal length. We completely characterize them in terms of finite orbits: they are always   set-theoretic $\liminf$ of nested finite cycles of increasing order; this is proved in  Theorem \ref{C_n}.


A measure of ``how far" in time a transfinite system goes respectively locally and globally is formalized through the concept of ordinal degree of a point and of the system itself. This will be later (Section \ref{sec8}) proven to be a (transfinite) topological invariant.

\textbf{In Section \ref{sec4}} we look at transfinite orbits through examples. The main result is a realization theorem on a standard continuum: there are sequentially continuous transfinite dynamical systems defined on the interval having arbitrarily large countable ordinal degree; this is proved in Theorem \ref{th_existence}.


\textbf{In Section \ref{sec5}} we introduce the transfinite analog of regularity properties. In particular, we define the concept of transfinite continuity and call a transfinite dynamical system $\lambda$-regular (in short, a $\lambda$-TDS) if all iterations of order $<\lambda$ are continuous. We start then approaching the theory of transfinite attractors by generalizing E. Akin's dynamical relations to transfinite systems. Transfinite orbit, recurrence, non-wandering and chain recurrence relations are increasingly general, and the latter is closed and transitive. For $\lambda$ a countable limit ordinal, $\lambda$-minimal and $\lambda$-transitive systems are precisely those in which every pair of points is, respectively, in $\lambda$-recurrence or $\lambda$-non-wandering relation. In every TDS there is a point with a dense  orbit, but the converse is not true, because a proper subset of a transfinite orbit can be dense in the space.

\textbf{In Section \ref{sec6}} transfinite limit sets and attractors are introduced. A $\lambda$-attractor is the limit set at level $\lambda$ of a transfinitely inward set. We identify two phase transitions along the ordinal scale.
\begin{itemize} 
\item At $\omega^2$: the classical link between attractiveness and strong invariance breaks:
for $\lambda \ge \omega^2$ there exist proper $\lambda$-attractors $Y$ with
$f(Y)\subsetneq Y$, whereas for $\lambda<\omega^2$ every proper $\lambda$-attractor is
strongly invariant.
\item At $\omega^\omega$: the implication “inward $\Rightarrow$ uniformly inward” fails:
for $\lambda \ge \omega^\omega$ one can have inward $V$ with $d\,\!\Big(\!\bigcup_{\beta<\lambda}\{f\}^{\beta}(\overline V),\,\partial V\Big)=0$ (here $d$ is the metric and $\{f\}^\beta$ indicates the iteration of order $\beta$), while for $\lambda<\omega^\omega$ inwardness is always uniform.
\end{itemize}

Uniformly inward sets converge to uniform transfinite attractors, which are more tractable than general ones. Assuming uniformity, the straightforward generalization of an ordinary attractor, that is proper $\lambda$-attractors, are closed, invariant and attractive (at the corresponding ordinal level), but in general not stable, while larger objects, extended $\lambda$-attractors, are closed, invariant and stable, but in general not attractive; this is proved in Theorem \ref{themattractors}.


\textbf{Section \ref{sec7}} and \textbf{\ref{sec8}} are the core of the paper. In Section \ref{sec7}, we addresses deeper dynamical properties of $\lambda$-attractors, aiming at a general classification in terms of invariant properties. We will see that finite (that is, usual) attractors occupy a small region of a much richer structural landscape (graphically represented in Figs. \ref{scheme_attrac_1} and \ref{scheme_attrac_2}).
Uniform extended attractors verify (under some conditions) a  generalization of a well-known structure result on attractors by Akin, in which  suitable transfinite versions of the non-wandering and chain recurrence relations (denoted by $\lambda\{\mathcal{D}\}$ and $\lambda\{\mathcal{G}\}$) appear; this is proved in Theorem \ref{__akin_}.


Uniform attractors are further particularized by complete attractors, that are limit sets of completely (not just uniformly) inward sets: larger-order iterations of the set are always included in smaller-order ones. Complete attractors share several properties of standard attractors in ordinary dynamical systems, but are not, in general, transfinitely strongly invariant: the transfinite images of a complete attractor $A$ are closed and included in $A$, but do \textit{not} necessarily coincide with $A$. Perfect attractors (a special class of complete attractors) are the truly well-behaved objects: in addition to being stable and attractive, they are also strongly invariant at every transfinite level. A sufficient condition for perfectness is $\lambda$-reachability: if a point $y$ belonging to the $\lambda$-attractor is reached by a sequence of points admitting a certain transfinite iteration $\beta<\lambda$, then $y$ also admits an iteration of order $\beta$; this is proved in Theorem \ref{strong_inv_perf_}.
Perfect attractors obey a neat generalization of Akin's Theorem involving two further transfinite versions of the non-wandering and the chain recurrence relations (namely $\lambda\{\mathcal{B}\}$ and $\lambda\{\mathcal{F}\}$); this is proved in Theorem \ref{akin_perf_}.

In \textbf{Section \ref{sec8}} we prove that the main concepts introduced in Sections \ref{sec6} and \ref{sec7} are invariant. Indeed, for $\lambda$ a countable limit ordinal, two topologically $\lambda$-conjugate systems have, up to a homeomorphism, the same ordinal degree, orbits, limit sets, attractors, basins; this is proved in Theorem \ref{_conj__}. The transfinite conjugacy between two systems can be checked by looking at the sequences defining the systems: if they are element-by-element conjugate (in the ordinary sense), the systems are $\lambda$-conjugate. The converse is in general not true; this is proved in Theorem \ref{sequenc_conj__},  and it shows that in fact transfinite conjugacy is more meaningful than literal one-by-one conjugacy of the maps in the sequences. 

\vspace{0.3cm}

For the sake of clarity,
we have been abundant with examples and counter-examples. They have been selected to be always as simple as possible, so as to illustrate the concepts clearly. This deliberate simplicity makes transparent that the phenomena under study already occur in minimal settings.

\section{Transfinite dynamical systems}
\label{sec3}

Throughout, $X$ is a compact metric space with metric $d$ and such that $|X|\ge \aleph_0$. When we say that $(X,f)$ is a \emph{topological dynamical system}, we assume that $f$ is a continuous self-map of $X$.

We will indicate by $\mathbb{N}$ and $\mathbb{N}_0$, respectively, the set of positive and non-negative integers, and by $\mathbb{Z}$, $\mathbb{R}$ and $\mathbb{C}$ the sets of integers, real numbers and complex numbers.  We will generally use the last letters of the Latin alphabet ($x,y,z,w,\dots$) for points in $X$, mid-alphabet letters $(n,m,h,j,\ell,\dots)$ for natural numbers and Greek letters ($\lambda,\alpha,\beta,\eta,\iota,\dots$) for countable ordinals. As usual, $\omega$ is the first infinite ordinal and $\omega_1$ is the first uncountable ordinal. 

Given two subsets $A,B\subseteq X$, we set $d(A,B)=\inf\{d(x,y):x\in A,\ y\in B\}$.  With a slight abuse of language, we will follow the habit of calling $d(A,B)$ the  ``distance" between  $A$ and $B$; we also set $d(A,\emptyset)=d(\emptyset,A)=+\infty$ for every nonempty set $A$.
For $\epsilon>0$ and $A\subseteq X$, we indicate by $B_\epsilon(A)$ the open ball of radius $\epsilon$ around $A$, that is $B_\epsilon(A):=\{x\in X: d(x,A)<\epsilon\}$. We indicate by $\overline{B}_\epsilon(A)$ the closed ball of radius $\epsilon$ around $A$, that is $\overline{B}_\epsilon(A):=\{x\in X: d(x,A)\le\epsilon\}$. The topological closure of a set $S\subseteq X$ will be indicated by $\overline{S}$, so that the closure of the open ball of radius $\epsilon$ around $A$ will by indicated by $\overline{B_\epsilon(A)}$. When we say that $U(x)$ is an open neighborhood of $x\in X$ we mean that $U\subseteq X$ is an open set such that $x\in U$. 
\begin{defi}
We say that the sequence of maps $\{f_n\}_{n\in\mathbb{N}}$ is \emph{finitely convergent} to a map $f$, in symbols
\[
f_n \dot{\longrightarrow} f \quad \text{for }n\to \infty,
\]
if, for every $x\in X$, there is $N\in\mathbb{N}$ such that $f_n(x)=f(x)$ for every $n>N$. 
\label{1}
\end{defi}



Let us indicate by $\{f_n\}_{n\in\mathbb{N}}$ and $f$, respectively, a finitely convergent sequence of self-maps of $X$ and its pointwise limit. For every point $x\in X$, we denote by $\mathcal{O}_{f_n}(x)$ the $f_n$-orbit of $x$, that is: $$\mathcal{O}_{f_n}(x):=\{f_n(x),f_n^2(x),\dots\}.$$ We will write simply $\mathcal{O}_n(x)$ instead of $\mathcal{O}_{f_n}(x)$ if the sequence of maps is clear from the context. Moreover, we denote by $\mathcal{O}_f(x)$ (or simply by $\mathcal{O}(x)$ if the limit map is clear from the context) the $f$-orbit of $x$, that is the set $\{f(x),f^2(x),\dots\}$. When considering an invertible map $h$, we will use the symbol $\mathcal{O}_h^{\mathbb{Z}}(x)$ (or simply $\mathcal{O}^{\mathbb{Z}}(x)$ if the map is clear from the context) to indicate the full backward and forward orbit: $$\mathcal{O}_h^{\mathbb{Z}}(x):=\{h^k(x)\}_{k\in\mathbb{Z}}.$$

Our aim is to extract $n$-asymptotic dynamics from the sequence $\{\left(X,f_n \right)\}_{n\in\mathbb{N}}$. This will lead to the definition of a \textit{transfinite dynamical system}, that we will indicate by $\left(X,\{f\}\right)$. For this, we need first to introduce some technical tools of order-theoretic nature.

\begin{defi}\label{defSTlim}
We recall here some elementary set-theoretic concepts. Let $\{S_n\}_{n\in\mathbb{N}}$ be a sequence of sets.  The set-theoretic limit inferior ($\liminf$) and limit superior ($\limsup$) of the sequence are defined as follows:
$$\liminf_{n\to\infty} S_n=\bigcup_{n\ge 1}\bigcap_{j\ge n}S_j \quad , \quad \limsup_{n\to\infty} S_n=\bigcap_{n\ge 1}\bigcup_{j\ge n}S_j .$$
Moreover, if there exists a set $S$ such that $\underset{n\to\infty}\liminf\, S_n=\underset{n\to\infty}\limsup\, S_n=S$, then $S$ is called the (set theoretic) limit of $\{S_n\}_{n\in\mathbb{N}}$, denoted by 
 $S=\underset{n\to\infty}\lim\  S_n$.
 \end{defi}
 
\begin{defi}\label{order}
For every $x\in X$ and $n\in\mathbb{N}$ and for every pair of distinct points $z,y\in\mathcal{O}_n(x)$, we set: $$y<_{x,n} z\Longleftrightarrow\min\{k\in\mathbb{N}:f_n^k(x)=y\}<\min\{k\in\mathbb{N}:f_n^k(x)=z\}.$$  
Set $$\textswab{O}(x):=\bigcup_{n=1}^\infty \mathcal{O}_n(x) \quad\quad,\quad\quad \mathcal{O}_\infty(x):= \underset{n\to\infty}\liminf\,  \mathcal{O}_n(x).$$ Let us introduce a strict partial order on $\mathcal{O}_\infty(x)$ by taking the direct limit of the orders $<_{x,n}$. We set thus, for every pair of distinct $z,y\in \mathcal{O}_\infty(x)$,
 \begin{equation}
y<_{x,\infty} z\Longleftrightarrow \exists\, N\in\mathbb{N} : y<_{x,n} z \quad\forall n> N.
\end{equation}
\end{defi}
From now on, we will write simply $<_n$ and $<_\infty$ when the point $x$ with respect to which the relation holds is clear from the context, while we will use the full notation when confusion may arise. We will also write $y\le_{\infty} z$ to mean ``either $y<_{\infty} z$ or $y=z$". 

Using the strict order relation $<_\infty$, we define now inductively the transfinite iterations of the point $x$ associated to the finitely convergent sequence $\{f_n\}_{n\in\mathbb{N}}$. This will be done in Def. \ref{cycle}, but we will arrive at it in two steps, starting from Def. \ref{transfinite}, which covers what we call \textit{basic transfinite iterations}. Those are proven, in Theorem \ref{lem_group}, to obey the (ordinal version of the) additive law for composition of iterations, which in turn motivates Def. \ref{cycle}. 

Throughout the work, we use a few elementary facts about ordinals without mentioning them explicitly. In particular, we exploit systematically that: i) every ordinal smaller than $\omega_1$ has countable cofinality;
    ii) ordinal sum is associative;
    iii) the additively indecomposable countable ordinals are precisely the ones of the form $\omega^\lambda$ for some $\lambda<\omega_1$;
    iv) every ordinal admits a unique Cantor Normal Form.
For a standard introduction to ordinal numbers we refer, for instance, to \cite{jech2003set} (p.12 onwards), and \cite{ciesielski1997set} (Chapters 4-5).

\begin{defi}
\label{transfinite}
For every $x\in X$, set $[f]^1(x):=f(x)$.

Assume $1<\beta<\omega_1$ and suppose that $[f]^\alpha(x)$ has been defined for every $\alpha$ such that $1\le\alpha<\beta$. Set: 
\begin{equation}
\label{Obeta}
    \mathcal{O}_{[\beta]}(x):=\{[f]^\alpha(x):1\le\alpha<\beta\ \text{and}\  [f]^\alpha\ \text{is defined}\}
\end{equation}
and suppose that there is $z\in\mathcal{O}_\infty(x)$ such that 

\begin{equation}
\forall y\in\mathcal{O}_{[\beta]}(x),\ \forall w\in\mathcal{O}_\infty(x)\setminus\left(\mathcal{O}_{[\beta]}(x)\cup\{z\}\right),\ \text{we have}\ y<_\infty z<_\infty w.
\label{iteration}
\end{equation} 

Such a point $z\in\mathcal{O}_\infty(x)$, if it exists, is clearly unique. Then we define 
$$[f]^\beta(x):=z.$$
Notice that this definition is equivalent to requiring that $[f]^\beta(x)$ is the least element with respect to $<_\infty$ of the set $\mathcal{O}_\infty(x)\setminus\mathcal{O}_{[\beta]}(x)$, so that a necessary and sufficient condition for the existence of $[f]^\beta(x)$ is that $\mathcal{O}_\infty(x)\setminus\mathcal{O}_{[\beta]}(x)$ has a least element.
If instead there is no $z\in\mathcal{O}_\infty(x)$ verifying \eqref{iteration} (equivalently, $\mathcal{O}_\infty(x)\setminus\mathcal{O}_{[\beta]}(x)$ has no least element), then $[f]^\eta(x)$ is not defined (we may say as well \emph{does not exist}) for every $\eta\ge\beta$. Finally, we set 
\begin{equation}\label{set_iter}
    [\mathcal{O}](x)=\{[f]^\alpha(x):\ 1\le\alpha<\omega_1\text{ and } [f]^\alpha(x)\text{ is defined}\}.
\end{equation}
\end{defi}

\begin{rem}
Let be $\lambda=\sup\{\beta<\omega_1 : [f]^\beta(x) \text{ exists}\}$.
By construction, the set $[\mathcal{O}](x)$ is order-isomorphic to $\lambda$. 
Hence, the relation $<_\infty$ is a strict well-ordering of $[\mathcal{O}](x)$. 
\end{rem}

The above definition allows for the existence of basic transfinite iterations, that is points belonging to the space $X$ which can be written as $[f]^{\beta}(x)$ for some $\beta\ge\omega$ and $x\in X$. We call them ``basic" because they are not, as such, a satisfactory transfinite version of ordinary iterations of a map. More specifically, the iterations defined in Def. \ref{transfinite} \textit{do not} generalize finite iterations. Indeed, if $x$ belongs to a periodic $f$-orbit of order $k\in\mathbb{N}$, we have that $[f]^{k+1}(x)$ is not defined, while of course $f^{k+1}(x)=f(x)$. For this reason, the introduction of Def. \ref{cycle} is required, which is indeed a generalization of both Def. \ref{transfinite} and ordinary finite iterations, allowing in particular for the ``completion of the cycles", whether they are finite or, as we will see, transfinite cycles. 

The first result we want to prove, Theorem \ref{lem_group}, provides the technical justification Def. \ref{cycle}, showing that whenever certain  of basic transfinite iterations exist, they behave
with respect to countable ordinals as finite compositions do with
nonnegative integers: the iteration orders are added,
with the addition being the usual ordinal addition (hence associative but, in
general, noncommutative). We exploit this to define transfinite iterations in
situations where Definition~\ref{transfinite} alone would not be applicable. 
To prove Theorem \ref{lem_group} we need some preliminary lemmas.

\begin{lem}\label{lem_inclus_}
\label{inclusion}
If $[f]^\beta(x)=y$, then \begin{equation} \label{inclus_}
\mathcal{O}_{\infty}(y)\subseteq \mathcal{O}_\infty(x).
\end{equation}
Moreover, we have: \begin{equation} \label{setz_}\mathcal{O}_\infty(x)\setminus\mathcal{O}_\infty(y)\subseteq \mathcal{O}_{[\beta+1]}(x).
\end{equation}
\end{lem}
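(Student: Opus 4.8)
The plan is to reduce both inclusions to one elementary observation about how, for a fixed large $n$, the orbit $\mathcal{O}_n(y)$ sits inside $\mathcal{O}_n(x)$ once $y$ has appeared in the latter. For $z\in\mathcal{O}_n(x)$ write $\ell_n(z):=\min\{k\in\mathbb{N}:f_n^k(x)=z\}$. Since $[f]^\beta(x)=y$ lies in $\mathcal{O}_\infty(x)=\liminf_n\mathcal{O}_n(x)$, there is $N_0$ with $y\in\mathcal{O}_n(x)$ for all $n>N_0$; for such $n$, putting $k_n:=\ell_n(y)$, any $w\in\mathcal{O}_n(x)$ with $y<_{x,n}w$ has $\ell_n(w)>k_n$, so $w=f_n^{\,\ell_n(w)}(x)=f_n^{\,\ell_n(w)-k_n}(y)$ with exponent $\ell_n(w)-k_n\ge 1$, whence $w\in\mathcal{O}_n(y)$. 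In particular $\mathcal{O}_n(y)\subseteq\mathcal{O}_n(x)$ for every $n>N_0$. These two facts — tail containment, and ``everything strictly after $y$ lands in $\mathcal{O}_n(y)$'' — do essentially all the work.

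To obtain \eqref{inclus_}: if $w\in\mathcal{O}_\infty(y)$ then $w\in\mathcal{O}_n(y)$ for all $n$ beyond some threshold, so for $n$ also exceeding $N_0$ we get $w\in\mathcal{O}_n(y)\subseteq\mathcal{O}_n(x)$, and hence $w\in\liminf_n\mathcal{O}_n(x)=\mathcal{O}_\infty(x)$.

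To obtain \eqref{setz_}: first note $\mathcal{O}_{[\beta+1]}(x)=\mathcal{O}_{[\beta]}(x)\cup\{y\}$, since $[f]^\beta(x)=y$ is defined and therefore $[f]^\alpha(x)$ is defined for every $\alpha\le\beta$. Take $w\in\mathcal{O}_\infty(x)$ with $w\notin\mathcal{O}_{[\beta]}(x)\cup\{y\}$; we must show $w\in\mathcal{O}_\infty(y)$. By the defining property \eqref{iteration} of $z=[f]^\beta(x)=y$, such a $w$ satisfies $y<_\infty w$, so there is $N_1$ with $y<_{x,n}w$ for all $n>N_1$, and there is $N_2$ with $w\in\mathcal{O}_n(x)$ for all $n>N_2$. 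For $n>\max\{N_0,N_1,N_2\}$ the observation above gives $w\in\mathcal{O}_n(y)$, so $w\in\liminf_n\mathcal{O}_n(y)=\mathcal{O}_\infty(y)$, as desired. (In the degenerate case $\mathcal{O}_\infty(x)=\mathcal{O}_{[\beta]}(x)\cup\{y\}$ the relevant clause of \eqref{iteration} is vacuous, but then \eqref{setz_} holds trivially, its left-hand side being empty.)

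I do not expect a genuine obstacle here: the statement is essentially bookkeeping with the definitions. The two places that want a little care are (i) in the basic observation, keeping the exponent $\ell_n(w)-k_n\ge 1$ so that $w$ really lies in the orbit $\mathcal{O}_n(y)=\{f_n^j(y):j\ge 1\}$ rather than only in $\{y\}\cup\mathcal{O}_n(y)$; and (ii) invoking precisely the clause ``$z<_\infty w$ for every $w\in\mathcal{O}_\infty(x)\setminus(\mathcal{O}_{[\beta]}(x)\cup\{z\})$'' of \eqref{iteration}, with the trivial handling of the degenerate case above.
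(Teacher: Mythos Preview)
Your proof is correct and follows essentially the same approach as the paper's: for \eqref{inclus_} both arguments observe that once $y\in\mathcal{O}_n(x)$ the orbit $\mathcal{O}_n(y)$ embeds in $\mathcal{O}_n(x)$, and for \eqref{setz_} both invoke the defining clause of \eqref{iteration} to get $y<_\infty w$ and then subtract hitting times to place $w$ in $\mathcal{O}_n(y)$. Your write-up is slightly more explicit about thresholds and the identification $\mathcal{O}_{[\beta+1]}(x)=\mathcal{O}_{[\beta]}(x)\cup\{y\}$, but the mathematical content is the same.
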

\begin{proof}
If 
$z\in\mathcal{O}_n(y)$ for every $n>N_1$ and $y\in\mathcal{O}_n(x)$ for every $n>N_2$, then $z\in \mathcal{O}_n(x)$ for every $n>\max\{N_1,N_2\}$.
Moreover, if $z\in \mathcal{O}_\infty(x)\setminus\mathcal{O}_\infty(y)$ and $z\notin \mathcal{O}_{[\beta+1]}(x)$, since $y=[f]^\beta(x)$, it follows that $y<_{\infty,x} z$. Hence, there exists $N>0$ and $k_n,h_n\in\mathbb{N}$ with $h_n<k_n$ such that $f_n^{k_n}(x)=z$ and $f_n^{h_n}(x)=y$ for all $n>N$. Then $f_n^{k_n-h_n}(y)=z$ for all $n>N$, which means that $z\in\mathcal{O}_\infty(y)$, and that is a contradiction.
\end{proof}

\begin{lem}\label{nec_cond_exist}
   For every countable ordinal $\beta$, we have $[f]^\beta(x)\notin \mathcal{O}_{[\beta]}(x)$. 
\end{lem}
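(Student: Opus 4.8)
The statement is an almost immediate consequence of Definition~\ref{transfinite}, so the whole argument is little more than an unpacking of that definition, with a little care for the degenerate cases. The plan is as follows.

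First I would dispose of $\beta=1$. Here $\mathcal{O}_{[1]}(x)=\{[f]^\alpha(x):1\le\alpha<1\}=\emptyset$, while $[f]^1(x)=f(x)$ is, by definition, always defined (and in fact $f(x)\in\mathcal{O}_\infty(x)$, since finite convergence of $\{f_n\}$, Definition~\ref{1}, gives $f_n(x)=f(x)\in\mathcal{O}_n(x)$ for all large $n$). Hence $[f]^1(x)\notin\emptyset=\mathcal{O}_{[1]}(x)$, trivially.

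Next, for $\beta>1$ I assume that $[f]^\beta(x)$ is defined and call it $z$. By the equivalent formulation recorded inside Definition~\ref{transfinite}, $z$ is precisely the $<_\infty$-least element of $\mathcal{O}_\infty(x)\setminus\mathcal{O}_{[\beta]}(x)$; in particular $z$ belongs to this set difference, i.e.\ $z\notin\mathcal{O}_{[\beta]}(x)$, which is exactly the claim. Equivalently, one may argue directly from \eqref{iteration}: were $z\in\mathcal{O}_{[\beta]}(x)$, instantiating the universally quantified $y$ in \eqref{iteration} at $y=z$ would force $z<_\infty z$, contradicting the irreflexivity of the strict order $<_\infty$. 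If instead $[f]^\beta(x)$ is not defined the assertion is vacuous, and since the set of ordinals $\beta<\omega_1$ for which $[f]^\beta(x)$ is defined forms an initial segment of $\omega_1$ (as is built into Definition~\ref{transfinite}), these exhaust all cases.

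I do not expect any real obstacle: the only points needing (mild) attention are the empty instance $\mathcal{O}_{[1]}(x)=\emptyset$ and the convention that the statement is to be read under the hypothesis that $[f]^\beta(x)$ exists. The substantive content is simply that Definition~\ref{transfinite} selects $[f]^\beta(x)$ from the \emph{complement} $\mathcal{O}_\infty(x)\setminus\mathcal{O}_{[\beta]}(x)$ by construction, so it cannot coincide with any earlier iterate.
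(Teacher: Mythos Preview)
Your argument is correct and essentially identical to the paper's: both derive the contradiction $z<_\infty z$ from the irreflexivity of the strict order, the paper by writing $z=[f]^\eta(x)<_\infty[f]^\beta(x)=z$ for the relevant $\eta<\beta$, you by instantiating $y=z$ in \eqref{iteration}. Your additional remarks on the case $\beta=1$ and on vacuity when $[f]^\beta(x)$ is undefined are sound but not strictly necessary.
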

\begin{proof}
    Set $y:=[f]^\beta(x)$. Then assuming $\mathcal{O}_{[\beta]}(x)\ni y=[f]^\eta(x)$ for some $\eta<\beta$, it follows $y=[f]^\eta(x)<_\infty [f]^\beta(x)=y$,
    which is impossible.
\end{proof}

\begin{lem}\label{lem_exist_iter}
Assume that there exists $[f]^{\alpha}(x)$ for some countable ordinal $\alpha$. Then $[f]^{\alpha+1}(x)$ exists if and only if $f([f]^\alpha(x))\notin \mathcal{O}_{[\alpha+1]}(x)$, and in that case we have: $$[f]^{\alpha+1}(x)=f([f]^\alpha(x)).$$
\end{lem}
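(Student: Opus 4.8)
The plan is to unfold the definition of $[f]^{\alpha+1}(x)$ via Definition~\ref{transfinite} and reduce it to a statement about least elements of $\mathcal{O}_\infty(x)\setminus\mathcal{O}_{[\alpha+1]}(x)$. First I would record that $\mathcal{O}_{[\alpha+1]}(x) = \mathcal{O}_{[\alpha]}(x)\cup\{[f]^\alpha(x)\}$ by the definition in \eqref{Obeta}, since $\alpha$ is the unique ordinal with $\alpha<\alpha+1$ that is not already $<\alpha$. By Definition~\ref{transfinite}, $[f]^{\alpha+1}(x)$ exists if and only if $\mathcal{O}_\infty(x)\setminus\mathcal{O}_{[\alpha+1]}(x)$ has a least element with respect to $<_\infty$, and in that case $[f]^{\alpha+1}(x)$ equals that least element.

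Next I would show that $w:=f([f]^\alpha(x))$, when it does not lie in $\mathcal{O}_{[\alpha+1]}(x)$, is precisely this least element. Write $y:=[f]^\alpha(x)$. The key point is that $w$ sits in $\mathcal{O}_n(x)$ exactly one step after $y$ for all large $n$: since $y\in\mathcal{O}_\infty(x)$, for $n$ large there is $h_n$ with $f_n^{h_n}(x)=y$, hence $f_n^{h_n+1}(x)=f_n(y)$; and by finite convergence (Def.~\ref{1}) $f_n(y)=f(y)=w$ for $n$ large. Therefore $w\in\mathcal{O}_\infty(x)$ and $y<_{x,\infty} w$. Now take any $z\in\mathcal{O}_\infty(x)\setminus\mathcal{O}_{[\alpha+1]}(x)$ with $z\neq w$; I must show $w<_{x,\infty} z$. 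Since $z\notin\mathcal{O}_{[\alpha]}(x)\cup\{y\}$ and $y=[f]^\alpha(x)$, the defining property \eqref{iteration} of $[f]^\alpha(x)$ gives $y<_{x,\infty} z$, so for large $n$ we have $f_n^{h_n}(x)=y$ occurring strictly before $z$. Because $w$ appears at step $h_n+1$ (the immediate successor step), and $z$ appears at some step $>h_n$ with $z\neq w$, the index of $z$ must be $\ge h_n+2 > h_n+1$, giving $w<_{x,n} z$ for all large $n$, i.e. $w<_{x,\infty} z$. Hence $w$ is the least element of $\mathcal{O}_\infty(x)\setminus\mathcal{O}_{[\alpha+1]}(x)$, so $[f]^{\alpha+1}(x)$ exists and equals $w$.

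For the converse direction, if $f([f]^\alpha(x))\in\mathcal{O}_{[\alpha+1]}(x)$, I would argue that $\mathcal{O}_\infty(x)\setminus\mathcal{O}_{[\alpha+1]}(x)$ has no least element — or more directly, that it has no element at all that could serve, because any candidate least element $z$ would, by the argument above, be forced to equal $f([f]^\alpha(x))$, which lies in $\mathcal{O}_{[\alpha+1]}(x)$, a contradiction. Concretely: suppose $[f]^{\alpha+1}(x)=z$ existed. Then $z\notin\mathcal{O}_{[\alpha+1]}(x)$ by Lemma~\ref{nec_cond_exist}, yet $y<_{x,\infty} z$ and $z$ is minimal above $\mathcal{O}_{[\alpha+1]}(x)$; running the step-counting argument shows $z$ occupies step $h_n+1$ for large $n$, so $z = f_n^{h_n+1}(x) = f_n(y) = f(y)$ for large $n$, whence $z = f([f]^\alpha(x)) \in \mathcal{O}_{[\alpha+1]}(x)$, contradicting $z\notin\mathcal{O}_{[\alpha+1]}(x)$.

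I expect the main obstacle to be the bookkeeping around "step $h_n+1$": one must be careful that the minimal witnessing index of $y$ in $\mathcal{O}_n(x)$ may depend on $n$, that $w=f(y)$ genuinely occupies the immediately following index (not an earlier repeat, which is ruled out because $w\notin\mathcal{O}_{[\alpha+1]}(x)$ and in particular $w\neq y$ and $w$ is not an earlier iterate used in defining $<_\infty$), and that the strict inequality $z\neq w \Rightarrow \text{index}(z)\ge h_n+2$ is justified — this uses that indices in $\mathcal{O}_n(x)$ are compared via the \emph{minimal} $k$ with $f_n^k(x)=z$, so two distinct points cannot share an index. Apart from this, the proof is a direct unwinding of Definitions~\ref{order} and~\ref{transfinite} together with Def.~\ref{1}.
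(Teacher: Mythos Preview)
Your ``if'' direction is correct and essentially the paper's argument, written out in more detail: $w=f(y)$ sits at index $h_n+1$ in $\mathcal{O}_n(x)$ for large $n$, and any other element of $\mathcal{O}_\infty(x)\setminus\mathcal{O}_{[\alpha+1]}(x)$ has minimal index $\ge h_n+2$, so $w$ is the least element.

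The ``only if'' direction, however, has a real gap. Your step-counting argument establishes only that any $z\in\mathcal{O}_\infty(x)\setminus\mathcal{O}_{[\alpha+1]}(x)$ with $z\neq f(y)$ has minimal index $\ge h_n+2$; it does \emph{not} show that a hypothetical minimum $z$ occupies step $h_n+1$. In fact, under the hypothesis $f(y)\in\mathcal{O}_{[\alpha+1]}(x)$ you automatically have $z\neq f(y)$ (since $z\notin\mathcal{O}_{[\alpha+1]}(x)$), so your own argument gives $z$'s minimal index $\ge h_n+2$, not $=h_n+1$, and the chain $z = f_n^{h_n+1}(x) = f(y)$ breaks at the first equality. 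The minimality of $z$ within $\mathcal{O}_\infty(x)\setminus\mathcal{O}_{[\alpha+1]}(x)$ tells you nothing about what happens at index $h_n+1$, because the element sitting there is $f(y)$, which lies in $\mathcal{O}_{[\alpha+1]}(x)$ and hence outside the set whose minimum you are considering.

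The missing idea is a cycle argument, and this is what the paper supplies. Since $f(y)\in\mathcal{O}_{[\alpha+1]}(x)$, we have $f(y)=[f]^\beta(x)$ for some $\beta\le\alpha$, so its minimal index $j_n$ in $\mathcal{O}_n(x)$ satisfies $j_n\le h_n$ for large $n$. But also $f_n^{h_n+1}(x)=f_n(y)=f(y)=f_n^{j_n}(x)$, so the orbit is eventually periodic with period $h_n+1-j_n$; in particular every element of $\mathcal{O}_n(x)$ already appears among $f_n^1(x),\dots,f_n^{h_n}(x)$ and therefore has minimal index $\le h_n$. Consequently there is no $z\in\mathcal{O}_n(x)$ with $y<_n z$ for large $n$, so $\mathcal{O}_\infty(x)\setminus\mathcal{O}_{[\alpha+1]}(x)$ is empty and $[f]^{\alpha+1}(x)$ cannot exist. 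This is the piece you need to add.
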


\begin{proof} 
Set $y:=[f]^\alpha(x)$. 
Then there exist a positive integer $N_1$ and a sequence of positive integers $\{k_n\}_{n\in\mathbb{N}}$ such that $f_n^{k_n}(x)=y$ for all $n>N_1$, with $k_n=\min\{k\in\mathbb{N}:f_n^k(x)=y\}$. 
If $f(y)=f([f]^\alpha(x))\in \mathcal{O}_{[\alpha+1]}(x)$, then there exists $\beta\le \alpha$ such that $f(y)=[f]^\beta(x)$, so there is a sequence of positive integers $\{h_n\}_{n\in\mathbb{N}}$  and  $N_2$ so large that, for all $n>N_2$, $$f(y)=f_n(y)=f_n(f_n^{k_n}(x))=f_n^{k_n+1}(x)=f_n^{h_n}(x),$$ for some $h_n=\min\{h\in\mathbb{N}:f_n^h(x)=f(y)\}\le k_n$.
Hence, for every $n>\max\{N_1,N_2\}=:N$, the set $\{f_n^{h_n}(x),\ldots,f_n^{k_n}(x)\}$ is a cycle of order $k_n-h_n+1$ with respect to the map $f_n$, which means that $\mathcal{O}_n(x)$ is a finite set. Then, for every $n>N$, there is no $z\in \mathcal{O}_n(x)$ such that $y<_n z$. Indeed, for every $z\in\mathcal{O}_n(x)$ and every $n>N$, we have: 
$$k_n=\min \{k\in\mathbb{N} : f_n^k(x)=y\}\ge \min \{k\in\mathbb{N} : f_n^k(x)=z\}.$$ 
It follows that there is no point $z$ in $\mathcal{O}_\infty(x)$ verifying \eqref{iteration} with $\beta=\alpha+1$, so $[f]^{\alpha+1}$ is not defined.

Otherwise, suppose that $f([f]^\alpha(x))\notin \mathcal{O}_{[\alpha+1]}(x)$. By the finite convergence of $\{f_n\}_{n\in\mathbb{N}}$, it follows that, for $n$ large enough, $f([f]^\alpha(x))=f_n([f]^\alpha(x))$, so that $f([f]^\alpha(x))\in \mathcal{O}_\infty (x)$ and 
\begin{equation*}
f([f]^\alpha(x))=\min_{<_{_\infty}}\{ z\in X : [f]^\alpha(x) <_\infty z\}.
\end{equation*}
This implies that $[f]^{\alpha+1}(x)=f([f]^\alpha(x))$.
\end{proof}

\begin{thm}\label{lem_group}
If $[f]^\beta(x)=y$ and $[f]^\eta(y)=s$, then $[f]^{\beta+\eta}(x)$ exists if and only if $s\notin \mathcal{O}_{[\beta+\eta]}(x)$, and in that case we have: 
\begin{equation}
\label{group}
[f]^{\beta+\eta}(x)=[f]^\eta\left([f]^\beta(x)\right).
\end{equation}
\end{thm}

\begin{proof} 
Let us first prove the equality \eqref{group} assuming that there exists $[f]^{\beta+\eta}(x)$.  

By Lemma \ref{lem_exist_iter}, it follows that $[f]^{\beta+1}(x)$, if it exists, verifies $$[f]^{\beta+1}(x)=f([f]^\beta(x))=[f]^1([f]^\beta(x)).$$
Now we proceed by transfinite induction up to $\eta$. 
Take $\gamma\le \eta$ and suppose that $[f]^{\beta+\gamma}(x)$ exists and that, for every $\alpha$ verifying $1\le\alpha<\gamma$, we have $[f]^{\beta+\alpha}(x)=[f]^\alpha([f]^\beta(x))=[f]^\alpha(y)$.  We want to show that $[f]^{\beta+\gamma}(x)=[f]^\gamma(y)$. 

By Lemma \ref{inclusion}, we know that $[f]^\gamma(y)\in \mathcal{O}_\infty(x)$. 

\begin{itemize}
\item Let us prove that \begin{equation}\label{ineqq__}
[f]^\gamma(y)\ge_{\infty,x} [f]^{\beta+\gamma}(x).   
\end{equation}

From the inductive hypothesis, it follows that $<_{\infty,x}$ extends the order $<_{\infty,y}$ to $\mathcal{O}_{[\beta+\gamma]}(x)$. More precisely, we have:
\begin{enumerate}
    \item[(i)] if $a<_{\infty,y}b$ with $a,b\in \mathcal{O}_{[\gamma]}(y)$, then there exist $1\le\alpha_1<\alpha_2<\gamma$ such that $a=[f]^{\alpha_1}(y)=[f]^{\beta+\alpha_1}(x)$ and $b=[f]^{\alpha_2}(y)=[f]^{\beta+\alpha_2}(x)$. Hence $\beta+\alpha_1<\beta+\alpha_2$ and $a<_{\infty,x}b$.
    \item[(ii)] If $a\in \mathcal{O}_{[\beta]}(x)$, $b\in \mathcal{O}_{[\gamma]}(y)$, then $a<_{\infty,x} y$ and $y<_{\infty,y}b$. By item (i) we have that $y<_{\infty,x} b$. By transitivity, it follows that $a<_{\infty,x} b$.
\end{enumerate} 
In particular, it follows that:
\begin{equation}\label{transitivity__}
u<_{\infty,x} [f]^\gamma(y)\ \text{for all }u\in \mathcal{O}_{[\beta+\gamma]}(x).
\end{equation}
Since $[f]^{\beta+\gamma}(x)$ exists, we have by definition $u<_{_\infty,x}[f]^{\beta+\gamma}(x)$ for $u\in \mathcal{O}_{[\beta+\gamma]}(x)$. It cannot be $[f]^\gamma(y)<_{\infty,x} [f]^{\beta+\gamma}(x)$, because otherwise by definition $[f]^\gamma(y)\in \mathcal{O}_{[\beta+\gamma]}(x)$, contradicting \eqref{transitivity__}. Therefore,
the inequality \eqref{ineqq__} follows.

\item Let us prove that $$[f]^\gamma(y)\le_{\infty,x} [f]^{\beta+\gamma}(x).$$ 

Assume the contrary, that is $[f]^{\beta+\gamma}(x) <_{\infty,x} [f]^\gamma(y)$.
Again, using \eqref{iteration} with $z=[f]^\gamma(y)$ and recalling that $<_{\infty,x}$ extends the order $<_{\infty,y}$,  it follows that $[f]^{\beta+\gamma}(x)\in \mathcal{O}_{[\gamma]}(y)$. Using the inductive hypothesis, this is equivalent to $[f]^{\beta+\gamma}(x) \in \mathcal{O}_{[\beta+\gamma]}(x)$, which is impossible by Lemma \ref{nec_cond_exist}.
\end{itemize}

We can conclude that, if $[f]^{\beta+\eta}(x)$ exists, we have $[f]^\gamma(y)=[f]^{\beta+\gamma}(x)$ for every $\gamma\le \eta$. 

\vspace{0.2cm}

Suppose now that $[f]^{\beta+\eta}(x)$ is not defined and assume that $s\notin \mathcal{O}_{[\beta+\eta]}(x)$. Set
$$S:=\{w\in \mathcal{O}_\infty(y)\setminus \mathcal{O}_{[\eta]}(y) : u<_{\infty,y} w, \ \forall u\in\mathcal{O}_{[\eta]}(y)\}.$$
Using \eqref{iteration} with $z=s$, we have that $s$ is the least element of the set $S$ with respect to $<_{\infty,y}$. Applying item (ii), it follows that $u<_{\infty,x} s$ for all $u\in \mathcal{O}_{[\beta+\eta]}(x)$. By the inclusion \eqref{inclus_}, and since $s\notin \mathcal{O}_{[\beta+\eta]}(x)$, we have that $s\in\mathcal{O}_\infty(x)\setminus\mathcal{O}_{[\beta+\eta]}(x)$. Moreover, using the inclusion \eqref{setz_}, we obtain: $$\big(\mathcal{O}_\infty(x)\setminus\mathcal{O}_{[\beta+\eta]}(x)\big)\setminus
(\mathcal{O}_\infty(y)\setminus\mathcal{O}_{[\eta]}(y))=\mathcal{O}_\infty(x)\setminus\big(\mathcal{O}_{[\beta+\eta]}(x) \cup \mathcal{O}_\infty(y)\big)=\emptyset.$$
It follows that $s$ is the least element of $\mathcal{O}_\infty(x)\setminus\mathcal{O}_{[\beta+\eta]}(x)$ with respect to $<_{\infty,x}$. 
This means that $[f]^{\beta+\eta}(x)=s$, which is a contradiction because we assumed that $[f]^{\beta+\eta}(x)$ does not exist. 
\end{proof}

Motivated by Theorem \ref{lem_group}, we now extend the validity of \eqref{group} making it a \textit{definition} of composition of transfinite iterations. 

\begin{defi}[\textbf{Transfinite iterations}]
\label{cycle}
For every $x\in X$, we set: $$\{f\}^0(x):=x,$$
$$\{f\}^\beta(x):=[f]^\beta(x)\text{ if $[f]^\beta(x)$ is defined for  $\beta<\omega_1$}.$$ Moreover, whenever there are ordinals $\beta,\eta<\omega_1$ and points $x,y\in X$ such that
\begin{equation}\label{composition1}
\{f\}^\beta(x)=y\quad,\quad
\{f\}^\eta(y)=z,
\end{equation}
we set:
\begin{equation} \label{composition2}
\{f\}^{\beta+\eta}(x)=z.
\end{equation}
Therefore, every point of type $y=\{f\}^\beta(x)$ for some countable ordinal $\beta$ and some $x\in X$ can be obtained starting from the basic transfinite iterations and applying a finite number of times  the composition rules \eqref{composition1}-\eqref{composition2}.
\end{defi}

For $\alpha,\beta<\omega_1$, $x\in X$ and $A\subseteq X$, we also define the following sets:
\begin{align}\label{def_orbits_}
    &\mathcal{O}_{\{\beta\}}(x):= \{\{f\}^\alpha(x):1\le\alpha<\beta\},\\
    &\{\mathcal{O}\}(x)\ := \{\{f\}^\alpha(x):\ 1\le\alpha<\omega_1\text{ and } \{f\}^\alpha(x)\text{ is defined}\},\label{def_orbits_2}\\
    &\{f\}^{-\beta}(A):=\left(\{f\}^\beta\right)^{-1}(A)=\{x\in X: \{f\}^\beta(x)\in A\}.
\end{align}

\begin{rem}
Def. \ref{cycle} is well-posed thanks to the associativity of multiplication of ordinals. Indeed, take $x,y,z,w \in X$ such that $\{f\}^\beta(x)=y$, $\{f\}^\eta(y)=z$ and $\{f\}^\gamma(z)=w$ for some $\beta,\eta,\gamma<\omega_1$. We have $\{f\}^{\eta+\gamma}(x)=w$ and $\{f\}^{\beta+\eta}(y)=z$. Moreover, $$w=\{f\}^{\eta+\gamma}(\{f\}^\beta(y))=\{f\}^{\beta+(\eta+\gamma)}(y)=\{f\}^{(\beta+\eta)+\gamma}(y)=\{f\}^\gamma(\{f\}^{\beta+\eta}(y))=w.$$
\end{rem}

\begin{defi}\label{Xbeta}
    For a countable ordinal $\lambda$, we set:
    \begin{equation}
      X^\lambda:=\{x\in X : \text{ $\forall\,$} \beta<\lambda,\ \exists\,y\in X \text{ such that }\{f\}^\beta(x)=y  \}.  
    \end{equation}
    We assume on $X^\lambda$ the relative topology induced from that of $X$.
    For every $\beta<\omega_1$, we say that a set $S\subseteq X$ is \emph{$\beta$-saturated}  if $S\subseteq X^\eta$ for every $\eta<\beta$.
\end{defi}

\begin{rem}\label{emptysetnotation}
Notice that we have always $X^\omega=X$ and, more generally, we have: $$X^{\beta+\omega}=X^{\beta+1}\text{ for every $\beta<\omega_1$}.$$
For every countable ordinal $\beta$ we have thus a map 
\begin{equation}\label{continuity__}
\{f\}^\beta:X^{\beta+1}\to X,
\end{equation} 
defined on a suitable subset of the whole space $X$, which collapses to the empty function if there are no points $x\in X$ for which the iteration of order $\beta$ is defined. In order to simplify the notation, for $A\subseteq X$ and $\beta<\omega_1$, we set $$\{f\}^\beta(A):=\{f\}^\beta(X^{\beta+1}\cap A).$$
In particular, we have $\{f\}^\beta(\{x\})=\emptyset$ if and only if $x\notin X^{\beta+1}$, and in this case, with a slight abuse of notation, we will write as well $\{f\}^\beta(x)=\emptyset$. Notice that this simplifies Def. \eqref{def_orbits_2} to $$\{\mathcal{O}\}(x)\ := \{\{f\}^\alpha(x):\ 1\le\alpha<\omega_1\}$$
\end{rem}

From Theorem \ref{lem_group} and Def. \ref{cycle} it follows immediately the following

\begin{cor}\label{cor_lem_group}
    If $[f]^\beta(x)$ is not defined while $\{f\}^\beta(x)\neq \emptyset$, then $\{f\}^\beta(x)\in \mathcal{O}_{[\beta]}(x)$. 
\end{cor}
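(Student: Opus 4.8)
The plan is to read off the structure of $z:=\{f\}^\beta(x)$ from the constructive description in Definition~\ref{cycle}. Since $\{f\}^\beta(x)\neq\emptyset$, the point $z$ is obtained from a finite chain of \emph{basic} iterations
$x=x_0\xrightarrow{\gamma_1}x_1\xrightarrow{\gamma_2}\cdots\xrightarrow{\gamma_k}x_k=z$,
where $[f]^{\gamma_i}(x_{i-1})=x_i$ for each $i$ (so each $[f]^{\gamma_i}(x_{i-1})$ is defined), we may assume $\gamma_i\ge 1$ for all $i$, and $\gamma_1+\dots+\gamma_k=\beta$ — the last identity being legitimate by associativity of ordinal addition, exactly as in the Remark following Definition~\ref{cycle}. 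Because $[f]^\beta(x)$ is \emph{not} defined we must have $k\ge 2$ (for $k=1$ we would have $z=[f]^\beta(x)$). I would then induct on the number $k$ of links in such a representation.

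For the inductive step, peel off the \emph{first} link: put $y:=x_1=[f]^{\gamma_1}(x)$ (defined) and $\eta:=\gamma_2+\dots+\gamma_k$, so that $\beta=\gamma_1+\eta$ and $z=\{f\}^\eta(y)$ is represented by a chain of $k-1$ basic iterations. If $[f]^\eta(y)$ is defined, then $z=[f]^\eta(y)$ and, since $[f]^{\gamma_1+\eta}(x)=[f]^\beta(x)$ does not exist, Lemma~\ref{lem_group} (with the roles of its "$\beta$" and "$\eta$" played by $\gamma_1$ and $\eta$) gives $z\in\mathcal{O}_{[\gamma_1+\eta]}(x)=\mathcal{O}_{[\beta]}(x)$, as wanted. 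If instead $[f]^\eta(y)$ is not defined, then $z=\{f\}^\eta(y)$ with $\{f\}^\eta(y)\neq\emptyset$, so the inductive hypothesis applies to the shorter chain and yields $z\in\mathcal{O}_{[\eta]}(y)$, i.e. $z=[f]^\delta(y)$ for some $1\le\delta<\eta$ with $[f]^\delta(y)$ defined. Hence $z=[f]^\delta([f]^{\gamma_1}(x))=\{f\}^{\gamma_1+\delta}(x)$, and strict left-monotonicity of ordinal addition gives $\gamma_1+\delta<\gamma_1+\eta=\beta$. Now either $[f]^{\gamma_1+\delta}(x)$ is defined, in which case Lemma~\ref{lem_group} forces it to equal $[f]^\delta([f]^{\gamma_1}(x))=z$, so $z\in\mathcal{O}_{[\beta]}(x)$ because $1\le\gamma_1+\delta<\beta$; or it is not defined, in which case Lemma~\ref{lem_group} gives $z\in\mathcal{O}_{[\gamma_1+\delta]}(x)\subseteq\mathcal{O}_{[\beta]}(x)$. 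In all cases $z\in\mathcal{O}_{[\beta]}(x)$, closing the induction.

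The only real subtlety — and the reason the induction runs on the number of composition steps rather than on $\beta$ — is ordinal-arithmetic. If one tried to peel off the \emph{last} link instead, the exponent reached would be of the form $\delta+\gamma_k$, and since right addition is only weakly monotone (e.g. $0+\omega=3+\omega$) this might again equal $\beta$, giving no descent. Peeling off the \emph{first} link produces $\gamma_1+\delta$, and strict monotonicity in the second summand guarantees $\gamma_1+\delta<\beta$; this is precisely what makes the recursion terminate. Everything else is a direct bookkeeping application of Lemma~\ref{lem_group} together with the monotonicity $\mathcal{O}_{[\alpha]}(x)\subseteq\mathcal{O}_{[\beta]}(x)$ for $\alpha\le\beta$ that is immediate from \eqref{Obeta}.
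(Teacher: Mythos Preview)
Your proof is correct and is exactly the natural elaboration of what the paper leaves implicit: the paper states this corollary as following ``immediately'' from Lemma~\ref{lem_group} and Definition~\ref{cycle} and gives no further argument, so your induction on the number of composition steps is precisely the way to make that word ``immediately'' honest. One small remark: in Case~1 the claim ``if $[f]^\eta(y)$ is defined then $z=[f]^\eta(y)$'' uses the well-posedness of Definition~\ref{cycle}, which the paper asserts in the Remark following that definition; if you wanted to avoid that reliance, you could strengthen the induction hypothesis to ``for any chain of length $<k$ with total exponent $\alpha$, either the resulting point equals $[f]^\alpha$ of the starting point, or it lies in $\mathcal{O}_{[\alpha]}$ of it'', which then proves well-posedness as a by-product. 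Your observation about peeling off the first rather than the last link is a nice touch, though in fact the last-link approach also goes through (if $\delta+\gamma_k=\beta$, then $[f]^{\delta+\gamma_k}(x)$ is undefined by hypothesis and Lemma~\ref{lem_group} still places $z$ in $\mathcal{O}_{[\beta]}(x)$); your choice simply gives the cleaner case analysis.
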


Let us prove now that Def. \ref{cycle} can define new \textit{iterations} of a certain point $x$, but  no new \textit{point} can be reached from $x$ by means of it. More precisely, we have the following result.
\begin{prop}\label{orbit_eq}
For every countable ordinal $\beta$, we have $\mathcal{O}_{[\beta]}(x)= \mathcal{O}_{\{\beta\}}(x)$.
\end{prop}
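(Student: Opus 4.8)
The plan is to prove the two inclusions separately, using transfinite induction on $\beta$ together with the composition rule from Definition~\ref{cycle}. Recall that $\mathcal{O}_{[\beta]}(x) = \{[f]^\alpha(x) : 1\le\alpha<\beta,\ [f]^\alpha(x)\text{ defined}\}$ while $\mathcal{O}_{\{\beta\}}(x) = \{\{f\}^\alpha(x): 1\le\alpha<\beta\}$ (with the convention that $\{f\}^\alpha(x)=\emptyset$ contributes nothing). Since every $[f]^\alpha(x)$ that exists equals $\{f\}^\alpha(x)$ by Definition~\ref{cycle}, the inclusion $\mathcal{O}_{[\beta]}(x)\subseteq\mathcal{O}_{\{\beta\}}(x)$ is immediate: each element of the left-hand side is $[f]^\alpha(x)=\{f\}^\alpha(x)$ for some $\alpha<\beta$. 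The real content is the reverse inclusion $\mathcal{O}_{\{\beta\}}(x)\subseteq\mathcal{O}_{[\beta]}(x)$, i.e. that every \emph{point} reachable as a composed iteration $\{f\}^\alpha(x)$ with $\alpha<\beta$ already appears as a basic iteration $[f]^{\alpha'}(x)$ for some $\alpha'<\beta$.

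For the reverse inclusion I would argue by transfinite induction on $\beta$, but the cleaner formulation is: show that for every $\alpha<\omega_1$, if $\{f\}^\alpha(x)\neq\emptyset$ then $\{f\}^\alpha(x)\in\mathcal{O}_{[\alpha+1]}(x)$ — i.e. it is a basic iteration of $x$ of order $\le\alpha$. This immediately gives $\mathcal{O}_{\{\beta\}}(x)\subseteq\mathcal{O}_{[\beta]}(x)$, since then each $\{f\}^\alpha(x)$ with $\alpha<\beta$ lies in $\mathcal{O}_{[\alpha+1]}(x)\subseteq\mathcal{O}_{[\beta]}(x)$. Now this claim is exactly what Corollary~\ref{cor_lem_group} gives when $[f]^\alpha(x)$ is \emph{not} defined; and when $[f]^\alpha(x)$ \emph{is} defined, $\{f\}^\alpha(x)=[f]^\alpha(x)\in\mathcal{O}_{[\alpha+1]}(x)$ trivially. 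So in fact the statement reduces almost entirely to Corollary~\ref{cor_lem_group} plus the trivial direction. I would therefore present the proof as: ($\subseteq$) immediate from $[f]^\alpha(x)=\{f\}^\alpha(x)$; ($\supseteq$) take $y=\{f\}^\alpha(x)$ with $\alpha<\beta$ and $y\neq\emptyset$; if $[f]^\alpha(x)$ is defined then $y=[f]^\alpha(x)\in\mathcal{O}_{[\beta]}(x)$; if not, Corollary~\ref{cor_lem_group} yields $y\in\mathcal{O}_{[\alpha]}(x)\subseteq\mathcal{O}_{[\beta]}(x)$.

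The one subtlety — and the place I expect to need care — is making sure the composition rule in Definition~\ref{cycle} doesn't secretly introduce a point via a \emph{multi-step} composition $\{f\}^{\beta_1}\cdots\{f\}^{\beta_k}(x)$ that isn't captured by a single application of Corollary~\ref{cor_lem_group}. Here one should note that any such composed point is of the form $\{f\}^{\gamma}(x)$ with $\gamma = \beta_1+\cdots+\beta_k$ (by associativity of ordinal addition, as recorded in the Remark after Definition~\ref{cycle}), so it suffices to handle single iterations $\{f\}^\gamma(x)$, and the dichotomy "$[f]^\gamma(x)$ defined or not" plus Corollary~\ref{cor_lem_group} covers it. One should also double-check the edge behaviour: if $\{f\}^\alpha(x)=\emptyset$ it is (by the convention in Remark~\ref{emptysetnotation}) not an element of either set, so it can be ignored on both sides. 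Modulo spelling out this reduction to single iterations, the proof is short and essentially a packaging of Corollary~\ref{cor_lem_group}.
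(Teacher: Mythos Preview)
Your argument is correct. Both your proof and the paper's hinge on Corollary~\ref{cor_lem_group}; the easy inclusion $\mathcal{O}_{[\beta]}(x)\subseteq\mathcal{O}_{\{\beta\}}(x)$ is handled identically, and for the reverse you simply apply the dichotomy ``$[f]^\alpha(x)$ defined or not'' plus Corollary~\ref{cor_lem_group} pointwise to each $\alpha<\beta$. This is clean and sufficient: Corollary~\ref{cor_lem_group} already absorbs the induction on composition depth, so your worry about multi-step compositions is indeed unnecessary.

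The paper takes a slightly more elaborate route: it singles out the \emph{least} ordinal $\eta$ where $[f]^\eta(x)$ fails to exist, applies Corollary~\ref{cor_lem_group} once to get $\{f\}^\eta(x)=[f]^\gamma(x)$ for some $\gamma<\eta$, and then explicitly traces out the periodic behavior $\{f\}^{\eta+(\bar\delta\cdot k)+\delta}(x)=[f]^{\gamma+\delta}(x)$ (where $\gamma+\bar\delta=\eta$) to conclude not only that all further iterates stay in $\mathcal{O}_{[\eta]}(x)$, but also that $\{f\}^\iota(x)=\emptyset$ for every $\iota\ge\gamma+\bar\delta\cdot\omega$. This yields extra structural information (the transfinite orbit eventually enters a cycle and there is an explicit ceiling on $D(x)$) which is not needed for the proposition itself. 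Your direct approach is more economical for the stated goal; the paper's approach buys a finer description of what happens once basic iterations cease to exist.
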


\begin{proof}
If $[f]^\eta(x)=\{f\}^\eta(x)$ for every $\eta<\beta$, the claim follows.
Otherwise, observe first that, since Def. \ref{cycle} extends Def. \ref{transfinite}, it follows that $\mathcal{O}_{[\beta]}(x)\subseteq \mathcal{O}_{\{\beta\}}(x)$.
To prove the converse inclusion,
let $\eta<\beta$ be the smallest ordinal such that $[f]^\eta(x)\ne \{f\}^\eta(x)$. Clearly $\mathcal{O}_{[\eta]}(x)=\mathcal{O}_{\{\eta\}}(x)$. By Def. \ref{cycle}, we have that $[f]^\eta(x)$ is not defined, and it follows from Corollary \ref{cor_lem_group} that $\{f\}^\eta(x)\in\mathcal{O}_{[\eta]}(x)$. Then
$\{f\}^\eta(x)=[f]^\gamma(x)$ for some $\gamma<\eta$, so $\{f\}^\eta(x)\in\mathcal{O}_{[\eta]}(x)$.  Again recalling Corollary \ref{cor_lem_group}, and by the composition rules \eqref{composition1}-\eqref{composition2}, this means that 
$$\{f\}^{\eta+\delta}(x)=[f]^{\gamma+\delta}(x)\in\mathcal{O}_{[\eta]}(x)$$
for every $\delta$ such that $[f]^{\gamma+\delta}(x)$ exists, that is until $\gamma+\delta<\eta$. If $\overline{\delta}$ is such that $\gamma+\overline{\delta}=\eta$,  again by the composition rules \eqref{composition1}-\eqref{composition2}, we have
$$\{f\}^{\eta+\overline{\delta}}(x)=\{f\}^{\gamma+\overline{\delta}}(x)=\{f\}^\eta(x)$$
so that, for every $k\in\mathbb{N}_0$ and every $\delta$ such that $\gamma+\delta<\eta$, we have:
\begin{equation}\label{finite_number__}\{f\}^{\eta+(\overline{\delta}\cdot k)+\delta}(x)=\{f\}^{\eta+\delta}(x)=[f]^{\gamma+\delta}(x)\in\mathcal{O}_{[\eta]}(x)\subseteq \mathcal{O}_{[\beta]},
\end{equation}
where the last inclusion is true because $\eta<\beta$. Observe finally that 
$$
\sup_{k\in\mathbb{N}_0, \delta<\overline{\delta}}\{\eta+(\overline{\delta}\cdot k)+\delta\}=\gamma + (\overline{\delta}\cdot\omega),
$$
but the transfinite order of iteration on the right hand side cannot be reached through a finite number of applications of the composition rules \eqref{composition1}-\eqref{composition2}. Therefore, $\{f\}^\iota=\emptyset$ for every $\iota\ge \gamma+(\overline{\delta}\cdot \omega)$, so the iterations appearing in Eq. \eqref{finite_number__} exhaust all the transfinite iterations of $x$.
\end{proof}

\begin{prop}\label{periodic_finite}
    If $x$ is a periodic or pre-periodic point for $f$, that is $|\mathcal{O}(x)|=N<\aleph_0$, then for every $\beta\ge\omega$ there is no $y\in X$ such that $\{f\}^\beta(x)=y$ .
\end{prop}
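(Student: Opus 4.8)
The plan is to reduce the statement to two observations: that a point with finite $f$-orbit admits only \emph{finitely many} basic transfinite iterations (in the sense of Def.~\ref{transfinite}), and that the composition rule of Def.~\ref{cycle} cannot manufacture an infinite iteration order out of finite ones.

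First I would handle the \emph{basic} iterations. Suppose $x'\in X$ has $|\mathcal{O}(x')|=M<\aleph_0$, and let $m\ge 1$ be the least integer with $f^{m+1}(x')\in\{f(x'),\dots,f^{m}(x')\}$; such an $m\le M$ exists because the forward orbit is finite. An easy induction on $k\le m$, starting from $[f]^1(x')=f(x')$ and using Lemma~\ref{lem_exist_iter} together with the pairwise distinctness of $f(x'),\dots,f^{m}(x')$, yields $[f]^k(x')=f^k(x')$ for $1\le k\le m$, hence $\mathcal{O}_{[m+1]}(x')=\{f(x'),\dots,f^{m}(x')\}$. Then $f\big([f]^m(x')\big)=f^{m+1}(x')\in\mathcal{O}_{[m+1]}(x')$, so by Lemma~\ref{lem_exist_iter} $[f]^{m+1}(x')$ is not defined, and by the ``propagation of undefinedness'' clause of Def.~\ref{transfinite}, $[f]^{\eta}(x')$ is undefined for every $\eta\ge m+1$. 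In particular, for every countable ordinal $\gamma$, if $[f]^\gamma(x')$ is defined then $\gamma<\omega$ and $[f]^\gamma(x')=f^\gamma(x')\in\mathcal{O}(x')$. I would also record the trivial closure fact that if $x'\in\mathcal{O}(x)$ then $\mathcal{O}(x')\subseteq\mathcal{O}(x)$, so this conclusion applies simultaneously to $x$ and to every point of $\mathcal{O}(x)$.

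Next I would handle the compositions. Assume, for contradiction, that $\{f\}^\beta(x)=y$ for some $\beta\ge\omega$. By the last sentence of Def.~\ref{cycle}, $y$ arises from $x$ by finitely many applications of the composition rule starting from basic iterations; unfolding this recursion (induction on the number of compositions, using associativity of ordinal addition to flatten nested sums) produces a finite chain $x=x_0,x_1,\dots,x_k=y$ and countable ordinals $\gamma_1,\dots,\gamma_k\ge 1$ with $[f]^{\gamma_i}(x_{i-1})=x_i$ for each $i$ and $\beta=\gamma_1+\dots+\gamma_k$. An induction on $i$ now shows $x_i\in\mathcal{O}(x)\cup\{x\}$ with $\mathcal{O}(x_i)$ finite and $\gamma_i<\omega$: indeed $x_0=x$ has finite orbit, and if $x_{i-1}$ has finite orbit then the first step forces $\gamma_i<\omega$ and $x_i=f^{\gamma_i}(x_{i-1})\in\mathcal{O}(x_{i-1})\subseteq\mathcal{O}(x)$, which again has finite orbit. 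Hence $\beta=\gamma_1+\dots+\gamma_k$ is a finite sum of finite ordinals, so $\beta<\omega$, contradicting $\beta\ge\omega$.

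The main obstacle is the unfolding in the third paragraph: one must argue carefully that $\{f\}^\beta(x)=y$ genuinely decomposes into a finite chain of \emph{basic} transfinite iterations whose orders sum, in the correct order, to $\beta$ — this is where the recursive structure of Def.~\ref{cycle} and the associativity of ordinal addition are essential. Everything else is routine bookkeeping with Lemma~\ref{lem_exist_iter} and Def.~\ref{transfinite}. (Alternatively, one could phrase the last step via Proposition~\ref{orbit_eq} and Corollary~\ref{cor_lem_group}, noting that any $\{f\}^\beta(x)$ with $[f]^\beta(x)$ undefined must lie in $\mathcal{O}_{[\beta]}(x)\subseteq\mathcal{O}(x)$ and then running a minimal-counterexample argument, but the explicit chain is the cleanest route.)
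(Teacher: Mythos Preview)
Your proof is correct and follows essentially the same strategy as the paper: show that basic iterations of a point with finite $f$-orbit exist only for finite orders, and then argue that the composition rule of Def.~\ref{cycle} cannot produce an order $\ge\omega$ from finitely many finite summands (the paper phrases this last step as ``$\omega$ is additively indecomposable''). The paper's version is more compressed and invokes Proposition~\ref{orbit_eq} rather than explicitly unfolding the chain $x=x_0,\dots,x_k=y$, but the underlying idea is identical; your alternative remark at the end is in fact closer to the paper's actual wording.
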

\begin{proof}
    The induction procedure defining basic transfinite iterations (Def. \ref{transfinite}) ends at step $N$, since $f^{N+1}(x)\in \mathcal{O}(x)$, so, by Proposition \ref{orbit_eq}, $\mathcal{O}_{\{N\}}(x)=\mathcal{O}_{[N]}(x)$ is a finite set. Since $\omega$ is additively indecomposable, all the transfinite iterations of $x$ have finite order, as the sums appearing in \eqref{composition2} can only reach finite ordinals.  
\end{proof}

\vspace{0.2cm}

\emph{From now on, unless specified otherwise, by transfinite iterations we will always mean the ones defined in Def. \ref{cycle}}.

\begin{defi}[\textbf{Transfinite Dynamical Systems}]
\label{defi_tds}
A pair $(X,\{f_n\}_{n\in\mathbb{N}})$, with $X$ a compact metric space and $\{f_n\}_{n\in\mathbb{N}}$ a sequence of self-maps of $X$ finitely converging to a continuous limit map $f$, is called 
a \emph{transfinite dynamical system} (TDS).
If the maps $f_n$ are continuous for every $n\in\mathbb{N}$, the pair $(X,\{f_n\}_{n\in\mathbb{N}})$ is called a \emph{ sequentially continuous transfinite dynamical system}.  

For $x\in X$, the set $\{\mathcal{O}\}(x)$ defined in Eq. \eqref{def_orbits_2} is called the \emph{transfinite orbit} of $x$, and 
\begin{equation}\label{def_traject}
\beta \mapsto \{f\}^\beta(x)\in \{\mathcal{O}\}(x)
\end{equation}
is called the \emph{transfinite trajectory} of $x$.

We say that the system $(X,\{f_n\}_{n\in\mathbb{N}})$ is \emph{finite} if there is $N$ such that $f_n\equiv f$ for all $n\ge N$.
\end{defi}

We will usually be a bit less precise and denote the transfinite system more compactly by $(X,\{f\})$.
If $(X,\{f\})$ is finite, then, for every $x$, we have $\mathcal{O}_n(x)=\mathcal{O}(x)=\mathcal{O}_\infty(x)$ for large enough $n$. Therefore, no iterations of order $\ge \omega$ can exist, and of course $\{f\}^k(x)=f^k(x)$ for every $x\in X$ and every $k\in\mathbb{N}$, so that we can identify the system $(X,\{f\})$ with the (ordinary) dynamical system $(X,f)$: everything is determined by the limit map $f$ alone, there are no emergent transfinite dynamical phenomena. 
For this reason, in the following, when we want to consider the particular case of an ordinary dynamical system, we may refer to it as a \textit{finite} system.

\begin{rem} \label{_subs_}
TDSs are in general not stable up to subsequences, in the sense that the transfinite orbits of the TDS $(X,\{f_n\}_{n\in\mathbb{N}})$ may not coincide with the ones of $(X,\{f_{n_j}\}_{j\in\mathbb{N}})$. An example of this phenomenon will be shown later (see Example \ref{noAkin_}, and in particular Remark \ref{subs___}).
\end{rem}

\begin{defi}
    For a countable ordinal $\lambda$, we say that the transfinite orbit $\{\mathcal{O}\}(x)$ of a point $x\in X$ is $\lambda$-\emph{open} if 
    $$\{f\}^\alpha(x)\ne \{f\}^\beta(x)\ \text{for every } 0\le \alpha< \beta <\lambda\text{ such that }\{f\}^\alpha(x) \ne\emptyset\text{ and } \{f\}^\beta(x)\ne\emptyset.$$
    We say that $\{\mathcal{O}\}(x)$ is an \emph{open orbit} if it is $\lambda$-open for every $\lambda<\omega_1$. 
\end{defi}
By Theorem \ref{lem_group}, $[f]^\beta(x)=\{f\}^\beta(x)$ for every $\beta<\omega_1$ if $\{\mathcal{O}\}(x)$ is an open orbit.

A consequence of Def. \ref{cycle}, as already said, is the onset of transfinite cycles, a natural concept in that they are, with respect to the sequence $\{f_n\}_{n\in\mathbb{N}}$, the limit of finite cycles of increasing order. This is formalized in the following Def. \ref{cycles} and Theorem \ref{C_n}. The proof of this result requires some intermediate steps, provided in Lemma \ref{nbound_seq} and Corollary \ref{cor_nbound_seq}. 

\begin{defi} [\textbf{Transfinite cycles}]
\label{cycles}
If there is a point $x\in X$ and an ordinal $\beta$ such that $\beta=\min\{\eta :\{f\}^\eta(x)=x,\  \omega\le\eta<\omega_1\}$, we say that $\{ \{f\}^\eta (x): 1\le\eta<\beta \}$ is a \textit{transfinite cycle} of $x$-order $\beta$. We say that a transfinite cycle $C$ has order $\beta$ if 
$\beta=\min\{\eta:\exists\, x\in C: C$ has $x$-order $\eta\}$.
\end{defi}
In case of a finite dynamical system, the order of every point in a cycle is the same, so that one can talk indifferently about the order of a point or the order of a cycle. This difference between the finite and the transfinite case can be seen as a consequence of the fact that addition is commutative below $\omega$ and generally not commutative above $\omega$: if $f^h(x)=y$, $f^k(y)=x$, then $f^{h+k}(x)=x$ and $f^{k+h}(y)=y$, where of course $h+k=k+h$ assuming $h,k<\omega$.
\begin{defi}\label{def_kn}
    Let $\beta<\omega_1$ be such that $[f]^\beta(x)$ is defined. Let us define a map $N^\beta: X\rightarrow \mathbb{N}$ where $N^\beta(x)$ for $x\in X$ is the least $n\in\mathbb{N}$ such that: 
    \begin{enumerate}
        \item $[f]^\beta(x)\in \mathcal{O}_m(x)$ for all $m\ge n$;
        \item for every $m\ge n$, we have that
        $$u<_m [f]^\beta(x)<_m w,$$ 
        for all $u\in\mathcal{O}_{[\beta]}(x)\cap \mathcal{O}_m(x),$ and for all $w\in(\mathcal{O}_\infty(x)\cap \mathcal{O}_m(x))\setminus\left(\mathcal{O}_{[\beta]}(x)\cup\{[f]^\beta(x)\}\right)$.
        
    \end{enumerate}
    
    Therefore, starting from $n=N^\beta(x)$, the point $[f]^\beta(x)$ appears in the correct ``place" in all the orbits $\mathcal{O}_n(x)$. We will write simply $N^\beta$ when the point $x$ with respect to which the map is applied is clear from the context. 
    
    For $x\in X$, let the map $i^\beta_x:\{n\in\mathbb{N} : n>N^\beta(x)\}\longrightarrow \mathbb{N}$ be defined by 
    \[
    i^\beta_x(n):=\min \{k\in\mathbb{N} : f_n^k(x)=[f]^\beta(x)\}.
    \]
    Let us define the sequence $\{k^{\beta,x}_n\}_{n>N^\beta(x)}$ where $k^{\beta,x}_n:=i^\beta_x(n)$. From now on, we will simply write $\{k^\beta_n\}_{n>N^\beta}$ when the point $x$ is clear from the context.
\end{defi}

\begin{lem}\label{nbound_seq}
If $[f]^\beta(x)$ is defined for some $\beta\ge \omega$, then the sequence $\{k^\beta_n\}_{n>N^\beta}$ is unbounded.
\end{lem}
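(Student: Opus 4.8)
The plan is to argue by contradiction: suppose $\{k^\beta_n\}_{n>N^\beta}$ is bounded, say $k^\beta_n\le M$ for all $n>N^\beta$ with $M\in\mathbb N$. The idea is that boundedness forces $y:=[f]^\beta(x)$ to be reached from $x$ in a uniformly bounded number of steps along every approximating orbit $\mathcal O_n(x)$; finite convergence then identifies $y$ with a \emph{finite} iterate $f^k(x)$, which lies in $\mathcal O_{[\beta]}(x)$ because $\beta\ge\omega$, contradicting Lemma~\ref{nec_cond_exist} (which says $[f]^\beta(x)\notin\mathcal O_{[\beta]}(x)$).

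First I would record a preliminary fact: since $[f]^\beta(x)$ is defined with $\beta\ge\omega$, every basic iteration $[f]^j(x)$, $j\in\mathbb N$, is defined; by Lemma~\ref{nec_cond_exist} these points are pairwise distinct, so a straightforward induction using Lemma~\ref{lem_exist_iter}, starting from $[f]^1(x)=f(x)$, gives $[f]^j(x)=f^j(x)$ for every $j\in\mathbb N$, whence $f^j(x)\in\mathcal O_{[\beta]}(x)$ for all $j$. Next, applying finite convergence at the finitely many points $x,f(x),\dots,f^{M-1}(x)$, I would obtain $N_1\in\mathbb N$ with $f_n^j(x)=f^j(x)$ for all $j\le M$ and all $n>N_1$ (an easy induction on $j$). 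Then for any fixed $n>\max\{N^\beta,N_1\}$, the definition of $k^\beta_n$ yields $y=f_n^{k^\beta_n}(x)=f^{k^\beta_n}(x)=[f]^{k^\beta_n}(x)$ with $k^\beta_n\le M<\omega\le\beta$, so $y\in\mathcal O_{[\beta]}(x)$ --- the desired contradiction. Hence $\{k^\beta_n\}_{n>N^\beta}$ is unbounded.

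I do not anticipate a serious obstacle here; the only step needing a little care is the preliminary one, namely checking that the finite part of the transfinite trajectory really is the ordinary orbit $\{f^j(x):j\in\mathbb N\}$ and has not collapsed into a periodic loop. This is precisely where the hypothesis $\beta\ge\omega$ is used (compare Proposition~\ref{periodic_finite}): the existence of a transfinite iteration rules out (pre-)periodicity of $x$ under $f$, and Lemma~\ref{nec_cond_exist} packages exactly the distinctness of the finite iterates that makes the final identification $f^{k^\beta_n}(x)=[f]^{k^\beta_n}(x)$ legitimate.
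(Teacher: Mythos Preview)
Your proof is correct and follows essentially the same contradiction strategy as the paper: boundedness of $k^\beta_n$ together with finite convergence forces $y=[f]^\beta(x)$ to equal a finite iterate $f^k(x)=[f]^k(x)$, contradicting Lemma~\ref{nec_cond_exist}. Your version is in fact a bit more careful than the paper's, which tacitly passes from ``bounded'' to ``equal to a fixed $k$'' and leaves implicit the identification $[f]^j(x)=f^j(x)$ for $j<\omega$; your preliminary step makes this explicit and your use of a uniform bound $M$ (rather than a pigeonhole extraction) is a clean way to handle it.
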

\begin{proof}
Set $y:=[f]^\beta(x)$ and assume that $\{k^\beta_n\}_{n>N^\beta}$ is bounded, that is there exists $N_1>N^\beta$ such that for all $n>N_1$, $f_n^{k^\beta_n}(x)=f_n^k(x)=y$ for some $k\in\mathbb{N}$. Since $f_n \dot{\longrightarrow} f$, there exists $N_2>N_1$ such that $f_n^k(x)=f^k(x)=y$ for all $n>N_2$. Then $y\in \mathcal{O}(x)$ and $[f]^k(x)=y$.
\end{proof}

\begin{cor}\label{cor_nbound_seq}
Let $\omega\le \alpha<\beta<\omega_1$ be limit ordinals such that $[f]^\alpha(x)$ and $[f]^\beta(x)$ exist. Then there exists $N>0$ such that $\{h_n\}_{n>N}$, where $h_n=k^{\beta,x}_n-k^{\alpha,x}_n$, is positive and unbounded.
\end{cor}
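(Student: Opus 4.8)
The plan is to handle positivity and unboundedness of $\{h_n\}_{n>N}$ separately. For positivity I would take $N := \max\{N^\alpha(x), N^\beta(x)\}$, so that $k^{\alpha,x}_n$ and $k^{\beta,x}_n$ are both defined for $n > N$. Since $\alpha < \beta$ and $[f]^\alpha(x)$ exists, the point $[f]^\alpha(x)$ belongs to $\mathcal{O}_{[\beta]}(x)$; combining item~2 of Definition~\ref{def_kn} (valid for $n > N^\beta(x)$) with the membership $[f]^\alpha(x) \in \mathcal{O}_n(x)$ (valid for $n > N^\alpha(x)$) yields $[f]^\alpha(x) <_n [f]^\beta(x)$ for every $n > N$, which by definition of $<_n$ means exactly $k^{\alpha,x}_n < k^{\beta,x}_n$, i.e. $h_n \ge 1$.

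For unboundedness I would argue by contradiction, assuming $h_n \le M$ for all $n > N$ and some $M \in \mathbb{N}$. Write $y := [f]^\alpha(x) = \{f\}^\alpha(x)$. For $n > N$ we have $f_n^{k^{\alpha,x}_n}(x) = y$ and $f_n^{k^{\beta,x}_n}(x) = [f]^\beta(x)$, and since $k^{\beta,x}_n = k^{\alpha,x}_n + h_n$ this gives $f_n^{h_n}(y) = [f]^\beta(x)$ with $h_n \in \{1,\dots,M\}$. By the pigeonhole principle some fixed $h^* \in \{1,\dots,M\}$ satisfies $h_n = h^*$ for infinitely many $n$; along those indices, the finite convergence of $\{f_n\}_{n\in\mathbb{N}}$, applied simultaneously to the finitely many points $y, f(y), \dots, f^{h^*-1}(y)$, gives $f_n^{h^*}(y) = f^{h^*}(y)$ for $n$ large enough, and since also $f_n^{h^*}(y) = [f]^\beta(x)$ for such $n$, we conclude $f^{h^*}(y) = [f]^\beta(x)$.

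It remains to derive a contradiction. Iterating $h^*$ times the one-step rule $\{f\}^1(\cdot) = f(\cdot)$ and using the composition rules of Definition~\ref{cycle} starting from $\{f\}^\alpha(x) = y$, one obtains $\{f\}^{\alpha+h^*}(x) = f^{h^*}(y) = [f]^\beta(x)$. But $\beta$ is a limit ordinal with $\alpha < \beta$, so $\alpha + h^* < \beta$, whence $[f]^\beta(x) = \{f\}^{\alpha+h^*}(x) \in \mathcal{O}_{\{\beta\}}(x)$, and $\mathcal{O}_{\{\beta\}}(x) = \mathcal{O}_{[\beta]}(x)$ by Proposition~\ref{orbit_eq}, so $[f]^\beta(x) \in \mathcal{O}_{[\beta]}(x)$, contradicting Lemma~\ref{nec_cond_exist}; this shows $\{h_n\}_{n>N}$ is unbounded. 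The delicate point is precisely the passage to the fixed‑exponent identity $f^{h^*}(y) = [f]^\beta(x)$: one must first extract a subsequence on which $h_n$ is constant and only then invoke finite convergence, applying it to the whole orbit segment $y, f(y), \dots, f^{h^*-1}(y)$ — each point carrying its own threshold — in order to produce a single $n$ past which $f_n^{h^*}(y) = f^{h^*}(y)$.
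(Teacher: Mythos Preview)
Your proof is correct. Both positivity (via item~2 of Definition~\ref{def_kn}) and unboundedness (via the pigeonhole/finite-convergence contradiction) are sound; the final step could even be shortened slightly, since the existence of $[f]^\beta(x)$ forces $[f]^{\alpha+h^*}(x)$ to exist and to equal $f^{h^*}(y)$, so you land directly in $\mathcal{O}_{[\beta]}(x)$ without passing through Proposition~\ref{orbit_eq}.

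The paper takes a different and shorter route. It writes $\beta=\alpha+\eta$ with $\eta\ge\omega$ (here is where the hypothesis that both $\alpha$ and $\beta$ are limit ordinals is used), invokes Lemma~\ref{lem_group} to get $[f]^\beta(x)=[f]^\eta(y)$ with $y=[f]^\alpha(x)$, and then observes that $h_n=k^{\beta,x}_n-k^{\alpha,x}_n$ coincides with $k^{\eta,y}_n$, whose unboundedness is exactly Lemma~\ref{nbound_seq}. So the paper reduces the corollary to the lemma it just proved, whereas you essentially reprove that lemma in situ. Your argument is more self-contained and avoids Lemma~\ref{lem_group}; the paper's argument is cleaner and highlights that $h_n$ is itself a sequence of the type governed by Lemma~\ref{nbound_seq}, which is conceptually the ``right'' explanation.
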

\begin{proof}
    Take $\eta<\omega_1$ such that $\alpha+\eta=\beta$. Since $\alpha,\beta$ are limit ordinals, $\eta\ge \omega$. By Theorem \ref{lem_group}, we have  $[f]^\beta(x)=[f]^{\eta}([f]^\alpha(x))$. Set $y:=[f]^\alpha(x)$. By Lemma \ref{nbound_seq}, the sequence $\{k^{\eta,y}_n\}_{n>N^\eta(y)}$ is unbounded and we conclude by observing that $k^{\beta,x}_n=k^{\eta,y}_n+k^{\alpha,x}_n$ for $n>N^\beta(x)$.
\end{proof}

\begin{thm}
\label{C_n}
Let $(X,\{f\})$ be a TDS. Then every transfinite cycle in $(X,\{f\})$ is the limit inferior of finite cycles of increasing order. 
More precisely, if for $x\in X$ there exists a cycle $$C=\{\{f\}^\eta(x): 1\le\eta \le\beta\}$$ of $x$-order $\beta\ge \omega$, then, for every $n>N^\beta$, $x$ belongs to a cycle $C_n= \{f_n(x),f_n^2(x),\ldots,f_n^{k^\beta_n-1}(x),x\}$ of order $k^\beta_n$, that is $f_n^{k^\beta_n}(x)=x$. Moreover, we have: 
\begin{equation}\label{cycliminf}
C=\liminf_{n\to\infty}C_n.
\end{equation}
\end{thm}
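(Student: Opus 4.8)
The plan is to proceed in three stages: first reconcile the two notions of transfinite iteration so that $N^\beta$ and $\{k^\beta_n\}$ (Definition~\ref{def_kn}) actually apply to the pair $(x,\beta)$; then read the finite cycles $C_n$ off the definition of $k^\beta_n$; and finally prove the two set inclusions in \eqref{cycliminf}. For the first stage I would argue that $[f]^\beta(x)$ is defined and equals $x$: since $\beta\ge\omega$ and $\{f\}^\beta(x)=x$, the point $x$ is not periodic or pre-periodic for $f$ (otherwise Proposition~\ref{periodic_finite} would forbid any iteration of order $\ge\omega$), so $\{f\}^\alpha(x)\ne x$ for $1\le\alpha<\omega$, and minimality of $\beta$ gives $\{f\}^\alpha(x)\ne x$ for $\omega\le\alpha<\beta$; hence $x\notin\mathcal{O}_{\{\beta\}}(x)=\mathcal{O}_{[\beta]}(x)$ by Proposition~\ref{orbit_eq}. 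If $[f]^\beta(x)$ were undefined, Corollary~\ref{cor_lem_group} would force $x=\{f\}^\beta(x)\in\mathcal{O}_{[\beta]}(x)$, a contradiction. So $[f]^\beta(x)=\{f\}^\beta(x)=x$; in particular, by Definition~\ref{transfinite}, $x$ is the $<_\infty$-least element of $\mathcal{O}_\infty(x)\setminus\mathcal{O}_{[\beta]}(x)$, and $C=\mathcal{O}_{[\beta]}(x)\cup\{x\}$ (a disjoint union). The general statement about arbitrary transfinite cycles then follows by choosing $x\in C$ realizing the order of $C$.

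For the finite cycles: by Definition~\ref{def_kn}, for $n>N^\beta$ one has $k^\beta_n=\min\{k\in\mathbb{N}:f_n^k(x)=[f]^\beta(x)\}=\min\{k\in\mathbb{N}:f_n^k(x)=x\}$, so $f_n^{k^\beta_n}(x)=x$ and $k^\beta_n$ is the minimal $f_n$-period of $x$; thus $C_n=\{f_n(x),\dots,f_n^{k^\beta_n-1}(x),x\}$ is indeed a cycle of order $k^\beta_n$, and moreover $C_n=\mathcal{O}_n(x)$ as a set, since $f_n^{k^\beta_n+j}(x)=f_n^j(x)$ for all $j\ge 0$. That these orders grow without bound (``of increasing order'') is exactly Lemma~\ref{nbound_seq}.

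For \eqref{cycliminf}, recall $\liminf_{n\to\infty}C_n=\bigcup_m\bigcap_{j\ge m}C_j$, i.e.\ the points lying in all but finitely many $C_n$. For $C\subseteq\liminf_n C_n$: if $y=x$, then $y=f^{k^\beta_n}_n(x)\in C_n$ for every $n>N^\beta$; if $y\in\mathcal{O}_{[\beta]}(x)$, then $y=[f]^\alpha(x)$ for some $1\le\alpha<\beta$, and since this basic iteration exists we have $y\in\mathcal{O}_\infty(x)$ (for $\alpha=1$ by finite convergence, $f_n(x)=f(x)$ eventually; for $\alpha>1$ this is built into Definition~\ref{transfinite}), hence $y\in\mathcal{O}_n(x)=C_n$ for all large $n$; either way $y$ lies in all but finitely many $C_n$. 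For $\liminf_n C_n\subseteq C$: if $y\in C_n=\mathcal{O}_n(x)$ for all $n\ge n_0$, then $y\in\mathcal{O}_\infty(x)$; suppose $y\notin\mathcal{O}_{[\beta]}(x)$, so that by the least-element property of $x=[f]^\beta(x)$ either $y=x$ or $x<_\infty y$. In the latter case $x<_n y$ for all large $n>N^\beta$; but $\mathcal{O}_n(x)$ is then the $k^\beta_n$-cycle of $x$, so every point of $\mathcal{O}_n(x)$ distinct from $x$ equals $f^j_n(x)$ for some $1\le j\le k^\beta_n-1$ and hence precedes $x$ in $<_n$ — contradicting antisymmetry of $<_n$. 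Therefore $y=x\in C$, and otherwise $y\in\mathcal{O}_{[\beta]}(x)\subseteq C$.

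The step I expect to be most delicate is the inclusion $\liminf_n C_n\subseteq C$: it rests on combining the least-element characterization of $[f]^\beta(x)=x$ with the observation that, beyond $N^\beta$, each orbit $\mathcal{O}_n(x)$ is a finite cycle of length exactly $k^\beta_n$ in which $x$ occupies the final position, so no point of $\mathcal{O}_\infty(x)$ can sit strictly after $x$ and thereby persist in all the $C_n$ without already belonging to $C$. The preliminary identification $[f]^\beta(x)=\{f\}^\beta(x)=x$ is also essential rather than cosmetic, since $N^\beta$ and $k^\beta_n$ are only defined for basic iterations.
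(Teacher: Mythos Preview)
Your proof is correct and follows essentially the same route as the paper's: identify $[f]^\beta(x)=x$, read off the finite cycles $C_n=\mathcal{O}_n(x)$ from Definition~\ref{def_kn} and Lemma~\ref{nbound_seq}, and verify the two inclusions in \eqref{cycliminf} via the least-element characterization of $[f]^\beta(x)$. The paper's argument is structurally identical but much terser; in particular it simply asserts ``$\{f\}^\beta(x)=[f]^\beta(x)=x$'' and, for the inclusion $\liminf_n C_n\subseteq C$, writes only ``then $y\in\mathcal{O}_{\{\beta\}}(x)$'' without spelling out the contradiction with $<_n$ that you make explicit. Your preliminary step deriving $[f]^\beta(x)=x$ from Proposition~\ref{periodic_finite}, Proposition~\ref{orbit_eq} and Corollary~\ref{cor_lem_group} is a genuine clarification the paper omits, and is indeed needed for $N^\beta$ and $k^\beta_n$ to make sense.
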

\begin{proof}
By Lemma \ref{nbound_seq}, the sequence $\{k^\beta_n\}_{n>N^\beta}$ is unbounded. Now observe that $\beta$ is the smallest ordinal such that $\{f\}^\beta(x)=[f]^\beta(x)=x$, so it follows that $C_n= \{f_n(x),f_n^2(x),\ldots,f_n^{k^\beta_n-1}(x),x\}$ is a cycle of least order $k^\beta_n$ for all $n>N^\beta$. 

To prove \eqref{cycliminf}, notice first that $x\in C$ and  $x\in C_n$ for all sufficiently large $n$. If $y\in C$ and $y\neq x$, then $y=\{f\}^\eta(x)$ for some $\eta<\beta$. Hence there exists $N>N^\beta$ such that $y<_n x$ for all $n>N$. This means that there exists a sequence $\{h_n\}_{n\in\mathbb{N}}$ of natural numbers such that $f_n^{h_n}(x)=y$ and $h_n<k^\beta_n$ for all $n>N$. Then $y\in C_n$ for all $n>N$, that is $y\in \liminf_{n\to\infty}C_n$. On the other hand, if $y\in \liminf_{n\to\infty}C_n$ and $y\neq x$, then $y\in \mathcal{O}_{\{\beta\}}(x)$, which implies $y\in C$.
\end{proof}

\begin{defi}[\textbf{Ordinal degree}]\label{degree_}
We indicate by $D(x)$ the ordinal degree of $x\in X$, that is the largest ordinal $\beta\le \omega_1$ such that $x$ has transfinite iterations for every smaller ordinal:
\begin{equation}
    D(x)=\beta \iff \{f\}^\eta(x)\ne\emptyset\ \text{for every}\ 0<\eta<\beta.
\end{equation}
We further define the ordinal degree $\mathfrak{D}$ of the whole system as:
\begin{equation}
    \mathfrak{D}(X,\{f\}):=\sup_{x\in X} D(x)
\end{equation}
Since $\vert \textswab{O}(x)\vert \le \aleph_0$, it is clear that $D(x)$ is always less than $\omega_1$.  Moreover, since for every $\beta<\omega_1$ such that $\{f\}^\beta(x)\ne\emptyset$ we have $\{f\}^{\beta+1}(x)=f(\{f\}^\beta(x))\ne\emptyset$, it follows that $D(x)$ is always a limit ordinal.
Notice that the limit ordinal $D(x)$ is the domain of the trajectory map defined in Eq. \eqref{def_traject}, and that $X^{\lambda}\neq \emptyset \implies \mathfrak{D}(X,\{f\}) \ge \lambda$.   
\end{defi}
Finally let us point out that, for every $x\in X$ and every $\beta$ such that $\omega\le \beta \le D(x)$, we have:
\begin{equation}
\label{hierarchy}
    \mathcal{O}(x)\subseteq \mathcal{O}_{\{\beta\}}(x)\subseteq \{\mathcal{O}\}(x)\subseteq \mathcal{O}_\infty(x)\subseteq \textswab{O}(x). 
\end{equation}
In case of a finite dynamical system, we have $\beta=\omega$ and the  inclusions become all equalities.

\vspace{0.3cm}

We remark that only in very special cases can the family of transfinite iterations be organized as a semigroup (or monoid, upon adjoining the identity) of self-maps of~$X$. 
Indeed, this requires that the entire space is $\omega^{\beta}$-saturated for some countable ordinal~$\beta$, which is a very strong condition. 
Even then, the induced action of the additive monoid~$\omega^{\beta}$ on~$X$ is in general neither commutative nor continuous, even if all the maps~$f_n$ and the limit map~$f$ are continuous. 
For spaces homeomorphic to, or retracts of, closed Euclidean balls, one always has $X\setminus X^{\omega+1}\neq\emptyset$, 
since fixed points of continuous maps have ordinal degree~$\omega$ by Proposition~\ref{periodic_finite}. On the other hand, if~$f$ has no periodic points, one can construct examples in which every $x\in X$ satisfies $\omega<D(x)$ and even $D(x)=\mathfrak{D}(X,\{f\})$ for all~$x$ (see Example~\ref{irrational__}).

Our focus is on 
how dynamical behavior unfolds across ordinal scales and how attractors or recurrence phenomena arise at distinct ordinal levels. 
The transfinite level of interest is dictated by the sequential phenomenon under study and by the associated hierarchy of time scales. 
The recurrence notions developed for semigroup actions (e.g. in \cite{ellis2001topological,lalwani2022attractors}) are therefore not directly applicable in this setting.

\section{Transfinite orbits}
\label{sec4}
Up to now, from a finitely convergent sequence $\{f_n\}_{n\in\mathbb{N}}$ of self-maps of the compact metric space $X$, with continuous pointwise limit $f$, we defined the transfinite dynamical system $\left(X, \{f\}\right)$.
In this Section, we start looking at different types of possible transfinite orbits through some examples.  
All the examples in this section are set in the unit interval $\mathbb{I}=[0,1]$. Later on we will see examples in which the domain is a different compact space.

\begin{exa}\label{exa1}
    We provide a first, very simple example, showing the existence of transfinite iterations and, more specifically, of a transfinite cycle of order $\omega$. Assume $0<c<1$. Let us define the map $f:\mathbb{I}\circlearrowleft$ as 
    \begin{equation}
    \label{ex1_f}
    f(x)=\begin{cases}
        m x &\quad \text{if } x\in [0,c]\\
        1+\frac{1-m c}{1-c}(x-1) &\quad \text{if } x\in (c,1]
    \end{cases}
    \end{equation}
    where $0<m<1$. We point out that the function was defined so that $1$ is a fixed point because this property will be useful later in this section (in the proof of Theorem \ref{th_existence}). Take $z\in \mathbb{I}$ and let $U=(a,b)\subset [0,1]$ be an interval such that $z\in U$ and $m b<a$.  For any $n\in\mathbb{N}$ we set $z_n:=f^n (z) $. Let $\{a_n\}_{n\in\mathbb{N}}$ and $\{b_n\}_{n\in\mathbb{N}}$ be two sequences of real numbers that converge to $z$ such that 

\begin{itemize}
\item $\{a_n\}_{n\in\mathbb{N}}$ is increasing and $a_n\in (a,z)$ for all $n\in\mathbb{N}$;
\item $\{b_n\}_{n\in\mathbb{N}}$ is decreasing and $b_n\in (z,b)$ for all $n\in\mathbb{N}$.
\end{itemize}
For every $n\in\mathbb{N}$, we set $u_n=f^n (a_n)$, $v_n=f^n (b_n)$ and we indicate by $U_n=[u_n,v_n]$. Notice that $z_n\in U_n$ for all $n\in\mathbb{N}$. Pick $h\in (0,1)$ and we define the sequence of functions $f_n :\mathbb{I}\rightarrow \mathbb{I}$ as follows (see Fig.\ref{fig_ex_D}):
\begin{equation}
\label{ex1_fn}
f_n(x)=
\begin{cases}
f(x) &\quad \text{if } x\in \mathbb{I}\setminus U_n\\
 l_n(x-z_n)+h &\quad \text{if } x\in [u_n,z_n)\\
r_n(x-z_n)+h &\quad \text{if } x\in [z_n,v_n]\\
\end{cases}
\end{equation}
where
$$l_n=\frac{h- m u_n}{z_n-u_n},\qquad r_n=\frac{h- m v_n}{z_n-v_n}.$$
The maps $f_n$ are continuous for every $n\in \mathbb{N}$ and we have $f_n \dot{\longrightarrow} f$ as $n$ goes to $\infty$, with the limit map $f$ continuous as well. If $h=z$, for every $n\in\mathbb{N}$,
\begin{equation}
\label{ex_On}
    \mathcal{O}_n(z)=\{f(z),\ldots,f^n(z),z\}.
\end{equation}

Thus $z$ is an $(n+1)$-periodic point for every $f_n$. From \eqref{ex_On},
$\mathcal{O}_\infty(z)=\mathcal{O}(z)\cup \{z\} \ \text{and}\ \{f\}^\omega(z)=z.$

By Definition \ref{cycle} (with $x,y=z$ and $\beta,\eta=\omega$),  we have $ \{ f \}^{\omega \cdot k} (z) = z 
$ for all $k\in\mathbb{N}_0$. 
Therefore, $\{ \{f\}^\eta(z) : 1 \le \eta<\omega \}$ is a transfinite cycle of $(\{f\}^k(z))$-order $\omega+k$ for every $k\in\mathbb{N}_0$, and thus of order $\omega$. 
In general, for $h\in(0,1)$, we have $\{f\}^\omega(z)=h$ and $\{f\}^\omega(x)=\emptyset$ for any $x\in \mathbb{I}\setminus\mathcal{O}^\mathbb{Z}(z)$. Indeed, if we set $I^k_n=[f^k(a_n),f^k(b_n)]$, we have $\bigcap_{n=1}^\infty I^0_n=\{z\}$. 
Therefore $\bigcap_{n=1}^\infty \bigcup_{k=0}^\infty I^k_n=\{z\}\cup \mathcal{O}(z).$

If $\{f\}^\omega(x)\neq \emptyset$, then there exists $N\in\mathbb{N}$ such that $\mathcal{O}(x)\cap I_n^n\neq \emptyset$ for all $n>N$. Therefore $x\in\mathcal{O}^\mathbb{Z}(z)$. We remark that, if $h=f^k(z)$ for some $k\in\mathbb{N}$, then $[f]^{k+\omega}(z)=[f]^\omega(z)$ is not defined as a consequence of Theorem \ref{lem_group}, applied with $x=z$, $y=s=h$, $\beta=k$ and $\eta=\omega$, because we have $[f]^k(z)=h$ and $$[f]^\omega(h)=h\in\mathcal{O}_{[k+\omega]}(z)=\mathcal{O}_{\omega}(z)=\mathcal{O}(z).$$

Finally, for $h\in\mathcal{O}^{\mathbb{Z}}(z)$, we have  $D(x)=\omega^2$ if $x\in\mathcal{O}^{\mathbb{Z}}(z)$, while $D(x)=\omega$ for all other points. If instead $h\notin \mathcal{O}^{\mathbb{Z}}(z)$, we have $D(x)=\omega\cdot 2$ for $x\in \mathcal{O}^{\mathbb{Z}}(z)$ and $D(x)=\omega$ for all other points.

\begin{figure}[H]
\centering
\fbox{\includegraphics[width=10cm]{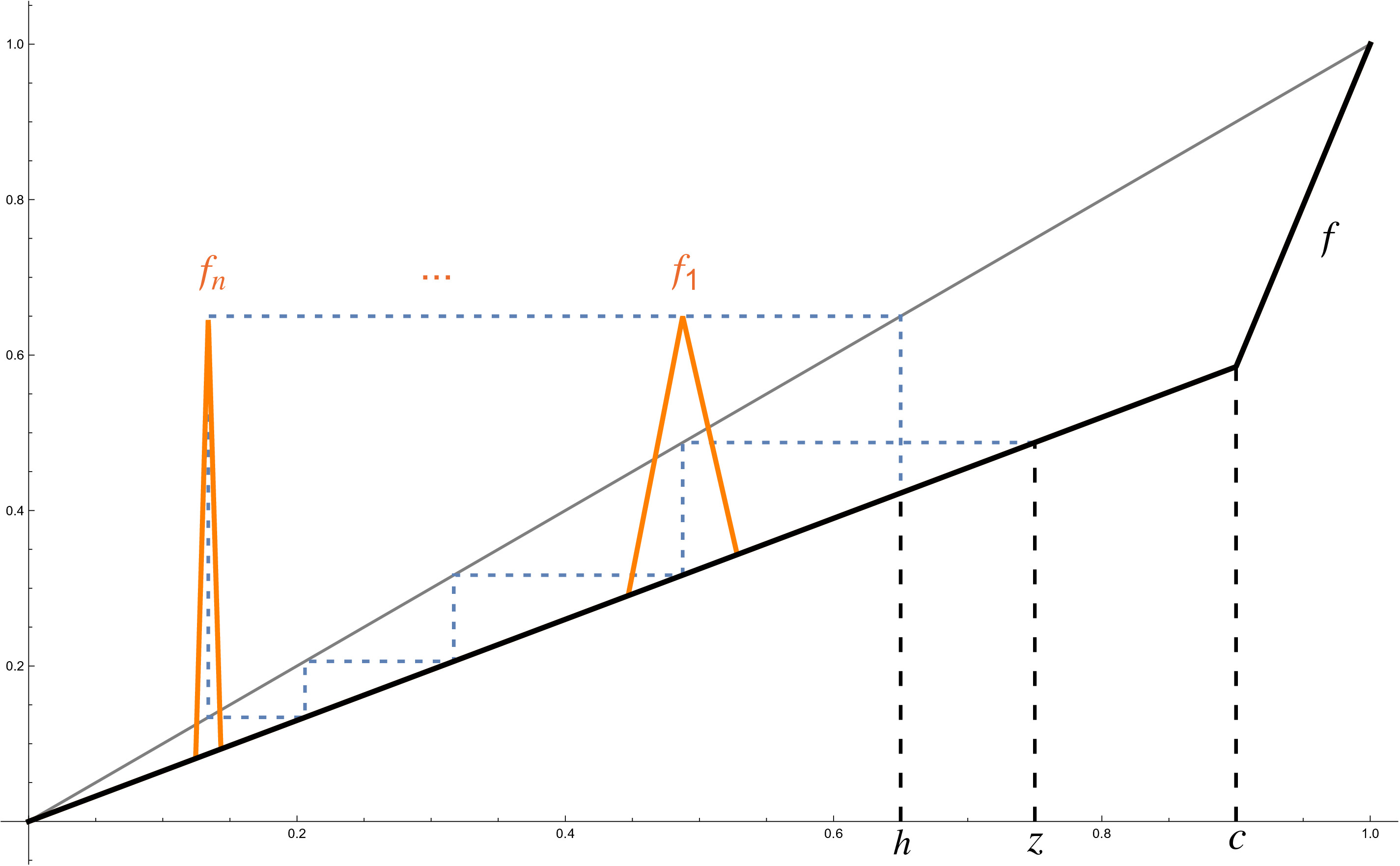}}
\caption{The system defined in Example \ref{exa1}.}\label{fig_ex_D}
\end{figure}
\end{exa}

\begin{exa}\label{exa2}
    We provide an example (see Fig. \ref{fig_cyc_w2}) where a transfinite cycle of order $\omega^2$ emerges.
    Let $\{z_n\}_{n\in\mathbb{N}_0}$ be a sequence of points in $\mathbb{I}$ such that $z_i\in(\frac{1}{2^{i+1}}, \frac{1}{2^{i}})$ for all $i\in\mathbb{N}_0$. 
    Set $0<m<1$. Let $h:\mathbb{I}\circlearrowleft$ be defined as $h(x)=m x$. We define a sequence $\{h_i\}_{i\in\mathbb{N}_0}$ of functions $h_i:[\frac{1}{2^{i+1}},\frac{1}{2^{i}})\rightarrow \mathbb{I}$, given by $h_i(x)= m (x-\frac{1}{2^{i+1}})+\frac{1}{2^{i+1}}$.
    Let $\{\epsilon_n\}_{n\in\mathbb{N}},\{\delta_n\}_{n\in\mathbb{N}}$ be two strictly decreasing sequences converging to $0$ as $n\to\infty$. 
    For every $n\in\mathbb{N}$ and $i=0,\ldots,n-1$ we set
\begin{align*}
&a^i_n=b^i_n-\delta_n \quad,\quad b^i_n=h_i^n (z_i)-\epsilon_n \quad,\quad  c^i_n=h_i^n (z_i)+\epsilon_n \quad,\quad d^i_n=c^i_n+\delta_n\\
&a^n_n=b^n_n-\delta_n \quad,\quad b^n_n=h^n(z_n)-\epsilon_n \ \ \ ,\quad c^n_n=h^n(z_n)+\epsilon_n \ \ ,\quad  d^n_n=c^n_n+\delta_n.\\
\end{align*}

We can choose the sequences $\{\epsilon_n\}_{n\in\mathbb{N}}$ and $\{\delta_n\}_{n\in\mathbb{N}}$ so that, for all $n\in\mathbb{N}$,
$$
\mathcal{O}_{h_i}(z_i)\cap [a^i_n,d^i_n]=h_i^n (z_i)\quad \text{for } i=0,\ldots,n-1$$
and
$$ \mathcal{O}^\mathbb{Z}(z_n)\cap [a^n_n,d^n_n]=h^n(z_n).$$
We can now define a sequence $\{f_n\}_{n \in \mathbb{N}}$ of functions $f_n:\mathbb{I}\rightarrow \mathbb{I}$  as follows:
\begin{equation*}
    f_n(x)=
\begin{cases}
\frac{z_0-h(a^n_n)}{b^n_n-a^n_n}(x-b^n_n)+z_0 &\quad \text{if } x\in [a^n_n,b^n_n)\\
z_0 &\quad \text{if } x\in [b^n_n,c^n_n]\\
\frac{z_0-h(d^n_n)}{c^n_n-d^n_n}(x-c^n_n)+z_0 &\quad \text{if } x\in (c^n_n,d^n_n]\\
h(x) &\quad \text{if } x\in [0,z_n] \setminus [a^n_n,d^n_n] \\
\frac{1}{2^n}+\frac{h(z_n)-\frac{1}{2^n}}{z_n-\frac{1}{2^n}}(x-\frac{1}{2^n}) &\quad \text{if } x\in (z_n,\frac{1}{2^n}]\\
g_i(x) &\quad \text{if } x\in (\frac{1}{2^{i+1}},\frac{1}{2^i}]\end{cases}
\end{equation*}
where, for every $i=0,\ldots,n-1$
\begin{equation*}
   \quad \ \ g_i(x)=
\begin{cases}
l^i_n(x-b^i_n)+z_{i+1} &\quad \text{if } x\in [a^i_n,b^i_n)\\
z_{i+1} &\quad \text{if } x\in [b^i_n,c^i_n]\\
r^i_n(x-c^i_n)+z_{i+1} &\quad \text{if } x\in (c^i_n,d^i_n]\\
h_{i}(x) &\quad \text{if } x\in (\frac{1}{2^{i+1}},z_i] \setminus [a^i_n,d^i_n] \\
\frac{1}{2^i}+\frac{h_{i}(z_i)-\frac{1}{2^i}}{z_i-\frac{1}{2^i}}(x-\frac{1}{2^i}) &\quad \text{if } x\in (z_i,\frac{1}{2^i}]
\end{cases}
\end{equation*}
\begin{equation*}
l^i_n=\frac{z_{i+1}-h_{i}(a^i_n)}{b^i_n-a^i_n},\qquad r^i_n=\frac{z_{i+1}-h_{i}( d^i_n)}{c^i_n-d^i_n}.
\end{equation*}
We define the map $f:\mathbb{I}\circlearrowleft$ as follows:
\begin{equation*}
    f(x)=
\begin{cases}
g_i(x) &\quad \text{if } x\in (\frac{1}{2^{i+1}},\frac{1}{2^i}]\text{ for some $i\in\mathbb{N}_0$}\\
0 &\quad \text{if } x=0
\end{cases}
\end{equation*}

By construction, we have $f_n \dot{\longrightarrow} f$ as $n$ goes to $\infty$. For every $n\in\mathbb{N}$, 
$$\mathcal{O}_n(z_0)=\{f(z_0),\ldots,f^n(z_0),z_1,f(z_1),\ldots,f^n(z_1),z_2,\ldots,z_n,f(z_n),\ldots,f^n(z_n),z_0\},$$ 
so that
$$\mathcal{O}_\infty(z_0)=\{f(z_0),f^2(z_0),\ldots,z_1,f(z_1),f^2(z_1),\ldots,z_2,\ldots,z_n,f(z_n),f^2(z_n),\ldots,z_{n+1},\ldots,z_0\}.$$ 
Hence we have $\{f\}^{\omega\cdot k}(z_0)=z_k$ for all $k\in\mathbb{N}$ and $\{f\}^{\omega^2}(z_0)=z_0$. Therefore, $$\mathcal{C}=\{\{f\}^\eta(z_0): 1\le \eta \le \omega^2\}$$ is a transfinite cycle of $\{f\}^k(z_j)$-order equal to $\omega^2+\omega\cdot j+k$ for every $k\in\mathbb{N}_0$, and therefore of order $\omega^2$. Finally, $D(z_0)=\omega^3$ and, for every $k,j\in\mathbb{N}$, $\mathcal{C}$ is $\omega^3$-saturated but not $(\omega^3+1)$-saturated. 

We remark that, in the examples above, the transfinite cycles are not only the set-theoretic $\liminf$ of finite cycles of increasing order for the maps $f_n$, as Theorem \ref{C_n} says, but they are actually the set-theoretic limit of finite cycles.

\begin{figure}[H] 
\centering
\subfigure[$f_1$]{\fbox{\includegraphics[width=6.9cm]{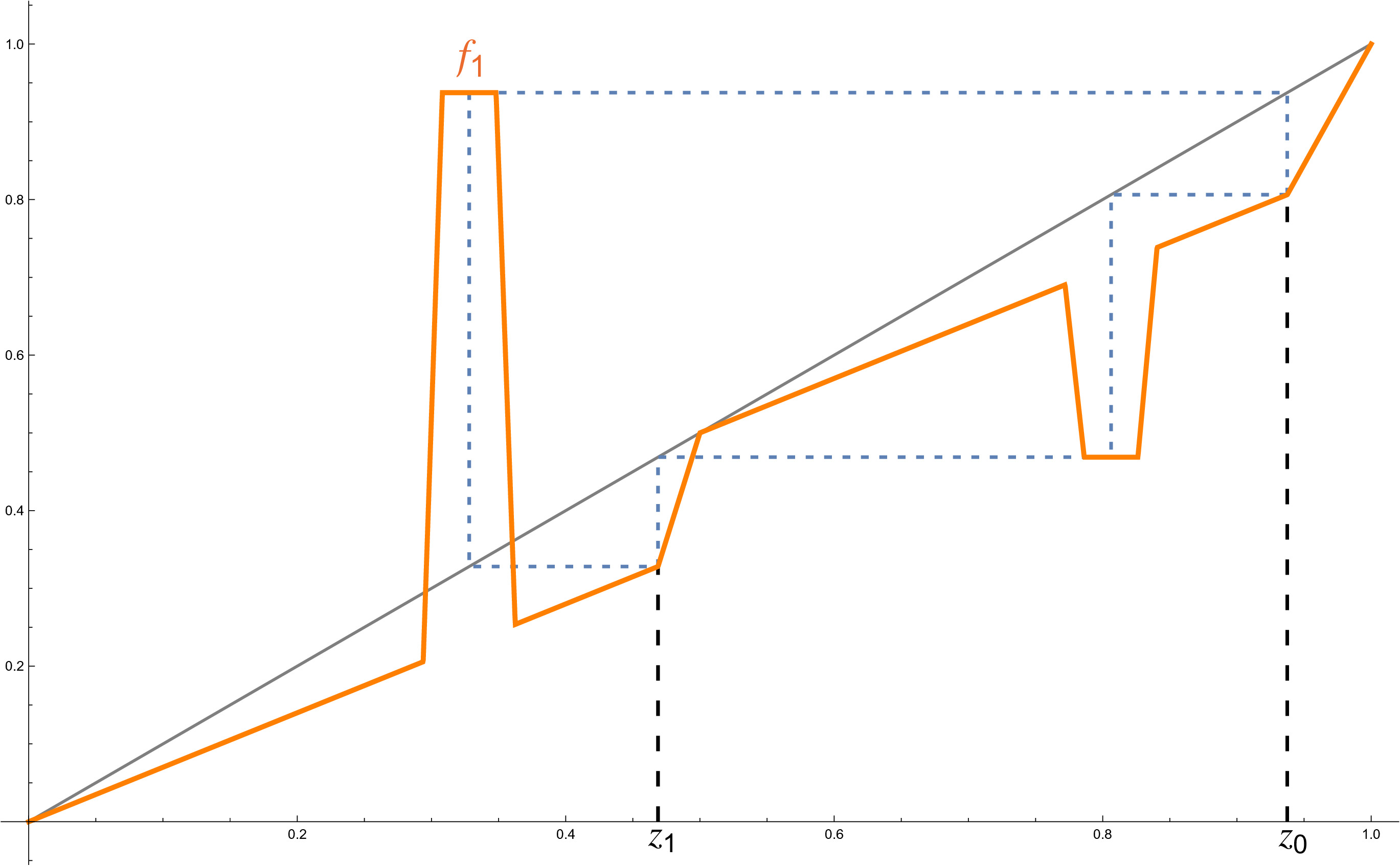}}}
\hspace{0.5mm}
\subfigure[$f_3$]
{\fbox{\includegraphics[width=6.9cm]{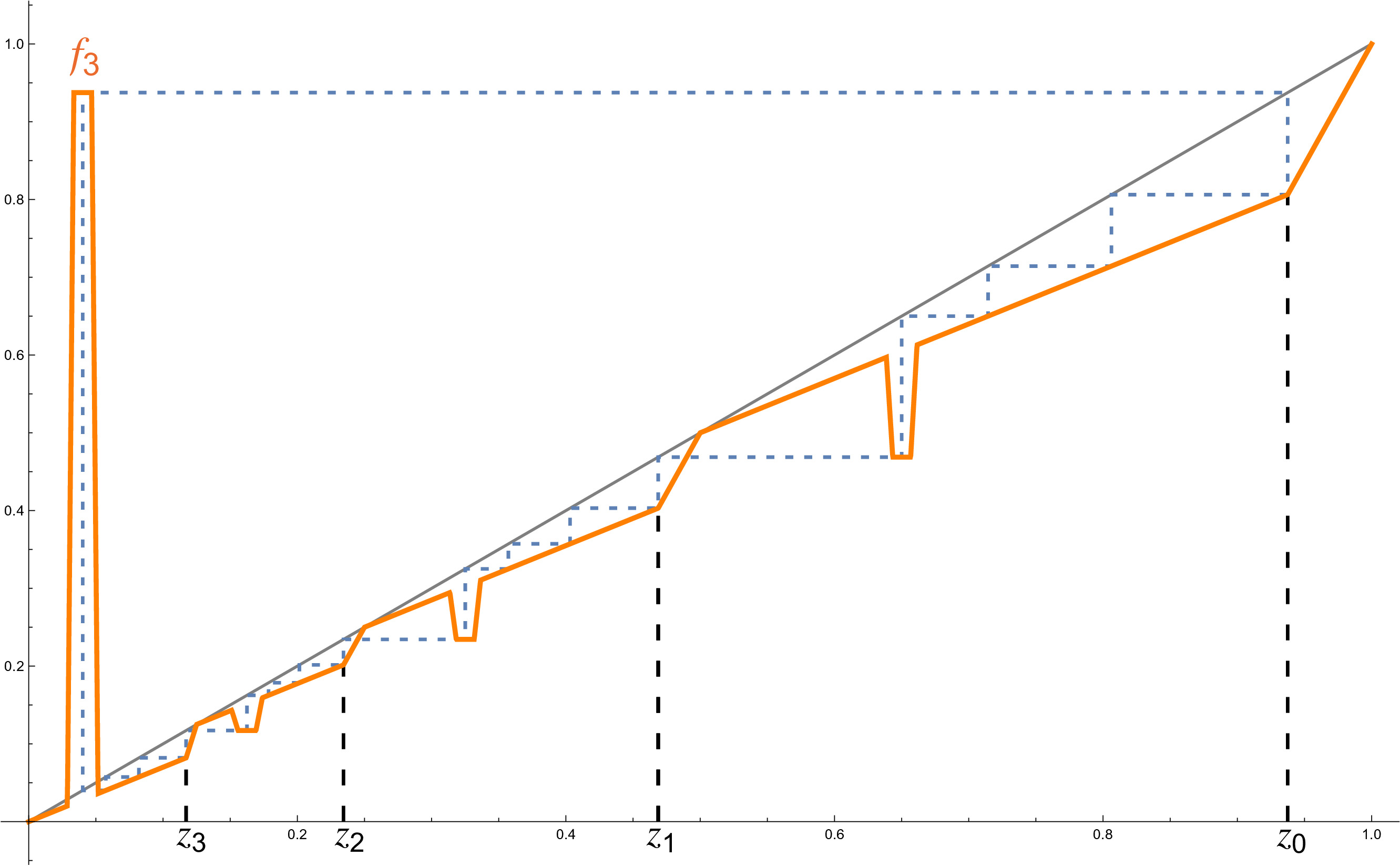}}}
\caption{The maps $f_1$ and $f_3$ from the sequence $\{f_n\}_{n\in\mathbb{N}}$ defined in Example \ref{exa2}.}
\label{fig_cyc_w2}
\end{figure}
\end{exa}

\begin{exa}\label{exa3}
For the kind of transfinite orbit we want to show in this example, we need, as the limit map $f$, a transitive map having also periodic points, so we pick the logistic map with parameter 4. 
Let thus $f:\mathbb{I}\circlearrowleft$ be defined as $f(x)=4x(1-x)$. 
Let $z\in (0,1)$ be a transitive point (that is we have $\overline{\mathcal{O}(z)}=\mathbb{I}$) and $a\in\mathbb{I}$ be a periodic point of order $t\in\mathbb{N}$. 
Pick $\epsilon_1>0$ and set $k_1=\min\{k\in\mathbb{N}: d(f^k(z),a)<\epsilon_1\}$. 
Define $\delta_1$ as $\delta_1=d(f^{k_1}(z),a)$ and set 

\[
a_1=a-\epsilon_1 \quad,\quad
b_1=a-\delta_1 \quad,\quad
c_1=a+\delta_1 \quad,\quad
d_1=a+\epsilon_1.
\]

Pick $\epsilon_2<\delta_1$. With the same procedure we can define $k_2,\delta_2,a_2,b_2,c_2$ and $d_2$. Proceeding inductively, for $\epsilon_m<\delta_{m-1}$ we set $k_m=\min\{k\in\mathbb{N}: d(f^k(z),a)<\epsilon_m\}$ and $\delta_m=d(f^{k_m}(z),a)$. Finally, we set $$a_m=a-\epsilon_m\quad,\quad b_m=a-\delta_m\quad,\quad c_m=a+\delta_m\quad,\quad d_m=a+\epsilon_m.$$
We can now define a sequence $\{f_n\}_{n\in\mathbb{N}}$ of functions $f_n:\mathbb{I}\circlearrowleft$ as follows (see Fig. \ref{fig_logic}):

\begin{equation*}
    f_n(x)=
    \begin{cases}
        \frac{f(a_n)-f(a)}{a_n-b_n}(x-b_n)+f(a) &\quad\text{if }x\in[a_n,b_n)\\
        f(a) &\quad\text{if }x\in [b_n,c_n]\\
        \frac{f(d_n)-f(a)}{d_n-c_n}(x-c_n)+f(a) &\quad\text{if }x\in(c_n,d_n]\\
        f(x) &\quad \text{if }x\in\mathbb{I}\setminus [a_n,d_n]   
    \end{cases}
\end{equation*}

The maps $f_n$ are continuous for every $n\in \mathbb{N}$ and we have $f_n \dot{\longrightarrow} f$ as $n$ goes to $\infty$, with the limit map $f$ continuous as well, so that we have a TDS $(X,\{f\})$. For every $n\in\mathbb{N}$, we have

\begin{equation}
    \mathcal{O}_n=\{f(z),\ldots,f^{k_n}(z),f(a),\ldots,f^{t-1}(a),a\}.
\end{equation}

so that $\mathcal{O}_\infty(z)=\{f(z),f^2(z),\ldots,f(a),\ldots,f^{t-1}(a),a\}$. Hence we have $\{f\}^\omega(z)=f(a)$. The point $z$ and every point in $\mathcal{O}(z)$ has a transfinite orbit consisting of a dense portion of length $\omega$ followed by a finite periodic cycle, so that their ordinal degree is $\omega\cdot 2$. 
\begin{figure}[H] 
\centering
\subfigure[$f_1$]{\fbox{\includegraphics[width=6.9cm]{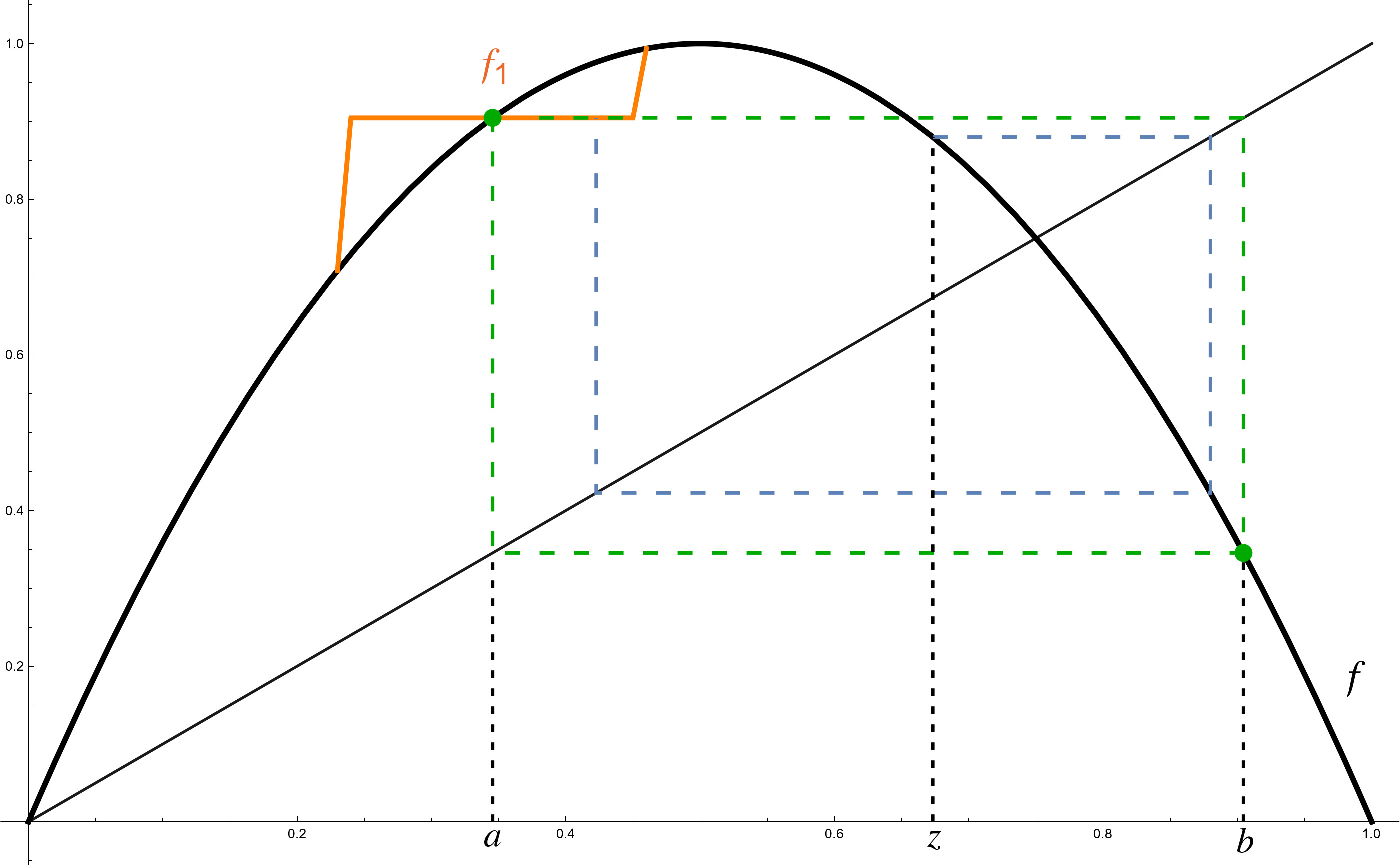}}}
\hspace{0.5mm}
\subfigure[$f_4$]
{\fbox{\includegraphics[width=6.9cm]{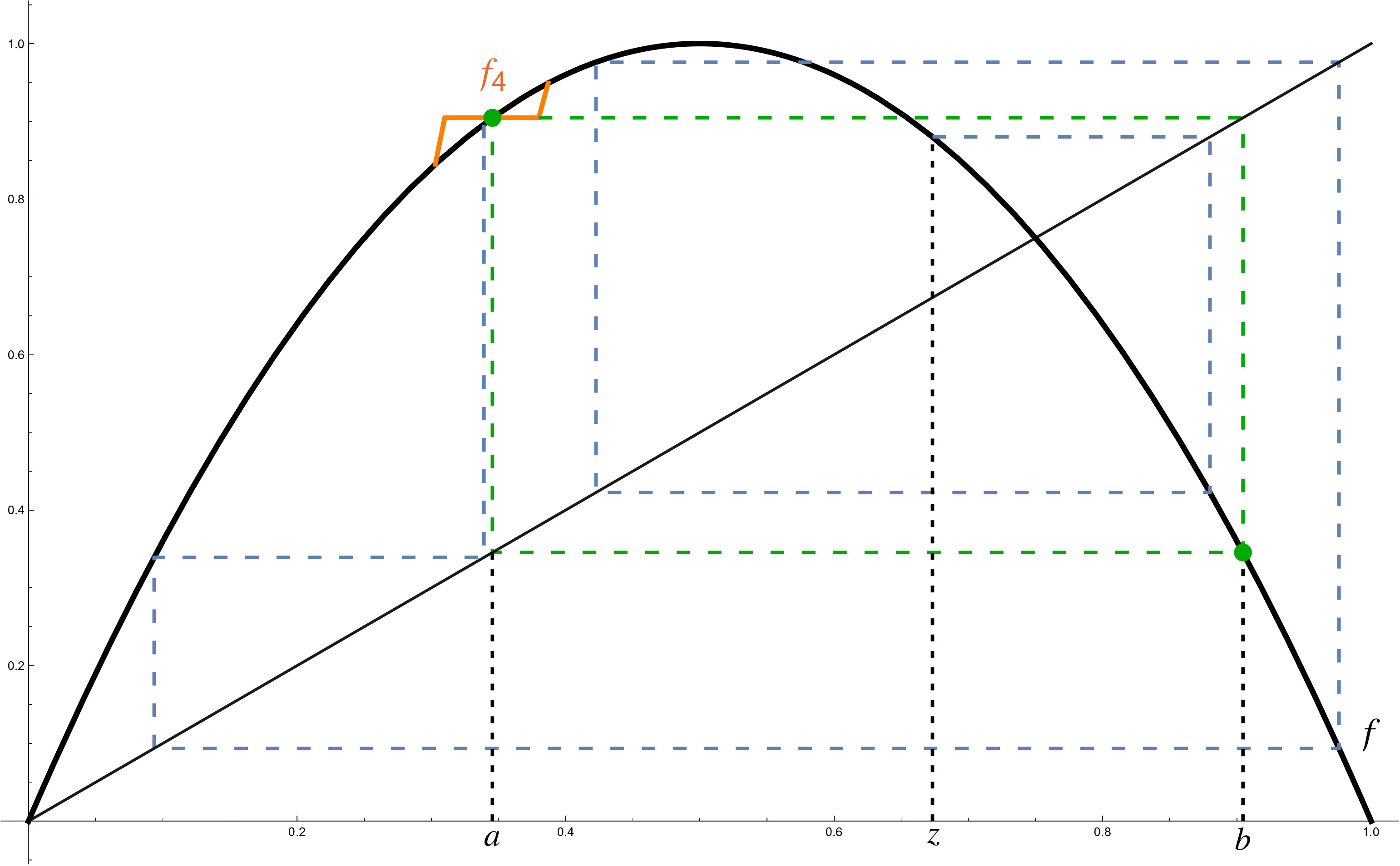}}}
\caption{The maps $f_1$ and $f_4$ from the sequence $\{f_n\}_n$ defined in Example \ref{exa3}}
\label{fig_logic}
\end{figure}
\end{exa}

The following two examples concern the ways in which the transfinite iterations of a point $x$ can cease to exist at a certain countable ordinal level. The most trivial case occurs when there exists $N$ such that $f^k_n(x)=f^k(x)$ for every $k\in\mathbb{N}$ and every $n > N$. This implies that the hierarchy of sets in Eq. \eqref{hierarchy} collapses, and in particular $\mathcal{O}_\infty(x)=\mathcal{O}(x)=\{\mathcal{O}\}(x)=\mathcal{O}_{\{\omega\}}(x)$, so that the finite iterations of $x$ exhaust all the points of $\mathcal{O}_\infty(x)$. Let us see two more interesting cases.

\begin{exa}\label{exa4}
In this example (see Fig. \ref{fig_A2A3}, left), there exists a point $x$ such that the relation $<_\infty$ is not a total order on $\mathcal{O}_\infty(x)$.  Let $z, y_1, y_2\in (0,1)$ be such that $z<y_2<y_1$. Set $0<m<1$ and define $f:\mathbb{I}\rightarrow \mathbb{I}$ as 

\begin{equation}
\label{ex_f}
f(x)=
\begin{cases}
\frac{y_1-mz}{y_2-z}(x-y_2)+y_1 \quad &\text{if }x\in (z,y_2]\\
\frac{y_1-y_2}{y_2-y_1}(x-y_2)+y_1 \quad& \text{if }x\in (y_2,y_1]\\
\frac{m - y_2}{1 - y_1}(x - 1)+m \quad& \text{if }x\in (y_1,1]\\
m x \quad &\text{otherwise} 
\end{cases}
\end{equation}

Let $\{\epsilon_n\}_{n\in\mathbb{N}},\{\delta_n\}_{n\in\mathbb{N}}$ be two strictly decreasing, vanishing sequences. For every $n\in\mathbb{N}$, we set:

\begin{equation*}
a_n=b_n-\delta_n \quad ,  \quad b_n=m^n z-\epsilon_n \quad,\quad c_n=m^n z+\epsilon_n \quad, \quad d_n=c_n+\delta_n.
\end{equation*}

We can choose the sequences $\{\epsilon_n\}_{n\in\mathbb{N}}$ and $\{\delta_n\}_{n\in\mathbb{N}}$ so that, for all $n\in\mathbb{N}$,

\begin{equation*}
\mathcal{O}_n(z)\cap [a_n,d_n]=f^n (z).
\end{equation*}

We can now define a sequence $\{f_n\}_{n \in \mathbb{N}}$ of functions $f_n:\mathbb{I}\rightarrow \mathbb{I}$  as follows:

\begin{align*}
\text{For odd $n$:\ \ }&f_n(x)=
\begin{cases}
\frac{y_1-m a_n}{b_n-a_n}(x-b_n)+y_1 &\quad \text{if } x\in [a_n,b_n)\\
y_1 &\quad \text{if } x\in [b_n,c_n]\\
\frac{y_1-m d_n}{c_n-d_n}(x-c_n)+y_1 &\quad \text{if } x\in (c_n,d_n]\\
f(x) &\quad \text{otherwise} 
\end{cases}\\
\text{For even $n$:\ \ }&f_n(x)=
\begin{cases}
\frac{y_2-m a_n}{b_n-a_n}(x-b_n)+y_2 &\quad \text{if } x\in [a_n,b_n)\\
y_2 &\quad \text{if } x\in [b_n,c_n]\\
\frac{y_2-m d_n}{c_n-d_n}(x-c_n)+y_2 &\quad \text{if } x\in (c_n,d_n]\\
f(x) &\quad \text{otherwise} 
\end{cases}
\end{align*}

We have $f_n \dot{\longrightarrow} f$ as $n$ goes to $\infty$. For every $n\in\mathbb{N}$, $\mathcal{O}_n(z)=\{f(z),\ldots,f^n(z),y_1,y_2\}$. Thus $\mathcal{O}_\infty(z)=\{f(z),f^2(z)\ldots,y_1,y_2\}$. We have $y_1<_n y_2$ for odd $n$ and $y_2<_n y_1$ for even $n$ and then $y_1$ and $y_2$ are not comparable with respect to $<_\infty$, so that $<_\infty$ is not a total order on $\mathcal{O}_\infty(z)$. This means that $\{f\}^{\omega}(z)$ does not exist, since $\mathcal{O}_\infty(z)\setminus \mathcal{O}_{[\omega]}(z)$ does not have a least element.
\end{exa}

\begin{exa}\label{ex_order}
    We provide now an example (see Fig. \ref{fig_A2A3}, right) where a certain transfinite iteration of a point $z$ is not defined because $<_{z,\infty}$, while being a total order, it is not a well-ordering. Pick $z\in (0,1)$ and take $0<m<1$. Let $f:\mathbb{I}\rightarrow \mathbb{I}$ be defined as
    
\begin{equation*}
f(x)=
\begin{cases}
m x \quad &\text{if } x\in [0,z]\\
\frac{1-m z}{1-z}(x-1)+1 \quad &\text{if } x\in (z,1]
\end{cases}
\end{equation*}

Let $\{y_n\}_{n\in\mathbb{N}}$ be a sequence of points such that $y_1=f^{-1}(z)$ and $y_{n+1}=f^{-1}(y_n)$ for every $n\in\mathbb{N}$. Let $\{\epsilon_n\}_{n\in\mathbb{N}},\{\delta_n\}_{n\in\mathbb{N}}$ be two strictly decreasing sequences converging to $0$ as $n\to\infty$. For every $n\in\mathbb{N}$, we set

\begin{equation*}
a_n=b_n-\delta_n \quad ,  \quad b_n=m^n z-\epsilon_n \quad,\quad c_n=m^n z+\epsilon_n \quad, \quad d_n=c_n+\delta_n.
\end{equation*}

We can choose the sequences $\{\epsilon_n\}_{n\in\mathbb{N}}$ and $\{\delta_n\}_{n\in\mathbb{N}}$ so that, for all $n\in\mathbb{N}$

\begin{equation*}
\mathcal{O}_n(z)\cap [a_n,d_n]=f^n (z).
\end{equation*}

Let us now define a sequence $\{f_n\}_{n \in \mathbb{N}}$ of functions $f_n:\mathbb{I}\rightarrow \mathbb{I}$  as follows:

\begin{equation*}
f_n(x)=
\begin{cases}
\frac{y_n-m a_n}{b_n-a_n}(x-b_n)+y_n &\quad \text{if } x\in [a_n,b_n)\\
y_n &\quad \text{if } x\in [b_n,c_n]\\
\frac{y_n-m d_n}{c_n-d_n}(x-c_n)+y_n &\quad \text{if } x\in (c_n,d_n]\\
f(x) &\quad \text{otherwise} 
\end{cases}
\end{equation*}

\vspace{0.2cm}

We have, for every $n\in\mathbb{N}$, $$\mathcal{O}_n(z)=\{ f(z),\ldots,f^n(z),y_n,y_{n-1},\ldots y_1,z\}$$ so that 
$$\mathcal{O}_\infty(z)=\{f(z),f^2(z),\ldots,y_n,y_{n-1},\ldots,y_1,z\}.$$
Here $\{y_n\}_{n\in\mathbb{N}}$ is an infinite decreasing sequence with respect to $<_\infty$, so that $\mathcal{O}_\infty(z)\setminus \mathcal{O}_{[\omega]}(z)$ does not have a least element, and therefore $\{f\}^\omega(z)$ does not exist. 

\begin{figure}[H] 
\centering
\subfigure[]
{\fbox{\includegraphics[width=6.9cm]{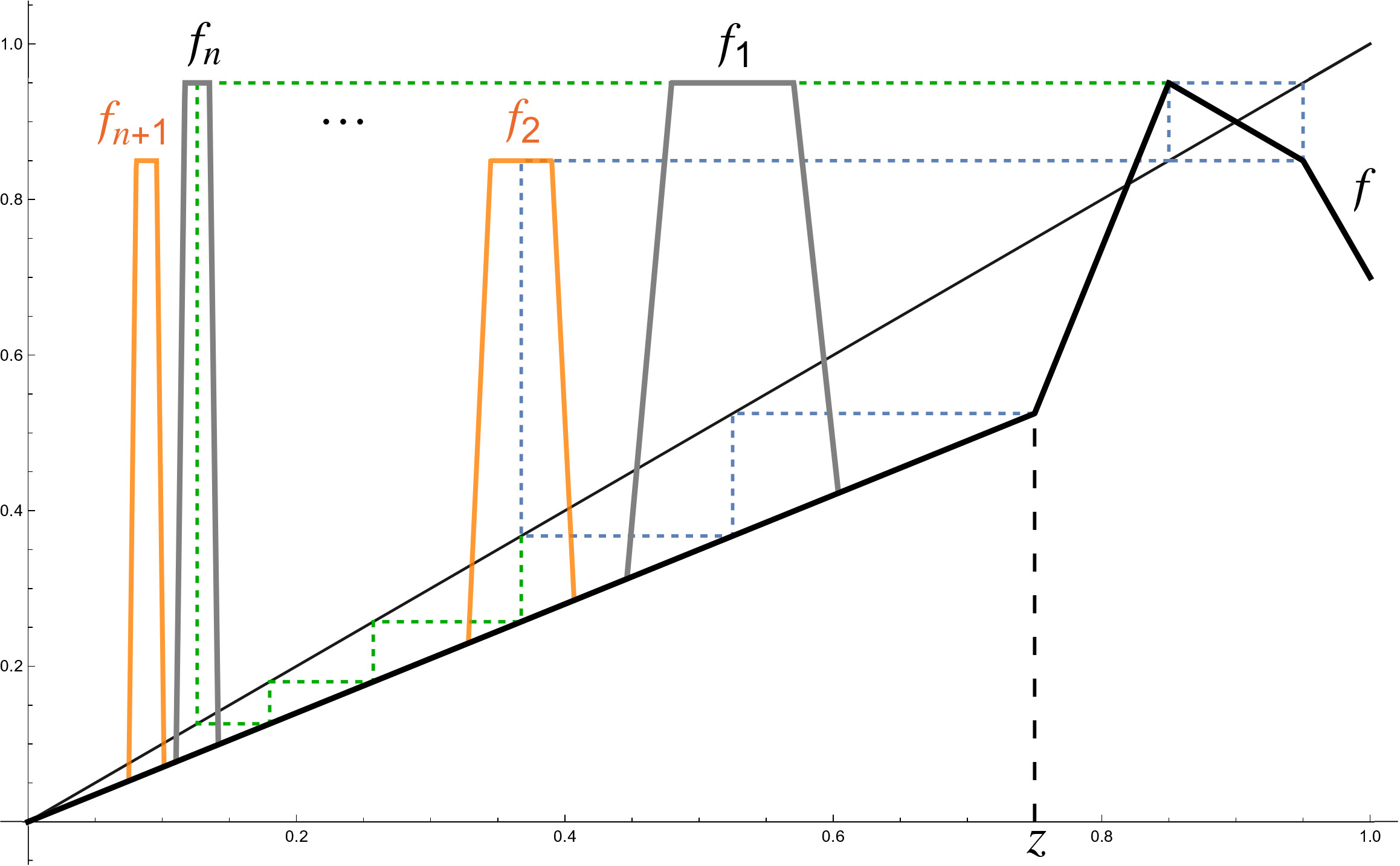}}}
\hspace{0.5mm}
\subfigure[]
{\fbox{\includegraphics[width=6.9cm]{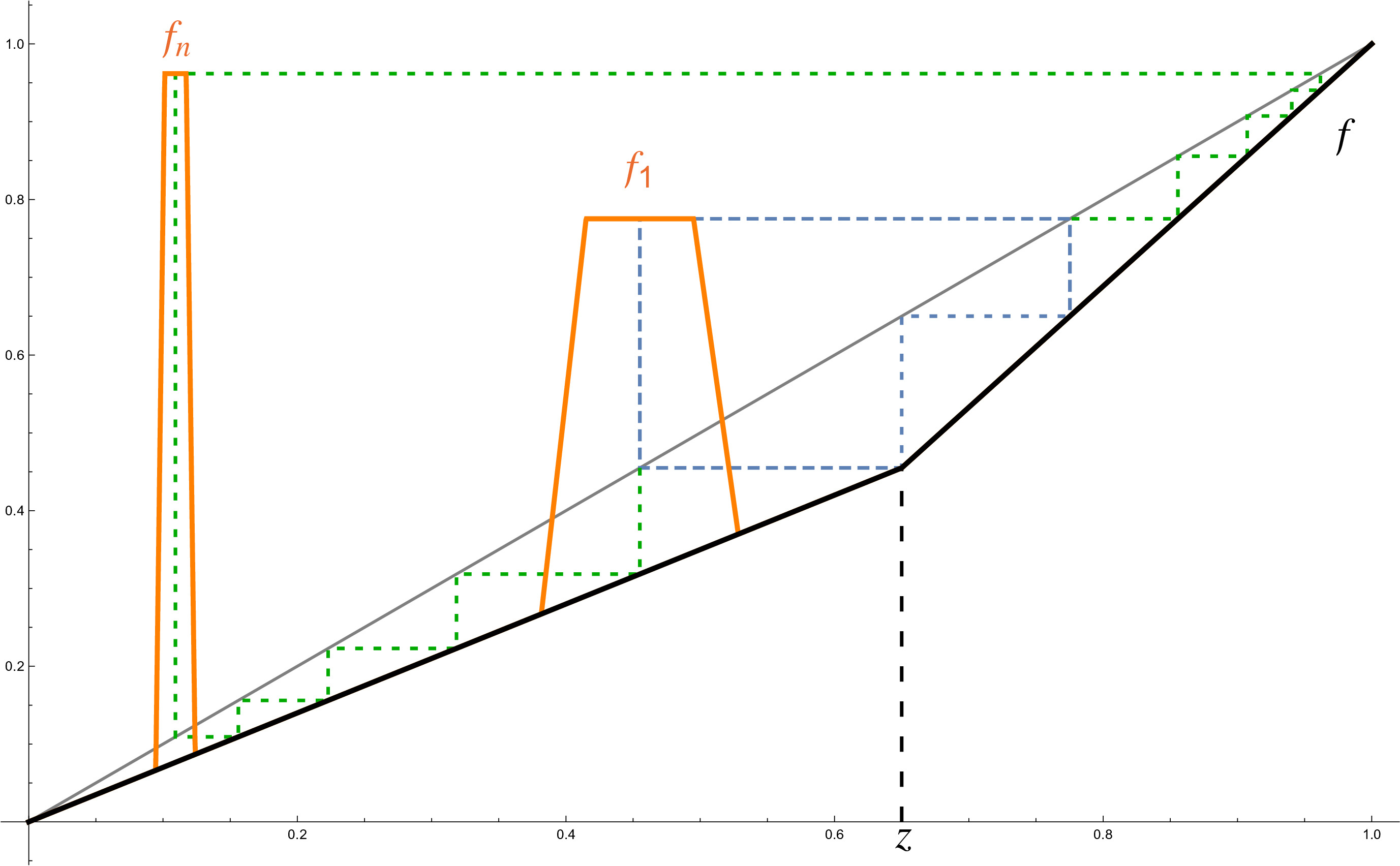}}}
\caption{(a): The system defined in Example \ref{exa4}: the order induced by $<_{_\infty}$ is not a total order on $\mathcal{O}_\infty$. (b): The system defined in Example \ref{ex_order}: the order induced by $<_{_\infty}$ on $\mathcal{O}_\infty$ is not a well-ordering.}
\label{fig_A2A3}
\end{figure}
\end{exa}

A natural question about transfinite orbits concerns in which cases the iteration of order $\omega$ of $x$ has something to do with the $\omega$-limit of $x$ with respect to $f$:
\begin{equation}
\label{def_omegalimit}
\omega_f(x)=\bigcap_{n=0}^\infty\overline{\bigcup_{k>n}f^k(x)}.
\end{equation}
The two objects can be totally unrelated, of course. For instance, in Example \ref{exa1}, we have $\omega_f(z)=\{0\}$ while $\{f\}^\omega(z)>0$. However, under some (strong) regularity assumptions, there is a result covering cases that can be seen as ``trivial" since the transfinite dynamics does not provide any further richness than the $\omega$-limit with respect to the limit map $f$. 

\begin{defi}\label{equicontinuity_}
We say that a sequence of maps $\{f_n\}_{n\in\mathbb{N}}$ is \emph{equicontinuous} at $x\in X$ if, for every $\epsilon>0$, there exist $N=N(\epsilon,x)\in\mathbb{N}$ and $\delta=\delta(\epsilon,x)>0$ such that, for every $n>N$,
\begin{equation*}
   d(x,y)<\delta \implies d(f_n(x),f_n(y))<\epsilon. 
\end{equation*}
$\{f_n\}_{n\in\mathbb{N}}$ is said \emph{strongly equicontinuous} at $x$ if $\{f^k_n\}_{n\in\mathbb{N}}$ is equicontinuous at $x$ for all $k\in\mathbb{N}$.
\end{defi}

\begin{prop}
Let $(X,\{f\})$ be a TDS and assume that $\{f_n\}_{n\in\mathbb{N}}$ is strongly equicontinuous at $x\in X$. If $\omega_f(x)=\{z\}$ and $\{f\}^\omega(x)=y$, then $y=z$.
\end{prop}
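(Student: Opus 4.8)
The plan is to show that $y = \{f\}^\omega(x)$ must be a limit point of the $f$-orbit of $x$, hence equals the unique point $z$ of $\omega_f(x)$. First I would unpack what $\{f\}^\omega(x) = y$ means in terms of the sequences $\{f_n\}$: by Definition~\ref{def_kn} and Lemma~\ref{nbound_seq}, there is $N = N^\omega(x)$ and a strictly unbounded sequence $\{k^\omega_n\}_{n>N}$ (we may pass to a subsequence making it monotone increasing, using Remark~\ref{_subs_}'s caveat only implicitly — actually we should be careful and work along the full sequence since the hypotheses are stated for the full sequence) such that $f_n^{\,k^\omega_n}(x) = y$ for all $n > N$. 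Also, by finite convergence, for each fixed $j \in \mathbb{N}$ there is $M_j$ with $f_n^{\,j}(x) = f^j(x)$ for $n > M_j$.

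Next I would exploit strong equicontinuity to transfer the identity $f_n^{\,k^\omega_n}(x) = y$ into a statement about $f^{\,k^\omega_n}(x)$ being close to $y$. Fix $\epsilon > 0$; by strong equicontinuity at $x$, for each $k$ there are $N(\epsilon,x)$ and $\delta(\epsilon,x)$ with $d(u,x) < \delta \Rightarrow d(f_n^k(u), f_n^k(x)) < \epsilon$ for $n > N$. The delicate point is that $k^\omega_n$ grows with $n$, so we cannot directly apply equicontinuity for a single fixed $k$. The workaround I would use: since $\{k^\omega_n\}$ is unbounded, for a fixed target iterate $j = k^\omega_{n_0}$ (some large index), consider that for $n$ large, $f_n^{\,j}(x) = f^j(x)$ (finite convergence), and separately $f_n^{\,k^\omega_n}(x) = y$. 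Write $k^\omega_n = j + (k^\omega_n - j)$ for the chosen $n = n_0$; then $y = f_{n_0}^{\,k^\omega_n - j}(f_{n_0}^{\,j}(x))$. Combining with $f_{n_0}^{\,j}(x)$ being approximable by $f^j(x)$ and then applying strong equicontinuity of $\{f_m^{\,k^\omega_{n_0}-j}\}$ at the point $f^j(x)$ (more precisely, near $x$ after tracking iterates) should yield that $y$ lies in the closure of $\{f^\ell(x) : \ell \ge j\}$ for every $j$, i.e. $y \in \omega_f(x)$.

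The cleanest route, which I would actually pursue: show directly that $y \in \bigcap_{n}\overline{\bigcup_{k>n} f^k(x)}$. Fix $n_0$; I must produce iterates $f^k(x)$ with $k > n_0$ arbitrarily close to $y$. Pick any index $m$ with $k^\omega_m > n_0$. Then $f_m^{\,k^\omega_m}(x) = y$. Now use finite convergence to replace the first few $f_m$-steps by $f$-steps: there is a largest ``coincidence threshold'', and using strong equicontinuity to absorb the discrepancy in the remaining steps, one gets that $f^{\,k^\omega_m}(x)$ is within $\epsilon$ of $y$ once $m$ is large enough, with $k^\omega_m > n_0$. Hence $d(y, \bigcup_{k>n_0} f^k(x)) < \epsilon$ for every $\epsilon > 0$, so $y \in \overline{\bigcup_{k>n_0} f^k(x)}$; since $n_0$ was arbitrary, $y \in \omega_f(x) = \{z\}$, giving $y = z$.

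\textbf{Main obstacle.} The crux is the non-uniformity in $k$: strong equicontinuity is stated iterate-by-iterate (for each fixed $k$ separately), but $k^\omega_n \to \infty$, so one cannot apply a single equicontinuity constant uniformly along the sequence. The fix is to decompose each iteration $f_n^{\,k^\omega_n}(x)$ at a \emph{fixed} cut point $j$ (chosen first, as a function of $\epsilon$ via $\omega_f(x) = \{z\}$, so that $f^\ell(x)$ is $\epsilon/2$-close to $z$ for all $\ell \ge j$), and then handle only the \emph{initial} $j$ steps with finite convergence — for which $j$ is fixed — while the \emph{tail} $k^\omega_n - j$ steps are controlled because $f_n^{\,k^\omega_n}(x) = y$ exactly and $f_n$ and $f$ agree on the relevant orbit points for $n$ large. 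One must be careful that the orbit points $f^\ell(x)$, $\ell \le j$, are finitely many, so a single large $n$ makes $f_n$ agree with $f$ on all of them simultaneously; this is the standard trick for finite convergence. I expect writing out the interplay of the two thresholds (the $\epsilon$-ball around $z$ in which the tail lives, versus the coincidence index for the head) to be the only genuinely fiddly part; everything else is bookkeeping.
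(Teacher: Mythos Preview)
Your decomposition at a fixed cut point $j=K$ (chosen so that $f^\ell(x)$ is close to $z$ for $\ell\ge K$) and your use of finite convergence on the initial $K$ steps are exactly what the paper does. But your treatment of the tail has a genuine gap. You write that the remaining $k^\omega_n-K$ steps are ``controlled because $f_n^{\,k^\omega_n}(x)=y$ exactly and $f_n$ and $f$ agree on the relevant orbit points for $n$ large'': this is where the argument breaks. Finite convergence only gives $f_n=f$ on \emph{finitely many} prescribed points at a time; the tail $\{f^\ell(x):K\le\ell\le k^\omega_n\}$ has length growing with $n$, so you cannot force $f_n$ and $f$ to agree on all of it. Consequently there is no way, within your sketch, to compare $f^{\,k^\omega_n}(x)$ with $f_n^{\,k^\omega_n}(x)=y$, and your target inclusion $y\in\omega_f(x)$ remains unproved.

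The paper closes this gap with one observation you never invoke: since $\omega_f(x)=\{z\}$ is $f$-invariant, $z$ is a \emph{fixed point} of $f$, hence $f_n(z)=z$ for all large $n$, and therefore $f_n^{\,m}(z)=z$ for every $m$. The paper then applies strong equicontinuity not to compare $f$ with $f_n$ along the $x$-orbit (which is what you attempt and which fails), but to compare two $f_n$-orbits: that of $x$ and the constant orbit at $z$. Since $f_n^{\,K}(x)=f^{\,K}(x)$ is $\delta$-close to $z$, equicontinuity yields
\[
d\bigl(f_n^{\,k^\omega_n}(x),\,f_n^{\,k^\omega_n-K}(z)\bigr)=d(y,z)<\epsilon,
\]
a contradiction. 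So the missing idea is precisely the fixed-point property of $z$ and its use as a reference orbit; without it, strong equicontinuity at $x$ gives you no leverage on the tail.
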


\begin{proof}
Suppose that $\omega_f(x)=z\in X$. Assume that $\{f\}^\omega(x)=y\notin \mathcal{O}(x)$ and that $y\neq z$. Therefore $d(y,z)>\epsilon>0$. Since $\{f_n\}_{n\in\mathbb{N}}$ is strongly uniformly equicontinuous in $x$, there exist $N_{\epsilon,x}\in\mathbb{N}$ and $\delta_{\epsilon,x}>0$ such that
\[
   d(x,u)<\delta_{\epsilon,x} \implies d(f_n^k(x),f_n^k(u))<\epsilon \quad \forall k\in\mathbb{N},\ \forall n>N_{\epsilon,x}. 
\]
Since $z=\omega_f(x)$, there exists $K\in\mathbb{N}$ such that 
\begin{equation}\label{3}
d(f^K(x),z)<\delta_{\epsilon,x}.
\end{equation}
By Lemma \ref{nbound_seq}, the sequence $\{k^{\omega,x}_n\}_{n>N^\omega(x)}$ is unbounded, and in particular 
\begin{equation}\label{2}
f_n^{k^\omega_n}(x)=y \qquad \forall n> N^\omega.
\end{equation}
Thus, there exists $N_1>N^\omega$ such that $k^\omega_n>K$ for all $n >N_1$. Since $f_n\dot{\longrightarrow}f$, there exists $N_2\in\mathbb{N}$ such that
\begin{equation}\label{4}
f_n^k(x)=f^k(x)\qquad \forall k\le K,\ \forall n> N_2.
\end{equation}
 Since $\omega_f(x)$ is $f$-invariant, there exists $N_3\in\mathbb{N}$ such that
\begin{equation}\label{5}
f_n(z)=f(z)=z \qquad \forall n> N_3.
\end{equation}
Set $N:=\max\{N_{\epsilon,x},N_1,N_2,N_3\}$. By \eqref{3} and \eqref{4}, we have
$d(f_n^K(x),z)=d(f^K(x),z)<\delta_{\epsilon,x}$ for all $n>N$.
In particular, since $y\notin \mathcal{O}(x)$, we have  $f_n^k(x)\neq y$ for all $k\le K$ and $n>N$. By \eqref{2} and \eqref{5} and since $\{f_n\}_{n\in\mathbb{N}}$ is strongly uniformly equicontinuous at $x$, for all $n>N$ we have:
$$
d\left(f_n^{K+(k^\omega_n-K)}(x),f_n^{k^\omega_n-K}(z)\right)=d(y,z)<\epsilon,
$$
which is a contradiction. Suppose that $\{f\}^\omega(x)\in \mathcal{O}(x)$, then there exists $k\in\mathbb{N}$ such that $f^k(x)=\{f\}^\omega(x)$. Notice that $k$ is unique, indeed, otherwise, we have that $x$ is a pre-periodic point and, by Proposition \ref{periodic_finite}, it follows that $\{f\}^\omega(x)=\emptyset$. Therefore, we can apply the previous argument replacing the point $x$ by the point $f^k(x)$.
\end{proof}
We conclude this section by showing that there exist TDSs, set on the interval and sequentially continuous, in which arbitrarily large countable ordinal iterations exist. 

\begin{thm}\label{th_existence}
    For every countable ordinal $\lambda$ there is a sequentially continuous TDS $(\mathbb{I},\{f\})$ 
    such that $\mathfrak{D}(\mathbb{I},\{f\})\ge \lambda$.
\end{thm}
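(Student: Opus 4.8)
The plan is to build the system by transfinite recursion on $\lambda$, nesting copies of the two basic gadgets already seen in Examples \ref{exa1} and \ref{exa2}: the ``$\omega$-cycle gadget'' (a point that is $(n+1)$-periodic for $f_n$ and returns at stage $\omega$) and the ``concatenation gadget'' (a countable fan of shrinking intervals, accumulating at a point, each carrying one lower-order block, wired so that passing through block $k$ costs ordinal $\omega^\alpha\cdot$(something), and the accumulation point is reached only after all of them). Concretely, I will prove by induction the stronger statement: for every countable ordinal $\lambda$ there is a sequentially continuous TDS on $\mathbb{I}$ and a point $z$ with $D(z)\ge\lambda$, realized by a map $f$ for which $z$ lies on a transfinite orbit whose $\lambda$-initial segment is open, together with the bookkeeping data (the nested intervals $I^k_n$ and the thresholds as in Examples \ref{exa1}--\ref{exa2}) needed to feed the next stage.

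First I would dispose of successor steps: given a system realizing degree $\ge\alpha$ at $z$ with $\{f\}^\alpha(z)$ defined, composing with one more application of $f$ (Lemma \ref{lem_exist_iter}) gives degree $\ge\alpha+1$ provided $f(\{f\}^\alpha(z))$ is a genuinely new point, which is arranged by choosing $f$ injective along the relevant orbit segment (all the example maps are piecewise affine and this is cheap). So the real content is at limit ordinals. Using the Cantor Normal Form, every countable limit $\lambda$ is a finite sum $\omega^{\gamma_1}+\dots+\omega^{\gamma_r}$ with $\gamma_1\ge\dots\ge\gamma_r\ge 1$; by the composition rule \eqref{composition2} it suffices to realize each additively indecomposable $\omega^{\gamma}$ and then concatenate the resulting systems end-to-end inside disjoint subintervals of $\mathbb{I}$, gluing at the endpoints. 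Thus I reduce to: for every countable ordinal $\gamma\ge 1$, realize a sequentially continuous TDS on $\mathbb{I}$ with a point of degree $\ge\omega^{\gamma}$, with the orbit's $\omega^\gamma$-segment open.

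For the indecomposable case I induct on $\gamma$. If $\gamma=1$ this is exactly Example \ref{exa1} (the $\omega$-cycle gadget, reading $h=z$ gives $\{f\}^\omega(z)=z$, or rather we want it \emph{open}: take $h\notin\mathcal{O}^{\mathbb{Z}}(z)$ so $\{f\}^\omega(z)=h$ is a fresh point, giving degree $\ge\omega\cdot 2>\omega=\omega^1$ — already enough). If $\gamma$ is a successor $\gamma=\delta+1$, I have by induction a gadget $G_\delta$ realizing degree $\ge\omega^{\delta}$ at some point; I place shrunk affine copies $G_\delta^{(k)}$ of it on the intervals $(\tfrac{1}{2^{k+1}},\tfrac{1}{2^k}]$ exactly as in Example \ref{exa2} (where $\gamma=2$, $\delta=1$), wiring the exit of $G_\delta^{(k)}$ into the entrance of $G_\delta^{(k+1)}$ and the common accumulation point $0$ back to the start, so that $\{f\}^{\omega^\delta\cdot k}(z_0)=z_k$ and $\{f\}^{\omega^\delta\cdot\omega}(z_0)=\{f\}^{\omega^{\delta+1}}(z_0)=z_0$; the key computation is that $\sup_k \omega^\delta\cdot k=\omega^{\delta+1}=\omega^\gamma$ and that the accumulation point genuinely sits \emph{after} all the $z_k$ in $<_\infty$, which is forced by the geometric nesting of the intervals (the $n$th map visits $z_k$ for $k<n$ at iteration-times growing with $n$, so $z_k<_\infty$ the return point for every $k$). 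If $\gamma$ is a limit ordinal, fix an increasing sequence $\delta_j\uparrow\gamma$; I again place, on $(\tfrac{1}{2^{j+1}},\tfrac{1}{2^j}]$, a gadget $G_{\delta_j}$ of degree $\ge\omega^{\delta_j}$, chained as before; then $\{f\}^{\beta}(z_0)$ runs through degrees exceeding $\omega^{\delta_j}$ for every $j$, so past $\sup_j\omega^{\delta_j}=\omega^\gamma$ (using that $\gamma=\sup\delta_j$ and exponentiation is continuous in the exponent). In every case sequential continuity is automatic because each $f_n$ is piecewise affine with matching endpoint values, exactly as in the explicit examples, and $f_n\dot{\longrightarrow}f$ since outside the shrinking perturbation intervals $f_n=f$.

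The main obstacle is the bookkeeping at limit stages: one must check that the nested-interval data produced by the inductive hypothesis can be re-shrunk and re-glued so that (i) the set-theoretic $\liminf$ of the $f_n$-orbits still equals the intended transfinite orbit (so that Def. \ref{transfinite} actually produces iterations of the claimed orders, rather than stalling because $\mathcal{O}_\infty(x)\setminus\mathcal{O}_{[\beta]}(x)$ fails to have a least element, as in Examples \ref{exa4}--\ref{ex_order}), and (ii) the orbit stays \emph{open}, i.e. no accidental collisions between points of different blocks, which lets Lemma \ref{lem_group} give $[f]^\beta=\{f\}^\beta$ throughout and makes the ordinal arithmetic of the concatenation transparent. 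Both are handled by the same device used implicitly in Example \ref{exa2}: choose all perturbation widths $\epsilon_n,\delta_n$ decreasing fast enough that $\bigcap_n I^0_n$ is a single point and each orbit segment meets each nested interval exactly once; the induction hypothesis must therefore carry these choices as part of its data. Everything else — continuity, finite convergence, injectivity along orbits — is routine once the combinatorial skeleton is fixed.
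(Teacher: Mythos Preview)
Your proposal is correct and follows the same core strategy as the paper: a transfinite induction that places rescaled copies of lower-order gadgets on shrinking subintervals of $\mathbb{I}$ (exactly the mechanism of Examples~\ref{exa1}--\ref{exa2}), with the perturbations of $f_n$ redirecting orbits between blocks so that the exit of one block becomes the $<_\infty$-successor of everything in the previous block.

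The organization differs slightly. You first reduce to additively indecomposable $\omega^\gamma$ via Cantor Normal Form plus a finite concatenation, and then induct on the exponent $\gamma$; the paper instead inducts directly on $\lambda$, splitting the limit case into ``$\lambda=\beta+\omega$'' (one copy of the $\beta$-system followed by a fresh aperiodic tail on $[\tfrac12,1]$) versus ``$\lambda$ a limit of limit ordinals'' (an infinite chain of $\beta_j$-systems with $\sup_j\beta_j=\lambda$). Your scheme is a bit more modular; the paper's avoids the separate finite-concatenation step. Both unwind to the same nesting construction.

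One small point to tighten: in your successor step $\gamma=\delta+1$ you write $\{f\}^{\omega^{\delta+1}}(z_0)=z_0$, i.e.\ a transfinite cycle, but for your CNF concatenation (and for the inductive hypothesis ``$\lambda$-initial segment open'') you need the exit at stage $\omega^{\delta+1}$ to land on a \emph{fresh} point outside the chain, not back at $z_0$. You already note this fix for $\gamma=1$ (take $h\notin\mathcal{O}^{\mathbb{Z}}(z)$); the paper enforces it uniformly by always routing the final jump to a new point $z$ in a reserved interval $[\tfrac12,1]$ carrying an aperiodic contraction. With that adjustment your inductive hypothesis should explicitly record that $\{f\}^{\omega^\gamma}(z)$ exists and equals a designated exit point (the paper's property $\boldsymbol{P}_\lambda$-2), not merely $D(z)\ge\omega^\gamma$, since the concatenation step needs the iteration \emph{at} $\omega^\gamma$, not just below it.
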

\begin{proof}

    Let $\lambda$ be a countable ordinal. We will build, by transfinite induction up to $\lambda$, a countable collection 
    \[
    \Big \{(\mathbb{I},\{f_\beta\})\Big \}_{\omega\le\beta\le\lambda}
    \]
    of TDSs set in the unit interval such that, for every $\beta$, there exists a point $x_\beta\in\mathbb{I}$ such that $\{f_\beta\}^\beta(x_\beta)\ne\emptyset$.
    
    We start by defining the system $(\mathbb{I},\{f_{\omega,n}\}_{n\in\mathbb{N}})$ as equal to the TDS of Example \ref{exa1}, that is $f_\omega\equiv f$ as defined in Eq. \eqref{ex1_f} and, for every $n\in\mathbb{N}$, $f_{\omega,n}\equiv f_n$ as defined in Eq. \eqref{ex1_fn}.
    
    Using again the notation of Example \ref{exa1}, if we take $x_\omega=z$ and $h\in \mathbb{I}\setminus \mathcal{O}^\mathbb{Z}(x_\omega)$, we have 
    $$\{f_\omega\}^\omega(x_\omega)=[f_\omega]^\omega(x_\omega)=h.$$ 
    The starting point of the induction procedure is therefore a system $(\mathbb{I},\{f_\omega\})$ in which the following set of properties, that we will call $\boldsymbol{P}_\omega$, hold: 
    \begin{itemize}
        \item[$\boldsymbol{P}_\omega$-1.] $0$ and $1$ are fixed points for $f_\omega$;
        \item [$\boldsymbol{P}_\omega$-2.] there exists $x_\omega\in\mathbb{I}$ such that $[f_\omega]^\omega(x_\omega)=\{f_\omega\}^\omega(x_\omega)\neq \emptyset$;
        \item [$\boldsymbol{P}_\omega$-3.] the transfinite orbit of $x_\omega$ is $(\omega\cdot 2)$-open.
    \end{itemize}
    
Let us now assume that, for all $\alpha$ such that $\omega\le \alpha<\lambda$, there exists $(\mathbb{I},\{f_\alpha\})$ such that properties $\boldsymbol{P}_\alpha$ hold, that is: 
    \begin{enumerate}
    \item[$\boldsymbol{P}_\alpha$-1.] $0$ and $1$ are fixed points for $f_\alpha$; 
    \item[$\boldsymbol{P}_\alpha$-2.] there exists $x_\alpha\in\mathbb{I}$ such that $[f_\alpha]^\alpha(x_\alpha)=\{f_\alpha\}^\alpha(x_\alpha)\neq \emptyset$;
    \item[$\boldsymbol{P}_\alpha$-3.] the transfinite orbit of $x_\alpha$ is $(\alpha+\omega)$-open.        
    \end{enumerate} 
    We want to prove the existence of a system $(\mathbb{I},\{f_{\lambda}\})$ having properties $\boldsymbol{P}_\lambda$.
    If $\lambda$ is a successor ordinal, that is $\lambda=\alpha+1$, then there exist a sequentially continuous TDS $(\mathbb{I},\{f_\alpha\})$ and $x_\alpha\in \mathbb{I}$ having properties $\boldsymbol{P}_\alpha$. 
    By Lemma \ref{lem_exist_iter}, we have $\{f_\alpha\}^{\alpha+1}(x_\alpha)=[f_\alpha]^{\alpha+1}(x_\alpha)=f([f_\alpha]^\alpha(x_\alpha)),$ and since $\alpha+\omega=\alpha+1+\omega$, the transfinite orbit of $x_\alpha$ is $(\alpha+1+\omega)$-open. 
    Hence, we can take $\{f_\lambda\}$ coinciding with $\{f_\alpha\}$ and $x_\lambda=x_\alpha$.
    
    Suppose now that $\lambda$ is a countable limit ordinal. Then $\lambda=\sup_{j\in\mathbb{N}} \beta_j$ where $\beta_j<\omega_1$ for all $j\in\mathbb{N}$. We consider two cases, depending on $\lambda$ being of type $\lambda=\beta+\omega$ or not. 
    \begin{enumerate}
        \item Suppose $\lambda=\beta+\omega$ for some $\omega\le \beta <\omega_1$. We can assume  $\beta_j=\beta+j$ for every $j\in\mathbb{N}$. By the inductive hypothesis, there exist $(\mathbb{I},\{f_\beta\})$ and a point $x_\beta\in \mathbb{I}$ with the properties $\boldsymbol{P}_\beta$.  
    
    For every $n\in\mathbb{N}$, set $\Tilde{f}_{\beta,n}(x):=\frac 1 2 f_{\beta,n}(2x)$ and $\Tilde{f}_\beta(x):=\frac 1 2 f_\beta(2x)$. Let $z\in (\frac 1 2,1)$, and let $g:[\frac 1 2,1]\circlearrowleft$ be a continuous function with $\frac 1 2$ and $1$ fixed points and such that the orbit of $z$ is $\omega$-open, that is $z$ is not a periodic or ultimately periodic point. 
    Consider the system $([0,\frac 1 2],\{\Tilde{f}_\beta\})$, set $\Tilde{x}_\beta:=x_\beta/2$, $y_\beta:=\{\Tilde{f}_\beta\}^{\beta}(\Tilde{x}_\beta)=[\Tilde{f}_\beta]^{\beta}(\Tilde{x}_\beta)$ and, for every $j\in\mathbb{N}$, $$y_j:=\{\Tilde{f}_\beta\}^{\beta_j}(\Tilde{x}_\beta)=\{\Tilde{f}_\beta\}^j\left(\{\Tilde{f}_\beta\}^{\beta}(\Tilde{x}_\beta)\right) =\Tilde{f}_\beta^j(y_\beta).$$ 
    Since $\beta\ge \omega$, by Lemma \ref{nbound_seq} the sequence $\{k^\beta_n\}_{n>N^\beta}$ is unbounded and $\Tilde{f}_{\beta,n}^{k^\beta_n}(\Tilde{x}_\beta)=y_\beta$ for all $n>N^\beta$. Since the transfinite orbit of $\tilde{x}_\beta$ is $(\beta+\omega)$-open, we have that the points $\{y_j\}_{j\in\mathbb{N}}$ are all distinct, so that we can extract from them a a strictly monotone subsequence $\{y_{j_k}\}_{k\in\mathbb{N}}$. By the finite convergence of $\Tilde{f}_{\beta,n}$ to $\Tilde{f}_\beta$, for every $k\in\mathbb{N}$ there exists $N_k>N^\beta$ such that, for every $i\le k$, we have 
    \[
    \Tilde{f}_{\beta,n}^{k^\beta_n+j_i}(\Tilde{x}_\beta)=\Tilde{f}_\beta^{j_i}(y_\beta)=y_{j_i} \qquad \ \forall n\ge N_k.
    \]
    For all $k\in\mathbb{N}$ and for all $n\in\mathbb{N}$, we take $\epsilon_{k,n}>0$ so small to have, for all $h<k^\beta_n+j_k$, 
    \[
    \Tilde{f}_{\beta,n}^h(\Tilde{x}_\beta)\cap B_{\epsilon_{k,n}}(y_{j_k})=\emptyset,
    \]
    where $B_{\epsilon_{k,n}}(y_{j_k})=(y_{j_k}-\epsilon_{k,n}, y_{j_k}+\epsilon_{k,n})$.
    Let us define a sequence of continuous functions $f_{\lambda,n}:\mathbb{I} \rightarrow \mathbb{I}$. Set $f_{\lambda,n}(x):=g(x)$ for all $x\in [\frac 1 2,1]$ and for all $n\in\mathbb{N}$. If $x\in [0,\frac 1 2)$, we set $f_{\lambda,n}(x):=\Tilde{f}_{\beta,n}(x)$ for all $n< N_1$. Let $c_n=\max\{k\le n : N_k\le n\}$. 
    Set $a_n:=y_{j_{c_n}}-\epsilon_{c_n,n}$ and $b_n:=y_{j_{c_n}}+\epsilon_{c_n,n}$. Then, for $n\ge N_1$, we set

\begin{equation*}
    f_{\lambda,n}(x)=
\begin{cases}
    \Tilde{f}_{\beta,n}(x) &\quad \text{if }x\in [0,\frac 1 2)\setminus (a_n,b_n)\\
    \frac{z-\Tilde{f}_{\beta,n}(a_n)}{y_{j_{c_n}}-a_n}(x-y_{j_{c_n}})+z  &\quad \text{if }x\in (a_n,y_{j_{c_n}}]\\
    \frac{z-\Tilde{f}_{\beta,n}(b_n)}{y_{j_{c_n}}-b_n}(x-y_{j_{c_n}})+z &\quad \text{if }x\in (y_{j_{c_n}},b_n).
\end{cases}
\end{equation*}

 Notice that $\{c_n\}_{n\ge N_1}$ is increasing and not ultimately constant. Let us define $f_\lambda:\mathbb{I}\rightarrow \mathbb{I}$ as  
 \begin{equation*}
     f_\lambda(x)=
     \begin{cases}
         \Tilde{f}_\beta(x) &\quad\text{if }x\in[0,\frac 1 2 )\\
         g(x) &\quad\text{if }x\in[\frac 1 2 ,1]
     \end{cases}
 \end{equation*}
 
By construction $f_\lambda$ is a continuous function. Since $\{y_{j_k}\}_{k\in\mathbb{N}}$ is a strictly monotonic sequence and $\epsilon_{k,n}\to 0$ as $n\to \infty$, it follows that $f_{\lambda,n}\dot{\longrightarrow} f_\lambda$. 
Since the maps $f_{\lambda,n}$ were also continuous for every $n\in\mathbb{N}$, we have that $(\mathbb{I},\{f_\lambda\})$ is a sequentially continuous TDS. 
We want to show that the system verifies the properties $\boldsymbol{P}_\lambda$. 
By construction, $0$ and $1$ are fixed points for $f_\lambda$. For all $n\ge N_1$ we have that $z\in \mathcal{O}_n(\Tilde{x}_\beta)$, which implies $z\in\mathcal{O}_\infty(\Tilde{x}_\beta)$. 
Moreover, for all $n\ge N_1$ we have that $z>_n y_i$ for all $i< j_{c_n}$. Since the sequence $\{c_n\}_{n\ge N^1}$ is increasing and not ultimately constant, it follows that $z>_\infty u$ for all $u\in \mathcal{O}_{[\beta+\omega]}(\Tilde{x}_\beta)$. Since $\mathcal{O}_{[\beta+\omega]}(\Tilde{x}_\beta)=\mathcal{O}_{[\beta+1]}(\Tilde{x}_\beta)\cup(\cup_{j=1}^\infty y_j)$, there is no $v\in\mathcal{O}_\infty(\Tilde{x}_\beta)$ such that, for all $u\in \mathcal{O}_{[\beta+\omega]}(\Tilde{x}_\beta)$, $u <_\infty v <_\infty z$. Therefore, $z=[f_\lambda]^\lambda(\Tilde{x}_\beta)=\{f_\lambda\}^\lambda(\Tilde{x}_\beta)$. Finally, since the orbit of $z$ is $\omega$-open, it follows that the transfinite orbit of $\Tilde{x}_\beta$ is $(\lambda+\omega)$-open.

\item Suppose that $\lambda$ is a countable limit ordinal which is not of the form $\beta+\omega$. We can thus assume that $\{\beta_j\}_{j\in\mathbb{N}}$ is a sequence of limit ordinals such that $\lambda=\sup_{j\in\mathbb{N}}\beta_j$. By the inductive hypothesis, for every $j\in\mathbb{N}$, there exist a sequentially continuous TDS $( \mathbb{I},\{f_{\beta_j,n}\}_{n\in\mathbb{N}})$ and a point $x_j\in\mathbb{I}$ with the properties $\boldsymbol{P}_{\beta_j}$.
 
 For every $j\in\mathbb{N}$, we define the maps
 \[
 \Tilde{f}_{\beta_j,n}(x):=\frac{1}{2^{j+1}}+\frac{1}{2^{j+1}} f_{\beta_j,n}\left(2^{j+1}\left(x-\frac{1}{2^j+1}\right)\right),
 \]
 and its pointwise limit (notice that the convergence is finite): 
 \[
 \Tilde{f}_{\beta_j}(x):=\frac{1}{2^{j+1}}+\frac{1}{2^{j+1}} f_{\beta_j}\left(2^{j+1} \left(x-\frac{1}{2^{j+1}}\right)\right).
 \]
  For every $j\in\mathbb{N}$, consider the sequentially continuous TDS $([\frac{1}{2^{j+1}},\frac{1}{2^j}],\{\Tilde{f}_{\beta_j,n}\}_{n\in\mathbb{N}})$ and set $\Tilde{x}_j:= \frac{x_j}{2^{j+1}}+\frac{1}{2^{j+1}}$, $y_j:=\{\Tilde{f}_{\beta_j}\}^{\beta_j}(\Tilde{x}_j)=[\Tilde{f}_{\beta_j}]^{\beta_j}(\Tilde{x}_j)$.  By Lemma \ref{nbound_seq}, for every $j\in\mathbb{N}$, the sequence $\{k_n^{\beta_j}\}_{n>N^{\beta_j}}$ is unbounded. We set $N^j:=N^{\beta_j}$ and $k_n^j:=k_n^{\beta_j}$, so that  $$\Tilde{f}_{\beta_j,n}^{k^j_n}(\Tilde{x}_j)=y_j \quad \forall n>N^j.$$ 
  We can assume that the sequence $\{\Tilde{f}_{\beta_j,n}\}_{n\in\mathbb{N}}$ is such that $\Tilde{f}_{\beta_j,n}^{k_n^j}(\Tilde{x}_j)\neq \Tilde{f}_{\beta_j}(\Tilde{f}_{\beta_j,n}^{k_n^j-1}(\Tilde{x}_j))$ for all $n>N^j$. Set $z_{j,n}:=\Tilde{f}_{\beta_j,n}^{k_n^j-1}(\Tilde{x}_j)$. Then there exists $\epsilon_{j,n}>0$ such that 
 \begin{equation}\label{cond_fc}
 \Tilde{f}_{\beta_j,n}(x)\neq \Tilde{f}_{\beta_j}(x) \quad\text{ for all }x\in B_{\epsilon_{j,n}}(z_{j,n}),
 \end{equation}
 where $B_{\epsilon_{j,n}}(z_{j,n})=(a_{j,n},b_{j,n}):=(z_{j,n}-\epsilon_{j,n}, z_{j,n}+\epsilon_{j,n})$. Take $z\in (\frac 1 2,1)$ and let $$g:\left[\frac 1 2,1\right]\rightarrow \left[\frac 1 2,1\right]$$ be a continuous function such that i) $\frac 1 2$ and $1$ fixed points for $g$; ii) $z$ is not a periodic or ultimately periodic point for $g$. For any $j\in\mathbb{N}$, we define a sequence of functions $\overline{f}_{\beta_j,n}:[\frac{1}{2^{j+1}},\frac{1}{2^j}]\circlearrowleft$ as follows. For all $n<N^j$, we set $\overline{f}_{\beta_j,n}(x):= \Tilde{f}_{\beta_j,n}(x)$ for all $x\in[\frac{1}{2^{j+1}},\frac{1}{2^j}]$. For $n>N^j$, we set 
 
  \vspace{0.2cm}
  
 \begin{equation*}\label{eq_pert_lim}
    \overline{f}_{\beta_j,n}(x)=
\begin{cases}
    \Tilde{f}_{\beta_j,n}(x) &\quad \text{if }x\in [0,\frac 1 2)\setminus (a_{j,n},b_{j,n})\\
    \frac{\Tilde{x}_{j+1}- \Tilde{f}_{\beta_j,n}(a_{j,n})}{z_{j,n}-a_n}(x-z_{j,n})+\Tilde{x}_{j+1}  &\quad \text{if }x\in (a_{j,n},z_{j,n}]\\
    \frac{\Tilde{x}_{j+1}-\Tilde{f}_{\beta_j,n}(b_{j,n})}{z_{j,n}-b_n}(x-z_{j,n})+\Tilde{x}_{j+1} &\quad \text{if }x\in (z_{j,n},b_{j,n}).
\end{cases}
\end{equation*}

 \vspace{0.2cm}
 
 By construction $\overline{f}_{\beta_j,n}(z_{j,n})=\Tilde{x}_{j+1}\in [\frac{1}{2^{j+2}},\frac{1}{2^{j+1}}]$ and $\overline{f}_{\beta_j,n}$ is  continuous for all $j\in\mathbb{N}$. Moreover, by \eqref{cond_fc} and since $\Tilde{f}_{\beta_j,n} \dot{\longrightarrow} \Tilde{f}_{\beta_j}$, we have $\overline{f}_{\beta_j,n} \dot{\longrightarrow} \Tilde{f}_{\beta_j}$ for all $j\in\mathbb{N}$. 
 
 For every $n\in\mathbb{N}$, we define a continuous functions $\overline{f}_{\lambda,n}:\mathbb{I} \rightarrow \mathbb{I}$ as follows. We set $\overline{f}_{\lambda,n}(0)=0$, $\overline{f}_{\lambda,n}(x)=g(x)$ for all $x\in [\frac 1 2,1]$, and $\overline{f}_{\lambda,n}(x)=\overline{f}_{\beta_j,n}(x)$ whenever $x\in [\frac{1}{2^{j+1}},\frac{1}{2^j})$ for some $j\in\mathbb{N}$.
 
 We can now define a sequence of functions $f_{\lambda,n}:\mathbb{I} \rightarrow \mathbb{I}$ as follows.
 Set $f_{\lambda,n}(x)=\overline{f}_{\lambda,n}(x)$ for all $x\in\mathbb{I}$ and for all $n<N^1$. Let $c_n:=\max\{k\le n : N^k\le n\}$, then, for $n\ge N^1$, we set

 \vspace{0.2cm}
 
\begin{equation*}
    f_{\lambda,n}(x)=
\begin{cases}
    \overline{f}_{\lambda,n}(x) &\quad \text{if }x\in [0,\frac 1 2)\setminus (a_{c_n,n},b_{c_n,n})\\
    \frac{z-\overline{f}_{\lambda,n}(a_{c_n,n})}{z_{c_n,n}-a_{c_n,n}}(x-z_{c_n,n})+z  &\quad \text{if }x\in (a_{c_n,n},z_{c_n,n}]\\
    \frac{z-\overline{f}_{\lambda,n}(b_{c_n,n})}{z_{c_n,n}-b_{c_n,n}}(x-z_{c_n,n})+z &\quad \text{if }x\in (z_{c_n,n},b_{c_n,n}).
\end{cases}
\end{equation*}

 \vspace{0.2cm}
 
 Therefore, we have $f_{\lambda,n}(z_{c_n,n})=z$ for all $n\ge N^1$ and $f_{\lambda,n}$ is continuous for all $n\in\mathbb{N}$. Let us define $f_\lambda:\mathbb{I}\rightarrow \mathbb{I}$ as $f_\lambda |_{[1/2,1]}=g$, $f_\lambda(0)=0$ and  $f_\lambda |_{[\frac{1}{2^{j+1}},\frac{1}{2^j})}=\overline{f}_{\beta_j}$ for all $j\in\mathbb{N}$. Notice that $f_\lambda$ is a continuous function. Since the sequence $\{c_n\}_{n\ge N^1}$ is increasing and not ultimately constant, it follows that $f_{\lambda,n}\dot{\longrightarrow} f_\lambda$. This means that $(\mathbb{I},\{f_\lambda\})$ is a sequentially continuous TDS with the property $\boldsymbol{P}_\lambda$-1. We want to show that the point $\Tilde{x}_1\in\mathbb{I}$ verifies $\boldsymbol{P}_\lambda$-2 and $\boldsymbol{P}_\lambda$-3. By construction, we have that $$\Tilde{x}_j=\{f_\lambda\}^{\sum_{i=1}^{j-1}\beta_i}(\Tilde{x}_1)=[f_\lambda]^{\sum_{i=1}^{j-1}\beta_i}(\Tilde{x}_1)$$ for all $j\ge 2$. 
 Moreover, $z\in \mathcal{O}_n(\Tilde{x}_1)$ for all $n\ge N^1$, which implies $z\in\mathcal{O}_\infty(\Tilde{x}_1)$. 
 By the properties of the sequence $\{c_n\}_{n\in\mathbb{N}}$, for every $j\in\mathbb{N}$, we have that $z>_\infty u$, for all $u\in  \mathcal{O}_{\left[\sum_{i=1}^j\beta_i\right]}(\Tilde{x}_1)$ and there is no $y\in \mathcal{O}_\infty(\Tilde{x}_1)$ such that $z>_\infty y>_\infty u$ for all $u\in \mathcal{O}_{\left[\sum_{i=1}^j\beta_i\right]}(\Tilde{x}_1)$. It follows that 
 $$
 z=[f_\lambda]^\alpha(\Tilde{x}_1)=\{f_\lambda\}^\alpha(\Tilde{x}_1)\neq\emptyset
 $$ 
 with 
 \[
 \alpha=\sup_{j\in\mathbb{N}}\sum_{i=1}^j\beta_i\ge \sup_{j\in\mathbb{N}}\beta_j=\lambda.
 \]
  \end{enumerate}
\end{proof}

\begin{rem}
In the previous proof, one cannot replace $\lambda$ with $\omega_1$, because no countable sequence of ordinals converges to it. To obtain a sequentially continuous TDS having ordinal degree $\omega_1$ one should have a set of points $\{x_\iota\}_{\iota\in S}$, with $|S|\ge \aleph_1$, such that $D(x_\iota)=\lambda_\iota$ and $\lambda_\iota\to\omega_1$.
\end{rem}

\section{Transfinite dynamical relations} \label{sec5}
The topological dynamical relations introduced by E. Akin \cite[Chapter 1]{akin1993general} represent one of the cleanest paths towards the topological theory of attractors. Our main aim is to lay the foundation for an analogous theory for transfinite attractors, so we start by adapting some of Akin's concepts to the transfinite case. While certain ideas transfer formally,
the transfinite setting raises structural issues that do not occur in ordinary dynamics. We give first some generalizations to transfinite systems of some standard regularity properties for finite systems.

\begin{defi}[\textbf{Transfinite continuity}] \label{cont___}
Assume that $\lambda\le\omega_1$ is a limit ordinal and consider a TDS $(X,\{f\})$ and a point $x\in X$ such that $D(x)\ge\lambda$.
We say that $\{f\}$ is \emph{$\lambda$-continuous} at $x$ if, for every $\beta<\lambda$, the map \eqref{continuity__}
is continuous at $x$. 
\end{defi}

\begin{defi}
Assume that $\lambda<\omega_1$ is a limit ordinal and consider a TDS $(X,\{f\})$.  We say that $S\subseteq X$ is \emph{$\lambda$-closed} if $\{f\}^\beta(\overline{S})$ is closed for every $\beta<\lambda$.
\end{defi}
Clearly, in a finite topological dynamical system, by compactness every set is $\omega$-closed.

\begin{defi}\label{regularity_}
For $\lambda<\omega_1$, a TDS $(X,\{f\})$ such that $\mathfrak{D}(X,\{f\})\ge\lambda$ is called \emph{$\lambda$-regular} (we will write in short $\lambda$-TDS) if \eqref{continuity__} is a continuous map for every $\beta<\lambda$. 
The system $(X,\{f\})$ is called \emph{$\lambda$-closed} if \eqref{continuity__} is a closed map for every $\beta<\lambda$. 
The system $(X,\{f\})$ is called \emph{$\lambda$-normal} if it is both $\lambda$-regular and $\lambda$-closed. 
\end{defi}

    Notice that $\{f\}$ is $\lambda$-closed if and only if every set $S\subseteq X$ is $\lambda$-closed.
    We recall that a Baire space is a topological space such that every countable intersection of open, dense sets is dense, and that every complete metric space is Baire (see for instance \cite{munkres2000topology}, p. 295).
\begin{defi}\label{lambda_star}
If a $\lambda$-regular ($\lambda$-closed, $\lambda$-normal) system $(X,\{f\})$ is such that $X^\lambda$ is a Baire space, we say that it is $\lambda^*$-regular ($\lambda^*$-closed, $\lambda^*$-normal). 
\end{defi}

In general, $\lambda$-continuity does not imply that $\{f\}^\beta$ is uniformly continuous for $\beta<\lambda$, because the topological subspace $X^{\beta+1}$ can fail to be compact. A case of continuous but not uniformly continuous transfinite iteration is  shown in Example \ref{ex_attr_cyc}.
Of course every continuous finite topological dynamical system $(X,f)$ is $\omega$-regular. Moreover, assuming $X$ compact, as we do throughout, it is also $\omega^*$-normal and, thus, $\omega$-normal.
It is easy to see that the Example \ref{exa1} is $(\omega\cdot 2)$-normal.

It is important to appreciate that being $\lambda$-continuous and $\lambda$-closed are rather strong requirements. In the following result we see a set of (quite restrictive) assumptions under which $\lambda$-continuity and $\lambda$-regularity always fail alredy at level $(\omega+1)$.
\begin{prop} \label{fail_cont}
    Let $(X,\{f\})$ be a TDS. Suppose that $X^{\omega+1}=X$ and that $(X,f)$ is a minimal system. Then:
    \begin{enumerate} 
    \item if $|\{f\}^\omega(X)|\ge 2$, then $\{f\}^\omega$ is not continuous at any point;
    \item if $|\{f\}^\omega(X)|\ge\aleph_0$, then $\{f\}^\omega$ is not closed.
    \end{enumerate}
\end{prop}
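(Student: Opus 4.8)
The plan is to exploit minimality of $(X,f)$: if $\{f\}^\omega$ were continuous at one point it would, by the density of orbits, be forced to agree with a continuous map everywhere, and this will clash with $|\{f\}^\omega(X)|\ge 2$ together with the fact that $\{f\}^\omega$ is constant along $f$-orbits in a suitable sense. More precisely, I would first record the key structural fact behind both statements: for every $x\in X$ one has $\{f\}^{\omega+1}(x)=f(\{f\}^\omega(x))$ by Lemma~\ref{lem_exist_iter}, and more importantly $\{f\}^\omega(f(x))$ relates to $\{f\}^\omega(x)$. Indeed, from the composition rule \eqref{composition1}–\eqref{composition2} and the identity $1+\omega=\omega$, whenever $\{f\}^\omega(x)$ exists we get $\{f\}^\omega(f(x))=\{f\}^{1+\omega}(x)=\{f\}^\omega(x)$ — so $\{f\}^\omega$ is \emph{$f$-invariant}: it is constant on each $f$-orbit. (One must check $\{f\}^\omega(f(x))$ is genuinely defined and equals this; since $X^{\omega+1}=X$ it is defined, and the composition rule applies because $f(x)=\{f\}^1(x)$ and $\{f\}^\omega(f(x))\ne\emptyset$.)

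For part (1): suppose $\{f\}^\omega$ is continuous at some $x_0\in X$. Pick $x,y\in X$ with $\{f\}^\omega(x)\ne\{f\}^\omega(y)$, which exist since $|\{f\}^\omega(X)|\ge 2$. By minimality, $\mathcal O(x_0)$ is dense in $X$, so there are sequences $f^{n_k}(x_0)\to x$ and $f^{m_k}(x_0)\to y$. By $f$-invariance, $\{f\}^\omega(f^{n_k}(x_0))=\{f\}^\omega(x_0)$ for all $k$, and likewise $\{f\}^\omega(f^{m_k}(x_0))=\{f\}^\omega(x_0)$. If $\{f\}^\omega$ were continuous at $x_0$ this is not yet a contradiction — continuity at $x_0$ only controls sequences converging to $x_0$. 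So instead I would use minimality more carefully: the set of continuity points of $\{f\}^\omega$, if nonempty, and the $f$-invariance, propagate continuity along the dense orbit. Concretely, if $\{f\}^\omega$ is continuous at $x_0$, then since $\{f\}^\omega\circ f=\{f\}^\omega$ and $f$ is continuous, $\{f\}^\omega$ is continuous at every preimage $f^{-k}(x_0)$; but a cleaner route is: continuity at $x_0$ plus constancy on the dense set $\mathcal O(x_0)$ (where it equals $\{f\}^\omega(x_0)$, using $f$-invariance and that $x_0$ itself has this value or we shift to $f(x_0)$) forces $\{f\}^\omega$ to be constant on all of $X$ by taking limits, contradicting $|\{f\}^\omega(X)|\ge 2$. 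The subtlety to nail down is that $\{f\}^\omega(x_0)=\{f\}^\omega(f^n(x_0))$ for all $n\ge 1$, so $\{f\}^\omega$ takes a single value on the dense set $\mathcal O(x_0)$; continuity at $x_0$ then pins $\{f\}^\omega(x_0)$ to be a limit of that constant value, and since every point of $X$ is a limit of points of $\mathcal O(x_0)$, if $\{f\}^\omega$ were continuous \emph{at that point too} it would equal the same constant — but we only assumed continuity at $x_0$. Hence the real argument must be: continuity at $x_0$ is used to show $\{f\}^\omega(x_0)=\{f\}^\omega(x_0)$ trivially, so one instead argues by contradiction that \emph{no} point can be a continuity point, by showing any continuity point $p$ is approached by orbit points carrying a fixed $\{f\}^\omega$-value while $p$ itself, being approachable from orbits of \emph{different} base points, would need two values.

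For part (2): if $\{f\}^\omega$ is closed, then $\{f\}^\omega(X)=\{f\}^\omega(\overline X)$ is closed, hence compact, hence (being a subset of the compact metric $X$) it is a compact countable-or-larger set; more to the point, the $f$-invariance gives $f(\{f\}^\omega(X))\subseteq\{f\}^\omega(X)$ via $\{f\}^{\omega+1}=f\circ\{f\}^\omega$, and closedness forces $\{f\}^\omega(X)$ to be a closed invariant set on which the dynamics is constrained; I would derive a contradiction with $|\{f\}^\omega(X)|\ge\aleph_0$ by showing that a closed $\{f\}^\omega$ whose image is infinite must have an accumulation point $q\in\{f\}^\omega(X)$, and then that the fiber structure of $\{f\}^\omega$ (again using $f$-invariance, so fibers are unions of $f$-orbits, and by minimality each fiber is either empty or dense) is incompatible with having infinitely many distinct values while being closed: a closed map with dense fibers and infinite image cannot exist on a compact space, because two nearby values $q_1,q_2$ in the image would have dense disjoint fibers, and a closed set meeting a dense set meets everything in the closure, forcing the fibers to be all of $X$, hence a single value.

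\textbf{Main obstacle.} The delicate point throughout is handling the fact that $\{f\}^\omega$ is, a priori, only defined (not continuous, not closed) and that ``continuity at a single point'' is a weak hypothesis; the leverage comes entirely from combining $f$-invariance ($\{f\}^\omega\circ f=\{f\}^\omega$, from $1+\omega=\omega$ and the composition rule) with minimality, which makes every nonempty fiber of $\{f\}^\omega$ dense. Turning ``dense disjoint fibers'' into a contradiction with closedness (part 2) and with pointwise continuity (part 1) is the crux; I expect part (1) to require the most care, since one must rule out \emph{every} continuity point rather than exhibit a failure at a convenient one — the resolution being that a continuity point would have to be a limit of orbit points of two base points carrying different $\{f\}^\omega$-values, which is impossible.
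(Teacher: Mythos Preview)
Your key observation is correct and matches the paper's: from the composition rule and $k+\omega=\omega$ one has $\{f\}^\omega(f^k(x))=\{f\}^\omega(x)$ for every $k\ge 1$, so $\{f\}^\omega$ is constant along $f$-orbits, and by minimality every nonempty fiber is dense. Both parts of the proposition are consequences of this.

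For part (1), you eventually arrive at the right idea, but the route is unnecessarily circuitous and the final formulation (``two base points carrying different values'') is slightly off. The paper's argument is a one-line application of your fiber observation: given \emph{any} $x$, pick $w$ with $\{f\}^\omega(w)=z\ne y=\{f\}^\omega(x)$; the orbit of $w$ is dense, so every neighborhood of $x$ contains some $t=f^k(w)$, and $\{f\}^\omega(t)=z$ stays at fixed positive distance from $y$. That is the whole proof. You do not need two auxiliary base points; $x$ itself supplies one value and $w$ the other.

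For part (2), your argument has a genuine gap. The claim ``a closed set meeting a dense set meets everything in the closure, forcing the fibers to be all of $X$'' is not correct, and nothing in your sketch actually produces a closed set with non-closed image. The paper exploits your dense-fiber observation in a direct construction instead: since $|\{f\}^\omega(X)|\ge\aleph_0$, choose $z_n$ with $w_n:=\{f\}^\omega(z_n)$ converging to some $w$ distinct from every $w_n$ and from $y:=\{f\}^\omega(x)$; by density of each $\mathcal O(z_n)$, pick $t_n\in\mathcal O(z_n)$ with $t_n\to x$. Then $\{x\}\cup\{t_n\}_{n\in\mathbb N}$ is closed (a convergent sequence with its limit), while its image $\{y\}\cup\{w_n\}_{n\in\mathbb N}$ omits the accumulation point $w$ and is therefore not closed. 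Your dense-fiber insight is exactly what makes this construction possible; you just need to use it to \emph{build} a witness rather than to argue abstractly.
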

\begin{proof}
Pick $x\in X$ and set $\{f\}^{\omega}(x)=y$. 
\begin{enumerate}
  
    \item Take $z\ne y$ such that $\{y,z\}\subseteq \{f\}^{\omega}(X)$. Let $w\in X$ be such that $\{f\}^{\omega}(w)=z$. Since $(X,f)$ is minimal, in every open neighborhood $U(x)$ of $x$ there is a point $t=f^k(w)$ for a certain $k\in\mathbb{N}$. We have: $$\{f\}^{\omega}(t)=\{f\}^{\omega}(\{f\}^k(w))=\{f\}^{k+\omega}(w)=\{f\}^\omega(w)=z,$$ and since $d(y,z)>0$ is independent of $U(x)$, the map $\{f\}^\omega$ is discontinuous at $x$.
      \item 
      Take a countable set of points $\{z_n\}_{n\in\mathbb{N}}$ such that, setting $w_n:=\{f\}^\omega(z_n)$, the sequence $\{w_n\}_{n\in\mathbb{N}}$ converges to a limit point $w$ distinct from every $w_n$ and also from $y$. 
      Let $\{U_n\}_{n\in\mathbb{N}}$ be a set of nested open sets such that $\cap_n U_n=x$. By minimality of $(X,f)$, for every $n\in\mathbb{N}$ we can find a point $t_n\in U_n$ such that $t_n=f^{k_n}(z_n)$ for a certain $k_n\in\mathbb{N}$, so that $$\{f\}^{\omega}(t_n)=\{f\}^{\omega}(\{f\}^{k_n}(z_n))=\{f\}^{k_n+\omega}(z_n)=\{f\}^\omega(z_n)=w_n.$$ Therefore, we have $$\{f\}^\omega(\{x\}\cup\{t_n\}_{n\in\mathbb{N}})=\{y\}\cup\{w_n\}_{n\in\mathbb{N}},$$
      where the set $\{x\}\cup\{t_n\}_{n\in\mathbb{N}}$ is closed since $t_n\longrightarrow x$ as $n$ goes to $\infty$, while the set on the right hand side is not closed since it does not contain the limit point $w$.
\end{enumerate}
\end{proof}
In the following example we can see a case to which Proposition \ref{fail_cont} applies.

\begin{exa}\label{irrational__}
    
    Let $(\mathbb{T}=[0,1),R_a)$ be the irrational rotation on the circle. Set $U_n:=B_{\epsilon_n}(0)$ where $\epsilon_n \to 0$ s $n$ goes to $\infty$.
    Let $g: A:=\{ \mathcal{O}^\mathbb{Z}(x) : x\in \mathbb{T} \}\rightarrow \mathbb{T}$ be a surjective map such that $g(\mathcal{O}^\mathbb{Z}(0))=R_a(0)=a$.
    Notice that the latter exists since $|A|=|\mathbb{T}|$. 
    For every $x\in\mathbb{T}$, we set $B_n(x):=\mathcal{O}^\mathbb{Z}(x)\cap U_n$. 
    Let us define a sequence $\{R_{a,n}\}_{n\in\mathbb{N}}$ where $R_{a,n}:\mathbb{T}\circlearrowleft$ is given by:
    $$
    R_{a,n}(x)=R_a(x) \text{ if $x\notin B_n(x)$}\quad,\quad R_{a.n}(x)=g(\mathcal{O}^\mathbb{Z}(x)) \text{ if $x\in B_n(x)$}.
    $$
    Since $\epsilon_n\to 0$ as $n\to\infty$ and $R_{a,n}(0)=R_a(0)$ for all $n\in\mathbb{N}$, we have that $R_{a,n}\dot{\longrightarrow} R_a$, so that $(\mathbb{T},\{R_a\})$ is a TDS. 
    Since all the orbits are disjoint and dense in $\mathbb{T}$, it follows that $\{f\}^\omega(x)=g(\mathcal{O}^\mathbb{Z}(x))$ for all $x\in\mathbb{T}$. Since the map $g$ is surjective, we have that $\{f\}^\omega(\mathbb{T})=\mathbb{T}$.

    In this example, the iteration of order $\omega$ is defined on every point of the space, while no point has iteration of order $\ge\omega^2$. Therefore, for every $x\in \mathbb{T}$, we have $$D(x)=\mathfrak{D}(\mathbb{T},\{R_\alpha\})=\omega^2.$$ Moreover, the system is minimal and, consistently with Proposition \ref{fail_cont}, the map $\{f\}^\omega$ is everywhere discontinuous.
\end{exa}

\begin{defi}[\textbf{Transfinite dynamical relations}]
\label{def_tdr}
Let $(X,\{f\})$ be a TDS and  $\lambda\le\omega_1$ a limit ordinal. We define the following relations:

\begin{enumerate}
\item \emph{Transfinite orbit relation}: 

$(x,y)\in \lambda\mathcal{\{H\}}\ \text{($x$ ``hits" $y$) } \iff \exists\beta<\lambda:\{f\}^\beta(x)=y$

\item \emph{Transfinite recurrence relation}:

$(x,y)\in \lambda\mathcal{\{R\}} \iff$ for every open neighborhood $ U(y)$,   there exists $\beta<\lambda : \{f\}^\beta (x)\in U(y)$.

\item \emph{Transfinite nonwandering relation}:

$(x,y)\in \lambda\mathcal{\{N\}}\iff$ for every pair of open neighborhoods $U(y)$ and $V(x)$,  there exist $z\in V(x)$ and $\beta<\lambda$ such that $\{f\}^\beta (z)\in U(y)$.

\item \emph{Transfinite chain-recurrence relation}:

For $n\in\mathbb{N}$ we say that the pair 
$$(\{x_0,\dots,x_n\},\{\lambda_0,\dots,\lambda_{n-1}\})$$
is an \emph{$(\epsilon,\lambda)$-chain} from $x$ to $y$ (we may say as well ``connecting $x$ and $y$" or ``between $x$ and $y$") if we have:
\begin{enumerate}
    \item $x_i\in X$ for $i=0,\dots, n$, with $x_0=x, x_n=y$;
    \item the ordinals $\lambda_0,\ldots \lambda_{n-1}$ are such that $0<\lambda_i<\lambda$ and $\{f\}^{\lambda_i}(x_i)\neq \emptyset$ for $i=0,\ldots,n-1$;
    \item $d(\{f\}^{\lambda_i}(x_i),x_{i+1})<\epsilon$ for $i=0,\dots, n-1$.
\end{enumerate}

Set now:

$(x,y)\in \lambda\mathcal{\{C\}}\iff\forall \epsilon>0$ there exists an $(\epsilon,\lambda)$-chain  connecting $x$ and $y$.
\end{enumerate}
For $\mathcal{A}=\mathcal{H},\mathcal{R},\mathcal{N}$ or $\mathcal{C}$, we may write $x\,\lambda\{\mathcal{A}\}\,y$ to mean $(x,y)\in\lambda\{\mathcal{A}\}$. For $S\subset X$, we set 
$$\lambda\{\mathcal{A}\}(S):=\{y\in X: \exists x\in S: x\,\lambda\{\mathcal{A}\}\, y\}.$$
\end{defi}

The relation $\lambda\{\mathcal{C}\}$, just like the usual relation $\mathcal{C}$ in finite systems, is independent of the metric, in the sense that it stays unchanged if $d$ is replaced by a topologically equivalent metric $d'$. In fact, $x\,\lambda\{\mathcal{C}\}\,y$ is equivalent to the claim that, for every neighborhood $U\subseteq X\times X$ of the set $\{(x,x):x\in X\}$, there is a $\lambda$-chain  $$C_U:=\{(\{f\}^{\lambda_i}(x_i),x_{i+1})\}_{0\le i<n}$$ such that $C_U\subseteq U$. We prefer the definition using the metric because, just as it happens with $\mathcal{C}$, it is easier to work with.

\begin{defi} For $\mathcal{A}=\mathcal{H},\mathcal{R},\mathcal{N}$ or $\mathcal{C}$ we set

$$(x,y)\in \mathcal{\{A\}}\iff (x,y)\in \lambda\mathcal{\{A\}}\text{ for some  limit ordinal $\lambda$}.$$
\end{defi}

When $\lambda=\omega$, the previous relations become respectively the usual orbit, recurrence, nonwandering and chain recurrence relations (which we indicate here respectively by $\mathcal{H}$, $\mathcal{R}$, $\mathcal{N}$ and $\mathcal{C}$; often the same symbol $\mathcal{O}$ is used for both the orbit of a point and the orbit relation; to avoid potential confusion here we introduced the symbol $\mathcal{H}$), as defined for instance in \cite[Chapter 1]{akin1993general}. In particular, 
\begin{equation}
\label{chain_}
    \centering
    \begin{tabular}{lc}
       $x\,\mathcal{C}\,y\iff$  &  $\forall \epsilon>0,\  \exists \,n>0,\  \exists\, x_0,x_1,\ldots,x_n \text{, $x_0=x,\ x_n=y$, and }$\\
        & \\
         & $d(f(x_i),x_{i+1})<\epsilon, \quad\forall i\le n-1$.
    \end{tabular}
\end{equation}

However, notice that, although $(x,y)\in \omega\{\mathcal{C}\}\iff (x,y)\in\mathcal{C}$, an $(\epsilon,\omega)$-chain does not generally correspond to an (ordinary) $\epsilon$-chain with the same number of points.

\begin{prop}
\label{incl_top_rel}
Let $(X,\{f\})$ be a TDS. Then, for every countable ordinal $\lambda$, we have
\begin{equation*}
    \lambda\{\mathcal{H}\}\subseteq\lambda\{\mathcal{R}\}\subseteq\lambda\{\mathcal{N}\}\subseteq\lambda\{\mathcal{C}\}.
\end{equation*}
\end{prop}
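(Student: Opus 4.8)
The plan is to prove the three inclusions in turn: the first two are purely formal, and only the third one carries content.

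For $\lambda\{\mathcal{H}\}\subseteq\lambda\{\mathcal{R}\}$: if $(x,y)\in\lambda\{\mathcal{H}\}$, fix $\beta$ with $0<\beta<\lambda$ and $\{f\}^\beta(x)=y$; then $\{f\}^\beta(x)=y\in U(y)$ for \emph{every} open neighbourhood $U(y)$ of $y$, which is exactly the condition for $(x,y)\in\lambda\{\mathcal{R}\}$. For $\lambda\{\mathcal{R}\}\subseteq\lambda\{\mathcal{N}\}$: given $(x,y)\in\lambda\{\mathcal{R}\}$ and open neighbourhoods $U(y)$, $V(x)$, take as witness the point $z:=x$, which already lies in $V(x)$; the definition of $\lambda\{\mathcal{R}\}$ then produces $\beta<\lambda$ with $\{f\}^\beta(z)=\{f\}^\beta(x)\in U(y)$, so $(x,y)\in\lambda\{\mathcal{N}\}$.

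For $\lambda\{\mathcal{N}\}\subseteq\lambda\{\mathcal{C}\}$ I would adapt the classical argument that the nonwandering relation is contained in the chain-recurrence relation. Fix $(x,y)\in\lambda\{\mathcal{N}\}$ and $\epsilon>0$; the goal is to build an $(\epsilon,\lambda)$-chain from $x$ to $y$. Using continuity of the limit map $f$, choose an open $V(x)\ni x$ with $f(V(x))\subseteq B_{\epsilon/2}(f(x))$ and put $U(y):=B_{\epsilon/2}(y)$; applying the definition of $\lambda\{\mathcal{N}\}$ to this pair yields $z\in V(x)$ and an ordinal $\beta$ with $0<\beta<\lambda$ and $\{f\}^\beta(z)\in B_{\epsilon/2}(y)$ (so $\{f\}^\gamma(z)$ is defined for every $\gamma\le\beta$), together with $d(f(x),f(z))<\epsilon/2$. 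If $\beta$ is finite the chain is the usual one: for $\beta=1$ the single link $(\{x,y\},\{1\})$ works because $d(f(x),y)\le d(f(x),f(z))+d(f(z),y)<\epsilon$; for $\beta=m\ge 2$ take $x_0=x$, $x_i=f^{i}(z)$ for $1\le i\le m-1$, $x_m=y$, with all ordinals equal to $1$, so the interior links vanish, the first link is $d(f(x),f(z))<\epsilon$ and the last is $d(f^{m}(z),y)<\epsilon$ --- here using that finite transfinite iterations coincide with ordinary ones, that $\{f\}^1(x_i)=f(x_i)\neq\emptyset$, and that $1<\lambda$ since $\lambda$ is a limit ordinal.

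The genuinely new case is $\beta\ge\omega$, where $\beta$ has no immediate predecessor one could climb up to one step at a time, and I expect this to be the only real obstacle. My plan is to use the two-link chain $x_0=x$, $x_1=f(z)$, $x_2=y$ with ordinals $\lambda_0=1$ and $\lambda_1=\beta$: the first link is $d(f(x),f(z))<\epsilon/2$, so the whole argument reduces to showing that $\{f\}^\beta(f(z))$ exists and equals $\{f\}^\beta(z)$. Since $1+\beta=\beta$ for infinite $\beta$ and $\{f\}^1(z)=f(z)$, the composition rule of Definition~\ref{cycle} forces $\{f\}^\beta(f(z))=\{f\}^{1+\beta}(z)=\{f\}^\beta(z)$ as soon as $\{f\}^\beta(f(z))$ is known to exist; existence in turn follows from Lemma~\ref{lem_inclus_} applied with exponent $1$, which shows that $\mathcal{O}_\infty(f(z))$ differs from $\mathcal{O}_\infty(z)$ at most by deletion of its $<_\infty$-least element $f(z)$, so that the well-ordered initial segment of $\mathcal{O}_\infty(z)$ reaching position $\beta$ is inherited by $\mathcal{O}_\infty(f(z))$. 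Granting this, $\{f\}^\beta(f(z))=\{f\}^\beta(z)\in B_{\epsilon/2}(y)$, the last link has length $<\epsilon$, and $0<\lambda_0,\lambda_1<\lambda$, so the resulting sequence is a valid $(\epsilon,\lambda)$-chain from $x$ to $y$. Everything outside this shift identity is routine bookkeeping.
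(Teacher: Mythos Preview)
Your proof is correct and follows essentially the same route as the paper's: the first two inclusions are handled identically, and for $\lambda\{\mathcal{N}\}\subseteq\lambda\{\mathcal{C}\}$ both arguments exploit continuity of $f$ to build a chain passing through $f(z)$, splitting into the finite case (where you spell out the full $m$-step chain while the paper compresses it into the three-point chain $(\{x,f(z),y\},\{1,k-1\})$) and the case $\beta\ge\omega$, where both rely on the shift identity $\{f\}^\beta(f(z))=\{f\}^{1+\beta}(z)=\{f\}^\beta(z)$. Your version is in fact slightly more careful: you treat $\beta=1$ separately (the paper's compressed chain would degenerate to $\lambda_1=0$ there), and you sketch via Lemma~\ref{lem_inclus_} why $\{f\}^\beta(f(z))$ exists, whereas the paper simply asserts the identity.
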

\begin{proof} 
Let $(x,y)\in \lambda\{\mathcal{H}\}$, then there exists $\beta<\lambda$ such that $\{f\}^\beta(x)=y$. Hence, for any $U(y)$ open neighborhood of $y$ we have that $\{f\}^\beta(x)=y\in U(y)$, which implies that $(x,y)\in\lambda\{\mathcal{R}\}$.  Let $(x,y)\in\lambda\{\mathcal{R}\}$. Since $x\in V(x)$ for any $V(x)$ open neighborhood of $x$, it follows that $(x,y)\in \lambda\{\mathcal{N}\} $. For the last inclusion, assume that $(x,y)\in\lambda\{\mathcal{N}\}$ and pick $\epsilon>0$. Since $f$ is continuous, we can take $0<\delta<\epsilon$ sufficiently small that
$
f(B_\delta(x))\subseteq B_\epsilon(f(x)).
$
Then there exist $z\in B_\delta(x)$ and $\beta<\lambda$ such that $\{f\}^\beta(z)\in B_\delta(y)$. It follows that
\begin{equation}\label{eq_iterata1}
d(f(x),f(z))<\epsilon,
\end{equation}
 and we can consider two cases:
 
\begin{enumerate}
\item Let us assume that $\beta=k\in\mathbb{N}$. Then the result is just the inclusion  between the usual non-wandering and chain recurrence relations, $\mathcal{N}\subseteq \mathcal{C}$: the pair $(\{x,f(z),y\},\{1,k-1\})$ is an $(\epsilon,\lambda)$-chain from $x$ to $y$. Indeed, recalling \eqref{eq_iterata1}, it is sufficient to observe that
\[
f^{k-1}(f(z))=f^k (z)=\{f\}^\beta(z)\in B_\delta(y)\subset B_\epsilon(y).
\]
\item Let us assume that $\beta\ge \omega$. Consider the pair $(\{x,f(z),y\},\{1,\beta\})$: this is an $(\epsilon,\lambda)$-chain from $x$ to $y$. Indeed, since $\beta\ge \omega$, it follows that 
\[
\{f\}^{\beta}(f(z))=\{f\}^{1+\beta}(z)=\{f\}^\beta(z),
\] 
and by the definition of $\lambda\{\mathcal{N}\}$ we have $\{f\}^\beta(z)\in B_\delta(y)\subsetneq B_\epsilon(y).$
\end{enumerate} 
\end{proof}
\begin{rem}\label{etabeta}
For later purposes, let us point out that, in the previous proof:
\begin{itemize}
    \item in point 1. we deduced from $f^k(z)\in B_\delta(y)$ the existence of an $(\epsilon,\lambda)$-chain in which the last iteration is $k-1$, if $k$ is the iteration assumed to map $z\in B_\delta(x)$ to a point in $B_\delta(y)$;
    \item in point 2. we deduced from $\{f\}^\beta(z)\in B_\delta(y)$ the existence of an $(\epsilon,\lambda)$-chain in which the last iteration is $\beta$, if $\beta$ is the iteration assumed to map $z\in B_\delta(x)$ to a point in $B_\delta(y)$.
\end{itemize} 
\end{rem}

\begin{defi}
 For a relation $\mathcal{A}\subseteq X^2$, we indicate by $|\mathcal{A}|$ the \emph{diagonal} of $\mathcal{A}$, that is the set $|\mathcal{A}|:=\{x\in X:(x,x)\in \mathcal{A}\}$.  
\end{defi}

\begin{cor}
Let $(X,\{f\})$ be a TDS. Then, for every countable ordinal $\lambda$, we have $$|\lambda\{\mathcal{H}\}|\subseteq|\lambda\{\mathcal{R}\}|\subseteq|\lambda\{\mathcal{N}\}|\subseteq|\lambda\{\mathcal{C}\}|.$$
\end{cor}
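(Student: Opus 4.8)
The plan is to derive this immediately from Proposition~\ref{incl_top_rel} together with the trivial observation that the diagonal operator $\mathcal{A}\mapsto|\mathcal{A}|$ is monotone under set inclusion. Concretely, suppose $\mathcal{A}\subseteq\mathcal{B}$ are two subsets of $X^2$; then for any $x\in|\mathcal{A}|$ we have $(x,x)\in\mathcal{A}\subseteq\mathcal{B}$, hence $(x,x)\in\mathcal{B}$, i.e.\ $x\in|\mathcal{B}|$. Thus $|\mathcal{A}|\subseteq|\mathcal{B}|$.

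Applying this three times to the chain $\lambda\{\mathcal{H}\}\subseteq\lambda\{\mathcal{R}\}\subseteq\lambda\{\mathcal{N}\}\subseteq\lambda\{\mathcal{C}\}$ furnished by Proposition~\ref{incl_top_rel} yields $|\lambda\{\mathcal{H}\}|\subseteq|\lambda\{\mathcal{R}\}|\subseteq|\lambda\{\mathcal{N}\}|\subseteq|\lambda\{\mathcal{C}\}|$, which is the assertion. There is no real obstacle: the only content is Proposition~\ref{incl_top_rel}, which has already been established; the passage to diagonals is purely formal. For this reason I would present the proof in a single sentence, simply citing Proposition~\ref{incl_top_rel} and the monotonicity of the diagonal map.

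\begin{proof}
By Proposition~\ref{incl_top_rel} we have $\lambda\{\mathcal{H}\}\subseteq\lambda\{\mathcal{R}\}\subseteq\lambda\{\mathcal{N}\}\subseteq\lambda\{\mathcal{C}\}$. Since the diagonal operator $\mathcal{A}\mapsto|\mathcal{A}|$ is monotone with respect to inclusion (indeed $(x,x)\in\mathcal{A}\subseteq\mathcal{B}$ gives $x\in|\mathcal{B}|$ whenever $x\in|\mathcal{A}|$), the claimed chain of inclusions follows at once.
\end{proof}
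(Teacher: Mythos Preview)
Your proof is correct and essentially the same as the paper's: both derive the corollary immediately from Proposition~\ref{incl_top_rel}, the paper by specializing to pairs of the form $(x,x)$, you by invoking monotonicity of the diagonal operator, which amounts to the same observation.
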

\begin{proof}
  It is sufficient to take the initial pair of the form $(x,x)$ in the proof of Proposition \ref{incl_top_rel}. 
\end{proof}
\begin{defi}
Let us indicate by $\mathcal{H}_n,\ \mathcal{R}_n,\ \mathcal{N}_n$ and $\mathcal{C}_n$ respectively the usual orbit, recurrence, nonwandering and chain recurrence relations for $(X,f_n)$.
   For $\mathcal{A}=\mathcal{H},\mathcal{R},\mathcal{N}$ or $\mathcal{C}$ we set
   \[ \mathcal{A}_\infty:= \liminf_{n\to\infty} \mathcal{A}_n. \]
\end{defi}

The transfinite orbit of a point is always a subset of the limit inferior of the set of finite orbits $\{\mathcal{O}_n(x)\}_{n\in\mathbb{N}}$, that is $\{\mathcal{O}\}(x)\subseteq \mathcal{O}_\infty(x)$. A similar fact is true for the transfinite orbit relation, that is $\{\mathcal{H}\}\subseteq \mathcal{H}_\infty$. Indeed, if $(x,y)\in \{\mathcal{H}\}$, then $(x,y)\in \lambda\{\mathcal{H}\}$ for some limit ordinal $\lambda$, which means that $y\in\mathcal{O}_\infty(x)$. 

 An analogous inclusion, however, does not hold for the other transfinite topological relations. To see that, consider Example \ref{exa1} with $h=z$. We have that $(z,0)\in \mathcal{R}\subseteq \{\mathcal{R}\}$, but $(z,0)\notin \mathcal{R}_\infty$. By Proposition \ref{incl_top_rel}, this also implies that $\{\mathcal{N}\}$ and $\{\mathcal{C}\}$ are in general not subsets of, respectively, $\mathcal{N}_\infty$ and $\mathcal{C}_\infty$, because $(z,0)\notin\mathcal{C}_{\infty}\supseteq\mathcal{N}_\infty\supseteq\mathcal{R}_\infty$.

The converse inclusions do not hold either, in general, in the sense that it can be: \begin{equation}\label{notsubset_}
\mathcal{H}_\infty\not\subseteq \{\mathcal{H}\},\ \mathcal{R}_\infty\not\subseteq \{\mathcal{R}\},\ \mathcal{N}_\infty\not\subseteq \{\mathcal{N}\},\   \mathcal{C}_\infty\not\subseteq \{\mathcal{C}\}.
\end{equation}
Indeed, consider again the example in Fig. \ref{fig_A2A3}b. Since $z\in \mathcal{O}_n(z)$ for all $n\in\mathbb{N}$, we have that $(z,z)\in \mathcal{H}_\infty$. On the other hand, since $\{f\}^\omega(z)=\emptyset$ and $z\notin \mathcal{O}(z)$, it follows that $(z,z)\notin \{\mathcal{H}\}$. By Proposition \ref{incl_top_rel}, this provides an example also for the other inclusion negations in \eqref{notsubset_}.

The following definition refines the concept of recurrence by adding some further requirement to the mere $\lambda$-recurrence. In particular, we want to specify whether i) a point $x$ recurs with $y$ ``not before" a certain limit ordinal $\lambda$; ii) the same applies to every point in the $\lambda$-orbit of $x$.
\begin{defi}\label{proper_strict}
    Let $(x,y) \in \lambda\{\mathcal{R}\}$ for some limit ordinal $\lambda$. We say that 
    \begin{itemize}
        \item $x$ is \emph{properly $\lambda$-recurrent} with $y$ if, for all limit ordinals $\beta<\lambda$, we have $(x,y)\notin \beta\{\mathcal{R}\}$;
        \item $x$ is \emph{strictly $\lambda$-recurrent} with $y$ if there is no $z\in\mathcal{O}_{\{\lambda\}}(x)$ such that   $(z,y)\in \beta\{\mathcal{R}\}$ for some limit ordinal $\beta<\lambda$.
    \end{itemize}
\end{defi}
 Notice that every recurrent pair in the ordinary sense, which in our notation can be indicated by $(x,y)\in\omega\{\mathcal{R}\}$, is always properly and strictly recurrent as there are no limit ordinals below $\omega$.
\begin{lem}\label{strictprop_}
Let $\lambda$ be a countable limit ordinal.
  \begin{enumerate}
      \item Strict $\lambda$-recurrence implies proper $\lambda$-recurrence;
      \item if $\lambda$ is an indecomposable ordinal, then strict $\lambda$-recurrence is equivalent to proper $\lambda$-recurrence.
  \end{enumerate}  
\end{lem}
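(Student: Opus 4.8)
The plan is to prove both parts by contraposition, the only real tools being the composition law \eqref{composition1}--\eqref{composition2} for transfinite iterations together with two arithmetic facts about ordinals: $1+\beta=\beta$ for every limit ordinal $\beta$, and $\gamma+\beta<\lambda$ whenever $\lambda$ is additively indecomposable and $\gamma,\beta<\lambda$. No continuity or regularity hypotheses enter.

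\textbf{Part 1 (strict $\Rightarrow$ proper).} Suppose $x$ is \emph{not} properly $\lambda$-recurrent with $y$, i.e.\ $(x,y)\in\beta\{\mathcal{R}\}$ for some limit ordinal $\beta<\lambda$. I would take $z:=\{f\}^1(x)=f(x)$, which lies in $\mathcal{O}_{\{\lambda\}}(x)$ since $1<\lambda$, and show $(z,y)\in\beta\{\mathcal{R}\}$, so that $x$ fails to be strictly $\lambda$-recurrent with $y$. The point is that by \eqref{composition1}--\eqref{composition2} one has $\{f\}^{\alpha'}(z)=\{f\}^{1+\alpha'}(x)$ for every $\alpha'$, and since $\beta$ is a limit ordinal the map $\alpha'\mapsto 1+\alpha'$ carries $[1,\beta)$ onto $[2,\beta)$; hence the ``hits'' of $z$ below $\beta$ are exactly $\{\{f\}^\alpha(x):2\le\alpha<\beta\}$. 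Given any open neighbourhood $U(y)$, recurrence of $x$ with $y$ at level $\beta$ yields some $\alpha\in[1,\beta)$ with $\{f\}^\alpha(x)\in U(y)$; if $f(x)\ne y$ one first shrinks $U(y)$ to a neighbourhood omitting $f(x)$, so that the new witness $\alpha$ is forced to be $\ge 2$, producing a point of $\mathcal{O}_{\{\beta\}}(z)$ inside $U(y)$. Thus $(z,y)\in\beta\{\mathcal{R}\}$. The one exceptional configuration is $f(x)=y$ (the orbit of $x$ hits $y$ already at the first step): then $z=f(x)=y$ is itself in $\mathcal{O}_{\{\lambda\}}(x)$ and one must look directly at the transfinite orbit of $y$ sitting inside $\mathcal{O}_{\{\lambda\}}(x)$. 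I expect this degenerate first-hit case to be the only genuinely fiddly point of the lemma, since there the ``shift by one'' device collapses.

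\textbf{Part 2 (equivalence for indecomposable $\lambda$).} By Part 1, strict $\lambda$-recurrence implies proper $\lambda$-recurrence for every limit $\lambda$, so when $\lambda$ is additively indecomposable it suffices to prove the converse. Contrapositively, assume $x$ is not strictly $\lambda$-recurrent with $y$: there are $z=\{f\}^\gamma(x)\in\mathcal{O}_{\{\lambda\}}(x)$, with $1\le\gamma<\lambda$, and a limit ordinal $\beta<\lambda$ such that $(z,y)\in\beta\{\mathcal{R}\}$. Fix an open $U(y)$ and pick $\delta\in[1,\beta)$ with $\{f\}^\delta(z)\in U(y)$; by the composition law $\{f\}^\delta(z)=\{f\}^{\gamma+\delta}(x)\neq\emptyset$, and $1\le\gamma+\delta<\gamma+\beta$. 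Since $\lambda$ is additively indecomposable and $\gamma,\beta<\lambda$, we get $\gamma+\beta<\lambda$, while $\gamma+\beta$ is a limit ordinal because $\beta$ is. As $U(y)$ was arbitrary, this says exactly that $(x,y)\in(\gamma+\beta)\{\mathcal{R}\}$ with $\gamma+\beta$ a limit ordinal strictly below $\lambda$, so $x$ is not properly $\lambda$-recurrent with $y$. Combined with Part 1 this gives the asserted equivalence. To motivate the indecomposability hypothesis, note that for a decomposable $\lambda$ one can have $\gamma,\beta<\lambda$ with $\gamma+\beta=\lambda$, so the recurrence of an orbit point $z$ can ``spill over'' precisely to level $\lambda$: this is exactly where the implication proper $\Rightarrow$ strict is expected to fail in general, and it is the structural reason the two notions separate below $\omega^\mu$-type ordinals.
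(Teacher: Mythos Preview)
Your approach coincides with the paper's: for Part~1 shift to $z=f(x)$ via the composition law, and for Part~2 combine the composition law with additive indecomposability. Your Part~2 is correct and in fact slightly more careful than the paper, which restricts attention to $z=\{f\}^\eta(x)$ for limit $\eta$ even though the definition of strict recurrence quantifies over all $z\in\mathcal{O}_{\{\lambda\}}(x)$; your argument covers every $1\le\gamma<\lambda$ and goes through unchanged.

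The case $f(x)=y$ that you flag as merely ``fiddly'' in Part~1 is in fact a genuine obstruction rather than a detail to be patched. Consider the finite TDS on the one-point compactification $X=\mathbb{N}_0\cup\{\infty\}$ with $f(n)=n+1$, $f(\infty)=\infty$; take $x=0$, $y=1$, and any limit $\lambda>\omega$. Then $(x,y)\in\omega\{\mathcal{R}\}$ via the single witness $f(0)=1$, so $x$ is not properly $\lambda$-recurrent with $y$. Yet $\mathcal{O}_{\{\lambda\}}(0)=\{1,2,3,\dots\}$, and for every $k\ge 1$ the isolated neighbourhood $\{1\}$ of $y$ is missed by all $\{f\}^\alpha(k)$; hence no $z\in\mathcal{O}_{\{\lambda\}}(x)$ is $\beta$-recurrent with $y$ for any limit $\beta<\lambda$, and $x$ \emph{is} strictly $\lambda$-recurrent with $y$. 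The paper's own proof elides exactly this point, asserting without comment that $(x,y)\in\omega\{\mathcal{R}\}$ forces $(f(x),y)\in\omega\{\mathcal{R}\}$, which fails precisely when $f(x)=y$ and $y\notin\overline{\{f^k(x):k\ge 2\}}$. So you are not missing a routine fix: as written, Part~1 requires an extra hypothesis (for instance, enlarging $\mathcal{O}_{\{\lambda\}}(x)$ to include $x$ itself in the definition of strict recurrence) for the shift-by-one argument to close.
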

\begin{proof}
\begin{enumerate}
    \item Suppose that $x$ is strictly $\lambda$-recurrent with $y$. Assume by absurd that $x\,\beta\{\mathcal{R}\}\,y$ for some limit ordinal $\beta<\lambda$.
If $\beta=\omega$, then $\{f\}(x)$ is also $\omega$-recurrent with $y$, contradicting strict $\lambda$-recurrence. Assume thus $\beta>\omega$. For every $\eta\ge\omega$ it is $\{f\}^\eta(\{f\}(x))=\{f\}^\eta(x)$, so we have again $\{f\}(x)\,\beta\{\mathcal{R}\}\,y$, contradicting strict $\lambda$-recurrence. 
\item Assume that $\lambda<\omega_1$ is an additively indecomposable ordinal. By item 1., we just have to prove that proper $\lambda$-recurrence implies strict $\lambda$-recurrence. Let $x$ be properly $\lambda$-recurrent with $y$.  Consider a limit ordinal $\eta<\lambda$. If, by absurd, we have $(\{f\}^\eta(x),y)\in \beta\{\mathcal{R}\}$ for some limit ordinal $\beta<\lambda$, then $x$ would be $(\eta+\beta)$-recurrent with $y$. Since $\lambda$ is additively indecomposable, we have $\eta+\beta<\lambda$, which contradicts proper $\lambda$-recurrence. 
\end{enumerate}
\end{proof}

\begin{exa}\label{circle__}
Let us consider the unit disk $\mathbb{D}:=\{z\in\mathbb{C}:|z|\le 1\}$ in the complex plane. We write complex points $z$ in  polar form, and in particular we write $z$ as $(\rho,\theta)$ if $z=\rho e^{i\theta}$, with $\rho\in \mathbb{I}$ and $\theta\in[0,2\pi]$; we indicate the origin $(0,0)$ by $O$. Let $g:\mathbb{I}\circlearrowleft$ be a strictly monotonic continuous function such that: 
    
\begin{itemize}
\item The point 0 is a repulsive fixed point;
\item The point 1 is an attractive fixed point;
\item the $\omega$-limit set $\omega_g(x)=1$ for every $x\in (0,1]$.
\end{itemize}

Set $\theta_0=0$ and let $\{\theta_k\}_{k\in\mathbb{N}}$ be a strictly increasing sequence of points in $(0,2\pi]$ that converges to $2\pi$ as $k\to\infty$.  Let us define: 
$$R_k=[0,1]\times \theta_k, \quad \forall k\in\mathbb{N}_0$$
and further: 
$$\Theta= \bigcup_{k\in\mathbb{N}_0}\ \theta_k ,\qquad R=\bigcup_{k\in\mathbb{N}_0} R_k=[0,1]\times \Theta.$$
Notice that $R$ is a compact subspace of $\mathbb{D}$. 
Let $f:R\rightarrow R$ be the function given by $$f(\rho,\theta)=(g(\rho),\theta)\ \text{for all }  (\rho,\theta)\in R.$$ 
For every $n\in\mathbb{N}$ we consider the point $x_n=g^n(1/2)$ and a sequence or positive reals $\{\epsilon_n\}_{n\in\mathbb{N}}$ converging to 0 and such that $\epsilon_n$ is so small that $g^{n-1}(1/2)$ and $g^{n+1}(1/2)$ are not contained in $[x_n-\epsilon_n,x_n+\epsilon_n]$. Let $\{I_n\}_{n\in\mathbb{N}}$ be a sequence of intervals in $\mathbb{I}$, where $I_n=[x_n-\epsilon_n,x_n+\epsilon_n]$. We can define a sequence $\{f_n\}_{n\in\mathbb{N}}$ of functions $f_n:R\rightarrow R$ as follows:
\begin{equation}
f_n(\rho,\theta)=
\begin{cases}
(g^{-(k+1)}(\frac 1 2),\theta_{k+1}) &\quad \text{if }(\rho,\theta)\in I_n\times \theta_k \text{ for some } k\in\mathbb{N} \\
(g(\rho),\theta) &\quad \text{otherwise}
\end{cases}
\end{equation}
Since $\epsilon_n\to 0$, we have $f_n \dot{\longrightarrow}  f$. Moreover, the limit map $f$ is continuous. Therefore, we have defined the TDS $\left(R,\{f\}\right)$.
In Fig.\ref{fig_ex_circ} we plot the space $R$ and we indicate by dots the iterations of the point $z=\left(\frac 12, \theta_1\right)$ under the map $f_1$ (left) and $f_n$ (right). The dotted circle represents the level $x_n-\epsilon_n$ at which the point ``jumps" from the interval at angle $\theta_n$ to the interval at angle $\theta_{n+1}$. 
Looking at some transfinite orbits, we have $\{f\}^\omega(1/2,\theta_1)=(g^{-2}(1/2),\theta_2)$, and more generally, for $k\ge 1$, 
\begin{equation}\label{transorb}\{f\}^\omega(1/2,\theta_k)=\{f\}^{\omega\cdot k}{(1/2,\theta_1)}=(g^{-(k+1)}(1/2),\theta_{k+1}).
\end{equation}
Since the sequence $\{(g^{-(k+1)}(1/2),\theta_{k+1})\}_{k\in\mathbb{N}}$ converges to $O$, Eq. \eqref{transorb} implies: $(1/2,\theta_1)\,\omega^2\{\mathcal{R}\}\,O$, where the $\omega^2$-recurrence is proper (in the sense of Def. \ref{proper_strict}). In fact, since $\omega^2$ is additively indecomposable, for $i\in\mathbb{N}$, by Lemma \ref{strictprop_} each $t_i:=\left(\frac 1 2, \theta_i\right)$  is strictly $\omega^2$-recurrent with $O$. 

We will come back to this example in Section \ref{sec6}.

\begin{figure}[H] 
\centering
\subfigure[$f_1$]{\fbox{\includegraphics[width=6.9cm]{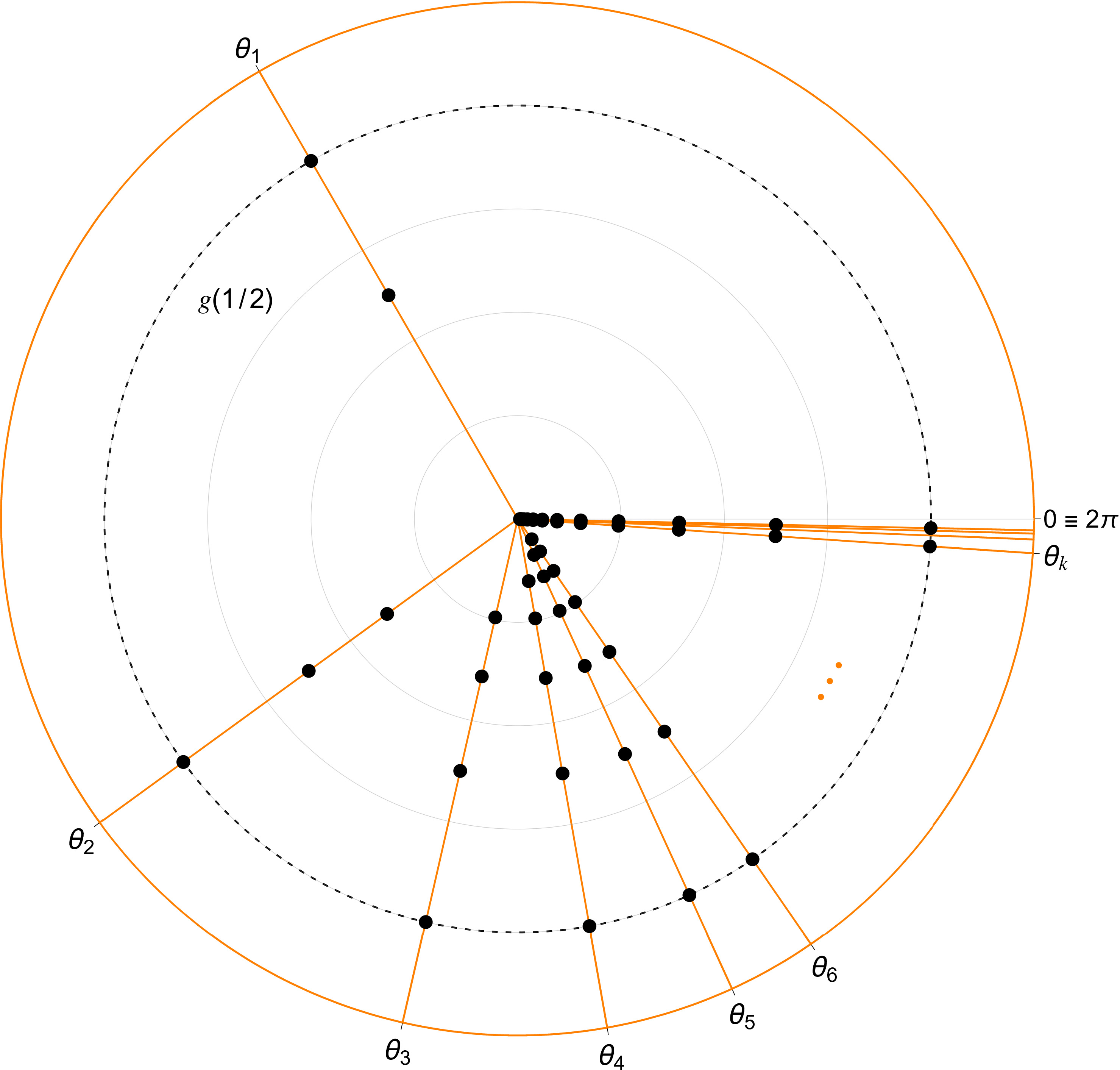}}}
\hspace{0.5mm}
\subfigure[$f_n$]
{\fbox{\includegraphics[width=6.9cm]{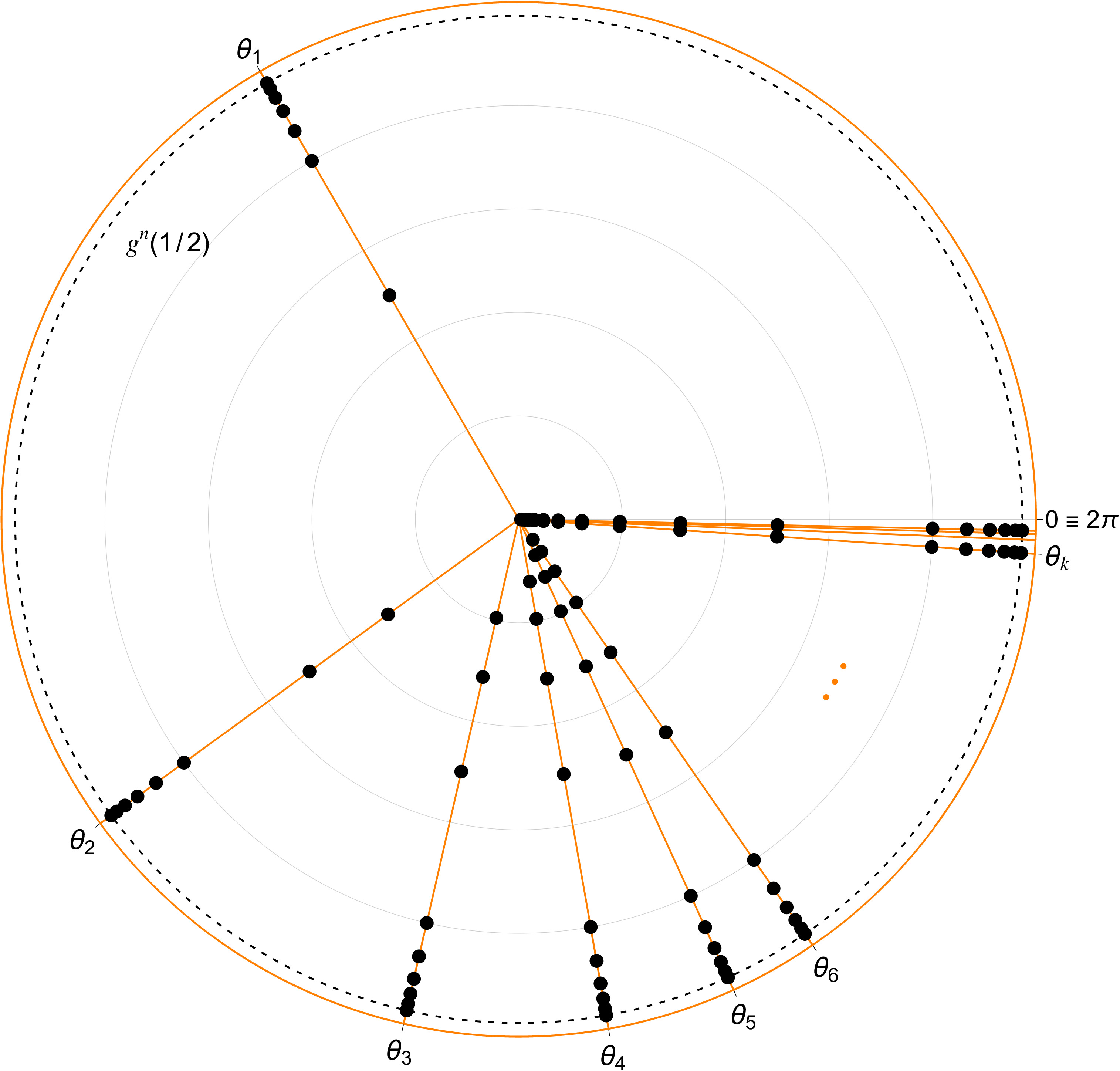}}}
\caption{The maps $f_1$ and $f_n$ from the sequence $\{f_n\}_{n\in\mathbb{N}}$ defining the system in Example \ref{circle__}.}
\label{fig_ex_circ}
\end{figure}
\end{exa}

\begin{prop}\label{prop_N_C}
Let $(X,\{f\})$ be a TDS and $\lambda<\omega_1$. Then the following hold:
\begin{itemize}
 \item $\lambda\{\mathcal{H}\}$ is transitive.
 \item $\lambda\{\mathcal{N}\}=\overline{\lambda\{\mathcal{H}\}}$ in the product topology of $X\times X$.
    \item $\lambda\{\mathcal{C}\}$ is transitive and closed in $X\times X$.
\end{itemize}
\end{prop}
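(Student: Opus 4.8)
The plan is to handle the three assertions in increasing order of difficulty: the two transitivity statements (essentially formal), then the identity $\lambda\{\mathcal N\}=\overline{\lambda\{\mathcal H\}}$ (pure point-set topology), and finally the closedness of $\lambda\{\mathcal C\}$, which is where the real work lies. \emph{Transitivity.} For $\lambda\{\mathcal H\}$: if $\{f\}^{\beta}(x)=y$ and $\{f\}^{\eta}(y)=z$ with $\beta,\eta<\lambda$, then $\{f\}^{\beta+\eta}(x)=z$ by the composition rule of Definition~\ref{cycle}, and $\beta+\eta<\lambda$ because $\lambda$ is additively indecomposable; hence $x\,\lambda\{\mathcal H\}\,z$. (This is the only place where the arithmetic type of $\lambda$ intervenes: for a decomposable limit ordinal such as $\omega\cdot2$ the sum $\beta+\eta$ can reach $\lambda$ — this already happens inside the transfinite cycle of order $\omega^{2}$ of Example~\ref{exa2} — so an indecomposability hypothesis is genuinely needed for this item, and is irrelevant for all the others.) For $\lambda\{\mathcal C\}$ no ordinal arithmetic occurs: given an $(\epsilon,\lambda)$-chain from $x$ to $y$ and one from $y$ to $z$, I identify their common endpoint $y$ and list the two finite blocks of ordinals one after the other; every ordinal still lies in $(0,\lambda)$ with nonempty iteration, and each inequality of condition~(c) of Definition~\ref{def_tdr} is inherited from one of the two chains, so the concatenation is an $(\epsilon,\lambda)$-chain from $x$ to $z$.

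\emph{The identity $\lambda\{\mathcal N\}=\overline{\lambda\{\mathcal H\}}$.} For ``$\supseteq$'' it is enough, since $\lambda\{\mathcal H\}\subseteq\lambda\{\mathcal N\}$ by Proposition~\ref{incl_top_rel}, to prove that $\lambda\{\mathcal N\}$ is closed. So I would take $(x_k,y_k)\to(x,y)$ with each $(x_k,y_k)\in\lambda\{\mathcal N\}$ and fix open neighbourhoods $V(x)$, $U(y)$; for all large $k$ one has $x_k\in V(x)$ and $y_k\in U(y)$, so $V(x)$ is an open neighbourhood of $x_k$ and $U(y)$ one of $y_k$, and applying $(x_k,y_k)\in\lambda\{\mathcal N\}$ to this pair produces $z\in V(x)$ and $\beta<\lambda$ with $\{f\}^{\beta}(z)\in U(y)$, which is precisely $(x,y)\in\lambda\{\mathcal N\}$. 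For ``$\subseteq$'', if $(x,y)\in\lambda\{\mathcal N\}$ then for every basic neighbourhood $V(x)\times U(y)$ of $(x,y)$ the definition yields $z\in V(x)$ and $\beta<\lambda$ with $\{f\}^{\beta}(z)\in U(y)$, so $\bigl(z,\{f\}^{\beta}(z)\bigr)\in\lambda\{\mathcal H\}\cap\bigl(V(x)\times U(y)\bigr)$, whence $(x,y)\in\overline{\lambda\{\mathcal H\}}$.

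\emph{Closedness of $\lambda\{\mathcal C\}$.} I expect this to be the main obstacle. The classical proof that chain recurrence is closed perturbs the two endpoints of an approximate chain using (uniform) continuity of the generating map, but here the maps $\{f\}^{\beta}$ with $\omega\le\beta<\lambda$ need not be continuous. The way around this is the observation underlying Remark~\ref{etabeta}: the first step of any $(\epsilon,\lambda)$-chain may be taken to be a single application of the \emph{continuous} limit map $f$ — if the first ordinal $\mu_{0}$ of a chain $(\{u_{0},u_{1},\dots\},\{\mu_{0},\dots\})$ satisfies $\mu_{0}\ge\omega$, insert the point $f(u_{0})$ and keep $\mu_{0}$ (legitimate since $\{f\}^{\mu_{0}}\!\bigl(f(u_{0})\bigr)=\{f\}^{1+\mu_{0}}(u_{0})=\{f\}^{\mu_{0}}(u_{0})$ when $\{f\}^{\mu_{0}}(u_{0})\ne\emptyset$, by the additive law and Lemmas~\ref{lem_inclus_} and~\ref{lem_group}); if $\mu_{0}=t\ge2$, insert $f(u_{0})$ with ordinal $t-1$; if $\mu_{0}=1$ there is nothing to do. After this normalization the usual argument runs: given $(x_k,y_k)\to(x,y)$ in $\lambda\{\mathcal C\}$ and $\epsilon>0$, pick $\delta\le\epsilon/3$ with $d(a,b)<\delta\Rightarrow d(f(a),f(b))<\epsilon/3$, choose $k$ with $d(x_k,x)<\delta$ and $d(y_k,y)<\epsilon/3$, take a normalized $(\epsilon/3,\lambda)$-chain $\bigl(\{x_k,w_{1},\dots,w_{m-1},y_k\},\{1,\nu_{1},\dots,\nu_{m-1}\}\bigr)$ from $x_k$ to $y_k$, and return $\bigl(\{x,w_{1},\dots,w_{m-1},y\},\{1,\nu_{1},\dots,\nu_{m-1}\}\bigr)$. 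The interior conditions of~(c) are inherited, while at the two ends the triangle inequality gives $d(f(x),w_{1})\le d(f(x),f(x_k))+d(f(x_k),w_{1})<\epsilon$ and $d(\{f\}^{\nu_{m-1}}(w_{m-1}),y)\le d(\{f\}^{\nu_{m-1}}(w_{m-1}),y_k)+d(y_k,y)<\epsilon$ (collapsing to a single three-term estimate when $m=1$), and $\{f\}^{1}(x)=f(x)\ne\emptyset$ takes care of~(b); hence $x\,\lambda\{\mathcal C\}\,y$ and $\lambda\{\mathcal C\}$ is closed. The only genuinely delicate point of the whole proposition is this normalization step together with the check that the iterations it introduces are nonempty (condition~(b)); everything else is bookkeeping.
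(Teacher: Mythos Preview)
Your proof is correct and follows the same route as the paper: the identification $\lambda\{\mathcal{N}\}=\overline{\lambda\{\mathcal{H}\}}$ is unwound from the definition (the paper does it as a single equivalence, you as two inclusions), and for closedness of $\lambda\{\mathcal{C}\}$ both you and the paper use the identical normalization trick of prefixing one application of the continuous limit map $f$ before replacing the endpoints (the paper with $\epsilon/2$, you with $\epsilon/3$). Your parenthetical remark on the first item is in fact a correct sharpening of the paper: the paper just says transitivity of $\lambda\{\mathcal{H}\}$ ``is a consequence of the composition laws,'' but as you observe this needs $\beta+\eta<\lambda$, hence additive indecomposability of $\lambda$, and Example~\ref{exa2} at level $\omega\cdot 2$ shows the claim fails without it.
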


\begin{proof}
The transitivity of $\lambda\{\mathcal{H}\}$ is a consequence of the composition laws \eqref{composition1}-\eqref{composition2}.

By Def. \ref{def_tdr}, $(x,y)\in\lambda\{\mathcal{N}\}$ if and only if for every pair of neighborhoods $U(y),V(x)$ there exist $z\in V(x)$ and $w\in U(y)$ such that $(z,w)\in \lambda\{\mathcal{H}\}$, which means that $(x,y)\in \overline{\lambda\{\mathcal{H}\}}$.

The transitivity of $\lambda\{\mathcal{C}\}$ is trivial. 
Let $(x,y)\in \overline{\lambda\{\mathcal{C}\}}$, we want to show that $(x,y)\in \lambda\{\mathcal{C}\}$. Pick $\epsilon>0$. We want to construct an $(\epsilon,\lambda)$-chain from $x$ to $y$. Since $f$ is continuous, there exists $0<\delta<\epsilon/2$ such that 
$$d(x,z)<\delta\implies d(f(x),f(z))<\frac \epsilon 2.$$
By hypothesis, there exists an $(\epsilon/2,\lambda)$-chain 
\[
(\{x_0,x_1,\ldots,x_n\},\{\lambda_0,\ldots,\lambda_{n-1}\})
\]
from some $x_0\in B_{\delta}(x)$ to some $x_n\in B_\delta(y)$. 

Let us assume $\lambda_0\ge \omega$. 
Then $\{f\}^{\lambda_0}(f(x))=\{f\}^{\lambda_0}(x)$ for all $x\in X$. 
Thus the pair
\[
(\{x_0,f(x_0),x_1,\ldots,x_n\},\{1,\lambda_0,\ldots,\lambda_{n-1}\})
\]
is an $(\epsilon/2,\lambda)$-chain from $x_0\in B_\delta (x)$ to $x_n\in B_\delta (y)$. Indeed,
$$d(f(x_0),f(x_0))=0,\quad d(\{f\}^{\lambda_0}(f(x_0)),x_1)=d(\{f\}^{\lambda_0}(x_0),x_1)<\frac \epsilon 2.$$
Therefore we have that the pair
\[
(\{x,f(x_0),x_1,\ldots,x_{n-1},y\},\{
1,\lambda_0,\ldots,\lambda_{n-1}\})
\]
is an $(\epsilon,\lambda)$-chain from $x$ to $y$. Indeed, by the continuity of $f$ we have
$d(f(x),f(x_0))<\epsilon/2$,
and since $\delta<\epsilon/2$, it follows that 
$$d(\{f\}^{\lambda_{n-1}}(x_{n-1}),y)\le d(\{f\}^{\lambda_{n-1}}(x_{n-1}),x_n)+d(x_n,y)<\epsilon.$$
If we assume that $\lambda_0<\omega$, it is sufficient to consider the pair 
\[
(\{x,f(x_0),x_1,\ldots,x_{n-1},y\},\{1,\lambda_0-1,\lambda_1,\ldots,\lambda_{n-1}\}),
\] 
which is an $(\epsilon,\lambda)$-chain from $x$ to $y$.
\end{proof}

\begin{defi}\label{minimal__}
 We say that a TDS is \emph{$\lambda$-minimal} if, for every $x\in X$, we have $\overline{\mathcal{O}_{\{\lambda\}}(x)}=X$.
\end{defi}

\begin{defi}\label{minimal_transitive}
    We say that a TDS is \emph{$(\lambda,\beta$)-transitive} if, for every pair $U,V\subseteq X$ of nonempty open sets, there is $x\in U\cap X^{\beta}$ and $\eta<\lambda$ such that $\{f\}^\eta(x)\in V$.
    If $(X,\{f\})$ is $(\lambda,\omega)$-transitive, we simply say that it is $\lambda$-transitive (we recall that we have always $X^\omega=X$).
\end{defi}
Note that $(\lambda,\beta)$-transitive implies $(\lambda,\beta')$-transitive if $\beta\ge\beta'$ and $(\lambda,\beta)$-transitive implies $(\lambda',\beta)$-transitive if $\lambda\le\lambda'$.
Clearly a finite system is $\omega$-transitive if and only if it is transitive in the usual sense.
\begin{defi}
Given a point $x\in X$ and a pair $(\epsilon,\lambda)$, where $\epsilon>0$ and $\lambda<\omega_1$ a limit ordinal, we say that $x$  \emph{$(\epsilon,\lambda)$-shadows} a finite sequence $x_0,\ldots, x_n$ in $X$ if $x\in B_\epsilon(x_0)$ and for every $1\le i\le n$, there exists $\beta_i<\lambda$ such that 
$$d(\{f\}^{\beta_i}(x),x_i)<\epsilon.$$
We say that a system $(X,\{f\})$ has the \emph{$\lambda$-shadowing property} if, for every $\epsilon>0$, there exists $\delta>0$ such that any $(\delta,\lambda)$-chain is $(\epsilon,\lambda)$-shadowed by some point $x\in X$.
\end{defi}

\begin{prop}\label{th_min_tran_shw} 
For a TDS $(X,\{f\})$ and $\lambda<\omega_1$, the following hold:
\begin{enumerate}
\item The system $(X,\{f\})$ is $\lambda$-minimal if and only if $\lambda\{\mathcal{R}\}=X^2$.
\item  The system $(X,\{f\})$ is $\lambda$-transitive if and only if $\lambda\{\mathcal{N}\}=X^2$.
\item If $(X,\{f\})$ has the $\lambda$-shadowing property, then $\lambda\{\mathcal{N}\}=\lambda\{\mathcal{C}\}$.
\end{enumerate}
\end{prop}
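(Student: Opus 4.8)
I would prove the three assertions separately, in each case simply unwinding the relevant definition; the only external inputs needed are the inclusion $\lambda\{\mathcal{N}\}\subseteq\lambda\{\mathcal{C}\}$ from Proposition~\ref{incl_top_rel}, the composition laws \eqref{composition1}--\eqref{composition2}, and the fact that $X^{\omega}=X$ (so that $\{f\}(x)$, and more generally every finite iterate, is always defined).

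\emph{Part 1.} The forward implication is immediate: if $\overline{\mathcal{O}_{\{\lambda\}}(x)}=X$ for every $x$, then given $y\in X$ and a neighbourhood $U(y)$, the point $y$ is a limit of $\mathcal{O}_{\{\lambda\}}(x)$, so $\{f\}^{\alpha}(x)\in U(y)$ for some $1\le\alpha<\lambda$, which is exactly $(x,y)\in\lambda\{\mathcal{R}\}$. For the converse, assume $\lambda\{\mathcal{R}\}=X^{2}$, fix $x$ and $y\in X$ and a neighbourhood $U(y)$; the point to be careful about is that $\mathcal{O}_{\{\lambda\}}(x)$ does not contain $x$ itself, so the bare recurrence witness could be the useless ordinal $0$. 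To avoid this I would apply the recurrence relation not to $(x,y)$ but to $(\{f\}(x),y)\in\lambda\{\mathcal{R}\}$, obtaining $\gamma<\lambda$ with $\{f\}^{\gamma}(\{f\}(x))=\{f\}^{1+\gamma}(x)\in U(y)$. Since $\lambda$ is a limit ordinal one has $1\le 1+\gamma<\lambda$, so $\{f\}^{1+\gamma}(x)\in\mathcal{O}_{\{\lambda\}}(x)\cap U(y)\neq\emptyset$; as $U(y)$ and $y$ were arbitrary this gives $\overline{\mathcal{O}_{\{\lambda\}}(x)}=X$, hence $\lambda$-minimality. This shift-by-one argument is the only nonroutine step in the whole proof.

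\emph{Part 2.} Both directions are a literal matching of quantifiers between Definition~\ref{minimal_transitive} (using $X^{\omega}=X$, so $\lambda$-transitive means: for all nonempty open $U,V$ there are $z\in U$ and $\eta<\lambda$ with $\{f\}^{\eta}(z)\in V$) and Definition~\ref{def_tdr} for $\lambda\{\mathcal{N}\}$. If the system is $\lambda$-transitive, then given any $x,y$ and neighbourhoods $V(x),U(y)$ — which are nonempty open sets — transitivity applied to the pair $(V(x),U(y))$ yields $z\in V(x)$ and $\eta<\lambda$ with $\{f\}^{\eta}(z)\in U(y)$, i.e. $(x,y)\in\lambda\{\mathcal{N}\}$. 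Conversely, if $\lambda\{\mathcal{N}\}=X^{2}$, pick $a\in U$, $b\in V$ for nonempty open $U,V$ and read off the $\lambda\{\mathcal{N}\}$-condition for $(a,b)$ with the neighbourhoods $U$ of $a$ and $V$ of $b$. I would just write one sentence per direction.

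\emph{Part 3.} By Proposition~\ref{incl_top_rel} it suffices to prove $\lambda\{\mathcal{C}\}\subseteq\lambda\{\mathcal{N}\}$. Given $(x,y)\in\lambda\{\mathcal{C}\}$ and neighbourhoods $V(x),U(y)$, choose $\epsilon>0$ with $B_{\epsilon}(x)\subseteq V(x)$ and $B_{\epsilon}(y)\subseteq U(y)$; let $\delta>0$ be the constant furnished by the $\lambda$-shadowing property for this $\epsilon$. Since $(x,y)\in\lambda\{\mathcal{C}\}$ there is a $(\delta,\lambda)$-chain $(\{x_{0},\dots,x_{n}\},\{\lambda_{0},\dots,\lambda_{n-1}\})$ from $x$ to $y$, with $n\ge 1$; shadowing gives a point $z$ that $(\epsilon,\lambda)$-shadows $x_{0},\dots,x_{n}$. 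Then $z\in B_{\epsilon}(x_{0})=B_{\epsilon}(x)\subseteq V(x)$, and evaluating the shadowing condition at $i=n$ produces $\beta_{n}<\lambda$ with $d(\{f\}^{\beta_{n}}(z),y)<\epsilon$, i.e. $\{f\}^{\beta_{n}}(z)\in U(y)$; this is precisely what $(x,y)\in\lambda\{\mathcal{N}\}$ demands. I expect no genuine obstacle here; the only thing to keep in mind is that a chain has at least one step, so ``the last index $n$'' makes sense, and that $d(\cdot,\cdot)<\epsilon$ with a point forces $\{f\}^{\beta_{n}}(z)\neq\emptyset$.
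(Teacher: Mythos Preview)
Your proposal is correct and follows essentially the same approach as the paper: each part is a direct unwinding of the relevant definitions, with Part~3 invoking Proposition~\ref{incl_top_rel} for the nontrivial inclusion. In Part~1 you are in fact slightly more careful than the paper's own proof: the shift-by-one argument via $(\{f\}(x),y)$ cleanly excludes the degenerate witness $\beta=0$, a boundary case the paper glosses over when concluding $\overline{\mathcal{O}_{\{\lambda\}}(x)}=X$ from $\lambda\{\mathcal{R}\}=X^{2}$.
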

\begin{proof}
\begin{enumerate} 
\item Suppose that the system is $\lambda$-minimal and pick $x,y\in X$. By definition, $\overline{\mathcal{O}_{\lambda}(x)}=X$. It follows that for every $U(y)$ open neighborhood of $y$ there exists $\beta<\lambda$ such that $\{f\}^\beta(x)\in U(y)$, which implies that $x\, \lambda\{\mathcal{R}\}\,y$. Conversely, suppose that $\lambda\{\mathcal{R}\}=X^2$ and pick $x\in X$. For every fixed $y\in X$, since $x\, \lambda\{\mathcal{R}\}\, y$ we have that for every $U(y)$ open neighborhood of $y$ there exists $\beta<\lambda$ such that $\{f\}^\beta(x)\in U(y)$. By the arbitrariness of $y\in X$, it follows that $\overline{\mathcal{O}_{\{\lambda\}}(x)}=X$.
    
\item Suppose that the system is $\lambda$-transitive and pick $x,y\in X$. 
Therefore, for every pair of open neighborhoods $U(x)$ and $V(y)$, there exist $z\in U(x)$ and $\eta<\lambda$ such that $\{f\}^\eta(x)\in V(y)$, which means that $x\,\lambda\{\mathcal{N}\}\,y$. 
On the other hand, suppose that $\lambda\{\mathcal{N}\}=X^2$ and consider a pair $U,V\subset X$ of nonempty open sets. 
Take $x\in U$ and $y\in V$. Since $x\,\lambda\{\mathcal{N}\}\,y$, there exists $z\in U$ and $\eta<\lambda$ such that $\{f\}^\eta (x)\in V$.  

\item It is sufficient to show that $\lambda\{\mathcal{C}\}\subseteq \lambda\{\mathcal{N}\}$. 
Let $(x,y)\in \lambda\{\mathcal{C}\}$ and take a pair of open neighborhood $U(y)$ and $V(x)$. 
Then, there exists $\epsilon>0$ such that $B_\epsilon(y)\subseteq U(y)$ and $B_\epsilon(x)\subseteq V(x)$. 
Since the system has the $\lambda$-shadowing property, there exists $\delta>0$ such that any $(\delta,\lambda)$-chain is $(\epsilon,\lambda)$-shadowed by some point in $X$. 
We know that there exists a $(\delta,\lambda)$-chain from $x$ to $y$ and it is $(\epsilon,\lambda)$-shadowed by a point $z\in X$. Then $z\in B_\epsilon(x)\subseteq V(x)$ and there exists $\beta<\lambda$ such that $d(\{f\}^\beta(z),y)<\epsilon$.
\end{enumerate}
\end{proof}

For the next result, we use $\lambda^*$-regularity. We want to prove that, under certain conditions, transfinite topological transitivity implies transfinite point transitivity. For this, we need to find a suitable relative topology in which the set of preimages of the elements of a base is both open and dense. As we will see, this happens when a simple order condition is verified.

\begin{prop}\label{__trans__}
Let us assume that $(X,\{f\})$ is $\lambda^*$-regular and $(\lambda,\beta)$-transitive for some countable limit ordinals $\lambda,\beta$. If $\beta\ge\lambda$, then there exists $x\in X$ such that $\overline{\mathcal{O}_{\{\lambda\}}(x)}=X$.
\end{prop}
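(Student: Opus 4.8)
The plan is to mimic the classical Baire-category argument that in a transitive system there is a point with dense orbit, carefully working in the right subspace. Fix a countable base $\{V_m\}_{m\in\mathbb{N}}$ for the topology of $X$; we want a point $x$ whose $\lambda$-orbit meets every $V_m$. The natural candidate space to apply the Baire property is $X^\lambda$, which by $\lambda^*$-regularity is a Baire space. For each $m\in\mathbb{N}$ set
\[
W_m:=\bigcup_{\eta<\lambda}\{f\}^{-\eta}(V_m)\cap X^\lambda
=\{x\in X^\lambda:\exists\,\eta<\lambda\ \text{with}\ \{f\}^\eta(x)\in V_m\}.
\]
If every $W_m$ is open and dense in $X^\lambda$, then $\bigcap_m W_m$ is dense (in particular nonempty), and any $x$ in this intersection satisfies $\overline{\mathcal{O}_{\{\lambda\}}(x)}=X$, which is the conclusion.

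For \emph{openness}: since $(X,\{f\})$ is $\lambda^*$-regular, each map $\{f\}^\eta:X^{\eta+1}\to X$ is continuous for $\eta<\lambda$, and because $\lambda$ is a limit ordinal we have $X^\lambda\subseteq X^{\eta+1}$ for every $\eta<\lambda$; hence $\{f\}^{-\eta}(V_m)\cap X^\lambda$ is relatively open in $X^\lambda$, and $W_m$, a union of such sets over $\eta<\lambda$, is relatively open. This is the step where $\lambda^*$-regularity (rather than mere $\lambda$-continuity at a point) is used, and where the fact that $X^\lambda$ carries the relative topology matters.

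For \emph{density}: fix a nonempty relatively open $U\cap X^\lambda$ with $U$ open in $X$. We must find $x\in U\cap X^\lambda$ and $\eta<\lambda$ with $\{f\}^\eta(x)\in V_m$. Apply $(\lambda,\beta)$-transitivity to the pair $(U,V_m)$: there exist $z\in U\cap X^{\beta}$ and $\eta<\lambda$ with $\{f\}^\eta(z)\in V_m$. Now the hypothesis $\beta\ge\lambda$ is exactly what lets us upgrade $z\in X^\beta$ to $z\in X^\lambda$: by Definition~\ref{Xbeta}, $X^\beta\subseteq X^\lambda$ whenever $\beta\ge\lambda$ (every point admitting all iterations of order $<\beta$ a fortiori admits those of order $<\lambda$). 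Thus $z\in U\cap X^\lambda$ witnesses density of $W_m$. This is the crux of the argument and the place where the order condition $\beta\ge\lambda$ is essential: without it, transitivity only produces preimage points lying in the possibly smaller set $X^\beta$, which need not be dense in $X^\lambda$, and the Baire argument would collapse.

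The main obstacle is purely a matter of bookkeeping about the domains: one must be sure that the transfinite iteration maps are genuinely continuous \emph{as maps out of $X^\lambda$} (handled by $\lambda$-regularity together with $X^\lambda\subseteq X^{\eta+1}$ for all $\eta<\lambda$) and that the point produced by transitivity actually lands in $X^\lambda$ and not in some auxiliary subspace (handled by $\beta\ge\lambda$). Once these two containments are in place, the classical Baire-category scheme runs without change, and the resulting $x\in\bigcap_m W_m$ has $\mathcal{O}_{\{\lambda\}}(x)$ meeting every basic open set, i.e. $\overline{\mathcal{O}_{\{\lambda\}}(x)}=X$.
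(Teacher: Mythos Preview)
Your proof is correct and follows essentially the same Baire-category scheme as the paper's own argument: both take a countable base of $X$, form the preimage sets $\bigcup_{\eta<\lambda}\{f\}^{-\eta}(V_m)\cap X^\lambda$, use $\lambda$-regularity together with $X^\lambda\subseteq X^{\eta+1}$ for openness, and use $(\lambda,\beta)$-transitivity together with $X^\beta\subseteq X^\lambda$ (from $\beta\ge\lambda$) for density, then conclude by the Baire property of $X^\lambda$. Your version is in fact slightly cleaner in that it works directly with relative density in $X^\lambda$, whereas the paper first establishes density in $X$ and then passes to $X^\lambda$.
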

\begin{proof}
Let $B=\{U_n\}_{n\in\mathbb{N}}$ be a countable base for $X$. 
For $\iota<\lambda$, set 
$$
W_{\iota,n}:=\{f\}^{-\iota}(U_n)\quad,\quad V_n:=\bigcup_{\iota<\lambda} W_{\iota,n}.
$$
By $\lambda$-regularity, every set $W_{\iota,n}$ is open in the relative topology of $X^{\iota}$. 
Since for $\iota<\lambda$ it is $X^\lambda\subseteq X^\iota$, we also have $W_{\iota,n}\cap X^\lambda$, and thus $V_n\cap X^\lambda$, is open in the relative topology of $X^\lambda$.

Let $U\subseteq X$ be a nonempty open set. 
By $(\lambda,\beta)$-transitivity, for every $n\in\mathbb{N}$ there exists $\iota<\lambda$ and $x\in U\cap X^\beta$ such that $\{f\}^\iota(x)\in U_n$, and therefore $U\cap W_{\iota,n}\cap X^\beta\ne\emptyset$, so that $$
\overline{\bigcup_{\iota<\lambda}(W_{\iota,n}\cap X^\beta)}=\overline{V_n\cap X^\beta}=X.
$$ 
Since $\beta\ge\lambda$, we have $X^\beta\subseteq X^\lambda\subseteq X$, so that, for every $n\in\mathbb{N}$,
$V_n\cap X^\beta\subseteq V_n\cap X^\lambda$, so that the latter is  dense in $X^\lambda\subseteq X$.
By $\lambda^*$-regularity, we have 
$\bigcap_{n\in\mathbb{N}}(V_n\cap X^\lambda)=:S\ne\emptyset$.
Take $y\in S$. 
For every $U_n\in B$ there is $\iota<\lambda$ such that $\{f\}^\iota(y)\in U_n$, so that $\overline{\mathcal{O}_{\{\lambda\}}(y)}=X$.  
\end{proof}

Proposition \ref{__trans__}, in the particular case $\lambda=\omega$, becomes the well-known result that topological transitivity implies point-transitivity. In fact, in case of finite dynamical systems we can say more (see for instance \cite{kurka2003topological}, Theorem 2.9):
\begin{fact}
In a topological dynamical system $(X,f)$, with $X$ compact, $f$ continuous, there is a point $x\in X$ with a dense orbit if and only if the system is topologically transitive. 
\end{fact}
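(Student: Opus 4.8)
The statement is the classical Birkhoff transitivity theorem, and the natural plan is to prove the two implications separately, recycling the transfinite machinery for one of them.

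\emph{From topological transitivity to a dense orbit.} This direction is nothing but Proposition \ref{__trans__} in the special case $\lambda=\beta=\omega$. Indeed, $X$ compact metric is a Baire space and $X^\omega=X$, while each $\{f\}^k=f^k$ (for $k<\omega$) is continuous because $f$ is; hence $(X,f)$ is $\omega^*$-regular. Since $\omega$-transitivity coincides with ordinary topological transitivity and $\beta=\omega\ge\lambda=\omega$, Proposition \ref{__trans__} yields $x\in X$ with $\overline{\mathcal{O}_{\{\omega\}}(x)}=\overline{\mathcal{O}_f(x)}=X$. Unwinding that argument reproduces the usual proof: fix a countable base $\{U_n\}_{n\in\mathbb{N}}$ of $X$; each $V_n:=\bigcup_{k\ge 1}f^{-k}(U_n)$ is open (continuity of $f$) and dense (for nonempty open $W$, transitivity gives $k\ge 1$ with $f^k(W)\cap U_n\ne\emptyset$, so $W\cap V_n\ne\emptyset$); the Baire property then gives a point of $\bigcap_n V_n$, which has a dense orbit.

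\emph{From a dense orbit to topological transitivity.} Let $x\in X$ satisfy $\overline{\mathcal{O}_f(x)}=X$ and let $U,V\subseteq X$ be nonempty open. Choose $k\ge 1$ with $f^k(x)\in U$; it then suffices to produce $j>k$ with $f^j(x)\in V$, for then $f^{j-k}(U)\ni f^{j-k}(f^k(x))=f^j(x)\in V$ with $j-k\ge 1$. If no such $j$ existed, then $V\cap\{f^j(x):j>k\}=\emptyset$, and since $X=\overline{\mathcal{O}_f(x)}=\{f(x),\dots,f^k(x)\}\cup\overline{\{f^j(x):j>k\}}$ while $V$ is open, also $V\cap\overline{\{f^j(x):j>k\}}=\emptyset$, so the nonempty open set $V$ would be contained in the finite set $\{f(x),\dots,f^k(x)\}$. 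Thus every point of $V$ would be isolated in $X$. The argument is completed by the dichotomy: a compact metric system possessing a dense forward orbit is either a single periodic orbit --- in which case transitivity is trivial --- or has no isolated points, in which case the previous sentence is absurd; hence the required $j$ exists and $(X,f)$ is topologically transitive.

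The genuinely delicate point, and the one I would spend the most care on, is precisely this dichotomy, where compactness is used essentially. The idea is to inspect the $\omega$-limit set $\omega_f(x)$, which is nonempty (by compactness, once $X$ is infinite so that the dense orbit is infinite), closed and $f$-invariant. If $\omega_f(x)=X$ then for every $k_0$ the tail $\{f^k(x):k>k_0\}$ is already dense, and the step above goes through with no reference to isolated points at all. If instead $\omega_f(x)\subsetneq X$, then since the orbit converges to $\omega_f(x)$ one checks that every $y\in X\setminus\omega_f(x)$ has a neighbourhood meeting the (dense) orbit in only finitely many points, forcing that neighbourhood --- hence $X\setminus\omega_f(x)$ --- to consist of isolated points hit only finitely often; a further short analysis then shows the orbit is in fact finite, so $X$ is a single periodic orbit. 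Once this is nailed down, both halves of the equivalence are routine, the first being a Baire-category argument (already available here through Proposition \ref{__trans__}) and the second the observation that a cofinite subset of a dense orbit in a perfect space is still dense.
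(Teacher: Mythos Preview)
The paper does not actually prove this Fact: it cites \cite{kurka2003topological}, Theorem~2.9, and only observes that the implication ``transitive $\Rightarrow$ dense orbit'' is the case $\lambda=\beta=\omega$ of Proposition~\ref{__trans__}. Your treatment of that direction coincides with the paper's remark.

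For the converse, which the paper leaves entirely to the reference, your argument is correct and you have identified the right pressure point: the convention $\mathcal{O}(x)=\{f^n(x):n\ge 1\}$ (excluding $x$) is exactly what makes the equivalence hold without assuming perfectness, as the paper itself stresses just after the Fact. Your dichotomy is true, but the phrase ``a further short analysis then shows the orbit is in fact finite'' hides the actual work. Here is one way to close it. Assume the orbit is infinite, so all $f^n(x)$ are distinct. You have an isolated point $f^m(x)$; then $f^{-1}(\{f^m(x)\})$ is open, nonempty (it contains $f^{m-1}(x)$ if $m\ge 2$), and by density of the orbit together with distinctness one sees it equals $\{f^{m-1}(x)\}$, so $f^{m-1}(x)$ is isolated too. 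Inducting down, $f(x)$ is isolated; but now $f^{-1}(\{f(x)\})$ is open and contains $x$, yet meets the orbit only where $f^{n+1}(x)=f(x)$, which distinctness forbids for $n\ge 1$ --- contradicting density. Hence the orbit, and so $X$, is finite.

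A slightly tidier packaging avoids the dichotomy altogether: prove by the same preimage argument that if $\overline{\{f^n(x):n>m\}}=X$ then $\overline{\{f^n(x):n>m+1\}}=X$ (the failure would isolate $f^{m+1}(x)$ and force periodicity, which then puts $f^{m+1}(x)$ back into its own tail). With every tail dense, transitivity is immediate.
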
 
Let us remark that, in \cite{kurka2003topological}, $\mathcal{O}(x)$ is defined as the set of \emph{strictly positive} iterations of $x$, and a similar convention is used here for transfinite orbits (in Def. \ref{transfinite}, the inductive procedure starts at 1 and not at 0). Notice that the equivalence stated here fails if one, instead, includes by definition the point $x$ itself in the orbit $\mathcal{O}(x)$. In this case, the additional assumption of $X$ having no isolated points is needed (see \cite{silverman1992maps}, Proposition 1.1).
The ``only if" part of the result, however is not true for $\lambda$-regular TDSs (even assuming sequential continuity), as the following example shows.

\begin{exa}
 Let $X$ and $Y$ be two disjoint compact metric spaces with distances respectively $d_X$ and $d_Y$. Define $d:X\cup Y\to\mathbb{R}_0^+$ as $d|_X:=d_X$, $d|_Y:=d_Y$, $d(x,y)=1$ if $(x,y)\in X\times Y$.

Let us define a TDS  $(X\cup Y,\{f\})$ with the following properties:
\begin{enumerate}
\item $X$ and $Y$ are $f$-invariant;
\item The restrictions $f_{|_X}$ and $f_{|_Y}$ are continuous;
\item there exists $y\in Y$ such that $\overline{\mathcal{O}(y)}=Y$;
\item there exists $x\in X$ such that $\overline{\mathcal{O}(x)}=X$ and $\{f\}^\omega(x)=y$;
\item $D(z)\le\omega$ for all $z\in (X\setminus\{x\})\cup Y$.
\end{enumerate}
Under these assumptions, it follows that $(X\cup Y,\{f\})$ is a (sequentially continuous, if one wants) $(\omega\cdot 2)$-normal TDS.
By items 3. and 4. we have that $$\overline{\mathcal{O}_{\{\omega\cdot 2\}}(x)}=X\cup Y.$$ On the other hand, items 1. and 2. imply that $(u,v)\notin (\omega\cdot 2)\{\mathcal{N}\}$ for every $u\in Y$ and $v\in X$.
Notice finally that, for every $z\in X$, we have $(x,z)\in(\omega\cdot 2)\{\mathcal{R}\}$ but the recurrence is not strict nor proper.
\end{exa}

In case of a finite dynamical system $(X,f)$, an invariant set can be defined (just) as a subset $V\subseteq X$ such that $f(V)\subseteq V$, because then we are sure that every point stays in $V$ ``forever". In a TDS, however, the condition $\{f\}(V)\subseteq V$ is not enough, because then $\{f\}^\beta(V)$ can have nonempty intersection with $X\setminus V$ for some ordinal $\beta\ge \omega$. This is due to the fact that, in the additive monoid $\omega_1$, the submonoid $\omega$ has just one generator (the ordinal 1) so it is sufficient to assume that the iteration corresponding to this generator (i.e., $f^1$) stays in $V$ to be sure that every other iteration of finite order will stay there. However, this fails when $\beta\ge\omega$, since then the smallest submonoid in $\omega_1$ containing $\beta$ will have at least two (up to countably many) generators. This motivates the following definition.   

\begin{defi}[\textbf{Transfinite invariance}]
Let $A$ be a subset of $X$. We say that $A$ is \emph{$\lambda$-invariant} with respect to $\{f\}$ if
\begin{equation}\label{eq_inv}
    \{f\}^\beta(A)\subseteq A   \qquad \forall\beta < \lambda.
\end{equation}
Moreover, $A$ is said \emph{strongly $\lambda$-invariant} with respect to $\{f\}$ if all inclusions in \eqref{eq_inv} are equalities.
\end{defi}
\begin{prop}\label{orbit_invariant_}
    Let $(X,\{f\})$ be a TDS and $\lambda$ be an additively indecomposable countable ordinal. Then, for every $x\in X$ such that $\lambda\le D(x)$: 
    \begin{enumerate}
        \item $\mathcal{O}_{\{\lambda\}}(x)$ is $\lambda$-invariant; 
        \item $\overline{\mathcal{O}_{\{\lambda\}}(x)}$ is $\lambda$-invariant if $\{f\}$ is $\lambda$-continuous at $x$.
    \end{enumerate}

\end{prop}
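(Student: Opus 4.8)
The first assertion is purely order-theoretic, and I would prove it directly from the composition rules \eqref{composition1}--\eqref{composition2} together with the additive indecomposability of $\lambda$. Fix $\beta<\lambda$ and let $y\in\mathcal{O}_{\{\lambda\}}(x)\cap X^{\beta+1}$; by the convention of Remark~\ref{emptysetnotation} these are exactly the points of $\mathcal{O}_{\{\lambda\}}(x)$ whose image under $\{f\}^\beta$ contributes to $\{f\}^\beta(\mathcal{O}_{\{\lambda\}}(x))$. Write $y=\{f\}^\alpha(x)$ with $1\le\alpha<\lambda$. Since $y\in X^{\beta+1}$ we have $\{f\}^\beta(y)\neq\emptyset$, and because $\{f\}^\alpha(x)=y$ and $\{f\}^\beta(y)$ is defined, the composition rules \eqref{composition1}--\eqref{composition2} give $\{f\}^\beta(y)=\{f\}^{\alpha+\beta}(x)$. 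As $\lambda$ is additively indecomposable and $\alpha,\beta<\lambda$, we get $\alpha+\beta<\lambda$, and since also $\alpha+\beta\ge\alpha\ge 1$ this yields $\{f\}^\beta(y)=\{f\}^{\alpha+\beta}(x)\in\mathcal{O}_{\{\lambda\}}(x)$. As $\beta<\lambda$ was arbitrary, $\mathcal{O}_{\{\lambda\}}(x)$ is $\lambda$-invariant.

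For the second assertion I would upgrade the first one to the closure by a compactness-plus-continuity argument. Fix $\beta<\lambda$ and let $w\in\overline{\mathcal{O}_{\{\lambda\}}(x)}$ with $w\in X^{\beta+1}$; I must show $\{f\}^\beta(w)\in\overline{\mathcal{O}_{\{\lambda\}}(x)}$. If $w\in\mathcal{O}_{\{\lambda\}}(x)$ this is the first assertion, so assume $w$ is a genuine limit point and pick orbit points $y_k=\{f\}^{\alpha_k}(x)\to w$ with $1\le\alpha_k<\lambda$. Exactly as above, $\{f\}^\beta(y_k)=\{f\}^{\alpha_k+\beta}(x)\in\mathcal{O}_{\{\lambda\}}(x)$ for every $k$; by compactness of $X$ I may pass to a subsequence along which $\{f\}^\beta(y_k)$ converges to some $p\in\overline{\mathcal{O}_{\{\lambda\}}(x)}$. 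The argument is complete once I identify $p=\{f\}^\beta(w)$, i.e. once I commute $\{f\}^\beta$ with this particular limit; letting $\beta$ range over all ordinals $<\lambda$ then gives $\lambda$-invariance of $\overline{\mathcal{O}_{\{\lambda\}}(x)}$.

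The main obstacle is precisely this commutation. The maps $\{f\}^\beta$ are in general discontinuous and are defined only on the fragments $X^{\beta+1}$ of $X$, whereas the hypothesis (Def.~\ref{cont___}) supplies continuity only at the single point $x$. One therefore has to show that $\lambda$-continuity at $x$, used in combination with the composition rules, forces $\{f\}^\beta$ to be sequentially continuous along the approximating sequence $y_k\to w$ — equivalently, that $\lambda$-continuity propagates from $x$ to the iterates $\{f\}^{\alpha}(x)$ with $\alpha<\lambda$ and, more delicately, to the limit points of $\mathcal{O}_{\{\lambda\}}(x)$ that lie in $X^{\beta+1}$. This is the genuinely transfinite feature of the statement: in ordinary dynamics the analogue ($\overline{\mathcal{O}(x)}$ is invariant) is automatic because $f$ is continuous on all of $X$, while here continuity must be transported along the transfinite trajectory. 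Once that continuity is available one concludes $\{f\}^\beta(w)=\lim_k\{f\}^\beta(y_k)=p\in\overline{\mathcal{O}_{\{\lambda\}}(x)}$; if it cannot be propagated in full generality, the natural fallback is to run the same argument under the stronger assumption that $\{f\}$ is $\lambda$-continuous on $\overline{\mathcal{O}_{\{\lambda\}}(x)}$ (or that the system is $\lambda$-regular), which is the regularity regime in which the later results on transfinite limit sets and attractors are developed.
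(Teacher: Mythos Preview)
Your argument for item 1 is correct and is exactly the paper's: write $y=\{f\}^{\alpha}(x)$, use the composition rules to get $\{f\}^{\beta}(y)=\{f\}^{\alpha+\beta}(x)$, and use additive indecomposability of $\lambda$ together with $\lambda\le D(x)$ to conclude $\alpha+\beta<\lambda\le D(x)$, so the iterate is nonempty and lies in $\mathcal{O}_{\{\lambda\}}(x)$. (Your caveat about $y\in X^{\beta+1}$ is in fact automatic here, for the reason just given.)

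For item 2 your strategy also coincides with the paper's: approximate $w$ by orbit points $\{f\}^{\beta_k}(x)$, rewrite $\{f\}^{\eta}(\{f\}^{\beta_k}(x))=\{f\}^{\beta_k+\eta}(x)\in\mathcal{O}_{\{\lambda\}}(x)$, and pass to the limit. The difference is only in how the limiting step is treated. The paper does \emph{not} supply the ``propagation of continuity from $x$ to limit points'' argument you anticipate; it simply invokes the hypothesis and writes
\[
\{f\}^{\beta_k+\eta}(x)=\{f\}^{\eta}(\{f\}^{\beta_k}(x))\ \longrightarrow\ \{f\}^{\eta}(w)
\]
``by $\lambda$-continuity'', taking this convergence as immediate. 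In other words, the point you flag---that Def.~\ref{cont___} literally gives continuity of $\{f\}^{\eta}$ only at $x$, whereas the step above uses continuity at $w$---is not addressed in the paper's proof either. Your outline is therefore at least as complete as the paper's; your suggested fallback (assume $\lambda$-continuity on $\overline{\mathcal{O}_{\{\lambda\}}(x)}$, or $\lambda$-regularity) makes that step rigorous, and is also the regime in which the paper's later results are stated.
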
 
\begin{proof}
\begin{enumerate}
    \item If $y\in\mathcal{O}_{\{\lambda\}}(x)$, then there is $\beta<\lambda$ such that $y=\{f\}^\beta(x)$. For every $\eta<\lambda$, we have $\beta+\eta=\iota$ for some $\iota<\lambda\le D(x)$. Therefore, $\{f\}^\eta(\{f\}^\beta(x))=\{f\}^{\beta+\eta}(x)=\{f\}^\iota(x)\ne\emptyset$ and $\{f\}^\iota(x)\in\mathcal{O}_{\{\lambda\}}(x)$;
    \item Assume $y\in \overline{\mathcal{O}_{\{\lambda\}}(x)}$. Then there is a sequence of ordinals $\{\beta_k\}_{k\in\mathbb{N}}$ such that $\sup_{k\in\mathbb{N}}\beta_k=\lambda$, and $\lim_{k\to\infty}\{f\}^{\beta_k}(x)=y$. Take $\eta<\lambda$ and assume $\{f\}^\eta(y)\ne\emptyset$. We have $\sup_{k}(\beta_k+\eta)<\lambda$. Recalling that $\lambda\le D(x)$, by $\lambda$-continuity we have: $$\emptyset\ne \{f\}^{\beta_k+\eta}(x)=\{f\}^\eta(\{f\}^{\beta_k}(x))\to \{f\}^\eta(y)$$ as $k\to\infty$, so that $\{f\}^\eta(y)\in \overline{\mathcal{O}_{\{\lambda\}}(x)}$.
\end{enumerate}
\end{proof}
A case of a $\lambda$-orbit not $\lambda$-invariant is in Example \ref{exa6}, where $\mathcal{O}_{\{\omega\cdot 2\}}(z_0)$ is not $(\omega\cdot 2)$-invariant.

\begin{defi}\label{subsystem_}
A \emph{$\lambda$-subsystem} of the TDS $(X,\{f\})$ is a closed, $\lambda$-invariant subset $Y\subseteq X$. We say that the $\lambda$-subsystem is \emph{proper} if $Y\subsetneq X$. 
\end{defi}
Clearly, being $\omega$-invariant means being invariant in the usual sense. Therefore, an $\omega$-subsystem is a subsystem which, in itself, is a finite dynamical system.
In a finite dynamical system $(X,f)$, the set $\overline{\mathcal{O}(x)}$ is invariant so that 
$$(X,f) \text{ is not minimal} \iff \overline{\mathcal{O}(x)}\subsetneq X \text{ for some }x\in X \implies \text{$X$ has a proper subsystem}$$
In a TDS, the closure $\overline{\mathcal{O}_{\{\lambda\}}(x)}$ of the $\lambda$-orbit of a point $x\in X$ need not to be $\lambda$-invariant, so in general one can have systems without proper $\lambda$-subsystems which are not $\lambda$-minimal.  
On the other hand, if $(X,\{f\})$ is $\lambda$-minimal, then there are no proper $\lambda$-subsystems:
\begin{prop}\label{_minimal__}
If a TDS $(X,\{f\})$ is $\lambda$-minimal then it has no proper $\lambda$-subsystems.
\end{prop}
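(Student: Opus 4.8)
The plan is a direct contradiction argument that simply unwinds the two definitions involved. Suppose $(X,\{f\})$ is $\lambda$-minimal and, for contradiction, that it admits a proper $\lambda$-subsystem $Y$; as is customary we understand a subsystem to be nonempty (the empty set being excluded as degenerate). Fix an arbitrary point $y\in Y$.

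The first step is to show that the whole $\lambda$-orbit of $y$ stays inside $Y$. By definition of $\lambda$-invariance we have $\{f\}^\beta(Y)\subseteq Y$ for every $\beta<\lambda$, where, recalling the convention of Remark~\ref{emptysetnotation}, $\{f\}^\beta(Y)$ denotes the image $\{f\}^\beta(X^{\beta+1}\cap Y)$ of those points of $Y$ that admit a $\beta$-th iterate. Since $y\in Y$, this gives $\{f\}^\beta(\{y\})\subseteq\{f\}^\beta(Y)\subseteq Y$ for each $\beta$ with $1\le\beta<\lambda$, and therefore
\[
\mathcal{O}_{\{\lambda\}}(y)\;=\;\bigcup_{1\le\beta<\lambda}\{f\}^\beta(\{y\})\;\subseteq\;Y.
\]
Because $Y$ is closed, taking closures yields $\overline{\mathcal{O}_{\{\lambda\}}(y)}\subseteq Y$.

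The final step is to invoke $\lambda$-minimality at the point $y$: it gives $\overline{\mathcal{O}_{\{\lambda\}}(y)}=X$, hence $X\subseteq Y$, contradicting $Y\subsetneq X$. Thus no proper (nonempty) $\lambda$-subsystem can exist. I expect no genuine obstacle here — the result is essentially immediate once one observes that $\lambda$-invariance forces the $\lambda$-orbit of any element of $Y$ to remain in $Y$, and closedness then traps its closure there as well. The only points that need a little care are purely bookkeeping: the fact that $\{f\}^\beta$ is a partial map (so that ``$\lambda$-orbit $\subseteq Y$'' must be read through the subset-image convention $\{f\}^\beta(A):=\{f\}^\beta(X^{\beta+1}\cap A)$), and the harmless exclusion of the trivial case $Y=\emptyset$. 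One should also record — as already noted before the statement — that the converse is false in the transfinite setting: a TDS may possess no proper $\lambda$-subsystem without being $\lambda$-minimal, since $\overline{\mathcal{O}_{\{\lambda\}}(x)}$ need not be $\lambda$-invariant.
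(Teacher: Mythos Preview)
Your proof is correct and follows essentially the same argument as the paper: pick a point in $Y$, use $\lambda$-invariance to conclude that its $\lambda$-orbit lies in $Y$, use closedness of $Y$ to trap the closure there, and derive a contradiction with $\lambda$-minimality. The only differences are cosmetic: you are slightly more explicit about the partial-map convention and the exclusion of $Y=\emptyset$.
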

\begin{proof}
    Assume that $(X,\{f\})$ is $\lambda$-minimal. Suppose that $Y$ is closed and $\lambda$-invariant and pick $x\in Y$. By $\lambda$-invariance we have $\{f\}^\beta(x)\in Y$ for every $\beta<\lambda$, so that $\mathcal{O}_{\{\lambda\}}(x)\subseteq Y$. Since $Y$ is closed, we have $\overline{\mathcal{O}_{\{\lambda\}}(x)}\subseteq Y$ as well, which is a contradiction.
\end{proof}

In a finite dynamical system, the diagonal of the recurrence relation is invariant: $f(|\mathcal{R}|)\subseteq |\mathcal{R}|$ (see for instance \cite{kurka2003topological}, Proposition 2.17). Let us show that this is generally not the case for TDSs.

\begin{exa}
In this example we show that in a sequentially continuous TDS in general $|\lambda\{\mathcal{R}\}|$ is not $\lambda$-invariant. Let $z,y\in\mathbb{I}$ and $f:\mathbb{I}\circlearrowleft$ a continuous map such that
\begin{itemize}
\item $\overline{\mathcal{O}(z)}=\mathbb{I}$;
\item $f(y)$ is a pre-periodic point.
\end{itemize}
For every $n\in\mathbb{N}$ consider the closed interval $I_n:=[y-1/n, y+1/n]$. Then there exist
$$
k_n=\min \{k\in\mathbb{N} : f^k(z)\in I_n\}
\qquad\text{and}\qquad 
\delta_n=\min_{j<k_n} d\left( f^j(z),y\right).
$$
Set $A_n:=(y-\delta_n, y+\delta_n)$. 
Since $\delta_n>1/n$, we set $U_n:=A_n\setminus I_n$. Let us recall now the well-known Tietze Extension Theorem (see for instance \cite{willard2012general}, p. 99-108):
\begin{fact}[Tietze Extension Theorem]\label{th_tieze}
A topological space $X$ is normal if and only if for any closed set $C\subset X$ and given any continuous function $f :C\rightarrow \mathbb{I}$, there exists a continuous function $\hat{f}:X\rightarrow\mathbb{I}$ which extends the map $f$.
\qed
\end{fact}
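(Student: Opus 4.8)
The plan is to prove the two implications separately, treating the forward direction (extension property $\Rightarrow$ normal) as routine and concentrating on the converse. For the easy direction, suppose the extension property holds and let $A,B\subseteq X$ be disjoint closed sets. I would set $C:=A\cup B$, which is closed, and define $f:C\to\mathbb{I}$ by $f\equiv 0$ on $A$ and $f\equiv 1$ on $B$; since $A$ and $B$ are each clopen in the subspace $C$, this $f$ is continuous. Extending it to $\hat f:X\to\mathbb{I}$ and forming $\hat f^{-1}([0,1/2))$ and $\hat f^{-1}((1/2,1])$ yields disjoint open sets separating $A$ and $B$, so $X$ is normal.

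For the hard direction (normal $\Rightarrow$ extension), I would first rescale via the affine homeomorphism $t\mapsto 2t-1$, so that it suffices to extend a continuous $g:C\to[-1,1]$. The crucial auxiliary step is an approximation lemma: if $X$ is normal and $\varphi:C\to[-r,r]$ is continuous, then there exists a continuous $h:X\to[-r/3,r/3]$ with $|\varphi-h|\le 2r/3$ on $C$. To establish it, put $A:=\varphi^{-1}([-r,-r/3])$ and $B:=\varphi^{-1}([r/3,r])$; these are disjoint and closed in $C$, hence closed in $X$ because $C$ is closed, and Urysohn's Lemma (available precisely because $X$ is normal) furnishes the required $h$ with $h\equiv -r/3$ on $A$ and $h\equiv r/3$ on $B$. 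A three-case check over the regions $[-r,-r/3]$, $(-r/3,r/3)$ and $[r/3,r]$ then verifies the bound.

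Finally I would iterate the lemma. Starting from $g$ with $r=1$, it yields $h_1$ with $\|h_1\|_\infty\le 1/3$ and $|g-h_1|\le 2/3$ on $C$; feeding the residual $g-(h_1+\cdots+h_n)|_C$ back in with $r=(2/3)^n$ produces $h_{n+1}$ with $\|h_{n+1}\|_\infty\le \tfrac{1}{3}(2/3)^n$ and $|g-\sum_{k=1}^{n+1}h_k|\le (2/3)^{n+1}$ on $C$. Setting $\hat g:=\sum_{n\ge1}h_n$, the Weierstrass $M$-test gives uniform convergence on $X$ since $\sum_{n\ge1}\tfrac{1}{3}(2/3)^{n-1}=1$, so $\hat g$ is continuous with $|\hat g|\le 1$, while the bound $|g-\sum_{k\le n}h_k|\le (2/3)^n\to 0$ forces $\hat g|_C=g$; rescaling back gives the continuous extension $\hat f:X\to\mathbb{I}$. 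I expect the main obstacle to be the hard direction, and within it the dependence on Urysohn's Lemma together with the care needed to ensure that the geometric series converges uniformly, reproduces $g$ \emph{exactly} on $C$, and keeps the total value within $[-1,1]$.
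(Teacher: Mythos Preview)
Your argument is correct and is the standard proof of the Tietze Extension Theorem. However, the paper does not prove this statement at all: it is recorded as a \emph{Fact} with a \qed{} and only a reference to Willard's textbook (p.~99--108), so there is no in-paper proof to compare against. Your write-up simply supplies what the paper takes for granted.
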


We can now define, for any $n\in\mathbb{N}$, the function $f_n:X\rightarrow X$ as
\begin{equation}
f_n(x)=
\begin{cases}
f(x) \quad & \text{if } x\in \mathbb{I}\setminus A_n\\
\hat{f}_n(x) & \text{if } x\in U_n\\
f(y) & \text{if } x\in I_n
\end{cases}
\end{equation}
where $\hat{f}_n: \mathbb{I}\circlearrowleft$ is a continuous extension of $f_n|_{\mathbb{I}\setminus U_n}$, which exists by Tietze Extension Theorem (of course $\mathbb{I}$ is a normal space). Thus, we have defined the sequentially continuous TDS $(\mathbb{I},\{f\})$. 
Take $\lambda=\omega\cdot 2$.
We observe that $z\in |\mathcal{R}|\subseteq |\lambda\{\mathcal{R}\}|$. Since $\{f\}^\omega(z)=f(y)$ and $f(y)$ is pre-periodic, we have that $\{f\}^\omega(z)\notin |\lambda\{\mathcal{R}\}|$. Thus, $|\lambda\{\mathcal{R}\}|$ is not $\lambda$-invariant.
\end{exa}

\section{Transfinite limit sets and attractors}\label{sec6}

\begin{defi}[\textbf{Transfinite limit sets}]
\label{def_lambda_limits}
Let $(X,\{f\})$ be a TDS, $\lambda$ a countable limit ordinal, and consider $Y\subseteq X$. 
\begin{enumerate}

\item The \emph{proper $\lambda$-limit} of $Y$ is defined as
\begin{equation}\label{prop_limit}
p\lambda(Y):=\bigcap_{\eta<\lambda}\,\,\overline{\bigcup_{\eta<\beta<\lambda}\{f\}^\beta(Y)}.
\end{equation}

\item  The \emph{extended $\lambda$-limit} of $Y$ is defined as
\begin{equation}\label{ext_limit}
e\lambda(Y):=\bigcup_{\beta\le\lambda} \left( \bigcap_{\eta<\beta}\,\,\overline{\bigcup_{\eta<\iota<\beta}\{f\}^\iota(Y)}\right).
\end{equation}
\end{enumerate}
\end{defi}
If $\lambda=\omega$ we obtain that the classical definition of $\omega$-limit set given by Eq. \eqref{def_omegalimit},  that is for any $Y\subseteq X$,
\begin{equation*}
    p\omega(Y)=e\omega(Y)=\omega_f(Y).
\end{equation*}

In general, for a nonempty $Y\subseteq X$ we have that $p\lambda(Y)$ can be empty for a limit ordinal $\lambda>\omega$, while $e\lambda(Y)$ is always nonempty, since it contains $\omega(Y)$. 
The following result generalizes to the transfinite case a well-known characterization holding for finite attractors.

\begin{prop}\label{th_plim_seq}
Let $\lambda$ be a limit ordinal and $Y\subseteq X$ be a nonempty set. A point $y\in X$ belongs to $p\lambda(Y)$ if and only if there exists a strictly increasing sequence of ordinals $\{\beta_j\}_{j\in\mathbb{N}}$ and a sequence of points $\{y_j\}_{j\in \mathbb{N}}\subseteq Y$ such that
\begin{itemize}
\item $\sup_{j\in\mathbb{N}}\beta_j= \lambda$, and
\item the sequence $\{f\}^{\beta_j}(y_j)$ converges to $y$ as $j\to\infty$.
\end{itemize}
\end{prop}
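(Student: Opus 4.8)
The plan is to prove the two implications separately, in each case unwinding the definition \eqref{prop_limit} of $p\lambda(Y)$ as the nested intersection $\bigcap_{\eta<\lambda}\overline{\bigcup_{\eta<\beta<\lambda}\{f\}^\beta(Y)}$ and recalling that, by the convention $\{f\}^\beta(Y)=\{f\}^\beta(X^{\beta+1}\cap Y)$, a point of $\bigcup_{\eta<\beta<\lambda}\{f\}^\beta(Y)$ is precisely a point of the form $\{f\}^\beta(y')$ with $\eta<\beta<\lambda$ and $y'\in Y\cap X^{\beta+1}$.

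First I would do the ``if'' direction, which is the easy one. Given a strictly increasing sequence $\{\beta_j\}_{j\in\mathbb{N}}$ with $\sup_j\beta_j=\lambda$ and points $y_j\in Y$ with $\{f\}^{\beta_j}(y_j)\to y$ (each $y_j$ then automatically lying in $X^{\beta_j+1}$), I fix an arbitrary $\eta<\lambda$; since $\sup_j\beta_j=\lambda>\eta$, eventually $\beta_j>\eta$, so a tail of the convergent sequence $\{f\}^{\beta_j}(y_j)$ lies in $\bigcup_{\eta<\beta<\lambda}\{f\}^\beta(Y)$, whence $y$ lies in its closure. As $\eta<\lambda$ was arbitrary, $y\in p\lambda(Y)$.

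For the ``only if'' direction I would build the two sequences by recursion. Using that $\lambda$ is a countable limit ordinal, hence of countable cofinality, fix a strictly increasing sequence $\{\eta_j\}_{j\in\mathbb{N}}$ of ordinals with $\sup_j\eta_j=\lambda$. Put $\mu_1:=\eta_1$ and, for $j\ge 2$, $\mu_j:=\max(\eta_j,\beta_{j-1}+1)$; since $\eta_j<\lambda$ and $\beta_{j-1}<\lambda$ and $\lambda$ is a limit ordinal, $\mu_j<\lambda$. From $y\in p\lambda(Y)$ we get $y\in\overline{\bigcup_{\mu_j<\beta<\lambda}\{f\}^\beta(Y)}$, so there are an ordinal $\beta_j$ with $\mu_j<\beta_j<\lambda$ and a point $y_j\in Y\cap X^{\beta_j+1}$ with $d(\{f\}^{\beta_j}(y_j),y)<\tfrac1j$. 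By construction $\beta_j>\beta_{j-1}$ (strict monotonicity) and $\beta_j>\eta_j$, so $\sup_j\beta_j\ge\sup_j\eta_j=\lambda$, forcing $\sup_j\beta_j=\lambda$; and $\{f\}^{\beta_j}(y_j)\to y$ since the distances vanish.

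There is no real obstacle here; the only care needed is the bookkeeping that makes $\{\beta_j\}$ simultaneously strictly increasing (forced by $\beta_j>\beta_{j-1}$) and cofinal in $\lambda$ (forced by interleaving with the fixed cofinal sequence $\eta_j\uparrow\lambda$), together with noting that the degenerate case, in which $\bigcup_{\eta<\beta<\lambda}\{f\}^\beta(Y)=\emptyset$ for some $\eta<\lambda$ and hence $p\lambda(Y)=\emptyset$, makes both directions vacuous.
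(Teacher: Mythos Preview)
Your proof is correct and follows the same approach as the paper's: the ``if'' direction is a direct verification, and the ``only if'' is an inductive construction of the $\beta_j$ and $y_j$ using the definition of closure. Your version is in fact slightly more careful, since by interleaving with a fixed cofinal sequence $\eta_j\uparrow\lambda$ you explicitly guarantee $\sup_j\beta_j=\lambda$, a point the paper's own argument leaves implicit.
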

\begin{proof}
    Take $y\in p\lambda(Y)$.  We construct the sequences $\{y_j\}_{j\in\mathbb{N}}$ and $\{\beta_j\}_{j\in\mathbb{N}}$ by induction.  Since $Y\neq \emptyset$, we take $y_0\in Y$ and set $\beta_0:=0$.  Suppose that $y_{j-1}$ and $\beta_{j-1}$ have been constructed. Since $y\in p\lambda(Y)$, we have that 
    \[
    y\in \overline{\bigcup_{\beta_{j-1}<\beta<\lambda}\{f\}^\beta(Y)}.
    \]
    Therefore, setting $\epsilon_j=1/j$, we have:
    \[
    B_{\epsilon_j}(y)\cap \big(\bigcup_{\beta_{j-1}<\beta<\lambda} \{f\}^\beta(Y)\big) \neq \emptyset \quad \forall j\in\mathbb{N}.
    \]
    Recalling that $\lambda$ is a limit ordinal, there exist $y_j\in Y$ and $\beta_{j-1}<\beta_j<\lambda$ such that 
    $$
    d(\{f\}^{\beta_j}(y_j),y)<\epsilon_j.
    $$
    Since $\epsilon_j\to 0$ as $j\to\infty$, we have $\lim_{j\to\infty}\{f\}^{\beta_j}(y_j)=y$. 
    
    Conversely,  let us assume that there exists a sequence of ordinals $\{\beta_j\}_{j\in\mathbb{N}}$ and a sequence $\{y_j\}_{j\in\mathbb{N}}\subseteq Y$ that verify our assumptions. Thus, for every neighborhood $U(y)$  of $y$ and for any given $\beta<\lambda$, there exist $y_j\in Y$ and $\beta<\beta_j<\lambda$ such that $\{f\}^{\beta_j}(y_j)\in U(y)$.  
    Hence $y\in\overline{\bigcup_{\beta<\alpha<\lambda} \{f\}^\alpha(Y)}$,
    and since the latter holds for every $\beta<\lambda$, it follows that: 
    \begin{equation*}   y\in\bigcap_{\beta<\lambda}\overline{\bigcup_{\beta<\alpha<\lambda} \{f\}^\alpha(Y)}.
    \end{equation*}
\end{proof}
The following Corollary is an immediate consequence of the previous result, generalizing it to extended transfinite limit sets.
\begin{cor}
Let $\lambda$ be a limit ordinal and $Y\subseteq X$ be a nonempty set. A point $y\in X$ belongs to $e\lambda(Y)$ if and only if there exists an increasing sequence of ordinals $\{\beta_j\}_{j\in\mathbb{N}}$ and a sequence of points $\{y_j\}_{j\in \mathbb{N}}\subseteq Y$ such that
\begin{itemize}
\item $\sup_{j\in\mathbb{N}}\beta_j = \beta\le \lambda$, and
\item the sequence $\{f\}^{\beta_j}(y_j)$ converges to $y$ as $j\to\infty$.
\end{itemize}
\end{cor}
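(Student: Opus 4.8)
The plan is to obtain the statement directly from Proposition~\ref{th_plim_seq} by writing $e\lambda(Y)$ as a union of ``truncated proper limits''. For a limit ordinal $\beta\le\lambda$ set
$$p\beta(Y):=\bigcap_{\eta<\beta}\,\overline{\bigcup_{\eta<\iota<\beta}\{f\}^\iota(Y)},$$
so that $p\beta(Y)$ is exactly the object to which Proposition~\ref{th_plim_seq} applies with $\beta$ in the role of $\lambda$ (recall $\omega$ is itself a limit ordinal and every such $\beta$ satisfies $\beta\ge\omega$, so the proposition is available at each admissible truncation level). The first step is to observe that in the union over $\beta\le\lambda$ in \eqref{ext_limit} only the terms indexed by a limit ordinal can be nonempty: for a successor $\beta=\gamma+1$, choosing $\eta=\gamma$ in the intersection gives the empty union $\bigcup_{\gamma<\iota<\gamma+1}\{f\}^\iota(Y)=\emptyset$, hence that term equals $\overline{\emptyset}=\emptyset$; this disposes of every $\beta$ with $1\le\beta<\omega$, and the degenerate index $\beta=0$ may simply be dropped. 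Thus $e\lambda(Y)=\bigcup\{\,p\beta(Y):\omega\le\beta\le\lambda,\ \beta\text{ a limit ordinal}\,\}$, and the Corollary will follow by applying Proposition~\ref{th_plim_seq} termwise.

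For the ``only if'' direction I would take $y\in e\lambda(Y)$; by the decomposition there is a limit ordinal $\beta$ with $\omega\le\beta\le\lambda$ and $y\in p\beta(Y)$, and Proposition~\ref{th_plim_seq} (applied with $\beta$ in place of $\lambda$, same nonempty $Y$) furnishes a strictly increasing sequence $\{\beta_j\}_{j\in\mathbb{N}}$ with $\sup_j\beta_j=\beta$ and points $\{y_j\}_{j\in\mathbb{N}}\subseteq Y$ with $\{f\}^{\beta_j}(y_j)\to y$; since $\beta\le\lambda$ this is precisely the required data. For the ``if'' direction, starting from an increasing sequence $\{\beta_j\}_{j\in\mathbb{N}}$ of ordinals with $\sup_j\beta_j=\beta\le\lambda$ and $\{y_j\}_{j\in\mathbb{N}}\subseteq Y$ with $\{f\}^{\beta_j}(y_j)\to y$, I would first record the elementary ordinal fact that such a sequence (strictly increasing, as in Proposition~\ref{th_plim_seq}) has $\beta_j\ge j$ for all $j$, so $\beta\ge\omega$, and that $\beta$ cannot be a successor $\gamma+1$, since then the sequence would have to attain $\gamma+1$ in contradiction with strict monotonicity. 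Hence $\beta$ is a limit ordinal with $\omega\le\beta\le\lambda$, Proposition~\ref{th_plim_seq} (with $\beta$ for $\lambda$) gives $y\in p\beta(Y)$, and since $p\beta(Y)$ is one of the sets in the union for $e\lambda(Y)$ we conclude $y\in e\lambda(Y)$.

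I do not expect a genuine obstacle here; the content is entirely bookkeeping. The two points deserving care are the identification of $e\lambda(Y)$ with the union of the truncated limits $p\beta(Y)$ over limit ordinals $\beta\le\lambda$ (which rests on the observation that successor truncation levels contribute the empty set) and the matching of quantifiers: the truncation level $\beta$ appearing in the decomposition is exactly the supremum of the ordinal sequence produced by — or fed into — Proposition~\ref{th_plim_seq}, and that supremum is automatically a limit ordinal once the sequence is strictly increasing, which is why no separate ``$\beta$ limit'' hypothesis needs to be carried along.
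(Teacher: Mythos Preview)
Your proposal is correct and follows essentially the same route as the paper: decompose $e\lambda(Y)$ as the union of the $p\beta(Y)$ over limit ordinals $\beta\le\lambda$, then invoke Proposition~\ref{th_plim_seq} at each such $\beta$ for both directions. You are in fact more explicit than the paper on two points it leaves implicit---that successor truncation levels contribute $\emptyset$ to the union, and that the supremum of a strictly increasing ordinal sequence is necessarily a limit ordinal---so your write-up is, if anything, a fuller version of the same argument.
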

\begin{proof}
    Take $y\in e\lambda(Y)$. Then there exists a limit ordinal $\beta\le \lambda$ such that $y\in p\beta(Y)$.  By Proposition \ref{th_plim_seq}, there exists an increasing sequence $\{\beta_j\}_{j\in\mathbb{N}}$ of ordinals and a sequence of points $\{y_j\}_{j\in\mathbb{N}}$ in $Y$ such that $\sup_{j\in\mathbb{N}}\beta_j= \beta$ and $\{f\}^{\beta_j}(y_j)\to y$ as $j\to\infty$. 

    Conversely, assume that there is a sequence  $\{\beta_j\}_{j\in\mathbb{N}}$ and a sequence $\{y_j\}_{j\in\mathbb{N}}\subseteq Y$ that verify our assumptions for some $\beta\le \lambda$.   By Proposition \ref{th_plim_seq}, it follows that $y\in p\beta(Y)\subseteq e\lambda(Y)$.
\end{proof}

\begin{prop}\label{prop_inv}
Let $(X,\{f\})$ be a TDS. Let $Y\subseteq X$ be a nonempty set and $\lambda$ be a limit ordinal. Then $p\lambda(Y)$ is closed and 
$$f(p\lambda(Y))\subseteq p\lambda(Y).$$
\end{prop}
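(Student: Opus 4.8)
The plan is to handle the two assertions separately, as each is short. For \emph{closedness}, I would simply read off the definition \eqref{prop_limit}: the set $p\lambda(Y)$ is an intersection, indexed by the ordinals $\eta<\lambda$, of the sets $\overline{\bigcup_{\eta<\beta<\lambda}\{f\}^\beta(Y)}$, every one of which is closed as a topological closure. An arbitrary intersection of closed sets is closed, so $p\lambda(Y)$ is closed. Nothing dynamical is needed here.

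For the inclusion $f(p\lambda(Y))\subseteq p\lambda(Y)$, I would argue directly from the ``$\eta$-by-$\eta$'' description. Fix $y\in p\lambda(Y)$ and an arbitrary $\eta<\lambda$; the goal is to show $f(y)\in\overline{\bigcup_{\eta<\beta<\lambda}\{f\}^\beta(Y)}$. Given $\epsilon>0$, use continuity of the limit map $f$ to choose $\delta>0$ with $f(B_\delta(y))\subseteq B_\epsilon(f(y))$. Since $y$ belongs in particular to the $\eta$-th closure appearing in \eqref{prop_limit}, there are an ordinal $\beta$ with $\eta<\beta<\lambda$ and a point $u\in Y\cap X^{\beta+1}$ such that $d(\{f\}^\beta(u),y)<\delta$. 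The key move is then to shift the iteration order by one: because $\{f\}^1$ is everywhere defined and equals $f$, the composition rule of Definition \ref{cycle} gives $\{f\}^{\beta+1}(u)=f(\{f\}^\beta(u))$, hence $d(\{f\}^{\beta+1}(u),f(y))<\epsilon$. Since $\lambda$ is a limit ordinal and $\eta<\beta$, we still have $\eta<\beta+1<\lambda$, so $\{f\}^{\beta+1}(u)\in\bigcup_{\gamma:\,\eta<\gamma<\lambda}\{f\}^\gamma(Y)$. Letting $\epsilon\to0$ places $f(y)$ in the closure of this union, and since $\eta<\lambda$ was arbitrary we conclude $f(y)\in p\lambda(Y)$.

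Alternatively, the same computation can be packaged sequentially via Proposition \ref{th_plim_seq}: given $y\in p\lambda(Y)$, take a strictly increasing $\{\beta_j\}_{j\in\mathbb{N}}$ with $\sup_j\beta_j=\lambda$ and $\{y_j\}_{j\in\mathbb{N}}\subseteq Y$ with $\{f\}^{\beta_j}(y_j)\to y$; then $\{f\}^{\beta_j+1}(y_j)=f(\{f\}^{\beta_j}(y_j))\to f(y)$ by continuity of $f$, while $\{\beta_j+1\}_{j\in\mathbb{N}}$ is again strictly increasing with supremum $\lambda$ (here limit-ness of $\lambda$ is used), so Proposition \ref{th_plim_seq} gives $f(y)\in p\lambda(Y)$.

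I do not expect a genuine obstacle. The only two points to watch are that the ``$+1$'' does not escape past $\lambda$ — which is exactly where the hypothesis that $\lambda$ is a limit ordinal enters — and that $\{f\}^{\beta+1}$ really is the composite $f\circ\{f\}^\beta$ on its domain, so that displacing the ordinal by one behaves as in the finite case even when $\beta\ge\omega$; both are immediate from Definition \ref{cycle} and the remark in Definition \ref{degree_}. It is also worth noting that this argument gives only $f(p\lambda(Y))\subseteq p\lambda(Y)$ and not equality, since the one-step preimage of a point of $p\lambda(Y)$ need not lie in $p\lambda(Y)$.
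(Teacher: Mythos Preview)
Your proposal is correct and your sequential alternative is exactly the paper's proof: closedness from the intersection-of-closures structure, and invariance by taking $\{\beta_j\}$ and $\{y_j\}$ from Proposition~\ref{th_plim_seq}, applying continuity of $f$ to get $\{f\}^{\beta_j+1}(y_j)\to f(y)$, and noting $\sup_j(\beta_j+1)=\lambda$. Your first (direct $\eta$-by-$\eta$) argument is a valid variant that avoids invoking Proposition~\ref{th_plim_seq}, but the content is the same.
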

\begin{proof}
The closure of the set $p\lambda(Y)$ follows from the fact that it is an intersection of closed sets. 
Assume now that $p\lambda(Y)\ne\emptyset$ (otherwise invariance is vacuously true). Take $y\in p\lambda(Y)$.
By Proposition \ref{th_plim_seq}, there exists an increasing sequence of ordinals $\{\beta_j\}_{j\in\mathbb{N}}$ and a sequence of points $\{y_j\}_{\mathbb{N}}\subseteq Y$ such that $\sup_{j\in\mathbb{N}}\beta_j=\lambda$ and $\{f\}^{\beta_j}(y_j)\to y$ as $j\to \infty$.  Since $f$ is continuous we have that 
$$\{f\}^{\beta_j+1}(y_j)\longrightarrow f(y)\quad \text{as }j\to \infty.$$
Since $\sup_{j\in\mathbb{N}}\{\beta_j+1\}=\lambda$, by Proposition \ref{th_plim_seq} it follows that $f(y)\in p\lambda(Y)$.
\end{proof}

\begin{exa}\label{exa6}
    This example shows that, in general, a proper $\lambda$-limit is not $\lambda$-invariant, that is there exists $\beta<\lambda$ such that $ \{f\}^\beta(P)\not\subseteq P$.
    
    Let $x_0=1/3$, $x_1=2/3$,  $p\in (0,x_0)$ and $q\in (x_0,x_1)$. Set $0<m<1$ and set $g_0(x)=m x$. Let us define the functions $g:[0,x_0)\circlearrowleft$, $h:[x_0,x_1)\circlearrowleft$ and $l:[x_1,1]\circlearrowleft$ as follows
    \begin{align*}
        g(x)&=
    \begin{cases} 
    g_0(x) &\quad \text{if }x\in [0,p)\\ 
    \frac{g_0(p) -x_0}{p-x_0}(x-x_0)+x_0  &\quad \text{if }x\in[p,x_0)
    \end{cases} \\   
    h(x)&=
    \begin{cases} 
    m (x-x_0)+x_0 &\quad \text{if }x\in [x_0,q)\\ 
    \frac{m(q-x_0) +x_0-x_1}{q-x_1}(x-x_1)+x_1  &\quad \text{if }x\in[q,x_1)
    \end{cases}\\    
     l(x)&=4 \frac{x-x_1}{1-x_1}(1-x)+x_1    
    \end{align*}
    Let $f:\mathbb{I}\circlearrowleft$ be the function given by
    \begin{equation*}
        f(x)=
    \begin{cases} 
    g(x) &\quad \text{if }x\in [0,x_0)\\ 
    h(x)  &\quad \text{if }x\in[x_0,x_1)\\
    l(x)  &\quad \text{if }x\in[x_1,1]
    \end{cases} 
    \end{equation*}
    Let $z_0\in (0,p)$ and $U=(a,b)\subseteq(0,p)$ be such that $z_0\in U$ and $f(b) <a$. For every $n\in\mathbb{N}$ we set $z_n:=f^n(z_0)$ and let $\{a_n\}_{n\in\mathbb{N}}$ and $\{b_n\}_{n\in\mathbb{N}}$ be two sequences of real numbers that converge to $z_0$ and such that 
    \begin{itemize}
        \item $\{a_n\}_{n\in\mathbb{N}}$ is strictly increasing and $a_n\in (a,z_0)$ for all $n\in\mathbb{N}$;
        \item $\{b_n\}_{n\in\mathbb{N}}$ is strictly decreasing and $b_n\in (z_0,b)$ for all $n\in\mathbb{N}$.
    \end{itemize}
    For every $n\in\mathbb{N}$, we set $u_n=g^n(a_n)$, $v_n=g^n(b_n)$ and $U_n=[u_n,v_n]$. Let $y_0\in (x_1,1)$ be a transitive point for the function $l(x)$, that is $\overline{\mathcal{O}(y_0)}=[x_1,1]$. Let us define the sequence of functions $g_n:[0,x_0)\rightarrow \mathbb{I}$ as follows 
    \begin{equation}
\label{gn}
g_n(x)=
\begin{cases}
g(x) &\quad \text{if } x\in [0,x_0)\setminus U_n\\
 p_n(x-z_n)+y_0 &\quad \text{if } x\in [u_n,z_n)\\
r_n(x-z_n)+y_0 &\quad \text{if } x\in [z_n,v_n]\\
\end{cases}
\end{equation}
where
\[
p_n=\frac{y_0- g(u_n)}{z_n-u_n}>0,\qquad r_n=\frac{y_0- g(v_n)}{z_n-v_n}<0.
\]
Set $I_n:=[c_n,d_n]=[l^n(y_0)-\epsilon_n,l^n(y_0)+\epsilon_n]$. For every $n\in\mathbb{N}$ we take $\epsilon_n>0$ sufficiently small that $I_n\cap l^k(y_0)=\emptyset$ for all $k<n$. Take $y_1\in (x_0,q)$ and let us define the sequence of functions $l_n:[x_1,1]\rightarrow\mathbb{I}$ as follows 
\begin{equation*}
\label{ln}
l_n(x)=
\begin{cases}
l(x) &\quad \text{if } x\in [x_1,1]\setminus I_n\\
\frac{y_1-l(c_n)}{l^n(y_0)-c_n}(x-c_n)+l(c_n) &\quad \text{if } x\in [c_n,l^n(y_0))\\
\frac{y_1-l(d_n)}{l^n(y_0)-d_n}(x-d_n)+l(d_n) &\quad \text{if } x\in [l^n(y_0),d_n]
\end{cases}
\end{equation*}
Let us now define the sequence of functions $f_n:\mathbb{I}\circlearrowleft$ as follows (see Fig. \ref{fig_comp})
\begin{equation*}
        f_n(x)=
    \begin{cases} 
    g_n(x) &\quad \text{if }x\in [0,x_0)\\ 
    h(x)  &\quad \text{if }x\in[x_0,x_1)\\
    l_n(x)  &\quad \text{if }x\in[x_1,1]
    \end{cases} 
    \end{equation*}
We have: $f_n\dot{\longrightarrow}  f$ as $n\to\infty$. Therefore, we have defined the TDS $(\mathbb{I},\{f\})$. Moreover, we have that $\{f\}^\omega(z_0)=y_0$, $\{f\}^\omega(y_0)=y_1$ (and thus $\{f\}^{\omega\cdot 2}(z_0)=y_1$). Observe further that $\{f\}^\omega(x)=\emptyset$ for all $x\in [x_0,x_1]$.
Set $\lambda:= \omega\cdot 2$. Since $\omega_f(y_1)=x_0$, we have $P:=p\lambda(\mathbb{I})=[x_1,1]\cup \{x_0\}$.
However, since $\{f\}^\omega(y_0)=y_1\notin P$, we have that $\{f\}^\omega(P)\not\subseteq P$.

\begin{figure}[H]
\centering
\fbox{\includegraphics[width=10cm]{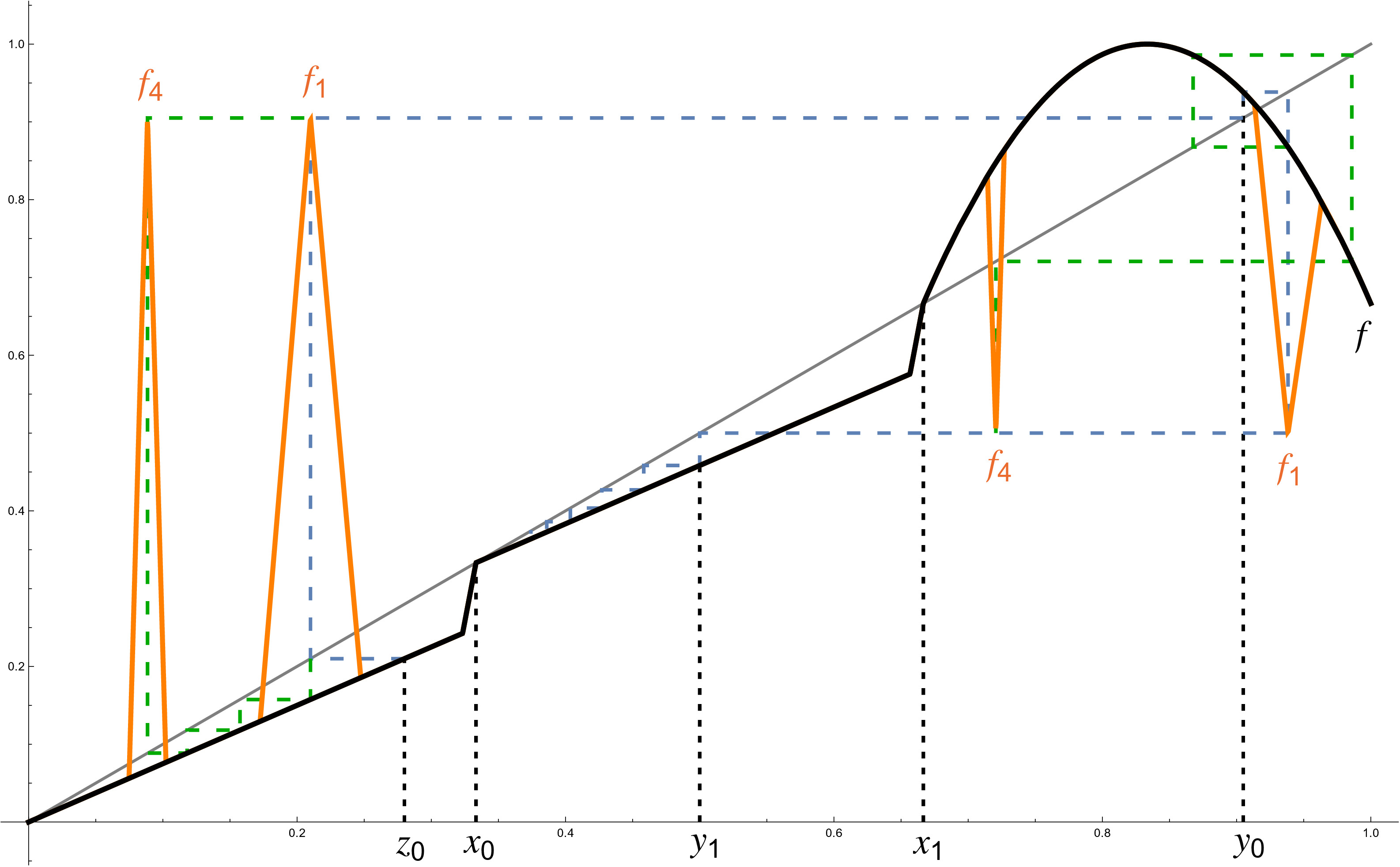}}
\caption{the system defined in Example  \ref{exa6}.}
\label{fig_comp}
\end{figure}

\end{exa}

\begin{defi}[\textbf{Transfinitely inward sets}]\label{inward__}
Let $\lambda$ be a countable limit ordinal. We say that $V\subseteq X$ is a \emph{$\lambda$-inward set} if 
\[
\{f\}^\beta (\overline{V})\subseteq V^\circ,\qquad \forall \beta<\lambda.
\]
Moreover, if there exists $\delta>0$ such that
\begin{equation*}
    \inf_{\beta<\lambda} d\left(\{f\}^\beta(\overline{V}),\partial V\right)>\delta,
\end{equation*}
then $V$ is called a \emph{uniformly $\lambda$-inward set}.
\end{defi}
Notice that, if $X$ has dimension zero, we have $\partial V=\emptyset$ and therefore $d(A,\partial V)=+\infty$ for every nonempty $A\subseteq X$. Notice also that the whole space $X$ is always uniformly $\lambda$-inward in $(X,\{f\})$. 

Below level $\omega^\omega$, transfinitely inward sets are uniformly inward provided a basic regularity condition, as we prove in the following proposition.

\begin{prop}\label{inward_}
Let $(X,\{f\})$ be a TDS and $V\subseteq X$ a $\lambda$-inward set for some limit ordinal $\lambda<\omega^\omega$. If $V$ is  $\lambda$-closed, then $V$ is uniformly $\lambda$-inward.
\end{prop}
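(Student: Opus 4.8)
The plan is to bound the total transfinite image $W:=\bigcup_{1\le\beta<\lambda}\{f\}^{\beta}(\overline V)$ above by a \emph{finite} union of closed sets, each lying inside $V^{\circ}$. Granting this, $\overline W$ is a compact subset of the open set $V^{\circ}$, hence disjoint from the compact set $\partial V=\overline V\setminus V^{\circ}$, so $\delta:=d(\overline W,\partial V)>0$ (if $\partial V=\emptyset$ there is nothing to prove). Since $\{f\}^{\beta}(\overline V)\subseteq\overline W$ for every $\beta<\lambda$, we then get $d(\{f\}^{\beta}(\overline V),\partial V)\ge\delta$ for all $\beta<\lambda$, i.e.\ $V$ is uniformly $\lambda$-inward (with, say, $\delta/2$ in the role of the constant in Definition~\ref{inward__}).

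To obtain the finite bound, I would first establish an \emph{absorption} step: for ordinals $\gamma,\eta$ with $1\le\gamma$ and $\gamma+\eta<\lambda$ one has $\{f\}^{\gamma+\eta}(\overline V)\subseteq\{f\}^{\eta}(\overline V)$. Indeed, $\lambda$-inwardness gives $\{f\}^{\gamma}(\overline V)\subseteq V^{\circ}\subseteq\overline V$, and then, using the composition rules \eqref{composition1}--\eqref{composition2}, a value $\{f\}^{\gamma+\eta}(x)$ with $x\in\overline V$ is either $\{f\}^{\eta}\!\big(\{f\}^{\gamma}(x)\big)\in\{f\}^{\eta}(\overline V)$, or — when $[f]^{\gamma+\eta}$ does not exist and the value is produced by a completed cycle — it equals $\{f\}^{\iota}(x)$ for some $\iota<\gamma+\eta<\lambda$ by Corollary~\ref{cor_lem_group} and Proposition~\ref{orbit_eq}, hence lies in $\{f\}^{\iota}(\overline V)\subseteq V^{\circ}$. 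The second ingredient is the ordinal bookkeeping that uses $\lambda<\omega^{\omega}$: every $\beta$ with $1\le\beta<\lambda$ is smaller than $\omega^{N}$ for some fixed $N$, so it admits a largest exponent $j=j(\beta)\ge 0$ with $\omega^{j}\mid\beta$; writing $\beta=\omega^{j}\cdot\rho$ with $\omega\nmid\rho$, i.e.\ $\rho=\rho_{0}+1$, either $\rho_{0}=0$ and $\beta=\omega^{j}$, or $\rho_{0}\ge1$ and $\beta=\omega^{j}\cdot\rho_{0}+\omega^{j}$ with $1\le\omega^{j}\cdot\rho_{0}<\beta<\lambda$, so that $\{f\}^{\beta}(\overline V)\subseteq\{f\}^{\omega^{j}}(\overline V)$ by the absorption step. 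In every case $\{f\}^{\beta}(\overline V)\subseteq\{f\}^{\omega^{j(\beta)}}(\overline V)$ with $\omega^{j(\beta)}\le\beta<\lambda$, whence
\[
W\;\subseteq\;\bigcup_{\,j\,:\,\omega^{j}<\lambda}\{f\}^{\omega^{j}}(\overline V),
\]
a union over the \emph{finitely many} $j$ with $\omega^{j}<\lambda$. By $\lambda$-closedness each $\{f\}^{\omega^{j}}(\overline V)$ is closed, and by $\lambda$-inwardness each lies in $V^{\circ}$, so the right-hand side is a closed (hence compact) subset of $V^{\circ}$, which is what the first paragraph needs.

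I expect the main obstacle to be the absorption step, that is, making rigorous the factorization of $\{f\}^{\gamma+\eta}$ through $\{f\}^{\gamma}$ on $\overline V$ ``up to harmlessly reabsorbed values'': this is exactly where the subtleties of Definition~\ref{cycle} (cycle completion, and the fact that $\{f\}^{\gamma+\eta}$ need not be literally $\{f\}^{\eta}\circ\{f\}^{\gamma}$) must be controlled, via Lemma~\ref{lem_group}, Corollary~\ref{cor_lem_group} and Proposition~\ref{orbit_eq}. The remainder is a routine ordinal computation plus the standard fact that disjoint compact subsets of a metric space lie at positive distance; and it is precisely the finiteness of $\{j:\omega^{j}<\lambda\}$ — equivalently, the hypothesis $\lambda<\omega^{\omega}$ — that keeps the covering union finite, hence closed. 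At $\lambda=\omega^{\omega}$ this countable union need no longer be closed, consistent with the failure of the implication announced in the introduction.
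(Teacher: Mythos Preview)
Your proposal is correct and is essentially the paper's own proof: both use the composition rules to absorb $\{f\}^\beta(\overline V)$ into $\{f\}^{\omega^{j}}(\overline V)$, where $j$ is the smallest exponent in the Cantor normal form of $\beta$ (your $j(\beta)$ is the paper's $a_m$), and then use $\lambda$-closedness together with the finiteness of $\{j:\omega^j<\lambda\}$ to get a positive minimum distance to $\partial V$. The paper simply invokes \eqref{composition1}--\eqref{composition2} directly for the absorption inclusion without isolating the cycle-completion subtlety you flag as the main obstacle.
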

\begin{proof}
Set $\lambda=\omega^{k_1}\cdot h_1+\ldots+ \omega^{k_n}\cdot h_n$ where $n\in\mathbb{N}$, $h_i,k_i$ are natural numbers for all $i=1,\ldots,n$ and $k_1>\ldots>k_n>0$. Observe first that, recalling the composition law \eqref{composition1}-\eqref{composition2}, if $\beta<\lambda$ has the form $\beta=\omega^{a_1}\cdot c_1+\ldots + \omega^{a_m}\cdot c_m$ with $m\le n$, it follows that:
$$
\{f\}^\beta(\overline{V})\subseteq \{f\}^{\omega^{a_m}\cdot c_m}(\overline{V})\subseteq \{f\}^{\omega^{a_m}}(\overline{V}),
$$ 
where $0\le a_m \le k_1$. Therefore, we have:
\begin{equation}
\label{distance_le}
d(\{f\}^\beta(\overline{V}),\partial V)\ge d(\{f\}^{\omega^{a_m}\cdot c_m}(\overline{V}),\partial V)\ge d(\{f\}^{\omega^{a_m}}(\overline{V}),\partial V).
\end{equation}
Moreover, since $V$ is $\lambda$-closed, 
$$
S_j:=\{f\}^{\omega^j}(\overline{V})=\{f\}^{\omega^j}(X^{\omega^j+1}\cap \overline{V})
$$ 
is a closed set included in $V^{\circ}$ for every integer $j$ such that $0\le j\le k_1$, so that $d(S_j,\partial V)=:d_j>0$ for every $j$. Therefore, by \eqref{distance_le}, we have:
$$\inf_{\beta<\lambda} d\left(\{f\}^\beta(\overline{V}),\partial V\right)= \min \{d_0,\dots, d_{k_1}\}>0.$$
\end{proof}

Let us recall the definition of a stable set in finite dynamical systems.
\begin{defi}
    Let $(X,f)$ be a topological dynamical system. A set $S\subseteq X$ is \emph{stable} if for all $\epsilon>0$, there exists $\delta>0$ such that for all $x\in B_\delta(S)$,
    \[
    d(f^n(x),S)<\epsilon \quad  \forall n\in\mathbb{N}.
    \]
    In the following, when we consider a TDS $(X,\{f\})$, we will say that a set $S\subseteq X$ is stable if it is such in the above sense for the finite dynamical system $(X,f)$. 
\end{defi}
In finite topological dynamical systems, attractors are usually defined as nonempty, closed, strongly invariant sets which are stable and attractive (see for instance \cite[Def. 2.60]{kurka2003topological}). Moreover, one can prove that every attractor is the $\omega$-limit set of an inward set, and vice-versa (\cite[Propositions 2.64 and 2.65]{kurka2003topological}). In our context, it is convenient to take the latter as the \textit{definition} of transfinite attractors, and then refine the concept to let further dynamical properties arise. 

\begin{defi}[\textbf{Transfinite attractors}] \label{defi_attractors}
Let $V\subseteq X$ be a $\lambda$-inward set. 
\\
We say that $A\subseteq X$ is a \emph{proper $\lambda$-attractor} if $A=p\lambda(V)$ and there is no $\beta<\lambda$ such that $A=p\beta(V)$. We say that $A\subseteq X$ is an \emph{extended $\lambda$-attractor} if $A=e\lambda(V)$. We say that the $\lambda$-attractor (proper or extended) is \emph{uniform} if $V$ is uniformly $\lambda$-inward.
\end{defi}
Note that every attractor in a finite dynamical system is a at the same time a proper $\omega$-attractor and an extended $\omega$-attractor. Note also that every global $\lambda$-attractor, that is an attractor which is the $\lambda$-limit of the whole space $X$, is uniform.

\begin{lem}
Let $V\subseteq X$ be a $\lambda$-inward set. Set $\Gamma_\lambda:=\{\beta\le \lambda : p\beta(V) \text{ is a proper $\beta$-attractor}\}$. Then $e\lambda(V)=\bigcup_{\beta\in \Gamma_\lambda} p\beta(V)$.
\end{lem}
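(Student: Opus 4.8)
The plan is to reduce the statement to an elementary order-theoretic observation and then invoke the well-ordering of the ordinals.

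First I would rewrite the extended limit set: I claim that $e\lambda(V)=\bigcup\{p\beta(V):\beta\le\lambda,\ \beta\text{ a limit ordinal}\}$. This is immediate from Definition~\ref{def_lambda_limits}. For a limit ordinal $\beta\le\lambda$ the summand in \eqref{ext_limit} indexed by $\beta$ is, by \eqref{prop_limit}, exactly $p\beta(V)$. For a successor ordinal $\beta=\mu+1$, the choice $\eta=\mu$ forces the inner union $\bigcup_{\mu<\iota<\mu+1}\{f\}^\iota(V)$ to range over the empty set of ordinals, so that summand is $\overline{\emptyset}=\emptyset$ and contributes nothing to the union; thus only the limit indices survive.

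With this in hand, the inclusion $\bigcup_{\beta\in\Gamma_\lambda}p\beta(V)\subseteq e\lambda(V)$ is trivial, since every $\beta\in\Gamma_\lambda$ is in particular a limit ordinal $\le\lambda$. For the reverse inclusion I would argue by well-founded descent. Given $y\in e\lambda(V)$, fix a limit ordinal $\beta\le\lambda$ with $y\in p\beta(V)$, and let $\gamma$ be the least limit ordinal with $p\gamma(V)=p\beta(V)$; such a least element exists because the ordinals are well-ordered and $\beta$ itself lies in this set (so, necessarily, $\gamma\le\beta\le\lambda$). Since $V$ is $\lambda$-inward and $\gamma\le\lambda$, it is also $\gamma$-inward — the inclusions $\{f\}^\eta(\overline V)\subseteq V^\circ$ required for $\eta<\gamma$ are a subfamily of those for $\eta<\lambda$ — so the notion of proper $\gamma$-attractor is legitimately available via Definition~\ref{defi_attractors}. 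If $p\gamma(V)$ failed to be a proper $\gamma$-attractor, there would be a limit ordinal $\delta<\gamma$ with $p\delta(V)=p\gamma(V)=p\beta(V)$, contradicting the minimality of $\gamma$. Hence $\gamma\in\Gamma_\lambda$ and $y\in p\beta(V)=p\gamma(V)\subseteq\bigcup_{\mu\in\Gamma_\lambda}p\mu(V)$, which closes the argument.

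I do not expect any genuine obstacle here: the statement is essentially a bookkeeping identity. The only mildly delicate points are the harmless reduction of the union in \eqref{ext_limit} to limit-ordinal indices (correctly discarding the degenerate successor terms) and the observation that $\lambda$-inwardness of $V$ descends to $\gamma$-inwardness for every limit $\gamma\le\lambda$, which is exactly what keeps the notion of proper $\gamma$-attractor meaningful at every level produced by the descent.
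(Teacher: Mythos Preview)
Your proof is correct and follows essentially the same approach as the paper: both arguments rest on a well-founded descent on the ordinals $\beta\le\lambda$ with $y\in p\beta(V)$, using that failure to be a proper $\beta$-attractor produces a strictly smaller witness. Your version is slightly more direct (you construct a $\gamma\in\Gamma_\lambda$ rather than argue by contradiction) and more explicit about why successor indices in \eqref{ext_limit} contribute nothing, but the substance is the same.
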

\begin{proof}
    By Def. \ref{def_lambda_limits}, we have that $e\lambda(V)=\bigcup_{\beta\le\lambda}p\beta(V)\supseteq \bigcup_{\beta\in \Gamma_\lambda} p\beta(V)$.
    It remains to prove the converse inclusion. 
    Let $y\in e\lambda(V)$ and suppose, by absurd, that $y\notin \bigcup_{\beta\in \Gamma_\lambda} p\beta(V)$. Then, there exists $$\gamma=\min \{ \beta\le \lambda : \beta\notin \Gamma_\lambda \text{ and } y\in p\beta(V) \}.$$ 
    Thus, $y\in p\gamma(V)$ and since $\gamma\notin \Gamma_\lambda$, there exists $\eta<\gamma$ such that $p\gamma(V)=p\eta(V)$. Hence $y\in p\eta(V)$ and since $y\notin \bigcup_{\beta\in \Gamma_\lambda} p\beta(V)$ we have that $\eta\notin \Gamma_\lambda$, which is a contradiction.
\end{proof}

\begin{defi}
  Let $Y$ be a proper $\lambda$-attractor. We set
\begin{equation*}
    \mathcal{B}(Y):=\{x\in X : d(\{f\}^\beta(x),Y)\longrightarrow 0 \text{ as } \beta\to \lambda \},
\end{equation*}
where the convergence is meant in the sense that, for every $\epsilon>0$, there is $\eta<\lambda$ such that $d(\{f\}^\beta(x),Y)<\epsilon$ for all $\beta$ such that $\eta<\beta<\lambda$.
  We call $\mathcal{B}(Y)$ the \emph{basin} of the attractor $Y$.
  
  Let $Z$ be an extended $\lambda$-attractor. Let $\{\beta_i\}_{i\in\mathbb{N}}$ be the set of all limit ordinals smaller than or equal to $\lambda$, and let $Y_i$ indicate the proper $\beta_i$-attractor.
  We set:
  \begin{equation*}
      \mathcal{B}(Z):=\bigcup_{i\in\mathbb{N}} \mathcal{B}(Y_i).
      \end{equation*}
\end{defi}
    For finite topological systems, the basin of an attractor is always an open set. In case of TDSs, even assuming sequential continuity and $\lambda$-regularity, $\mathcal{B}(Y)$ may not be an open set in $X$ neither when $Y$ is a proper nor an extended attractor. When $Y$ is a proper $\lambda$-attractor, its basin is in general only open in the relative topology induced on $X^\lambda$. Indeed, as we can see in Example \ref{exa1}, for $\lambda=\omega\cdot 2$ and $h=1$, the $\lambda$-attractor $Y=p\lambda(\mathbb{I})=\{1\}$ and $\mathcal{B}(Y)=\mathcal{O}^{\mathbb{Z}}(z)$. Hence, $\mathcal{B}(Y)$ is not an open set in $\mathbb{I}$, while it is open in $\mathbb{I}^{\omega\cdot 2}$ (in fact it coincides with this set).

\begin{defi}
Let $\lambda$ be a limit ordinal and $S\subseteq X$.  We say that $S$ is \emph{$\lambda$-attracting} if there exists $\delta>0$ such that for all $x\in B_\delta(S)$ we have that 
$
\lim_{\beta\to \lambda} d\left(\{f\}^\beta(x),S\right)=0
$
whenever $\{f\}^\beta(x)\ne\emptyset$ for all $\beta<\lambda$.
\end{defi}

\begin{exa}
\label{ex_attr_cyc}
Fix $0<m<1$. Take $x_0\in (0,m)$ and $z\in (m,1)$. Set $U_0:=(a,b)=(z-\nu,z+\nu)\subset (x_0,1)$. Let us define $f:\mathbb{I}\circlearrowleft$ as $f(x)=m (x-x_0)+x_0$. Set $U_n:=(f^n(a),f^n(b))$ for every $n\in\mathbb{N}$,  and denote by $z_n:=f^n(z)\in U_n$. Let $\{c_n\}_{n\in\mathbb{N}},\{d_n\}_{n\in\mathbb{N}}$ be two sequences in $\mathbb{I}$ which are respectively strictly increasing and strictly decreasing, and such that 
\begin{itemize}
    \item $c_n<a$ and $d_n>b$ for all $n\in\mathbb{N}$;
    \item $c_n\longrightarrow a$ and $d_n\longrightarrow b$ as $n\to\infty$;
    \item for every $n\in\mathbb{N}$, we have $[f^h(c_n),f^h(d_n)]\cap [f^k(c_n),f^k(d_n)]=\emptyset$ whenever $h\neq k$.
\end{itemize}
For every $n\in\mathbb{N}$, we set $A_n:=[f^n(c_n),f^n(a)]$ and $B_n:=[f^n(b),f^n(d_n)]$.

Let us define the sequence $\{f_n\}_{n\in\mathbb{N}}$ of functions $f_n:\mathbb{I}\circlearrowleft$ as follows:
\begin{equation}
f_n(x)=
\begin{cases}
f(x) &\quad \text{if } x\in \mathbb{I}\setminus \left( U_n \cup A_n\cup B_n\right)\\
z-m(z-f^{-n}(x)) &\quad \text{if } x\in (f^n(a),f^n(z)]\\
z-m(f^{-n}(x)-z) &\quad \text{if } x\in (f^n(z),f^n(b))\\
l_n(x-f^n(c_n)) +f^{n+1}(c_n) &\quad \text{if } x\in A_n\\
r_n(x-f^n(d_n))+f^{n+1}(d_n) &\quad \text{if } x\in B_n
\end{cases}
\end{equation}
where 
\[
l_n=\frac{z-m(z-a)-f^{n+1}(c_n)}{f^n(a)-f^n(c_n)},\quad r_n=\frac{z-m(b-z)-f^{n+1}(d_n)}{f^n(b)-f^n(d_n)}.
\]
The functions $\{f_n\}_{n\in\mathbb{N}}$ are continuous for every $n\in\mathbb{N}$, and $f_n \dot{\longrightarrow} f$, where the limit map $f$ is continuous as well. So, we have defined the sequentially continuous TDS $(\mathbb{I},\{f\})$.

Take $y\in \cup_{n\ge 0}U_n$, and for $k\in\mathbb{N}$, set $a^y_k:=z-m^k|z-y|$.  Then, for any $n\in\mathbb{N}$ we have:
\[
\mathcal{O}_n(y)=\{f(y),\ldots,f^n(y),a^y_1, f(a^y_1),\ldots, f^n(a^y_1),a^y_2,\ldots\}.
\]
Note that $\lim_{k\to\infty}a^y_k=z$ and $a^y_k\in (a,z]$ for all $k\in\mathbb{N}$. Moreover, for every $k,n\in\mathbb{N}$, we have that $a^y_k=f_n^{k(n+1)}(y)$, from which it follows that $a^y_k=\{f\}^{\omega\cdot k}(y)$ for every $k\in\mathbb{N}$.
Therefore, for every $h\in\mathbb{N}$, we have:
\[
\lim_{k\to\infty}\{f\}^{\omega\cdot k}(y)=z,\qquad \lim_{k\to\infty}\{f\}^{\omega\cdot k+h}(y)=f^h(z).
\]
On the other hand, if $y\in \mathbb{I}\setminus \cup_{n\ge 0}U_n$, there is $N>0$ such that $y\notin (U_n\cup A_n \cup B_n)$ for all $n>N$. Hence $\{f\}^\omega(y)=\emptyset$.
It follows that the system $(\mathbb{I},\{f\})$ is a sequentially continuous $\omega^2$-TDS.

The set $\mathbb{I}$, coinciding with the whole space, is of course $\omega^2$-inward. In fact, we have the stronger properties $f^k(\mathbb{I})\subsetneq (0,1)$ for all $k\in\mathbb{N}$ and $\{f\}^\beta(\mathbb{I})\subsetneq (0,1)$ for any $\beta<\omega^2$. 
For the latter, it is sufficient to show that $f_n(U_n)\subsetneq(0,1)$ for all $n\in\mathbb{N}$. 
We observe that 
\begin{align*}
&f_n(f^n(a))=z-m(z-a)=z-m\nu \\
&f_n(f^n(z))=z\\
&f_n(f^n(b))=z-m(b-z)=z-m\nu,
\end{align*}
from which it follows that $f_n(U_n)=(z-m\nu ,z]\subseteq (0,1)$. 
Thus, the set $P:=p\omega^2(\mathbb{I})=\{x_0,z,f(z),f^2(z),\ldots\}$ is a proper $\omega^2$-attractor. Note that $x_0$ belongs to $P$ as, for instance,
$\lim_{k\to\infty}\{f\}^{\omega\cdot k+k}(z)=x_0$.
In this case, $P$ is stable. Indeed, since $f$ is a contraction, for any $\epsilon>0$ if we take $0<\delta<\epsilon$ we have that $x\in B_\delta(P)$ implies $d(f(x),f(P))<\delta<\epsilon$.

This example shows that, in general, strong invariance does not hold when $\lambda\ge \omega^2$, as there is no $x\in P$ such that $f(x)=z\in P$.  Moreover observe that, for every $k\in\mathbb{N}$, the set $X^{(\omega\cdot k)+1}$ is not compact. 
Indeed, $X^{\omega\cdot k+1}=\cup_{n\ge 0} U_n$ which is an $F_\sigma$ set but not a closed set, since $x_0\notin X^{\omega\cdot k+1}$. Therefore, the map $\{f\}^{\omega\cdot k}$ need not to be uniformly continuous, and in fact it is not. 
Take indeed $0<\epsilon<m^k(z-a)$. 
Then, for every fixed $\delta>0$, there exists $h\in\mathbb{N}$ such that $f^h(z)-f^h(a)<\delta$. However,  $d(\{f\}^{\omega\cdot k}(f^h(z)), \{f\}^{\omega\cdot k}(f^h(a)))=m^k(z-a)>\epsilon$.
Finally observe that, for $k,h\in\mathbb{N}$, we have $\{f\}^{\omega\cdot k}(\mathbb{I})=[z- m^k\nu,z]$ and thus $$\{f\}^{\omega\cdot k+h}(\mathbb{I})=\{f\}^h(\{f\}^{\omega\cdot k})(\mathbb{I})=f^h([z- m^k\nu,z]),$$ where the latter is clearly a closed set. Therefore, $\mathbb{I}$ is an $\omega^2$-closed set.
 
\begin{figure}[H] 
\centering
{\fbox{\includegraphics[width=10cm]{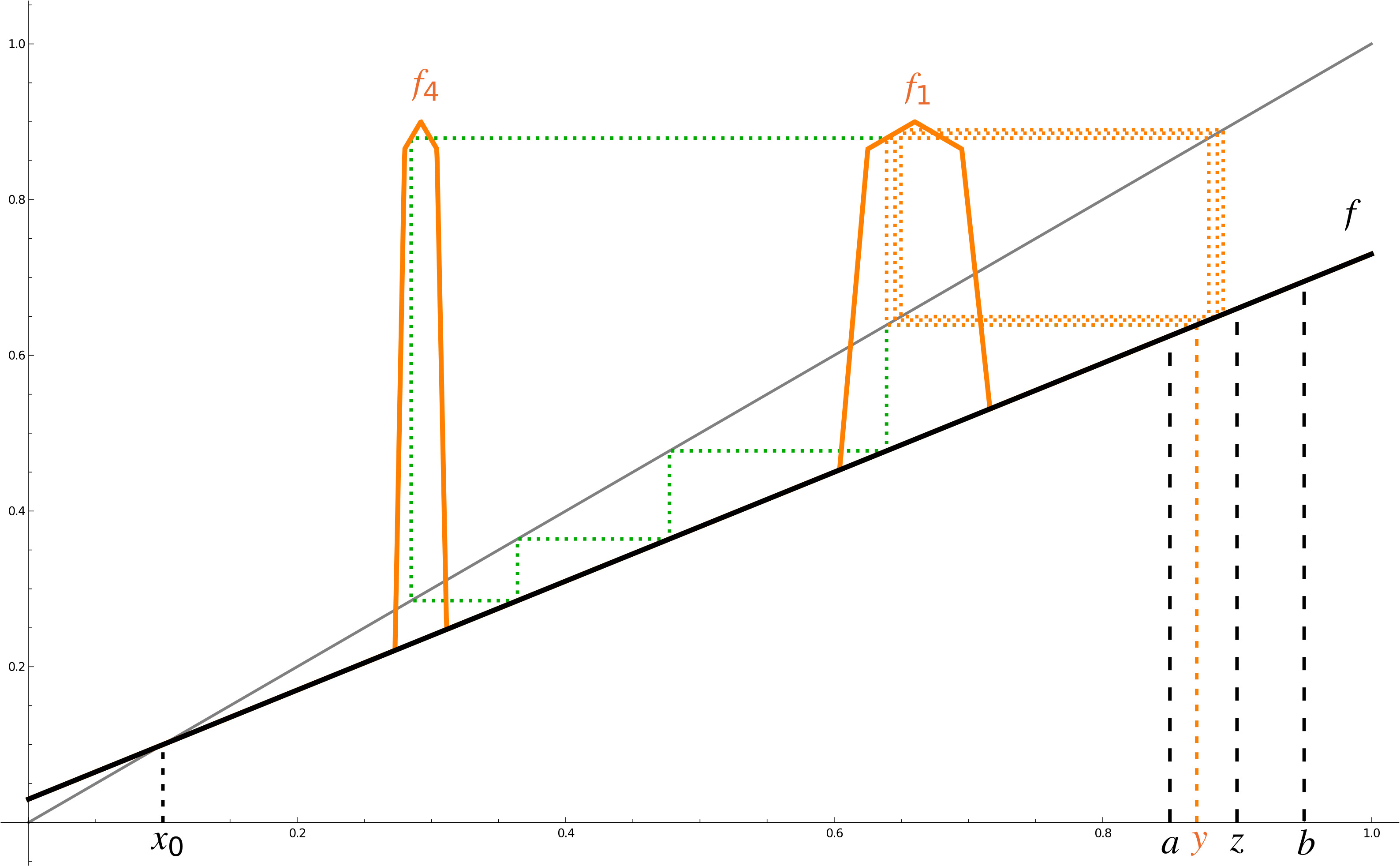}}}
\caption{The system defined in Example \ref{ex_attr_cyc}.}
\label{fig_A2A3_}
\end{figure}

\end{exa}

Proper $\lambda$-limits are clearly closed, being intersection of closed sets. The next result shows that extended $\lambda$-limits, which are clearly $F_\sigma$ sets by Eq. \eqref{ext_limit}, are in fact closed as well.

\begin{prop}\label{closed_attr}
Let $\lambda$ be a limit ordinal and $V$ be a subset of $X$. Then $e\lambda(V)$ is a closed set.
\end{prop}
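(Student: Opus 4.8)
The plan is to show that $e\lambda(V)$ is sequentially closed. If $V=\emptyset$ then $e\lambda(V)=\emptyset$ and there is nothing to prove, so assume $V\ne\emptyset$ and let $\{y_k\}_{k\in\mathbb{N}}\subseteq e\lambda(V)$ converge to some $y\in X$. By Definition \ref{def_lambda_limits}, $e\lambda(V)$ is the union of the proper limit sets $p\beta(V)$ taken over the (countably many) limit ordinals $\beta\le\lambda$, and each such $p\beta(V)$ is closed, being an intersection of closed sets. Accordingly, for every $k$ choose a limit ordinal $\gamma_k\le\lambda$ with $y_k\in p\gamma_k(V)$. The proof then splits according to the behaviour of the sequence of levels $\{\gamma_k\}$.

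Since the ordinals are well-ordered there is no infinite strictly decreasing sequence of ordinals, so by the usual peak/valley argument $\{\gamma_k\}$ admits a non-decreasing subsequence; consequently it has either a constant subsequence or a strictly increasing one. If some limit ordinal $\gamma\le\lambda$ satisfies $\gamma_k=\gamma$ for infinitely many $k$, then along that subsequence $y_k\in p\gamma(V)$; since $p\gamma(V)$ is closed and $y_k\to y$, we conclude $y\in p\gamma(V)\subseteq e\lambda(V)$ and we are done.

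Otherwise, after relabelling we may assume $\gamma_1<\gamma_2<\cdots$ with $y_k\in p\gamma_k(V)$ for all $k$, and we set $\gamma^*:=\sup_k\gamma_k$, which is a limit ordinal with $\gamma^*\le\lambda$. We choose, by induction on $k$, ordinals $\delta_k$ and points $z_k\in V$ with $\gamma_{k-1}<\delta_k<\gamma_k$ (taking $\gamma_0:=0$) and $d(\{f\}^{\delta_k}(z_k),y_k)<1/k$: this is possible because, directly from the definition of $p\gamma_k(V)$ with $\eta=\gamma_{k-1}<\gamma_k$, we have $y_k\in\overline{\bigcup_{\gamma_{k-1}<\iota<\gamma_k}\{f\}^\iota(V)}$, so this union is nonempty and a sufficiently small ball around $y_k$ meets $\{f\}^\iota(V)$ for some $\iota$ in that range. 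The resulting sequence $\{\delta_k\}$ is strictly increasing (since $\delta_{k-1}<\gamma_{k-1}<\delta_k$) with $\sup_k\delta_k=\sup_k\gamma_k=\gamma^*$, and $d(\{f\}^{\delta_k}(z_k),y)\le d(\{f\}^{\delta_k}(z_k),y_k)+d(y_k,y)\to 0$, so $\{f\}^{\delta_k}(z_k)\to y$. Proposition \ref{th_plim_seq}, applied with the limit ordinal $\gamma^*$, the strictly increasing sequence $\{\delta_k\}$ and the points $z_k\in V$, then yields $y\in p\gamma^*(V)\subseteq e\lambda(V)$.

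I expect the main obstacle to be the dichotomy on the levels $\{\gamma_k\}$ and, in the strictly increasing case, arranging the interleaving $\gamma_{k-1}<\delta_k<\gamma_k$ so that $\{\delta_k\}$ is strictly increasing with supremum exactly $\gamma^*$: it is precisely this that lets us invoke Proposition \ref{th_plim_seq} at level $\gamma^*$ rather than at some smaller ordinal, where the conclusion $y\in e\lambda(V)$ could fail. The remaining points — closedness of each $p\beta(V)$, nonemptiness of the relevant unions, and the triangle-inequality estimate — are routine.
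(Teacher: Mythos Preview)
Your proof is correct and follows essentially the same route as the paper's: sequential closure, a constant/strictly-increasing dichotomy on the levels $\gamma_k$, closedness of each $p\gamma(V)$ for the constant case, and Proposition~\ref{th_plim_seq} at the supremum level for the increasing case. Your interleaving trick $\gamma_{k-1}<\delta_k<\gamma_k$, which reads the definition of $p\gamma_k(V)$ directly at $\eta=\gamma_{k-1}$, is in fact a slight streamlining of the paper's argument, which instead invokes Proposition~\ref{th_plim_seq} for each $y_k$ and then diagonalizes over the resulting double array.
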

\begin{proof}
Set $A:=e\lambda(V)$ and take $z\in \overline{A}$. 
Then, there exists a sequence $\{z_n\}_{n\in\mathbb{N}}\subseteq A$ such that $z_n\to z$ as $n\to\infty$. 
Thus, for every $n\in\mathbb{N}$, we have that $z_n\in p\beta_n(V)$ for some $\beta_n\le \lambda$. 
We consider two cases.

\begin{enumerate}
    \item If there is $N\in\mathbb{N}$ and $\beta\le\lambda$ such that $z_n\in p\beta(V)$ for all $n>N$, then $z\in p\beta(V)\subseteq e\lambda(V)$. 
    \item Otherwise, note that the sequence $\{\beta_n\}_{n\in\mathbb{N}}$ cannot have a subsequence $\{\beta_{n_k}\}_{k\in\mathbb{N}}$, which is strictly decreasing. 
It follows that there exist $\beta\le\lambda$ and a strictly increasing subsequence $\{\beta_{n_k}\}_{k\in\mathbb{N}}$ such that $\sup_k\beta_{n_k}=: \beta\le \lambda$. 
We want to show that $z\in p\beta(V)\subseteq e\lambda(V)$. 

For any $k\in\mathbb{N}$, since $z_{n_k}\in p\beta_{n_k}(V)$, by Proposition \ref{th_plim_seq} there exist a sequence of points $\{y^k_{j}\}_{j\in\mathbb{N}}\subseteq V$ and a strictly increasing sequence of ordinals $\{\eta^k_j\}_{j\in\mathbb{N}}$ such that
\[
\{f\}^{\eta^k_j}(y^k_j)\xrightarrow{j\to\infty} z_{n_k} \quad\text{and}\quad \sup_{j\in\mathbb{N}}\eta_j^k=\beta_{n_k}.
\]
Set $j_0:=1$, $\gamma_1:=\eta^1_1$ and $y_1:=y^1_1$. For every $i\ge 2$, we set $\gamma_i:=\eta^i_{j_i}$ and $y_i:=y^i_{j_i}$ where 
\[
j_i=\min\{j\in\mathbb{N} : \eta^i_j>\gamma_{i-1} \text{ and } j>j_{i-1}\}.
\]
By construction, we have:
\begin{itemize}
    \item $\{f\}^{\gamma_i}(y_i)\xrightarrow{i\to\infty} z$;
    \item $y_i\in V$ for all $i\in\mathbb{N}$ and the sequence $\{\gamma_i\}_{i\in\mathbb{N}}$ is strictly increasing;
    \item $\sup_{i\in\mathbb{N}}\gamma_i=\beta$.
\end{itemize}
By Proposition \ref{th_plim_seq}, it follows that $z\in p\beta(V)\subseteq e\lambda(V)$.
\end{enumerate}

\end{proof}
The following two results show some further instances of the fact that, as seen in Remark \ref{inward_}, level $\omega^2$ is critical for the emergence of phenomena which are not possible in finite dynamical systems. Indeed, before $\omega^2$, proper $\lambda$-limits are still strongly invariant, just like  $\omega$-limits are (Proposition \ref{prop_strong_inv}) and moreover, proper $\lambda$-attractors do not contain new points with respect to finite attractors, although they can be proper subsets of them (Proposition \ref{prop9}). 
\begin{prop}
\label{prop_strong_inv}
Let $\lambda<\omega^2$ be a limit ordinal and $S\subseteq X$. Then $p\lambda(S)$ is strongly invariant, i.e. $\{f\}(p\lambda(S))=f(p\lambda(S))= p\lambda(S)$.
\end{prop}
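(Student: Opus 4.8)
The plan is to show that $p\lambda(S)$ is not only forward-invariant under $f$ (which we already know from Proposition \ref{prop_inv}) but also satisfies the reverse inclusion $p\lambda(S)\subseteq f(p\lambda(S))$, so that $f(p\lambda(S))=p\lambda(S)$; since $\{f\}^1=f$, this is the same as $\{f\}(p\lambda(S))=p\lambda(S)$. The key point is that $\lambda<\omega^2$ forces $\lambda$ to be of the form $\omega\cdot m$ for some $m\in\mathbb{N}$ (the limit ordinals below $\omega^2$ are exactly $\omega, \omega\cdot 2,\dots$), and in particular $\lambda$ is \emph{not} additively indecomposable unless $\lambda=\omega$; nonetheless, the crucial structural feature we exploit is that \emph{any} strictly increasing sequence of ordinals cofinal in $\lambda=\omega\cdot m$ must eventually lie in the final copy of $\omega$, i.e.\ eventually be of the form $\omega\cdot(m-1)+k$ with $k\to\infty$. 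This is what lets us peel off a predecessor.

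First I would invoke Proposition \ref{th_plim_seq}: take $y\in p\lambda(S)$, so there is a strictly increasing sequence of ordinals $\{\beta_j\}_{j\in\mathbb{N}}$ with $\sup_j\beta_j=\lambda$ and points $y_j\in S$ with $\{f\}^{\beta_j}(y_j)\to y$. Since $\lambda=\omega\cdot m$ with $m\ge 1$, for all large $j$ we have $\beta_j=\omega\cdot(m-1)+k_j$ with $k_j\in\mathbb{N}$, $k_j\ge 1$, and $k_j\to\infty$; discard the finitely many small indices. Now write $\beta_j = \gamma_j + 1$ where $\gamma_j:=\omega\cdot(m-1)+(k_j-1)$ (here I use that $\beta_j$ is a successor ordinal, which holds precisely because $k_j\ge 1$). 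By the composition rule \eqref{composition1}--\eqref{composition2}, $\{f\}^{\beta_j}(y_j)=f\bigl(\{f\}^{\gamma_j}(y_j)\bigr)$, and each $\{f\}^{\gamma_j}(y_j)$ is a well-defined point of $X$ since $\{f\}^{\beta_j}(y_j)\ne\emptyset$. By compactness of $X$, pass to a subsequence along which $\{f\}^{\gamma_j}(y_j)\to z$ for some $z\in X$; by continuity of $f$ we get $f(z)=y$. Finally, since $k_j\to\infty$ we still have $\sup_j\gamma_j=\omega\cdot(m-1)+\omega=\omega\cdot m=\lambda$ and the $\gamma_j$ can be taken strictly increasing, so Proposition \ref{th_plim_seq} applied to the sequences $\{\gamma_j\}$ and $\{y_j\}\subseteq S$ gives $z\in p\lambda(S)$. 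Hence $y=f(z)\in f(p\lambda(S))$, proving $p\lambda(S)\subseteq f(p\lambda(S))$.

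Combining this with the inclusion $f(p\lambda(S))\subseteq p\lambda(S)$ of Proposition \ref{prop_inv} yields $f(p\lambda(S))=p\lambda(S)$, and since $\{f\}^1=f$ this is exactly strong invariance $\{f\}(p\lambda(S))=p\lambda(S)$. The one degenerate case to dispatch is $p\lambda(S)=\emptyset$, where the statement is vacuous (and indeed $\emptyset$ satisfies $f(\emptyset)=\emptyset$). The main obstacle — and the only place where the hypothesis $\lambda<\omega^2$ is genuinely used — is the step extracting the predecessor: one needs the cofinal sequence to consist eventually of successor ordinals, which fails for $\lambda\ge\omega^2$ (e.g.\ $\lambda=\omega^2$ admits the cofinal sequence $\omega, \omega\cdot2,\omega\cdot3,\dots$ of limit ordinals, none of which has an immediate predecessor), and this is precisely the phenomenon illustrated by Example \ref{ex_attr_cyc}. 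Everything else is a routine compactness-and-continuity argument together with bookkeeping on the ordinals via \eqref{composition1}--\eqref{composition2}.
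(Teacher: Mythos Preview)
Your proof is correct and follows essentially the same approach as the paper's: both exploit that $\lambda=\omega\cdot m$ forces any cofinal strictly increasing sequence to be eventually of the form $\omega\cdot(m-1)+k_j$ with $k_j\to\infty$, peel off the predecessor $\gamma_j=\beta_j-1$, pass to a convergent subsequence by compactness, and apply continuity of $f$ together with Proposition~\ref{th_plim_seq} to conclude $z\in p\lambda(S)$ with $f(z)=y$. Your discussion of why the argument breaks at $\lambda=\omega^2$ is a nice addition.
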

\begin{proof}
If $p\lambda(S)=\emptyset$ the claim is trivial, so assume $p\lambda(S)\ne\emptyset$. By Proposition \ref{prop_inv},  we just have to prove that $f(p\lambda(S)) \supseteq p\lambda(S)$.  Assume that $y\in p\lambda(S)$. Then, there exists a sequence $\{y_j\}_{j\in\mathbb{N}}$ of points of $S$ and a strictly increasing sequence $\{\beta_j\}_{j\in\mathbb{N}}$ of ordinals such that 
\[
\{f\}^{\beta_j}(y_j)\xrightarrow{j\to\infty} y\quad\text{ and }\quad \sup_{j\in\mathbb{N}}\ \beta_j=\lambda.
\]
Since $\lambda<\omega^2$, we have that $\lambda=\omega\cdot k$ for some $k\in\mathbb{N}$. Therefore, there exists $N\in\mathbb{N}$ so large that $\beta_j=\omega\cdot (k-1)+h_j$ for all $j>N$, where $\{h_j\}_{j\in\mathbb{N}}$ is a strictly increasing sequence of positive integers. Let us consider the sequence of points 
$$\{\{f\}^{\omega\cdot(k-1)+h_j-1}(y_j)\}_{j>N}.$$
Since $X$ is compact, this sequence admits a convergent subsequence
\[
\{f\}^{\omega\cdot(k-1)+h_{j_i}-1}(y_{j_i})\xrightarrow{i\to\infty} z,
\]
where the subsequence $\{h_{j_i}\}_{i\in\mathbb{N}}$ is strictly increasing as well.
Since 
$$\sup_{i\in\mathbb{N}}\{\omega\cdot(k-1)+h_{j_i}-1\}=\lambda,$$ by Proposition \ref{th_plim_seq}, it follows that $z\in p\lambda(S)$ and $f(z)=y$.
\end{proof}

\begin{prop}
\label{prop9}
Let $V\subseteq X$ be a $\lambda$-inward set with $\lambda<w^2$ a limit ordinal. Then $p\lambda(V)\subseteq \omega(V)$.
\end{prop}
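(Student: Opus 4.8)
The plan is to reduce the statement to the ordinary $\omega$-limit set by exploiting the Cantor normal form of $\lambda$. Since $\lambda<\omega^2$ is a limit ordinal, it is a multiple of $\omega$, i.e. $\lambda=\omega\cdot k$ for some $k\in\mathbb{N}$. If $k=1$ the claim is trivial, since $p\omega(V)=\omega(V)$, so one may assume $k\ge 2$.

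Take $y\in p\lambda(V)$. By Proposition \ref{th_plim_seq} there are a strictly increasing sequence of ordinals $\{\beta_j\}_{j\in\mathbb{N}}$ with $\sup_j\beta_j=\lambda=\omega\cdot k$ and points $y_j\in V$ with $\{f\}^{\beta_j}(y_j)\to y$ as $j\to\infty$. Since the $\beta_j$ increase to $\omega\cdot k=\omega\cdot(k-1)+\omega$, after discarding finitely many terms one may write $\beta_j=\omega\cdot(k-1)+h_j$ with $\{h_j\}_{j\in\mathbb{N}}$ a strictly increasing sequence of positive integers; strict monotonicity of $\{\beta_j\}$ then forces $h_j\to\infty$.

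The key step is to use $\lambda$-inwardness to land inside $V$ at the intermediate iterate of order $\omega\cdot(k-1)$. For each such $j$, since $\{f\}^{\beta_j}(y_j)\ne\emptyset$ we have $y_j\in X^{\beta_j+1}\subseteq X^{\omega\cdot(k-1)+1}$, so (recalling the convention of Remark \ref{emptysetnotation}) the point $z_j:=\{f\}^{\omega\cdot(k-1)}(y_j)$ is a genuine element of $X$; moreover $\omega\cdot(k-1)<\lambda$ and $y_j\in\overline V$, so $\lambda$-inwardness gives $z_j\in\{f\}^{\omega\cdot(k-1)}(\overline V)\subseteq V^\circ\subseteq V$. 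By the composition rule \eqref{composition1}-\eqref{composition2} we have $\{f\}^{h_j}(z_j)=\{f\}^{\omega\cdot(k-1)+h_j}(y_j)=\{f\}^{\beta_j}(y_j)$, and since $h_j\in\mathbb{N}$ this equals $f^{h_j}(z_j)$. Hence $f^{h_j}(z_j)\to y$ with $z_j\in V$ and $h_j\to\infty$.

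Finally I conclude directly from the definition $\omega(V)=\bigcap_{n}\overline{\bigcup_{m>n}f^m(V)}$: for every $n\in\mathbb{N}$, eventually $h_j>n$, so $f^{h_j}(z_j)\in\bigcup_{m>n}f^m(V)$, whence $y\in\overline{\bigcup_{m>n}f^m(V)}$; as $n$ was arbitrary, $y\in\omega(V)$. I do not expect a genuine obstacle; the only points requiring care are the bookkeeping needed to ensure $z_j$ is a bona fide point (via the $X^{\beta+1}$-domain convention) and verifying that $h_j\to\infty$ rather than being merely unbounded — which is where the \emph{strict} monotonicity in Proposition \ref{th_plim_seq} is used.
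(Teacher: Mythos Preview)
Your proof is correct and follows essentially the same route as the paper: write $\lambda=\omega\cdot k$, use Proposition~\ref{th_plim_seq} to get $\beta_j=\omega\cdot(k-1)+h_j$ with $h_j\nearrow\infty$, apply $\lambda$-inwardness to place $z_j:=\{f\}^{\omega\cdot(k-1)}(y_j)$ in $V^\circ$, and then use the composition rules to rewrite $\{f\}^{\beta_j}(y_j)=f^{h_j}(z_j)$ and conclude $y\in\omega(V)$. Your version is, if anything, slightly more explicit about the well-definedness of $z_j$ via the $X^{\beta+1}$-domain convention.
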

\begin{proof}
Since $\lambda<\omega^2$, it follows that $\lambda=\omega\cdot k$ for some $k\in\mathbb{N}$. Let us assume that $y\in p\lambda(V)$. By Proposition \ref{th_plim_seq}, there exist an increasing sequence of ordinals $\{\beta_j\}_{j\in\mathbb{N}}$ and a sequence of points $\{y_j\}_{j\in\mathbb{N}}\subseteq V$ such that $\sup_{j\in\mathbb{N}}\beta_j=\lambda$ and $\{f\}^{\beta_j}(y_j)\longrightarrow y$ as $j\to\infty$.

Since $\lambda=\omega\cdot k$, there exists $N\in\mathbb{N}$ sufficiently large that $\beta_j=\omega\cdot (k-1)+h_j$ for all $j>N$, where $\{h_j\}_{j\in\mathbb{N}}$ is an increasing sequence of positive integers.
For every $j>N$, we set 
$$
x_j:=\{f\}^{\omega\cdot (k-1)}(y_j).
$$
Since $V$ is a $\lambda$-inward set, we have that $x_j\in V^\circ$ for all $j>N$. 
From the composition rules \eqref{composition1}-\eqref{composition2}, for every $j>N$, we have: 
$$
\{f\}^{\beta_j}(y_j)=\{f\}^{\omega\cdot (k-1)+h_j}(y_j)=f^{h_j}(x_j)\xrightarrow{j\to\infty}y.
$$ 
Since $x_j\in V^\circ$ for every $j>N$ and $\{h_j\}_{j>N}$ is an increasing sequence of positive integers, it follows that $y\in\omega(V)$.
\end{proof}
The following example shows that, starting from level $\omega^2$, the limit sets contain in general ``new" points with respect to the $\omega$-limit set.

\begin{exa}\label{ex_quad}
    Let $\{x_i\}_{i\in\mathbb{N}}\subseteq \mathbb{I}$ be a strictly increasing sequence such that $x_1=0$ and $x_i\rightarrow 1$ as $i\to\infty$. Set 
    $$I_i:=\{x_i\}\times \mathbb{I} \quad \forall\, i\in\mathbb{N},\qquad I_\infty:=\{1\}\times\mathbb{I},$$
    and
    $$
    X:=I_\infty \cup \bigcup_{ i\in\mathbb{N}}I_i    \subseteq \mathbb{I}^2.
    $$
    Assume on $X$ the Euclidean metric inherited from $\mathbb{R}^2$, so that $X$ is a compact metric space. Let us define the map $f:X\circlearrowleft$ as $$f(x,y)=(x,y/2).$$ 

    For every $n\in\mathbb{N}$, we consider the points $z_{i,n}=f^n(x_i,1/2)=(x_i,1/2^{n+1})$. Let $\{\epsilon_n\}_{n\in\mathbb{N}}$ be a sequence of positive numbers such that, for every $n\in\mathbb{N}$,
    $$\{x_i\}\times U_n \subset \{x_i\}\times \left[\frac{1}{2^n},\frac{1}{2^{n+2}}\right],$$
    where $U_n=\left[\frac{1}{2^{n+1}}-\epsilon_n,\frac{1}{2^{n+1}}+\epsilon_n \right]$.  
    Let us now define a sequence $\{f_n\}_{n\in\mathbb{N}}$ of functions $f_n:X\circlearrowleft$ as follows (see Fig. \ref{fig_quad}):
    \begin{equation*}
        f_n(x,y)=
        \begin{cases}
        (x_{i+1},1) \quad&\text{if }(x,y)\in \{x_i\}\times U_n\text{ for some $i\in\mathbb{N}$}  \\
        f(x,y) \quad& \text{otherwise} 
        \end{cases}
    \end{equation*}
Since $\epsilon_n\to 0$ as $n\to\infty$, we have $f_n \dot{\longrightarrow}  f$, where the limit map $f$ is continuous. Therefore we have defined a TDS $(X,\{f\})$. Notice that $$\omega_f(X)=\{(1,0),(x_1,0),\ldots, (x_k,0),\ldots\}.$$ In particular, $\omega_f(X)\subseteq [0,1]\times 0.$
Let us now look at the proper $\omega^2$-limit of $X$. By construction, for $x=(x_i,1)$ with $i\in\mathbb{N}$, we have:
\[
\mathcal{O}_n(x)=\{(x_i,1/2),\ldots, (x_i,1/2^n),(x_{i+1},1),(x_{i+1},1/2),\ldots,(x_{i+1},1/2^n),(x_{i+2},1),\ldots\}.
\]
Then, for every $k\in\mathbb{N}$ and $h\in \mathbb{N}_0$, we have that $\{f\}^{\omega\cdot k +h}(x,y)=(x_{i+k},1/2^h)$ whenever $(x,y)\in \mathcal{O}(x_i,1)\cup \{(x_i,1)\}$ for some $i\in\mathbb{N}$. 
Since $x_i\to 1$ as $i\to\infty$, it follows that
$$p\omega^2(X)=\{(1,0),(1,1),(1,1/2),\ldots,( 1,1/2^k),\ldots\}=\mathcal{O}_f((1,1))\cup \{(1,1)\} \cup \{(1,0)\}\not\subseteq \omega_f(X).$$

    \begin{figure}[H] 
\centering
\subfigure[]
{\fbox{\includegraphics[width=6.9cm]{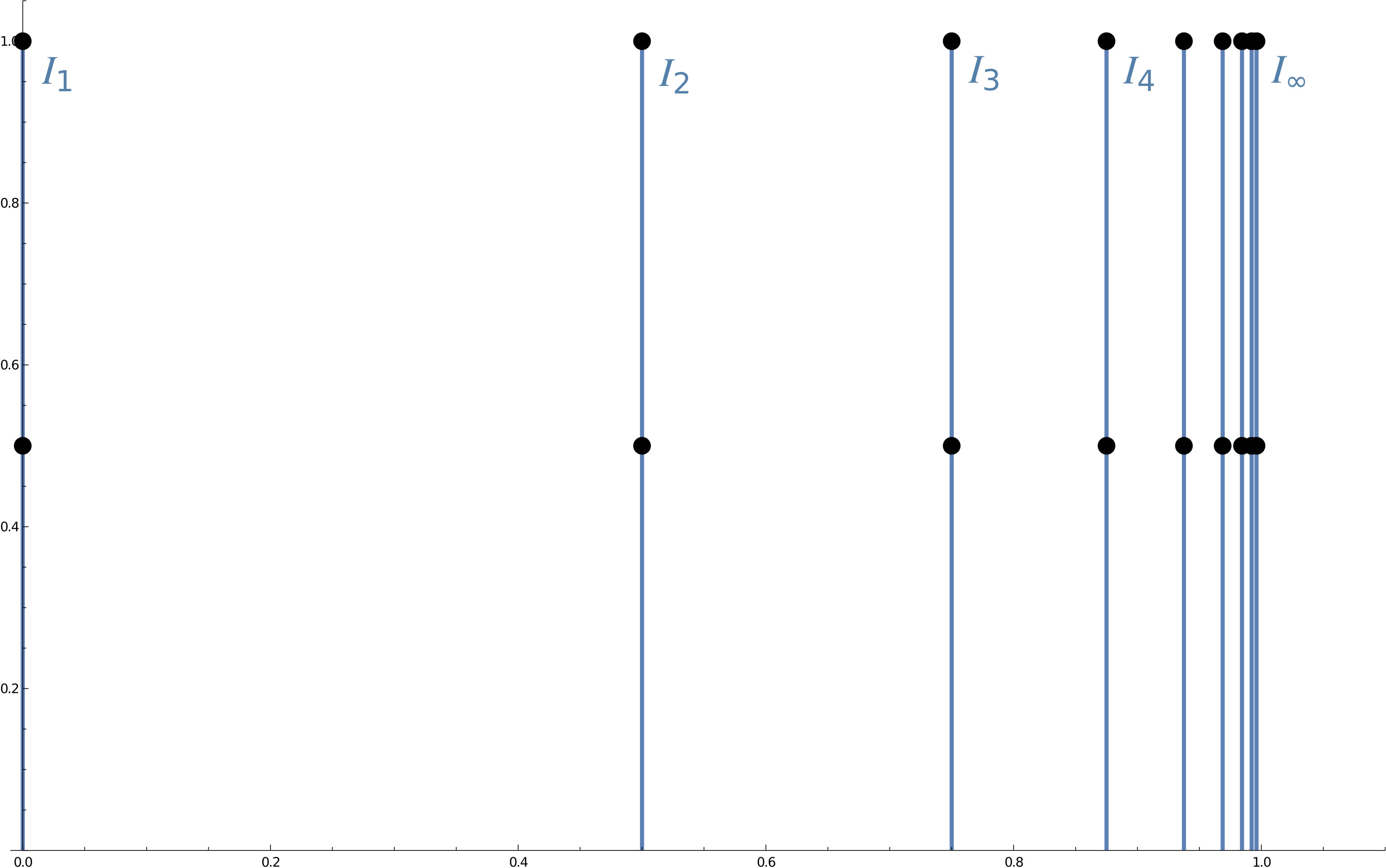}}}
\hspace{0.5mm}
\subfigure[]
{\fbox{\includegraphics[width=6.9cm]{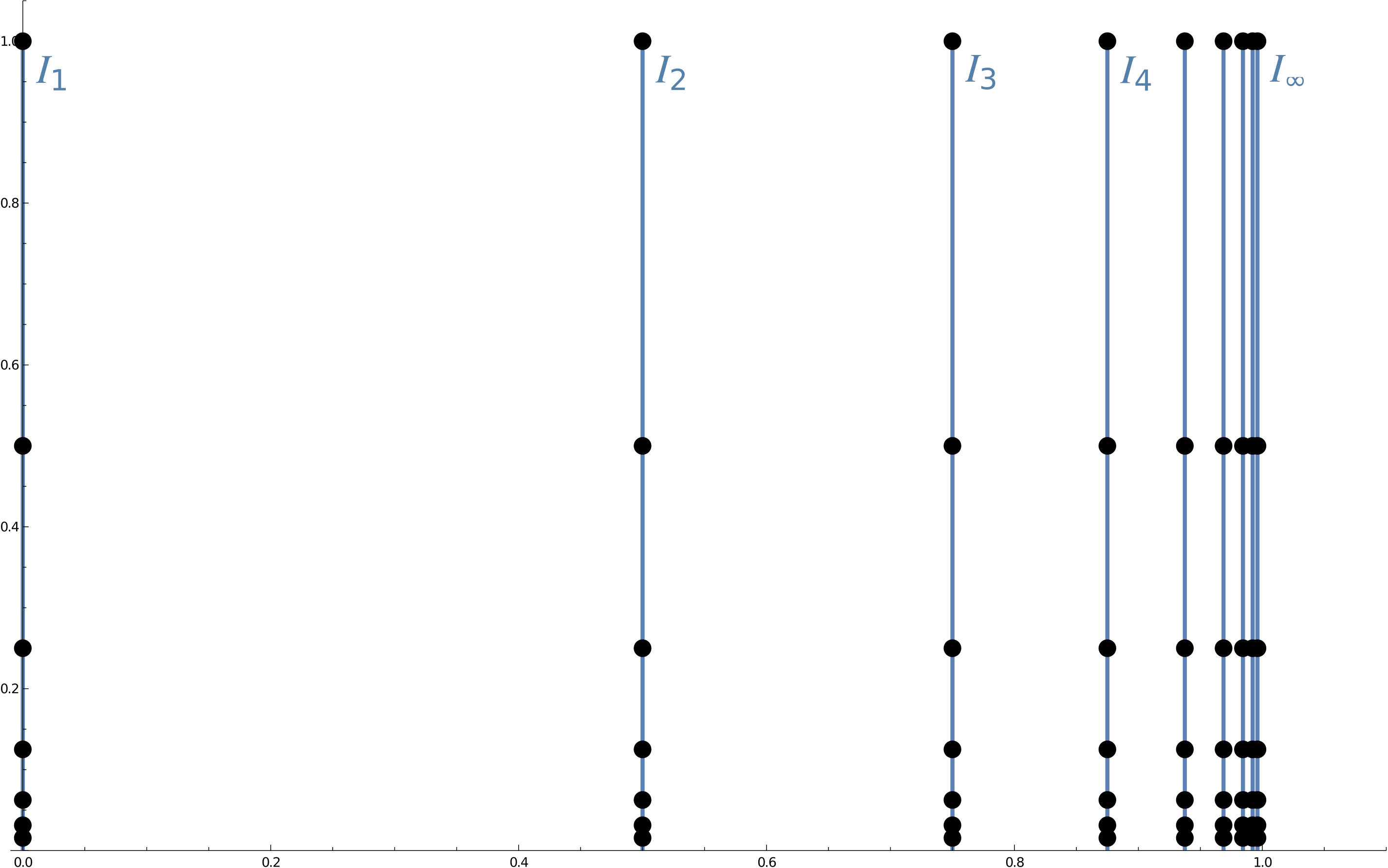}}}
\caption{The system defined in Example \ref{ex_quad}. The dots in (a) and (b) correspond to the iterations according to $f_1$ and $f_6$ respectively.}
\label{fig_quad}
\end{figure}

\end{exa}

\begin{thm}\label{themattractors}
Let $(X,\{f\})$ be a TDS, $\lambda$ a countable limit ordinal and $E,P\subseteq X$ respectively a proper and extended uniform $\lambda$-attractor.
Then:
\begin{enumerate} 
\item $E$ and $P$ are closed and invariant; 
\item $E$ is stable; 
\item $P$ is $\lambda$-attractive.
\end{enumerate}
\end{thm}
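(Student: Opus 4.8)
The plan is to prove the three assertions separately, drawing on the characterization of proper/extended limits via convergent sequences (Proposition \ref{th_plim_seq} and its Corollary) and on Proposition \ref{prop_inv}, Proposition \ref{closed_attr}.

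\textbf{Step 1 (closedness and invariance).} Let $V$ be the uniformly $\lambda$-inward set with $P=p\lambda(V)$ and $E=e\lambda(V)$ (with $P$ proper, i.e. not equal to $p\beta(V)$ for any $\beta<\lambda$). Closedness of $P$ is immediate since it is an intersection of closed sets; closedness of $E$ is exactly Proposition \ref{closed_attr}. For invariance, by definition invariance here means $f$-invariance, i.e.\ $f(P)\subseteq P$ and $f(E)\subseteq E$. The inclusion $f(P)\subseteq P$ is Proposition \ref{prop_inv}. For $E$: if $y\in E$ then $y\in p\beta(V)$ for some limit $\beta\le\lambda$, and by Proposition \ref{prop_inv} applied at level $\beta$ we get $f(y)\in p\beta(V)\subseteq e\lambda(V)=E$. (One should note that $E$ need not be $\lambda$-invariant in the strong transfinite sense — Example \ref{exa6} shows this — so only the statement about $f$ is claimed, consistent with the definition of stable/attractive sets which are defined relative to the finite system $(X,f)$.)

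\textbf{Step 2 ($E$ is stable).} Here uniformity is the crucial hypothesis. Since $V$ is uniformly $\lambda$-inward, there is $\delta_0>0$ with $d(\{f\}^\beta(\overline V),\partial V)>\delta_0$ for all $\beta<\lambda$; in particular $\overline{B_{\delta_0}(P)}\subseteq V^\circ$ and more generally every $\{f\}^\beta(\overline V)$ stays uniformly inside $V$. The idea is: given $\epsilon>0$, I want $\delta>0$ so that $x\in B_\delta(E)$ forces $d(f^n(x),E)<\epsilon$ for all $n$. One first reduces to showing that $B_\delta(E)$ is contained in a small neighborhood of $V$ (hence, after one step, inside $V$), and then uses that the finite forward orbit of any point of $\overline V$ eventually enters any neighborhood of $\omega_f(\overline V)$, which is contained in $E$ since $\omega_f(\overline V)=p\omega(\overline V)\subseteq e\lambda(V)=E$. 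The compactness of $\overline V$ gives uniformity in $n$: for the finitely many initial steps one uses uniform continuity of $f^n$ on $\overline V$, and for large $n$ one uses that $f^n(\overline V)$ eventually lies in $B_\epsilon(\omega_f(\overline V))\subseteq B_\epsilon(E)$ — a standard compactness argument for $\omega$-limits of a forward-invariant compact set. Shrinking $\delta$ so that $B_\delta(E)\subseteq V$ and so that the first few images stay within $\epsilon$ of $E$ closes the argument.

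\textbf{Step 3 ($P$ is $\lambda$-attractive).} I must exhibit $\delta>0$ such that for $x\in B_\delta(P)$ with $\{f\}^\beta(x)$ defined for all $\beta<\lambda$, one has $d(\{f\}^\beta(x),P)\to 0$ as $\beta\to\lambda$. Again uniform inwardness supplies a buffer: pick $\delta$ with $B_\delta(P)\subseteq V^\circ$. Then $x\in V$, so every $\{f\}^\beta(x)\in\{f\}^\beta(V)$, and $P=p\lambda(V)$ collects exactly the accumulation behavior of $\bigcup_{\eta<\beta<\lambda}\{f\}^\beta(V)$ as the tail index grows. The point is that the tail sets $\overline{\bigcup_{\eta<\beta<\lambda}\{f\}^\beta(V)}$ decrease to $P$; by compactness of $X$, for any $\epsilon>0$ there is $\eta<\lambda$ with $\overline{\bigcup_{\eta<\beta<\lambda}\{f\}^\beta(V)}\subseteq B_\epsilon(P)$, for otherwise a diagonal/finite-intersection argument would produce a point of $\bigcap_\eta\overline{\bigcup_{\eta<\beta<\lambda}\{f\}^\beta(V)}=P$ at distance $\ge\epsilon$ from $P$, a contradiction. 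Hence for all such $\beta$ with $\eta<\beta<\lambda$ we get $\{f\}^\beta(x)\in B_\epsilon(P)$, which is exactly $\lambda$-attractiveness.

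\textbf{Main obstacle.} The delicate point is Step 2: making the stability estimate uniform in $n\in\mathbb{N}$ while only knowing that $E$ contains $\omega_f(\overline V)$ together with the higher proper limits $p\beta(V)$. One must be careful that a point starting in $B_\delta(E)$ but \emph{outside} $V$ could, after one application of $f$, move to a point that is \emph{not} obviously close to $E$; the remedy is to choose $\delta$ so small that $B_\delta(E)\subseteq V$ (possible because $E\supseteq P$ and, more relevantly, because the forward orbit of every point of $\overline V$ stays in $V$ and accumulates on $\omega_f(\overline V)\subseteq E$, so $E$ sits well inside $V$ away from $\partial V$ — here is exactly where uniform inwardness, rather than mere inwardness, is used). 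Once every relevant point is trapped in the compact forward-invariant set $\overline V$, the classical Lyapunov-type argument for stability of $\omega$-limit sets of compact inward sets applies verbatim.
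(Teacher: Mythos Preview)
Your proposal is correct and follows essentially the same approach as the paper: Step~1 matches the paper exactly (Propositions~\ref{prop_inv} and~\ref{closed_attr} plus the decomposition $e\lambda(V)=\bigcup_{\beta\le\lambda}p\beta(V)$), and Steps~2 and~3 use the same three ingredients the paper uses---uniform inwardness to place $E,P$ at positive distance from $\partial V$, the inclusion $\omega_f(V)\subseteq E$ for stability, and compactness of the decreasing tail family for attractiveness---only phrased as direct arguments rather than the paper's proofs by contradiction. The one point worth making explicit, which both you and the paper leave slightly implicit, is that $E\subseteq V^\circ$ with a uniform buffer really does require \emph{uniform} $\lambda$-inwardness (mere $\lambda$-inwardness would allow $\overline{\bigcup_{\iota}\{f\}^\iota(V)}$ to touch $\partial V$); you correctly flag this in your ``main obstacle'' paragraph.
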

\begin{proof}
We assume $E=e\lambda(V)$ and $P=p\lambda(V)$, where $V\subseteq X$ is a uniformly $\lambda$-inward set.
To prove claim 1., it is enough to recall Proposition \ref{closed_attr} and to observe that, by Proposition \ref{prop_inv}, we have that $p\lambda(V)$ is invariant, and since $e\lambda(V)=\bigcup_{\beta\le \lambda} p\beta(V)$, it follows that $e\lambda(V)$ is invariant too. 

Let us now prove claim 2.
Suppose that $E$ is not stable. 
Then, there exists $\epsilon>0$ such that for all $\delta_k=1/k$ there exists $x_k\in X$ such that $d(x_k,E)<\delta_k$ and 
\begin{equation}\label{epsilon_n_k}
    d(f^{n_k}(x_k),E)\ge \epsilon,
    \end{equation}
for some $n_k\in \mathbb{N}$. 
 If $\{n_k\}_{k\in\mathbb{N}}$ was bounded, then some iteration, say $\bar{n}$, would make the inequality \eqref{epsilon_n_k} true for infinitely many values of $k$, but then by continuity of the map $f^{\bar{n}}$ we should have \begin{equation}\label{sublimit}
d(f^{\bar{n}}(\bar{x}),E)\ge\epsilon 
 \end{equation} 
 for any sublimit $\bar{x}$ of the sequence $\{x_k\}_{k\in\mathbb{N}}$. Since $E$ is closed and $d(\bar{x},E)=0$, we have $\bar{x}\in E$, and thus the inequality \eqref{sublimit} contradicts the invariance of $E$ established in point 1.
Therefore, we can assume, passing if needed to a subsequence, that $\{n_k\}_{k\in\mathbb{N}}$ is strictly increasing.

Since $E$ is closed and $E\subseteq V^\circ$, there exists $k_0\in\mathbb{N}$ such that $B_{1/k_0}(E)\subseteq V^\circ$. Hence $x_k\in V^\circ$ for all $k>k_0$ .  Since $X$ is compact, the sequence $f^{n_k}(x_k)$ has a subsequence $\{f^{n_{k_j}}(x_{k_j})\}_{j\in\mathbb{N}}$ converging to some $z$ as $j\to\infty$. Then $z\in \omega(V)\subseteq E$ and  $d(z,E)\ge \epsilon$, which is a contradiction.

Finally, let us prove claim 3.
Since $P$ is closed and $P\subseteq V^\circ$, there exists $\delta>0$ such that $B_\delta(P)\subseteq V^\circ$. Suppose that $P$ is not $\lambda$-attractive, so there exists $x\in B_\delta(P)$ such that $\{f\}^\beta(x)\neq \emptyset$ for all $\beta<\lambda$ and $d(\{f\}^\beta(x),P)$ does not converge to zero as $\beta\to \lambda$. That is, there exist $\epsilon>0$ and a strictly increasing sequence of ordinals $\{\beta_j\}_{j\in\mathbb{N}}$ such that
$
d\left( \{f\}^{\beta_j}(x),P \right)\ge \epsilon$ for all $j\in\mathbb{N}$,
and $\sup_{j\in\mathbb{N}}\beta_j=\lambda$. 
Then, there exists a subsequence $\{\beta_{j_k}\}_{k\in\mathbb{N}}$ such that $\{f\}^{\beta_{j_k}}(x)\longrightarrow y$, and
\[
d\left( \{f\}^{\beta_{j_k}}(x),P\right)\ge \epsilon,\qquad \forall k\in\mathbb{N}.
\]
Since $x\in V^\circ$ and $\sup_{k\in\mathbb{N}}\beta_{j_k}=\lambda$, by Proposition \ref{th_plim_seq}, it follows that $y\in p\lambda(V)$, but $d(y,P)\ge \epsilon$ which is a contradiction.
\end{proof}

\begin{rem}
    If $\lambda<\omega^2$, by Proposition \ref{prop_strong_inv}, we have that a proper $\lambda$-attractor is strongly invariant, which implies that, in this case, the extended $\lambda$-attractor is strongly invariant too.
\end{rem}

Let us come back to Example \ref{circle__} now that we have introduced some additional concepts. In that example, we can see that $\omega_f(R)=R$ (recall that $0$ is a repulsive fixed point for $g$), and in particular $\omega_f(R_k)=R_k$ for every $k\in\mathbb{N}_0$. Moreover, 
\begin{equation*} 
p(\omega\cdot k)(R)=\{(1,\theta_j):j\ge k\},
\end{equation*}
while 
$$p\omega^2(R)= \{O,(1,0)\}\cup \left(\mathcal{O}_g^{\mathbb{Z}}(1/2)\times 0\right).$$
which is therefore a uniform proper $\omega^2$-attractor. Notice that $O$, which is strictly $\omega^2$-recurrent with the points $(1/2,\theta_k)$ for every $k\in\mathbb{N}$,  belongs to $p\omega^2(R)$ and also to $[\partial\omega](R)$, but does not belong to any proper or extended transfinite attractor of order $<\omega^2$.

\begin{defi}[\textbf{Transfinite equicontinuity}]\label{def_equi_trans}
    For $\lambda\le\omega_1$, we say that $\{f\}$ is \emph{$\lambda$-equicontinuous} at $x\in X$ if, for every $\epsilon>0$, there exists $\delta=\delta(\epsilon,x)$ such that 
    \[
    d(x,y)<\delta \implies d(\{f\}^\beta(x),\{f\}^\beta(y))<\epsilon \qquad \forall\, 1\le\beta<\lambda.
    \]
\end{defi}
\begin{defi}
    For $\lambda<\omega_1$, we define the set $\mathcal{E}_\lambda\subseteq X$ of  $\lambda$-equicontinuous points of $(X,\{f\})$ as the set of all $x\in X$ such that $\{f\}$ is  $\lambda$-equicontinuous at $x$.
\end{defi}
Notice that, for $\lambda=\omega$, we have the classical definition of equicontinuity for a finite topological dynamical system (see for instance \cite{kurka2003topological}, p. 225).
Let us also recall the classical concept of \emph{sensitivity} for finite systems (see \cite[Def. 2.27]{kurka2003topological}):
\begin{defi}\label{def_sens}
    A finite dynamical system $(X,f)$ is \emph{sensitive} if 
    \[
    \exists\,\epsilon>0,\ \forall\,x\in X,\  \forall\,\delta>0, \ \exists\, y\in B_\delta(x), \ \exists\, n\ge 0,\  d(f^n(y),f^n(x))\ge \epsilon.
    \]
\end{defi}
This is the standard definition of sensitivity, but it does not work very well for transfinite systems, because in general transfinite iterations may not exist at $x$. However, a different version of it, which is equivalent in the finite case, generalizes better.
\begin{defi}\label{sensitive_2}
    We say that the finite dynamical system $(X,f)$ is \emph{sensitive}$^*$ if 
    \[
    \exists\,\epsilon>0,\ \forall\,x\in X,\  \forall\,\delta>0, \ \exists\, y_1,y_2\in B_\delta(x), \ \exists\, n\ge 0,\  d(f^n(y_1),f^n(y_2))\ge \epsilon.
    \]
\end{defi}
\begin{lem}\label{sensitiv_equiv}
Definitions \ref{def_sens} and \ref{sensitive_2} are equivalent.
\end{lem}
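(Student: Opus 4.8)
The plan is to prove the two implications separately. Both are elementary triangle-inequality arguments; the only point worth keeping in mind is that the sensitivity constant need not be preserved, which is harmless since both definitions only assert the \emph{existence} of some positive constant.

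First I would show that Definition~\ref{def_sens} implies Definition~\ref{sensitive_2}, using the \emph{same} $\epsilon$. Let $\epsilon>0$ witness sensitivity. Fix $x\in X$ and $\delta>0$, and let $y\in B_\delta(x)$ and $n\ge 0$ be such that $d(f^n(y),f^n(x))\ge\epsilon$, as provided by Definition~\ref{def_sens}. Setting $y_1:=x$ and $y_2:=y$, we have $y_1,y_2\in B_\delta(x)$ (since $d(x,x)=0<\delta$) and $d(f^n(y_1),f^n(y_2))=d(f^n(x),f^n(y))\ge\epsilon$, which is exactly the assertion of Definition~\ref{sensitive_2}.

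For the converse, I would show that Definition~\ref{sensitive_2} with constant $\epsilon$ implies Definition~\ref{def_sens} with constant $\epsilon/2$. Fix $x\in X$ and $\delta>0$; sensitivity$^*$ yields $y_1,y_2\in B_\delta(x)$ and $n\ge 0$ with $d(f^n(y_1),f^n(y_2))\ge\epsilon$. By the triangle inequality,
\[
\epsilon\le d(f^n(y_1),f^n(y_2))\le d(f^n(y_1),f^n(x))+d(f^n(x),f^n(y_2)),
\]
so at least one of the two summands on the right is $\ge\epsilon/2$. Letting $y$ be the corresponding $y_i$ (which lies in $B_\delta(x)$), we obtain $d(f^n(y),f^n(x))\ge\epsilon/2$, as required by Definition~\ref{def_sens}.

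I do not expect any genuine obstacle here; the only subtlety to flag is that the implication sensitive$^*\Rightarrow$ sensitive genuinely loses a factor of $2$ in the constant, so one must not attempt to carry the same $\epsilon$ through in that direction. This is immaterial for the logical equivalence of the two notions, and it is precisely why Definition~\ref{sensitive_2} is the more convenient variant to generalize to the transfinite setting, where the point $x$ itself may fail to admit the relevant iterations.
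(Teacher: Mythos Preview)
Your proof is correct and follows essentially the same approach as the paper's: the forward implication uses $y_1=x$, and the converse uses the triangle inequality to extract a point at distance at least $\epsilon/2$ from $f^n(x)$. The only cosmetic difference is that the paper phrases the triangle-inequality step as a contradiction rather than a direct pigeonhole, but the content is identical.
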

\begin{proof}
Since $x\in B_\delta(x)$, sensitive implies sensitive$^*$.
\\
To prove the converse implication, suppose that $(X,f)$ is sensitive$^*$  and pick $x\in X$ and $\delta>0$. 
Then there exist $y,z\in B_\delta(x)$ and $n\ge 0$ such that $d(f^n(y_1),f^n(y_2))\ge \epsilon$. Assuming that $d(f^n(y_1),f^n(x))< \epsilon/2$ and $d(f^n(x),f^n(y_2))< \epsilon/2$, it follows that 
    $$d(f^n(y_1),f^n(y_2))\le d(f^n(y_1),f^n(x))+d(f^n(x),f^n(y_2))<\frac \epsilon 2+\frac \epsilon 2,$$
    which is impossible. Then there exists a point $y\in B_\delta(x)$, coinciding with either $y_1$ or $y_2$, such that $d(f^n(x),f^n(y))\ge \epsilon/2$. By the arbitrariness of $\epsilon$, it follows that $(X,f)$ is sensitive in the sense of Definition \ref{def_sens}.
\end{proof}
We are now ready to define transfinite sensitivity by straightforwardly generalizing Def. \ref{sensitive_2}.
\begin{defi}[\textbf{Transfinite sensitivity}]\label{transsens__}
    A TDS $(X,\{f\})$ such that $\mathfrak{D}(X,\{f\})\ge\lambda$ is \emph{$\lambda$-sensitive} if 
    \[
    \exists\,\epsilon>0:\ \forall\,x\in X,\  \forall\,\delta>0,\  \exists\, \beta<\lambda, \ \exists\, y_1,y_2\in B_\delta(x)\cap X^{\beta+1}:d(\{f\}^\beta(y_1),\{f\}^\beta(y_2))\ge \epsilon.
    \]
\end{defi}
Recall that, for $n\in \mathbb{N}$, $X^{n+1}=X^\omega=X$. Therefore, 
by Lemma \ref{sensitiv_equiv}, a finite system is $\omega$-sensitive if and only if it is sensitive according to the standard Def. \ref{def_sens}. 

A well-known result in topological dynamics says that if an attractor contains only equicontinuous points, so does its basin (see for instance \cite{kurka2003topological}, Prop. 2.74). This does not hold for transfinite systems, even at the smallest limit ordinal above $\omega$, as we can see in the following example.
\begin{exa}
    Let $(\mathbb{T}=[0,1),R_a)$ be an irrational rotation on the circle and, for some space $X'$ such that $X'\cap \mathbb{T}=\emptyset$, let $(X',g)$ be an equicontinuous dynamical system.
    Set $X:=\mathbb{T} \cup X'$ and define a map $f:X\circlearrowleft$ as follows:
    \begin{equation*}
        f(x)=
        \begin{cases}
            R_a(x) \quad& \text{if } x\in \mathbb{T}\\
            g(x) \quad& \text{if } x\in X'
        \end{cases}
    \end{equation*}
    Let $x_0, y_0$ be two distinct points of $\mathbb{T}$ and set, for every $n\in\mathbb{Z}$, $x_n:=R_a^n(x_0)$ and $y_n:=R_a^n(y_0)$. Pick $x',y'\in X'$ and let us define the sequence $\{f_n\}_{n\in\mathbb{N}}$ of functions $f_n:X\circlearrowleft$ as follows:
    \begin{equation*}
        f_n(x)=
        \begin{cases}
             x' \quad& \text{if } x=x_n\\
             y' \quad& \text{if } x=y_n\\
            f(x) \quad& \text{otherwise }
        \end{cases}
    \end{equation*}
    Notice that, since the finite system $(\mathbb{T},R_\alpha)$ has no periodic points, we have $f_n \dot{\longrightarrow}  f$, and therefore $(X,\{f_n\}_{n\in\mathbb{N}})$ is a TDS (notice that the limit map $f$ is continuous). By construction if $x\in\mathbb{T}$ is such that $x\in\{x_n\}_{n\in\mathbb{Z}}$, it follows that $\{f\}^\omega(x)=x'$. Analogously, if $x\in\{y_n\}_{n\in\mathbb{Z}}$, then $\{f\}^\omega(x)=y'$.
    For all other points in $X$, since all the orbits in $\mathbb{T}$ are disjoint, we have that $\{f\}^\omega(x)=\emptyset$. 
    Set $Y:=p(\omega\cdot 2)(X)$. Then $Y\subseteq X'$, which means that $Y\subseteq\mathcal{E}_{\omega\cdot 2}$. Moreover, $x_0,y_0 \in\mathcal{B}(Y)$. However, in every open neighborhood of $x_0$ there is a point $z$ belonging to $\mathcal{O}(y_0)$, for which we have $\{f\}^\omega(z)=y'$, while $\{f\}^\omega(x_0)=x'$, so that the basin $\mathcal{B}(Y)$ does not consist of equicontinuous points. Finally, let us observe that, taking in the example $X'=\mathbb{T}$ and $g=R_\alpha$ (which we can do since an irrational rotation is an isometry, and therefore it verifies the equicontinuity assumption), $(X,\{f\})$ is an $(\omega\cdot 2)$-sensitive system. Indeed, in every nonempty open ball in $X$ there are points whose iterations of order $\omega$ are $d(x',y')$ apart.
\end{exa}

\section{Dynamical properties of transfinite attractors}\label{sec7}

 This Section is devoted to the investigation of some deeper properties of transfinite attractors. The purpose of the results developed here is to show that the classical theory occupies a small region of a much richer structural landscape (graphically represented in Figs. \ref{scheme_attrac_1} and \ref{scheme_attrac_2}), and that beyond level $\omega$ new invariance, stability, and reachability phenomena appear.
 Our first goal is to prove a transfinite analog of a well-known theorem by E. Akin,  which states that attractors are determined by their chain-recurrent points.
Let us start with a standard definition.
\begin{defi}
Given a relation $\mathcal{A}\subseteq X^2$, we say that a set $S\subseteq X$ is $\mathcal{A}$-invariant if, whenever $x\in S$ and $x\,\mathcal{A}\,y$, then $y\in S$.
\end{defi}

We can now state Akin's Theorem (see, for instance, \cite{kurka2003topological}, p. 82).
We recall that, when we say that $(X,f)$ is a topological dynamical systems we mean a finite system in which $X$ is compact, metric and $f$ is continuous. 

\begin{fact}[Akin]\label{akin__}
Let $(X,f)$ be a topological dynamical system. For $A\subseteq X$ attractor: 1. If $x\in A$ \text{ and } $x\,\mathcal{C}\,y$, then $y\in A$;
    2. The following inclusions hold:
    \begin{equation}\label{eq_akin_clas}
    A\subseteq\mathcal{N}(A\cap |\mathcal{N}|)\quad ,\quad A\supseteq\mathcal{C}(A\cap |\mathcal{C}|).
\end{equation}
\end{fact}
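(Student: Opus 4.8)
The plan is to reduce everything to two standard features of a finite attractor $A$: that it is strongly invariant, $f(A)=A$, and that it admits a fundamental system of \emph{trapping neighbourhoods}, i.e.\ a decreasing sequence of open sets $U_0\supseteq U_1\supseteq\cdots$ with $A\subseteq U_{j+1}$, with each $\overline{U_j}$ compact, with $f(\overline{U_j})\subseteq U_{j+1}$ for every $j$, and with $\bigcap_j\overline{U_j}=A$. Such a basis is the classical Conley/Akin construction built from any inward set $V$ witnessing $A=\omega(V)=\bigcap_{n\ge 0}f^{\,n}(\overline V)$; I would quote it rather than reprove it. Put $\eta_j:=d\bigl(f(\overline{U_j}),\,X\setminus U_{j+1}\bigr)>0$.

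First I would prove claim~1. Fix $x\in A$ with $x\,\mathcal{C}\,y$ and fix $j$; pick an $\eta_j$-chain $x=x_0,x_1,\dots,x_m=y$. Since $x\in A\subseteq\overline{U_j}$, an induction shows every $x_i\in\overline{U_j}$: if $x_i\in\overline{U_j}$, then $f(x_i)\in f(\overline{U_j})\subseteq U_{j+1}$, and as $d(x_{i+1},f(x_i))<\eta_j=d\bigl(f(\overline{U_j}),X\setminus U_{j+1}\bigr)$, the point $x_{i+1}$ lies in $U_{j+1}\subseteq\overline{U_j}$. Hence $y=x_m\in\overline{U_j}$, and since $j$ was arbitrary, $y\in\bigcap_j\overline{U_j}=A$. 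The inclusion $A\supseteq\mathcal{C}(A\cap|\mathcal{C}|)$ of claim~2 now drops out for free: $A\cap|\mathcal{C}|\subseteq A$, so $\mathcal{C}(A\cap|\mathcal{C}|)\subseteq\mathcal{C}(A)\subseteq A$ by what was just shown.

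For the other inclusion of claim~2, $A\subseteq\mathcal{N}(A\cap|\mathcal{N}|)$, I would fix $y\in A$ and, using $f(A)=A$, choose a full backward orbit $y_0=y,y_1,y_2,\dots$ inside $A$ (so $f(y_{i+1})=y_i$). If the set $\{y_i\}_i$ is finite, then $y$ is periodic and I take $w:=y$; otherwise, by compactness of $A$ it has an accumulation point $w\in A$. In both cases $w\in|\mathcal{N}|$: given neighbourhoods $U(w),W(w)$, choose indices $k>\ell$ with $y_k\in U(w)$, $y_\ell\in W(w)$, and note $f^{\,k-\ell}(y_k)=y_\ell$. And $w\,\mathcal{N}\,y$: given neighbourhoods $U(w),W(y)$, choose $y_k\in U(w)$ and note $f^{\,k}(y_k)=y_0=y$. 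Therefore $w\in A\cap|\mathcal{N}|$ with $w\,\mathcal{N}\,y$, so $y\in\mathcal{N}(A\cap|\mathcal{N}|)$.

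The main obstacle is the input to claim~1: the trapping-neighbourhood basis of $A$ (equivalently, a Conley--Lyapunov function). The naive shortcut of simply iterating $f$ on $\overline V$ fails, because $f$ need not be open, so the compacta $f^{\,n}(\overline V)\downarrow A$ need not be trapping regions with a positive escape-free margin $\eta_n$ --- and that margin is exactly what makes the induction along an $\eta_j$-chain go through. Once the basis is granted, the rest uses only strong invariance (to manufacture the backward orbit) and the definitions of $\mathcal{C}$ and $\mathcal{N}$.
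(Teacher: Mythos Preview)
The paper does not supply its own proof of this statement: it is recorded as a \emph{Fact} with a pointer to \cite{kurka2003topological}, p.~82, and then used as a benchmark for the transfinite generalizations. So there is no in-paper argument to compare against directly.

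Your proof is correct. The chain-invariance part (claim~1) via a nested basis of trapping regions with positive margin $\eta_j$ is the standard argument, and you flag accurately that the only nontrivial ingredient is the existence of that basis. The backward-orbit argument for $A\subseteq\mathcal{N}(A\cap|\mathcal{N}|)$ is also correct; your case split (finite versus infinite backward orbit) is handled cleanly.

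It is worth noting that, although the paper does not prove the classical statement, its proofs of the transfinite analogues (Theorem~\ref{__akin_} and Proposition~\ref{akin_compl_attr}) use exactly your backward-orbit idea for the $\mathcal{N}$-type inclusion: build preimages inside the attractor using strong invariance, extract a convergent subsequence, and verify the relevant recurrence relation at the limit. For the chain-type inclusion the paper goes through Lemma~\ref{lambdaf_prop} and Proposition~\ref{lambdaF_inv}, whose mechanism (uniform inwardness gives a fixed margin $\delta$, then $\epsilon$-chains with $\epsilon<\delta$ cannot escape $V$) is the same margin-and-induction argument you run, just packaged at the level of a single inward set $V$ rather than a nested basis $\{U_j\}$. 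So your approach and the paper's transfinite arguments are essentially the same at $\lambda=\omega$.
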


Here $\mathcal{C}$ is the standard chain relation (as defined in Eq. \eqref{chain_}). 
Since $\mathcal{N}\subseteq\mathcal{C}$, Eq. \eqref{eq_akin_clas} implies 
\begin{equation}\label{akino_}
  A=\mathcal{C}(A\cap |\mathcal{C}|).  
\end{equation}
This is the way in which the result is stated, for instance, in the given reference work \cite{kurka2003topological}, although the proof given there works in fact for the two inclusions in Eq. \eqref{eq_akin_clas}, which combined are in principle stronger that Eq. \eqref{akino_}.
Akin's Theorem does not transfer directly to transfinite systems, whether we generalize the concept of attractor with proper or extended $\lambda$-attractor. In fact, it fails very elementary, as it is shown in the following examples.

\begin{exa}\label{noAkin_}
In this example we see that in general the claim in point 1. of Akin's Theorem fails already at level $\omega\cdot 2$.

Take $x_0,x_1,z\in(0,1)$ such that $0<x_0<z<x_1<1$. Set $0<m<1$ and set $g_1(x):=m(x-x_0)+x_0$ and $g_2(x):=m(x-x_1)+x_1$. Let us define the map $f:\mathbb{I}\circlearrowleft$ as 
\begin{equation*}
 f(x)=\begin{cases}
        g_1(x) &\quad \text{if } x\in [0,z]\\
        x_1+\frac{x_1-g_1(z)}{x_1-z}(x-x_1) &\quad \text{if } x\in (z,x_1]\\
        g_2(x) &\quad \text{if } x\in (x_1,1]
    \end{cases}
\end{equation*}
Let $U=(a,b)$ be such that $z\in U$ and $x_0<g_1(b)<a<b<x_1$. For every $n\in\mathbb{N}$ we set $z_n:=g_1^n(z)$. Let $\{a_n\}_{n\in\mathbb{N}}$ and $\{b_n\}_{n\in\mathbb{N}}$ be two sequences of real numbers that converge to $z$ and such that 
\begin{itemize}
    \item $\{a_n\}_{n\in\mathbb{N}}$ is increasing and $a_n\in (a,z)$ for all $n\in\mathbb{N}$;
    \item $\{b_n\}_{n\in\mathbb{N}}$ is decreasing and $b_n\in (z,b)$ for all $n\in\mathbb{N}$.
\end{itemize}
For every $n\in\mathbb{N}$, we set $u_n:=g_1^n(a_n)$ and $v_n:=g_1^n(b_n)$ and we indicate by $U_n$ the interval $[u_n,v_n]$. Notice that $z_n\in U_n$ for all $n\in\mathbb{N}$. Pick $h\in (x_1,1)$. We define the sequence of functions $f_n:\mathbb{I}\circlearrowleft$ as follows (see Fig. \ref{fig_akin_c}):
\begin{equation*}
 f_n(x)=\begin{cases}
        f(x) &\quad \text{if } x\in \mathbb{I}\setminus U_n\\
        l_n(x-z_n)+h &\quad \text{if } x\in [u_n,z_n)\\
        r_n(x-z_n)+h &\quad \text{if } x\in [z_n,v_n]
    \end{cases}
\end{equation*}
where 
\[
l_n=\frac{h-g_1(u_n)}{z_n-u_n},\qquad r_n=\frac{h-g_1(v_n)}{z_n-v_n}.
\]
The maps $f_n$ are continuous for every $n\in\mathbb{N}$ and we have $f_n \dot{\longrightarrow} f$, with the limit map $f$ continuous as well. Then $(\mathbb{I},\{f\})$ is a sequentially continuous TDS.

Set $\lambda:=\omega\cdot 2$. Then $\mathbb{I}$ is a $\lambda$-inward set and $A:=p\lambda(\mathbb{I})=\{x_1\}$. Since $x_1\, \mathcal{C}\, x_0$ and $x_0\notin A$ we have that the claim 1. in Akin's Theorem does not hold for the proper $\lambda$-attractor $A$. Notice that, since $\mathcal{C}\subseteq \lambda\{\mathcal{C}\}$, this also implies that $\lambda$-attractors are not $\lambda\{\mathcal{C}\}$-invariant.

\begin{figure}[H]
\centering
\fbox{\includegraphics[width=10cm]{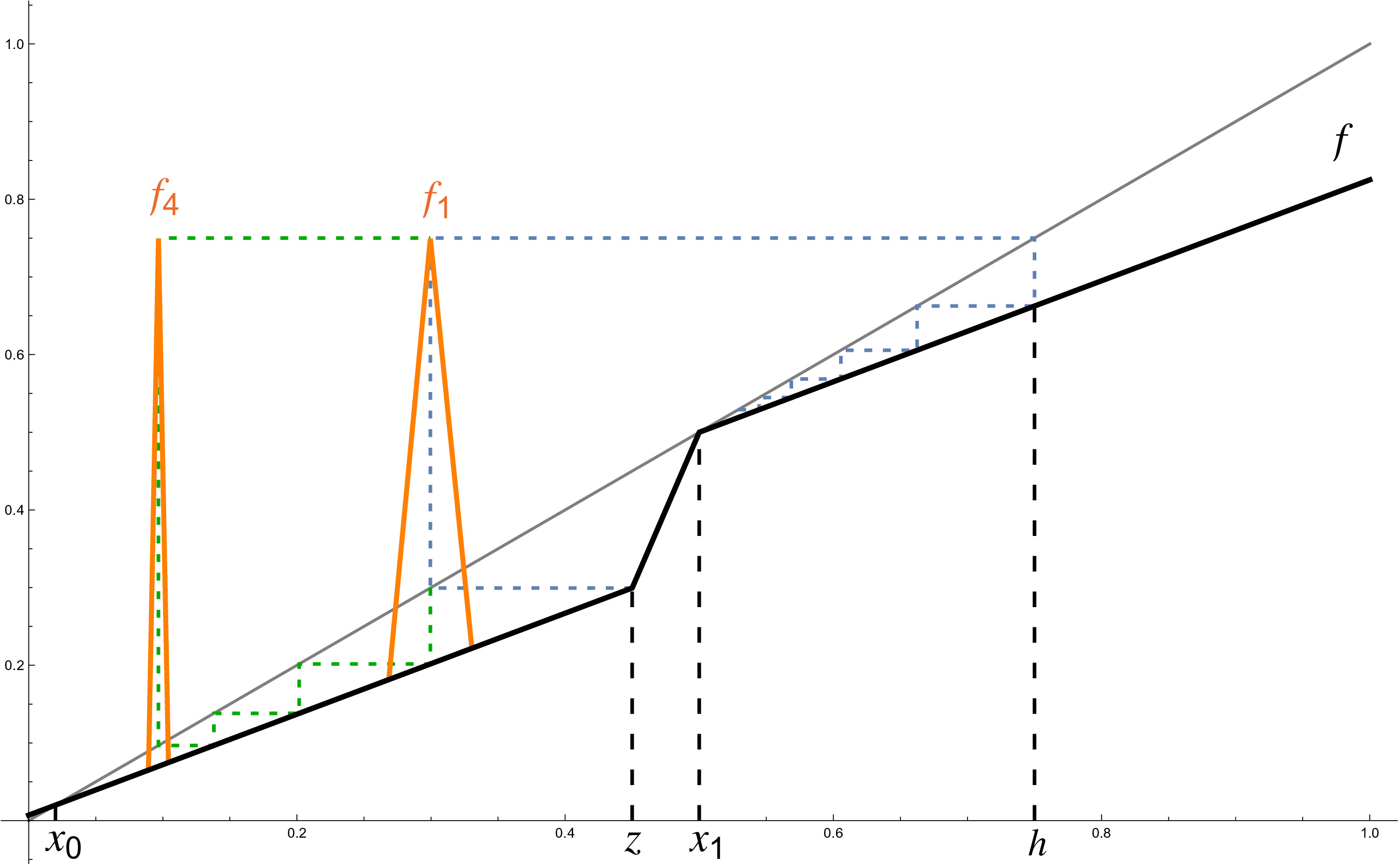}}
\caption{The maps $f_1$ and $f_4$ from the sequence $\{f_n\}_{n\in\mathbb{N}}$ defining the system in Example \ref{noAkin_}.}
\label{fig_akin_c}
\end{figure}
\end{exa}
\begin{rem}\label{subs___}
     Example \ref{noAkin_} can be used to show that TDSs are not stable up to subsequences, as stated in Remark \ref{_subs_}.
    Pick indeed $y\in (x_1,1)$.
    We define the sequence of functions $\{f_n\}_{n\in\mathbb{N}}$ by setting $h=y$ if $n$ is even, and $h=f(y)$ if $n$ is odd. 
    It follows that $f(y)\in \mathcal{O}_\infty(z)$, while $y\notin \mathcal{O}_\infty(z)$. 
    In particular, we have that $\{f\}^\omega(x)=\emptyset$ for all $x\in \mathbb{I}\setminus \mathcal{O}^\mathbb{Z}(z)$. 
    Moreover, setting $g_n:=f_{2n}$ for every $n\in\mathbb{N}$, and considering the TDS
    $(X,\{g_n\}_{n\in\mathbb{N}})$ where $g_n\dot{\longrightarrow} f$, we have that, for $x\in \mathcal{O}^\mathbb{Z}(z)$, $$f(y)=\{f\}^\omega(x)\ne\{g\}^\omega(x)=y.$$
\end{rem}

Let us now show that also the claim in point 2. of Akin's Theorem fails in general. For this, consider again Example \ref{ex_attr_cyc}, where we have $A:=e\omega^2(\mathbb{I})=p\omega^2(\mathbb{I})=\{x_0,z,f(z),f^2(z),\ldots\}$. In this case, since $\mathcal{C}(A)=\{x_0\}$, we have that claim 1. of Akin's Theorem holds for $A$. On the other hand,  $\mathcal{C}(A\cap |\mathcal{C}|)=\mathcal{C}(\{x_0\})=\{x_0\}\subsetneq A$,
which means that equality \eqref{akino_} is not verified.

\vspace{0.2cm}

Our aim is to identify the ``correct" transfinite generalization of the dynamical relations $\mathcal{C}$ and $\mathcal{N}$ to obtain a satisfactory transfinite version of Akin's Theorem. We thus want to define two dynamical relations, depending on a limit ordinal $\lambda<\omega_1$, which we will call $\lambda \{\mathcal{G}\}$ and $\lambda\{\mathcal{D}\}$, having the following characteristics:
\begin{enumerate}
  \item they are at least as strong as, respectively, $\mathcal{C}$ and $\mathcal{N}$ for $\lambda=\omega$, in the sense that: $$x\,\omega\{\mathcal{G}\}\,y\implies x\,\mathcal{C}\,y\quad , \quad x\,\omega\{\mathcal{D}\}\,y\implies x\,\mathcal{N}\,y $$
    \item they verify Akin's Theorem above, that is we have that $A$ is $\lambda\{\mathcal{G}\}$-invariant and
    \begin{equation}\label{akin___}
    A\subseteq \lambda\{\mathcal{D}\}(A\cap |\lambda\{\mathcal{D}\}|)\quad,\quad A\supseteq \lambda\{\mathcal{G}\}(A\cap |\lambda\{\mathcal{G}\}|) 
    \end{equation} 
    where $A$ is a (uniform) extended $\lambda$-attractor. 
    \end{enumerate}
    
    \begin{rem}\label{akin_ref}
    We remark that, in the particular case $\lambda=\omega$, this will provide a version of Akin's Theorem for finite systems in which $\mathcal{N}$ and $\mathcal{C}$ are replaced by stronger relations, which means that the first inclusion in Eq.\eqref{akin___}, assuming $\lambda=\omega$, will be in fact a slight refinement of the first inclusion in Eq.\eqref{eq_akin_clas}. 
    \end{rem}

This will be done in two steps. First of all we define the relations $\lambda \{\mathcal{B}\}$ and $\lambda\{\mathcal{F}\}$; the latter has the desirable property that proper $\lambda$-attractors are $\lambda\{\mathcal{F}\}$-invariant (whereas, as we saw, they are not $\lambda\{\mathcal{C}\}$-invariant). Building on those, we define the relations $\lambda\{\mathcal{D}\}$ and $\lambda\{\mathcal{G}\}$ appearing in Eq.\eqref{akin___}.
Finally, we will refine the definition of transfinite attractor, arriving gradually at a special class of proper attractors which verify Eq. \eqref{eq_akin_clas} with $\lambda\{\mathcal{B}\}$ and $\lambda\{\mathcal{F}\}$ replacing respectively $\lambda\{\mathcal{D}\}$ and $\lambda\{\mathcal{G}\}$.

\begin{defi} 
Let $\lambda$ be a countable limit ordinal. Let us define the following relations:
\begin{enumerate}
\item $(x,y)\in \lambda\mathcal{\{B\}}\iff$ for every pair of open neighborhoods $U(y)$ and $V(x)$ and for every $\beta<\lambda$, there exists $z\in V(x)$ and $\beta<\gamma<\lambda$ such that $\{f\}^\gamma(z)\in U(y)$.

\item $(x,y)\in \lambda\mathcal{\{D\}}\iff (x,y)\in \beta\mathcal{\{B\}}\text{ for some limit ordinal $\beta\le\lambda$}$.

\item $(x,y)\in \lambda\mathcal{\{F\}}\iff \forall \epsilon>0$, $\forall \beta<\lambda$, there exists an $(\epsilon,\lambda)$-chain 
\[
(\{x_0,x_1,\ldots,x_n\},\{\lambda_0,\ldots,\lambda_{n-1}\})
\]
from $x$ to $y$ such that $\beta<\lambda_{n-1}<\lambda$.

\item $(x,y)\in \lambda\mathcal{\{G\}}\iff (x,y)\in \beta\mathcal{\{F\}}\text{ for some limit ordinal $\beta\le\lambda$}$.
\end{enumerate}
\end{defi}

(In the case of ordinary dynamical systems, that is when $\lambda=\omega$, the relation $\lambda\{\mathcal{F}\}$ is a weaker form of what is called ``the relation $\mathcal{R}$" in the literature, see, e.g., \cite{ding2008chain}.)

Even if $d$ appears in their definition, the relations $\lambda\{\mathcal{F}\}$ and $\lambda\{\mathcal{G}\}$ are independent of the metric, because, like $\lambda\{\mathcal{C}\}$, they can be defined requiring that $\lambda\{\mathcal{F}\}$-chains and $\lambda\{\mathcal{G}\}$-chains belong to a certain neighborhood of the set $\{(x,x):x\in X\}$ in the product space $X^2$.
It is straightforward to see that $\lambda\{\mathcal{B}\}\subseteq \lambda\{\mathcal{N}\}$ and $\lambda\{\mathcal{F}\}\subseteq \lambda\{\mathcal{C}\}$. Let us show two further inclusions:
\begin{prop}\label{inclusions_DGBF}
Let $(X,\{f\})$ be a TDS. Then, for every countable limit ordinal $\lambda$, we have:
\begin{equation*}
    \lambda\{\mathcal{B}\}\subseteq\lambda\{\mathcal{F}\}\quad\quad,\quad\quad\lambda\{\mathcal{D}\}\subseteq\lambda\{\mathcal{G}\}.
\end{equation*}
\end{prop}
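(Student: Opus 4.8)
The plan is to transcribe, with one small adjustment, the argument used for the inclusion $\mathcal{N}\subseteq\mathcal{C}$ inside the proof of Proposition \ref{incl_top_rel}, keeping careful track of the ordinal that labels the last link of the chain. For the first inclusion, I will take $(x,y)\in\lambda\{\mathcal{B}\}$ and fix an arbitrary $\epsilon>0$ and an arbitrary $\beta<\lambda$; by continuity of $f$ I choose $0<\delta<\epsilon$ with $f(B_\delta(x))\subseteq B_\epsilon(f(x))$. The crucial move is to apply the definition of $\lambda\{\mathcal{B}\}$ to the neighbourhoods $U(y)=B_\delta(y)$, $V(x)=B_\delta(x)$ but with the parameter $\beta+1$ (still $<\lambda$, since $\lambda$ is a limit ordinal) rather than $\beta$ itself. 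This produces a point $z\in B_\delta(x)$ and an ordinal $\gamma$ with $\beta+1<\gamma<\lambda$ such that $\{f\}^\gamma(z)\in B_\delta(y)$.

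Then, exactly as in Proposition \ref{incl_top_rel} (see also Remark \ref{etabeta}), I will split into two cases according to whether $\gamma$ is infinite or finite. If $\gamma\ge\omega$, the pair $(\{x,f(z),y\},\{1,\gamma\})$ is an $(\epsilon,\lambda)$-chain from $x$ to $y$: the first link is controlled by $d(f(x),f(z))<\epsilon$, and, since $1+\gamma=\gamma$, the second satisfies $\{f\}^\gamma(f(z))=\{f\}^{1+\gamma}(z)=\{f\}^\gamma(z)\in B_\delta(y)\subseteq B_\epsilon(y)$; its last ordinal is $\lambda_{n-1}=\gamma>\beta$. If instead $\gamma=k<\omega$, then $\beta$ is finite and $k\ge\beta+2\ge 2$, and the pair $(\{x,f(z),y\},\{1,k-1\})$ is an $(\epsilon,\lambda)$-chain from $x$ to $y$ whose last ordinal $k-1$ satisfies $\beta<k-1<\lambda$ (because $k-1\ge\beta+1$). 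In both cases, for the given $\epsilon$ and $\beta$ I have exhibited an $(\epsilon,\lambda)$-chain from $x$ to $y$ with $\beta<\lambda_{n-1}<\lambda$; by arbitrariness of $\epsilon$ and $\beta$ this is precisely $(x,y)\in\lambda\{\mathcal{F}\}$. Checking clauses (a)--(c) of the definition of an $(\epsilon,\lambda)$-chain for these pairs is routine, using the composition identity $\{f\}^\gamma(f(z))=\{f\}^{1+\gamma}(z)$ exactly as it is already used in the proofs of Propositions \ref{incl_top_rel} and \ref{prop_N_C}.

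The second inclusion follows formally from the first: if $(x,y)\in\lambda\{\mathcal{D}\}$, then $(x,y)\in\eta\{\mathcal{B}\}$ for some limit ordinal $\eta\le\lambda$; applying the first inclusion with $\eta$ in place of $\lambda$ gives $(x,y)\in\eta\{\mathcal{F}\}$, hence $(x,y)\in\lambda\{\mathcal{G}\}$ by definition.

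The only genuinely delicate point is the ordinal bookkeeping in the finite case: prepending and absorbing the link of length $1$ forces the last ordinal to drop from $k$ to $k-1$, so feeding $\beta$ itself into $\lambda\{\mathcal{B}\}$ would only guarantee $\lambda_{n-1}\ge\beta$ and not the required strict inequality; using $\beta+1$ as the parameter repairs this at no cost, while in the transfinite case $1+\gamma=\gamma$ means there is no loss to repair. Everything else is a direct transcription of the finite argument.
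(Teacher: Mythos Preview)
Your proof is correct and follows the same approach as the paper: the paper simply cites Remark~\ref{etabeta} for the first inclusion and then deduces the second from the first exactly as you do. Your write-up is in fact more careful than the paper's, since you explicitly feed $\beta+1$ rather than $\beta$ into the definition of $\lambda\{\mathcal{B}\}$ to guarantee the \emph{strict} inequality $\lambda_{n-1}>\beta$ in the finite case (where the chain construction drops the last ordinal from $k$ to $k-1$); the paper leaves this small bookkeeping point implicit.
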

\begin{proof}
The first inclusion follows from Remark \ref{etabeta}.
Moreover, if $(x,y)\in\lambda\{\mathcal{D}\}$, then $(x,y)\in\beta\{\mathcal{B}\}$ for some $\beta\le\lambda$, so by the first inclusion  $(x,y)\in\beta\{\mathcal{F}\}$, which implies  $(x,y)\in\lambda\{\mathcal{G}\}$.
\end{proof}

Let us now prove that $\lambda\{\mathcal{F}\}$ shares some of the nice properties that $\mathcal{C}$ has for finite dynamical systems (Propositions \ref{clo_tra_} and \ref{lambdaF_inv}). Moreover, we will prove (Lemma \ref{lambdaf_prop}) that $\lambda\{\mathcal{F}\}$ also has some desirable additional properties.
\begin{prop}\label{clo_tra_}
The relation $\lambda\{\mathcal{F}\}$ is closed and transitive in $X\times X$.
\end{prop}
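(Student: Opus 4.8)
The plan is to prove transitivity and closedness separately, in both cases recycling the chain‑surgery already used for $\lambda\{\mathcal{C}\}$ in Proposition \ref{prop_N_C} and the last‑ordinal bookkeeping of Remark \ref{etabeta}.

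\emph{Transitivity.} Suppose $(x,y),(y,z)\in\lambda\{\mathcal{F}\}$ and fix $\epsilon>0$ and $\beta<\lambda$. Using the first relation I would take any $(\epsilon,\lambda)$-chain from $x$ to $y$, say $(\{x_0,\dots,x_n\},\{\lambda_0,\dots,\lambda_{n-1}\})$ with $x_0=x$, $x_n=y$; using the second relation I would take an $(\epsilon,\lambda)$-chain $(\{y_0,\dots,y_m\},\{\mu_0,\dots,\mu_{m-1}\})$ from $y$ to $z$ with the extra property $\beta<\mu_{m-1}<\lambda$. Since $x_n=y=y_0$, concatenating the two sequences at this common point yields $(\{x_0,\dots,x_n,y_1,\dots,y_m\},\{\lambda_0,\dots,\lambda_{n-1},\mu_0,\dots,\mu_{m-1}\})$, and all the clauses in the definition of an $(\epsilon,\lambda)$-chain — namely $0<\lambda_i<\lambda$, $\{f\}^{\lambda_i}(x_i)\neq\emptyset$, and the distance estimates — are inherited from the two pieces, while the last ordinal is $\mu_{m-1}>\beta$. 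As $\epsilon,\beta$ were arbitrary, $(x,z)\in\lambda\{\mathcal{F}\}$.

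\emph{Closedness.} Let $(x,y)\in\overline{\lambda\{\mathcal{F}\}}$ and fix $\epsilon>0$, $\beta<\lambda$. By uniform continuity of the (always continuous) limit map $f$ on the compact space $X$, choose $0<\delta<\epsilon/2$ with $d(u,v)<\delta\Rightarrow d(f(u),f(v))<\epsilon/2$, pick $(x',y')\in\lambda\{\mathcal{F}\}$ with $d(x,x')<\delta$ and $d(y,y')<\delta$, and take an $(\epsilon/2,\lambda)$-chain $(\{x_0,\dots,x_n\},\{\lambda_0,\dots,\lambda_{n-1}\})$ from $x'=x_0$ to $y'=x_n$ with $\beta+1<\lambda_{n-1}<\lambda$. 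I would then modify this chain only at its two ends. At the right end, replacing $x_n=y'$ by $y$ preserves admissibility since $d(\{f\}^{\lambda_{n-1}}(x_{n-1}),y)\le d(\{f\}^{\lambda_{n-1}}(x_{n-1}),y')+d(y',y)<\epsilon/2+\delta<\epsilon$, and it leaves the last ordinal untouched. At the left end, exactly as in Proposition \ref{prop_N_C}, I would prepend a single step $x\to f(x')$ using $\{f\}^{\lambda_0}(f(x'))=\{f\}^{1+\lambda_0}(x')$: when $\lambda_0\ge\omega$ this equals $\{f\}^{\lambda_0}(x')$ and one keeps the ordinal $\lambda_0$; when $2\le\lambda_0<\omega$ one uses the ordinal $\lambda_0-1$, since $\{f\}^{\lambda_0-1}(f(x'))=\{f\}^{\lambda_0}(x')$; and when $\lambda_0=1$ one simply deletes $x_0$ and connects $x$ directly to $x_1$ with a unit step, since $d(f(x),x_1)\le d(f(x),f(x'))+d(f(x'),x_1)<\epsilon$. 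In each subcase $d(f(x),f(x'))<\epsilon/2$ makes the new initial step(s) admissible, and one obtains an $(\epsilon,\lambda)$-chain from $x$ to $y$ whose last ordinal is $\lambda_{n-1}$ when $n\ge 2$ and is $\lambda_0$ or $\lambda_0-1$ when $n=1$; in every case it exceeds $\beta$. Hence $(x,y)\in\lambda\{\mathcal{F}\}$.

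The one point that requires care is the ordinal bookkeeping in the closedness argument: every surgery must be confined to the initial segment (plus the harmless relabelling of the final point), so that the large last ordinal supplied by the hypothesis survives; demanding $\lambda_{n-1}>\beta+1$ rather than merely $\lambda_{n-1}>\beta$ in the chosen chain is precisely what excludes the degenerate case $n=1,\ \lambda_0=1$ and guarantees that, even after subtracting $1$ from a finite $\lambda_0$, the last ordinal stays strictly above $\beta$. Everything else is routine.
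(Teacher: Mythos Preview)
Your proof is correct and follows essentially the same approach as the paper: transitivity by concatenation, closedness by the chain-surgery of Proposition~\ref{prop_N_C} applied to a nearby chain, preserving the large last ordinal. You are in fact slightly more careful than the paper's own write-up, which glosses over the edge cases $\lambda_0=1$ (where $\lambda_0-1=0$ is not an admissible step) and $n=1$ with $\lambda_0<\omega$ (where the last ordinal after surgery could drop to $\beta$); your device of requesting $\lambda_{n-1}>\beta+1$ and your explicit treatment of $\lambda_0=1$ close these gaps cleanly.
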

\begin{proof}
    The transitivity of $\lambda\{\mathcal{F}\}$ is trivial. Let $(x,y)\in \overline{\lambda\{\mathcal{F}\}}$, we want to show that $(x,y)\in \lambda\{\mathcal{F}\}$. Pick $\epsilon>0$ and $\beta<\lambda$.  
    Since $f$ is continuous in $X$, for $x\in X$ there exists $0<\delta<\epsilon/2$ such that $d(x,z)<\delta$, implies $d(f(x),f(z))<\epsilon/2$.
    By hypothesis, there exist $n\in\mathbb{N}$ and an $(\epsilon/2,\lambda)$-chain 
    $$
    (\{x_0,x_1,\ldots,x_n\}, \{\lambda_0,\ldots,\lambda_{n-1}\})
    $$ 
    from some point $x_0\in B_\delta(x)$ to some point $x_n\in B_\delta(y)$ such that $\beta<\lambda_{n-1}<\lambda$.
    
    If $\lambda_0\ge \omega$, using an analogous argument of the proof of Proposition \ref{prop_N_C} for the relation $\lambda\{\mathcal{C}\}$, we obtain that 
    $$
    (\{x,f(x_0), x_1,\ldots, x_{n-1},y\}, \{1,\lambda_0,\ldots,\lambda_{n-1}\})
    $$ 
    is an $(\epsilon,\lambda)$-chain from $x$ to $y$ with $\beta<\lambda_{n-1}<\lambda$. If $\lambda_0<\omega$ we have that $$(\{x,f(x_0),x_1,\ldots,x_{n-1},y\}, \{1,\lambda_0-1,\lambda_1,\ldots,\lambda_{n-1}\})$$ is an $(\epsilon,\lambda)$-chain from $x$ to $y$ with $\beta<\lambda_{n-1}<\lambda$.
    By the arbitrariness of $\epsilon$ and $\beta<\lambda$, we conclude that $(x,y)\in \lambda\{\mathcal{F}\}$. 
\end{proof}

\begin{lem}\label{lambdaf_prop}
    Let $V$ be a closed uniformly $\lambda$-inward set. Then $\lambda\{\mathcal{F}\}(V)= p\lambda(V)$.
\end{lem}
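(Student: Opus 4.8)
The plan is to prove the two inclusions $\lambda\{\mathcal{F}\}(V)\subseteq p\lambda(V)$ and $p\lambda(V)\subseteq\lambda\{\mathcal{F}\}(V)$ separately, exploiting that $V$ is closed and \emph{uniformly} $\lambda$-inward, which gives a uniform safety margin $\delta_0>0$ with $d\big(\{f\}^\beta(\overline V),\partial V\big)>\delta_0$ for all $\beta<\lambda$, and hence in particular $\{f\}^\beta(V)\subseteq V^\circ$ for every $\beta<\lambda$. The key consequence I would record first is that if $z$ is within distance $\delta_0$ of some point of $\{f\}^\beta(V)$ with $\beta<\lambda$, then $z\in V$; this is what lets $(\epsilon,\lambda)$-chains that start inside $V$ and use small $\epsilon<\delta_0$ never leave $V$. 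Indeed, for such a chain $(\{x_0,\dots,x_n\},\{\lambda_0,\dots,\lambda_{n-1}\})$ with $x_0\in V$, an induction on $i$ shows $x_i\in V$ for all $i$: given $x_i\in V$ (so $x_i\in\overline V$), $\{f\}^{\lambda_i}(x_i)\in\{f\}^{\lambda_i}(\overline V)$ lies at distance $>\delta_0$ from $\partial V$, hence in $V^\circ$, and $d(\{f\}^{\lambda_i}(x_i),x_{i+1})<\epsilon<\delta_0$ forces $x_{i+1}\in V$.

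For the inclusion $\lambda\{\mathcal{F}\}(V)\subseteq p\lambda(V)$: take $y$ with $x\,\lambda\{\mathcal{F}\}\,y$ for some $x\in V$. Fix a neighborhood $U(y)$ and an ordinal $\eta<\lambda$; I must produce $\beta$ with $\eta<\beta<\lambda$ and a point $w\in V$ with $\{f\}^\beta(w)\in U(y)$, which by Proposition \ref{th_plim_seq} (applied with a sequence of shrinking neighborhoods and increasing $\eta_j$) will give $y\in p\lambda(V)$. Choose $\epsilon>0$ small enough that $\epsilon<\delta_0$ and $B_\epsilon(y)\subseteq U(y)$. By definition of $\lambda\{\mathcal{F}\}$, there is an $(\epsilon,\lambda)$-chain from $x$ to $y$ whose last ordinal $\lambda_{n-1}$ satisfies $\eta<\lambda_{n-1}<\lambda$; by the induction in the previous paragraph, the penultimate point $x_{n-1}$ lies in $V$, and $d(\{f\}^{\lambda_{n-1}}(x_{n-1}),y)<\epsilon$, so $\{f\}^{\lambda_{n-1}}(x_{n-1})\in B_\epsilon(y)\subseteq U(y)$. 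Taking $w=x_{n-1}$ and $\beta=\lambda_{n-1}$ does the job. Letting $U(y)$ range over $B_{1/j}(y)$ and $\eta$ over a cofinal sequence $\eta_j\to\lambda$ produces the sequences required by Proposition \ref{th_plim_seq}, hence $y\in p\lambda(V)$.

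For the reverse inclusion $p\lambda(V)\subseteq\lambda\{\mathcal{F}\}(V)$: take $y\in p\lambda(V)$. By Proposition \ref{th_plim_seq} there are $y_j\in V$ and strictly increasing $\beta_j$ with $\sup_j\beta_j=\lambda$ and $\{f\}^{\beta_j}(y_j)\to y$. Fix $\epsilon>0$ and $\beta<\lambda$; choose $j$ large enough that $\beta_j>\beta$ and $d(\{f\}^{\beta_j}(y_j),y)<\epsilon$. Then the one-step chain $(\{y_j,y\},\{\beta_j\})$ is an $(\epsilon,\lambda)$-chain from $y_j\in V$ to $y$ with last ordinal $\beta_j$ satisfying $\beta<\beta_j<\lambda$; this is exactly what the definition of $\lambda\{\mathcal{F}\}$ demands, so $y_j\,\lambda\{\mathcal{F}\}\,y$ with $y_j\in V$, i.e. $y\in\lambda\{\mathcal{F}\}(V)$. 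Note this direction does not even use uniformity, only that $V\neq\emptyset$ and the $y_j$ can be taken in $V$.

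The only subtle point — and the one I would be most careful about — is the base-of-induction / first-step issue when the first ordinal $\lambda_0$ of a chain is finite: a priori $x_0\in V$ does not immediately give $\{f\}^{\lambda_0}(x_0)\in V^\circ$ with margin $\delta_0$ unless $\lambda_0<\lambda$, which it is; so the uniform-inwardness bound $d(\{f\}^{\lambda_0}(\overline V),\partial V)>\delta_0$ still applies for finite $\lambda_0<\lambda$ as well (here one uses that $\overline V\supseteq V$ and the definition of uniformly $\lambda$-inward quantifies over \emph{all} $\beta<\lambda$, finite or infinite). Thus the induction goes through uniformly. I expect no genuine obstacle beyond organizing these elementary steps and invoking Proposition \ref{th_plim_seq} correctly in both directions.
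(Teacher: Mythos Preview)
Your first inclusion $\lambda\{\mathcal{F}\}(V)\subseteq p\lambda(V)$ is fine and essentially matches the paper's argument: use the uniform margin $\delta_0$ to keep every intermediate point of a small-$\epsilon$ chain inside $V$, then read off the penultimate point as the witness for Proposition~\ref{th_plim_seq}.

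The reverse inclusion, however, has a genuine gap. You fix $\epsilon>0$ and $\beta<\lambda$, choose $j=j(\epsilon,\beta)$ large, and exhibit the one-step chain $(\{y_j,y\},\{\beta_j\})$; from this you conclude ``$y_j\,\lambda\{\mathcal{F}\}\,y$''. But $y_j$ depends on $(\epsilon,\beta)$, while the relation $x\,\lambda\{\mathcal{F}\}\,y$ is a universal statement over \emph{all} pairs $(\epsilon,\beta)$ for a \emph{fixed} $x$. What you have actually shown is only that for each $(\epsilon,\beta)$ there is \emph{some} point of $V$ that can be $(\epsilon,\lambda)$-chained to $y$ with last ordinal $>\beta$; this is strictly weaker than $y\in\lambda\{\mathcal{F}\}(V)$, which requires a single $x\in V$ working simultaneously for every $(\epsilon,\beta)$. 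Your remark that ``this direction does not even use uniformity'' is a symptom of the problem: the missing ingredient is precisely what pins down a single source point.

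The paper repairs this by passing to a convergent subsequence $y_{j_k}\to x$ (compactness of $X$), using that $V$ is \emph{closed} to get $x\in V$, and then showing $x\,\lambda\{\mathcal{F}\}\,y$ via a two-step chain $(\{x,f(y_{j_q}),y\},\{1,\beta_{j_q}\})$: continuity of $f$ controls the first jump, and when $\beta_{j_q}\ge\omega$ one uses $\{f\}^{\beta_{j_q}}(f(y_{j_q}))=\{f\}^{\beta_{j_q}}(y_{j_q})$ for the second. (A separate adjustment handles $\lambda=\omega$, where one takes last ordinal $\beta_{j_q}-1$.) Note that the limit point $x$ need not itself admit the iteration $\{f\}^{\beta_{j_q}}$, which is exactly why the one-step chain idea cannot be salvaged by simply ``taking $x=\lim y_{j_k}$'' without the detour through $f(y_{j_q})$.
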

\begin{proof}
    Pick $x\in V$ and let us assume that $x\,\lambda\{\mathcal{F}\}\,y$. We want to show that $y\in p\lambda(V)$. 
    Since $V$ is uniformly $\lambda$-inward, there exists $\delta>0$ such that $\inf_{\beta\le \lambda} d(\{f\}^\beta(\overline{V}),\partial V)>\delta$.
    Let $\{\epsilon_j\}_{j\in\mathbb{N}}$ be a strictly decreasing sequence of real numbers that converges to $0$, and such that $\epsilon_j<\delta$ for all $j\in\mathbb{N}$. 
    Let $\{\beta_j\}_{j\in\mathbb{N}}$ be a strictly increasing sequence of ordinals such that $\sup_{j\in\mathbb{N}}\beta_j=\lambda$.
    Since $x\, \lambda\{\mathcal{F}\}\, y$, for every $j\in\mathbb{N}$, there exists an $(\epsilon_j,\lambda)$-chain 
    $$
    (\{x_0,x_1,\ldots,x_{n_j}\},\{  \lambda_0,\ldots,\lambda_{n_j-1}\})
    $$ 
    from $x$ to $y$ with $\beta_j<\lambda_{n_j-1}<\lambda$.
    Since $\epsilon_j<\delta$, we have that $x_i\in V$ for all $i=0,\ldots,n_j-1$.
    Moreover, since the sequence of ordinals $\{\beta_j\}_{j\in\mathbb{N}}$ is strictly increasing, there exists a subsequence $\{j_k\}_{k\in\mathbb{N}}$ such that $\{\lambda_{n_{j_k}-1}\}_{k\in\mathbb{N}}$ is strictly increasing. 
    
    It follows that the sequence of points $\{x_{n_{j_k}-1}\}_{k\in\mathbb{N}}\subseteq V$ and $\{\lambda_{n_{j_k}-1}\}_{k\in\mathbb{N}}$ verify the assumptions of Proposition \ref{th_plim_seq}. 
    In particular, since $\{f\}^{\lambda_{n_{j_k}-1}}(x_{n_{j_k}-1})\in B_{\epsilon_{j_k}}(y)$ for all $k\in\mathbb{N}$, it follows that $\{f\}^{\lambda_{n_{j_k}-1}}(x_{n_{j_k}-1})\longrightarrow y$ as $k\to\infty$. 
    Then $y\in p\lambda(V)$. 
    
    Conversely, take $y\in p\lambda(V)$. 
    We want to show that there exists $x\in V$ such that $x\, \lambda\{\mathcal{F}\}\, y$.
    By Proposition \ref{th_plim_seq} there exist a sequence of points $\{y_j\}_{j\in\mathbb{N}}\subseteq V$ and a strictly increasing sequence of ordinals $\{\beta_j\}_{j\in\mathbb{N}}$ such that $\sup_{j\in\mathbb{N}}\beta_j=\lambda$ and $\{f\}^{\beta_j}(y_j)\longrightarrow y$ as $j\to\infty$. 
    Then, there exists a subsequence $\{y_{j_k}\}_{k\in\mathbb{N}}$ that converges to some $x\in X$, and since $V$ is closed we have that $x\in V$. Since $f$ is continuous, for every $\epsilon>0$, there exists $0<\delta_\epsilon<\epsilon$ such that $d(f(x),f(z))<\epsilon$ whenever $z\in B_{\delta_\epsilon}(x)$. Let us consider the following two cases:
     \begin{itemize}
         \item Suppose that $\lambda=\omega$. Pick $\epsilon>0$ and $\beta<\omega$.
         Let $q\in\mathbb{N}$ be so large that $y_{j_q}\in B_{\delta_\epsilon}(x)$, $\{f\}^{\beta_{j_q}}(y_{j_q})=f^{\beta_{j_q}}(y_{j_q})\in B_\epsilon(y)$ and $\beta<\beta_{j_q}-1<\omega$. Then the pair
         $$
         (\{x, f(y_{j_q}), y\}, \{1, \beta_{j_q}-1\})
         $$ 
         is an $(\epsilon,\omega)$-chain from $x$ to $y$. By the arbitrariness of $\epsilon$ and $\beta<\omega$, we have $x\,\omega\{\mathcal{F}\}\,y$;
         
         \item Suppose that $\lambda>\omega$. Pick $\epsilon>0$ and $\beta<\lambda$. 
         Let $q\in\mathbb{N}$ be so large that $y_{j_q}\in B_{\delta_\epsilon}(x)$, $\{f\}^{\beta_{j_q}}(y_{j_q})\in B_\epsilon(y)$ and $\beta<\beta_{j_q}<\lambda$. Then the pair
         $$
         (\{x, f(y_{j_q}), y\}, \{1, \beta_{j_q}\})
         $$ 
         is an $(\epsilon,\lambda)$-chain from $x$ to $y$. Indeed, since $\beta_{j_q}\ge \omega$, we have that 
         $$
         d(\{f\}^{\beta_{j_q}}(f(y_{j_q})),y)=d(\{f\}^{\beta_{j_q}}(y_{j_q}),y)<\epsilon.
         $$
         We conclude that $x\,\lambda\{\mathcal{F}\}\,y$.
     \end{itemize}
    
\end{proof}

\begin{rem}\label{__refin__}
    Notice that in the previous lemma, the inclusion $\lambda\{\mathcal{F}\}(V)\subseteq p\lambda(V)$ is valid without assuming that $V$ is a closed set.
    Note also that, when specialized to the case $\lambda=\omega$, the lemma is a slight refinement of the well-known inclusion, holding for closed $V$,
    $$\omega(V)\subseteq \mathcal{C}(V).$$
\end{rem}

\begin{prop}\label{lambdaF_inv}
    A uniform proper $\lambda$-attractor is $\lambda\{\mathcal{F}\}$-invariant. 
\end{prop}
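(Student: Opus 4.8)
The plan is to reduce the statement to the inclusion $\lambda\{\mathcal{F}\}(V)\subseteq p\lambda(V)$ recorded in Remark~\ref{__refin__}, which holds for \emph{any} uniformly $\lambda$-inward set $V$, closed or not. Write the given uniform proper $\lambda$-attractor as $A=p\lambda(V)$ with $V$ uniformly $\lambda$-inward, and fix $\delta>0$ with $\inf_{\beta<\lambda}d(\{f\}^{\beta}(\overline V),\partial V)>\delta$ (Definition~\ref{inward__}). The only substantive step is then to prove that $A$ sits inside the interior of its inward set, i.e. $A\subseteq V^{\circ}$. For this I would combine two elementary observations. First, plain inwardness gives $\{f\}^{\beta}(V)\subseteq\{f\}^{\beta}(\overline V)\subseteq V^{\circ}$ for every $\beta<\lambda$, so each set $\overline{\bigcup_{\eta<\beta<\lambda}\{f\}^{\beta}(V)}$ is contained in $\overline{V^{\circ}}\subseteq\overline V$, whence $A\subseteq\overline V$. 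Second, uniform inwardness gives $\{f\}^{\beta}(V)\subseteq\{x:d(x,\partial V)>\delta\}$ for every $\beta<\lambda$; since $\{x:d(x,\partial V)\ge\delta\}$ is closed, the closures above are contained in it, so $A\subseteq\{x:d(x,\partial V)\ge\delta\}$ and in particular $A\cap\partial V=\emptyset$. Combining the two, $A\subseteq\overline V\setminus\partial V=V^{\circ}\subseteq V$.

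With $A\subseteq V$ in hand the conclusion is immediate: if $x\in A$ and $x\,\lambda\{\mathcal{F}\}\,y$, then $x\in V$, so $y\in\lambda\{\mathcal{F}\}(V)\subseteq p\lambda(V)=A$ by Remark~\ref{__refin__}. Hence $A$ is $\lambda\{\mathcal{F}\}$-invariant, which is the claim.

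The main (mild) obstacle is this first step, and specifically making sure the argument genuinely uses \emph{uniform} inwardness and not just inwardness: bare inwardness only yields $A\subseteq\overline V$, which for an open $V$ need not be contained in $V$, and then one cannot feed points of $A$ into the inclusion $\lambda\{\mathcal{F}\}(V)\subseteq p\lambda(V)$. This is precisely where the hypothesis ``\emph{uniform} proper $\lambda$-attractor'' enters. I would also isolate the by-product $A\subseteq V^{\circ}$ as it parallels the classical fact that a finite attractor lies in the interior of its attractor block and is likely to be reused; the rest is routine bookkeeping with the definitions of $p\lambda(\cdot)$, of $\lambda\{\mathcal{F}\}$, and of $\lambda\{\mathcal{F}\}(\cdot)$ on subsets of $X$.
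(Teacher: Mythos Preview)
Your proof is correct and follows essentially the same route as the paper: the paper simply asserts $P\subseteq V$ and then applies the inclusion $\lambda\{\mathcal{F}\}(V)\subseteq p\lambda(V)$ from Lemma~\ref{lambdaf_prop} (via Remark~\ref{__refin__}) to conclude $\lambda\{\mathcal{F}\}(P)\subseteq\lambda\{\mathcal{F}\}(V)\subseteq p\lambda(V)=P$. Your contribution is to spell out the step $A\subseteq V^{\circ}$ using uniform inwardness, which the paper takes for granted (it is already used without proof in Theorem~\ref{themattractors}); this is a welcome clarification but not a different argument.
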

\begin{proof}
   Let $P=p\lambda(V)$ be a uniform proper $\lambda$-attractor. Since $P\subseteq V$, by Lemma \ref{lambdaf_prop}, it follows that $$\lambda\{\mathcal{F}\}(P)\subseteq \lambda\{\mathcal{F}\}(V)\subseteq p\lambda(V)=P.$$ 
\end{proof}
As we said, the relation $\lambda\{\mathcal{F}\}$ does not coincide with $\mathcal{C}$ when $\lambda=\omega$. In particular, one can have $\omega \{\mathcal{F}\}\subsetneq \mathcal{C}$. In a certain sense, however, $\omega \{\mathcal{F}\}$ is a more fundamental relation  than $\mathcal{C}$ for finite systems, because, just like the latter:
\begin{itemize}
    \item it is closed and transitive (by Proposition \ref{clo_tra_});
    \item  attractors are $\omega\{\mathcal{F}\}$-invariant (by Proposition \ref{lambdaF_inv}).
\end{itemize}  
Moreover it also verifies:
\begin{itemize}
    \item $\omega\{\mathcal{F}\}(V)=\omega(V)$ when $V$ is a (closed) inward set (by Lemma \ref{lambdaf_prop}). 
\end{itemize}

Of course, in the case $\lambda=\omega$, we cannot replace, in Lemma \ref{lambdaf_prop}, the relation $\omega\{\mathcal{F}\}$ by $\lambda\{\mathcal{C}\}$ because in general we have $$V\text{ is inward }\implies \mathcal{C}(V)\supsetneq  \omega(V).$$

Under some conditions, in finite systems, $x\,\mathcal{C}\,y$ implies $x\,\omega\{\mathcal{F}\}\,y$. For instance, this is true if $X$ is connected and every point in $X$ is chain recurrent with itself (this is an immediate consequence of Corollary 14 in \cite{richeson2008chain}).
In general, however, $\omega\{\mathcal{F}\}$ is stronger than $\mathcal{C}$. 
The following result concerns indeed the cases in which $(x,y)\in\mathcal{C}$ but $(x,y)\notin\omega\{\mathcal{F}\}$. It says that two points can belong to $C\setminus\omega\{\mathcal{F}\}$ only if they are in chain relation in quite a special sense: the $\epsilon$-chains connecting them can always be built without using the last $\epsilon$-correction, because with the last iterations of the map one can hit the target point. More precisely, we have the following 

\begin{prop}\label{omegaF_C}
    Let $(X,f)$ be a topological dynamical system. Assume that we have $(x,y)\in\mathcal{C}$
    and $(x,y)\notin \omega\{\mathcal{F}\}$. Then there exists a certain $\overline{k}\in\mathbb{N}$ such that, for every $\epsilon>0$, there is an $(\epsilon,\omega)$-chain 
    $$(\{x_0,\dots,x_n\},\{k_0,\ldots,k_{n-2},\overline{k}\})$$ from $x$ to $y$ such that
    \begin{equation}\label{k_barr_}
       f^{\overline{k}}(x_{n-1})=x_n=y. 
    \end{equation}
\end{prop}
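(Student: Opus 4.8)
The plan is to prove the statement with $\overline{k}=1$. The key observation is that $(x,y)\in\mathcal{C}$ already forces $y$ to be the image under $f$ of a point $p_*$ that is itself chain‑reachable from $x$; once we have such a $p_*$, appending the point $y$ to any $\epsilon$‑chain from $x$ to $p_*$ produces an $(\epsilon,\omega)$‑chain from $x$ to $y$ whose last jump $f^{1}(p_*)=y$ is exact. (In fact the hypothesis $(x,y)\notin\omega\{\mathcal{F}\}$ will not be needed: the conclusion holds for every $(x,y)\in\mathcal{C}$.)

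First I would dispose of the trivial case $f(x)=y$: then for every $\epsilon>0$ the pair $(\{x,y\},\{1\})$ is an $(\epsilon,\omega)$‑chain from $x$ to $y$ with $f^{1}(x)=y$, so $\overline{k}=1$ works. Assume henceforth $f(x)\neq y$, and put $\rho:=d(f(x),y)>0$. For each $m\in\mathbb{N}$ with $1/m<\rho$ use $(x,y)\in\mathcal{C}$ to fix an ordinary $(1/m)$‑chain $x=x_0^{m},\dots,x_{n_m}^{m}=y$; since $1/m<\rho$ the two‑point chain $\{x,y\}$ is not a $(1/m)$‑chain, so necessarily $n_m\ge 2$. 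Hence the penultimate point $p_m:=x_{n_m-1}^{m}$ satisfies $d(f(p_m),y)<1/m$ and is reached from $x$ by the sub‑chain $x_0^{m},\dots,x_{n_m-1}^{m}$ of length $n_m-1\ge 1$; writing $\Omega_\eta(x)$ for the set of points reachable from $x$ by an $\eta$‑chain of positive length, we thus have $p_m\in\Omega_{1/m}(x)$.

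By compactness of $X$, pass to a subsequence with $p_{m_j}\to p_*$. Continuity of $f$ gives $f(p_*)=\lim_j f(p_{m_j})=y$, so $p_*\in f^{-1}(y)$; in particular $p_*\neq x$ because $f(p_*)=y\neq f(x)$. Next I would check $(x,p_*)\in\mathcal{C}$: given $\epsilon>0$, choose $j$ with $1/m_j<\epsilon/2$ and $d(p_{m_j},p_*)<\epsilon/2$, and let $x=x_0,\dots,x_{L}=p_{m_j}$ ($L=n_{m_j}-1\ge 1$) be the corresponding $(1/m_j)$‑chain; then $x_0,\dots,x_{L-1},p_*$ is an $\epsilon$‑chain from $x$ to $p_*$, since $d(f(x_{L-1}),p_*)\le d(f(x_{L-1}),p_{m_j})+d(p_{m_j},p_*)<1/m_j+\epsilon/2<\epsilon$ and the earlier steps are $<1/m_j<\epsilon$. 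As $\epsilon$ was arbitrary, $(x,p_*)\in\mathcal{C}$.

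Finally, fix $\epsilon>0$ and take an $\epsilon$‑chain $x=x_0,\dots,x_{N}=p_*$ with $N\ge 1$ (possible since $(x,p_*)\in\mathcal{C}$ and $p_*\neq x$); appending the point $y$ and noting $d(f(p_*),y)=0<\epsilon$, the pair $(\{x_0,\dots,x_{N},y\},\{1,\dots,1,\overline{k}\})$ with $\overline{k}=1$ is an $(\epsilon,\omega)$‑chain from $x$ to $y$ in which $x_{N}=p_*$ is the penultimate point and $f^{\overline{k}}(x_{N})=f(p_*)=y$, which is exactly the required form \eqref{k_barr_}. The argument is essentially routine; the only points that need care are the bookkeeping for short chains (handled by splitting off $f(x)=y$ and using $1/m<\rho$ to guarantee $n_m\ge 2$) and the standard $\epsilon/2$‑perturbation used to pass from the approximate endpoints $p_{m_j}$ to the exact preimage $p_*$. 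There is no substantial obstacle.
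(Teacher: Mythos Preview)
Your proof is correct and follows essentially the same route as the paper: take chains of shrinking tolerance, use compactness to pass to a limit $p_*$ of the penultimate points, use continuity of $f$ to get $f(p_*)=y$, and then splice $p_*$ into the chain as the exact last step. The paper carries this out for general $(\epsilon_j,\omega)$-chains with bounded last jump (extracting a subsequence where the last jump is a constant $\overline{k}$), whereas you work directly with ordinary $\epsilon$-chains, which immediately gives $\overline{k}=1$ and makes the hypothesis $(x,y)\notin\omega\{\mathcal{F}\}$ visibly unnecessary; the paper is in fact aware of this, remarking that its assumed condition ``does not exclude $x\,\omega\{\mathcal{F}\}\,y$''. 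Your version is a clean specialization of the same argument, with the added payoff that it pins down $\overline{k}=1$ uniformly.
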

\begin{proof}
In fact, we will prove a stronger result. 
Let $\{\epsilon_j\}_{j\in\mathbb{N}}$ be a strictly decreasing sequence of positive reals converging to zero. Let be $(x,y)\in \mathcal{C}$. We have as well $(x,y)\in\omega\{\mathcal{C}\}$. Notice that, if $f^m(y)=y$ for some $m\ge 1$, then clearly $(x,y)\in\omega\{\mathcal{F}\}$, so we can assume that $y$ is not periodic. Suppose that  there exists $k\in\mathbb{N}$ such that, for every $j\in \mathbb{N}$, there is an $(\epsilon_j,\omega)$-chain $$(\{x_0^j,\dots,x_n^j\},\{k^j_0,\dots,k^j_{n-1}\})$$ between $x$ and $y$ such that $k_{n-1}^j<k$. Notice that this does not exclude $x\,\omega\{\mathcal{F}\}\,y$, it is simply a necessary condition for having $(x,y)\notin\omega\{\mathcal{F}\}$.

Set $u^j:=f^{k^j_{n-1}}(x^j_{n-1})$ and $w^j:=f^{k^{j}_{n-2}}(x^{j}_{n-2})$. There is a positive integer $\overline{k}\le k$ such that $k_{n-1}^j=\overline{k}$ along a subsequence $\{j_m\}_{m\in\mathbb{N}}$. By assumption, $u^{j_m}\to y$ when $m\to\infty$. The sequence $\{x_{n-1}^{j_m}\}_{m\in\mathbb{N}}$ converges, up to a subsequence $\{j_{m_p}\}_{p\in\mathbb{N}}$, to some $x_\infty$, and since $\epsilon^j\to 0$, by triangle inequality we have as well $w^{j_{m_p}}\to x_\infty$ when $p\to\infty$. To avoid too heavy a notation, we simply indicate by $j$ the generic term of the latter subsequence.  

Pick $\epsilon>0$ and take $j$ so large that $\epsilon_j<\epsilon$ and $d(u^j,x_\infty)<\epsilon$. In these assumptions, we have that $$(\{x^j_0,\dots,x^j_{n-2}, x_{\infty},x^j_n\},\{k_0^j,\dots,k_{n-2}^j,\overline{k}\})$$ is an $(\epsilon,\omega)$-chain from $x$ to $y$ because, by continuity of the map $f^{\overline{k}}$, we have $f^{\overline{k}}(x_\infty)=y$. Since Eq. \eqref{k_barr_} is verified and $\overline{k}$ is independent of $\epsilon$, we are done.
\end{proof}

\begin{rem}\label{anomaly}
The previous result sheds some light on what might be regarded as an ``anomaly" of finite systems. Indeed, in finite systems, the attractors are not only $\omega\{\mathcal{F}\}$-invariant (just as $\lambda$-attractors are $\lambda\{\mathcal{F}\}$-invariant), but also $\mathcal{C}$-invariant. On the other hand, $\lambda$-attractors are not, in general, $\lambda\{\mathcal{C}\}$-invariant for $\lambda>\omega$. 
Notice however that, when $Y\subseteq X$ is an attractor, if $x\in Y$ and $x\,\mathcal{C}\,y$, there are, according to Proposition \ref{omegaF_C}, two possibilities:
\begin{enumerate}
    \item the $(\epsilon,\omega)$-chains are forced to reach $y$ with iterations of unbounded order (for $\epsilon\to 0)$, which means that they are in fact in $\omega\{\mathcal{F}\}$ relation;
    \item the $(\epsilon,\omega)$-chains hit the point $y$ without needing the last $\epsilon$-correction, so that in fact $y$ is in orbit relation with some point connected to $x$ by $(\epsilon,\omega)$-chains of the type above (or with $x$ itself), and therefore $y$ belongs to $Y$ because of $f$-invariance of attractors. Notice that this works because, in finite systems, an attractor $A$ is $\omega$-invariant (as of course $f^k(A)\subseteq A$ for every $k<\omega$), that is it is invariant \emph{at the same level} at which it is attractive.
\end{enumerate} 
\end{rem} 

We are now ready to prove one of the two generalizations of Akin's Theorem  which we give for transfinite systems. This version holds for uniform extended $\lambda$-attractors and needs some additional conditions. To get a version for proper attractors (Theorem \ref{akin_perf_}), which in a sense is more relevant, we will need to refine further our concept of transfinite attractor.
\begin{thm}\label{__akin_}
    Let $E:=e\lambda(V)$ be a uniform extended $\lambda$-attractor. Then $E$ is $\lambda\{\mathcal{G}\}$-invariant. Moreover, if there exists a countable ordinal $\gamma$ such that $\gamma\cdot \omega\le\lambda$ and $E\subseteq \{f\}^\gamma(E)$, we have 
    \begin{equation*}
    E\subseteq\lambda\{\mathcal{D}\}(E\cap |\lambda\{\mathcal{D}\}|)\quad ,\quad E\supseteq\lambda\{\mathcal{G}\}(E\cap |\lambda\{\mathcal{G}\}|).
    \end{equation*}
\end{thm}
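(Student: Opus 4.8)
The plan is threefold: derive the $\lambda\{\mathcal{G}\}$-invariance of $E$ from Lemma~\ref{lambdaf_prop} applied at each limit level $\le\lambda$; deduce the inclusion $E\supseteq\lambda\{\mathcal{G}\}(E\cap|\lambda\{\mathcal{G}\}|)$ at once from that invariance; and obtain $E\subseteq\lambda\{\mathcal{D}\}(E\cap|\lambda\{\mathcal{D}\}|)$ by a backward--orbit construction fuelled by the hypothesis $E\subseteq\{f\}^{\gamma}(E)$. First I would make a harmless reduction and collect preliminaries. Replace $V$ by $\overline V$: it is again uniformly $\lambda$-inward (its boundary sits inside $\partial V$, and $\{f\}^{\beta}(\overline V)\subseteq V^{\circ}\subseteq(\overline V)^{\circ}$ for $\beta<\lambda$), and since for every $\beta\ge\omega$ one has $\{f\}^{\beta}(\overline V)=\{f\}^{\beta}(f(\overline V))$ with $f(\overline V)=\{f\}^{1}(\overline V)\subseteq V^{\circ}\subseteq V$, Proposition~\ref{th_plim_seq} gives $p\beta(\overline V)=p\beta(V)$ for every limit $\beta\le\lambda$, whence $e\lambda(\overline V)=e\lambda(V)=E$. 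So we may assume $V$ is closed. Then $V$ is closed and uniformly $\beta$-inward for every limit $\beta\le\lambda$, so Lemma~\ref{lambdaf_prop} is available at each such level; moreover $E$ is closed (Proposition~\ref{closed_attr}), $E=\bigcup_{\beta\le\lambda,\ \beta\text{ limit}}p\beta(V)$, and $E\subseteq V$.

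\emph{Invariance, and the inclusion $E\supseteq\lambda\{\mathcal{G}\}(E\cap|\lambda\{\mathcal{G}\}|)$.} Let $x\in E$ with $x\,\lambda\{\mathcal{G}\}\,y$. By the decomposition there is a limit $\beta_{0}\le\lambda$ with $x\in p\beta_{0}(V)\subseteq V$, and by definition of $\lambda\{\mathcal{G}\}$ there is a limit $\beta_{1}\le\lambda$ with $x\,\beta_{1}\{\mathcal{F}\}\,y$. Since $x\in V$, Lemma~\ref{lambdaf_prop} at level $\beta_{1}$ gives $y\in\beta_{1}\{\mathcal{F}\}(V)=p\beta_{1}(V)\subseteq E$. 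Hence $\lambda\{\mathcal{G}\}(E)\subseteq E$, and in particular $\lambda\{\mathcal{G}\}(E\cap|\lambda\{\mathcal{G}\}|)\subseteq\lambda\{\mathcal{G}\}(E)\subseteq E$. Note this part uses neither $\gamma$ nor the condition $E\subseteq\{f\}^{\gamma}(E)$.

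\emph{The inclusion $E\subseteq\lambda\{\mathcal{D}\}(E\cap|\lambda\{\mathcal{D}\}|)$.} We may assume $\gamma\ge 1$, so $\alpha:=\gamma\cdot\omega$ is a limit ordinal with $\alpha\le\lambda$ and every $\beta<\alpha$ satisfies $\beta<\gamma\cdot k$ for some $k\in\mathbb{N}$. Fix $x\in E$. Using $E\subseteq\{f\}^{\gamma}(E)$, choose inductively $x_{0}=x$ and $x_{k+1}\in E$ with $\{f\}^{\gamma}(x_{k+1})=x_{k}$; the composition rule \eqref{composition1}--\eqref{composition2} then yields $\{f\}^{\gamma\cdot(a-b)}(x_{a})=x_{b}$ whenever $a>b\ge 0$, in particular $\{f\}^{\gamma\cdot k}(x_{k})=x$. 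By compactness of $E$ pass to a subsequence $x_{k_{i}}\to w\in E$. I claim $w\in E\cap|\lambda\{\mathcal{D}\}|$ and $w\,\lambda\{\mathcal{D}\}\,x$, which concludes. For $w\,\alpha\{\mathcal{B}\}\,x$: given neighborhoods $W(w),U(x)$ and $\beta<\alpha$, pick $k$ with $\beta<\gamma\cdot k$, then $i$ large so that $x_{k_{i}}\in W(w)$ and $k_{i}>k$; then $z:=x_{k_{i}}$ and $\delta:=\gamma\cdot k_{i}$ satisfy $\beta<\delta<\alpha$ and $\{f\}^{\delta}(z)=x\in U(x)$. For $w\,\alpha\{\mathcal{B}\}\,w$: given a neighborhood $N(w)$ and $\beta<\alpha$, pick $k$ with $\beta<\gamma\cdot k$, then $i'$ with $x_{k_{i'}}\in N(w)$, then $i>i'$ with $x_{k_{i}}\in N(w)$ and $k_{i}-k_{i'}>k$; then $z:=x_{k_{i}}$ and $\delta:=\gamma\cdot(k_{i}-k_{i'})$ satisfy $\beta<\delta<\alpha$ and $\{f\}^{\delta}(z)=x_{k_{i'}}\in N(w)$. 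Since $\alpha\le\lambda$ is a limit ordinal, $w\,\alpha\{\mathcal{B}\}\,x$ gives $w\,\lambda\{\mathcal{D}\}\,x$, and $w\,\alpha\{\mathcal{B}\}\,w$ gives $w\in|\lambda\{\mathcal{D}\}|$; together with $w\in E$ this yields $x\in\lambda\{\mathcal{D}\}(E\cap|\lambda\{\mathcal{D}\}|)$.

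The part demanding real care is the first inclusion, and the role of the hypothesis is precisely to make it go through: above $\omega$ one cannot run Akin's classical ``backward, then $\alpha$-limit'' argument with single iterates, so one iterates $\{f\}^{\gamma}$ backwards $k$ times instead, and the resulting exponents $\gamma\cdot k$ must form a cofinal sequence in a limit ordinal $\le\lambda$ to be admissible as a $\lambda\{\mathcal{D}\}$-witness --- this is exactly what $\gamma\cdot\omega\le\lambda$ buys. The only other non-routine point is the preliminary identity $p\beta(\overline V)=p\beta(V)$ at limit levels, which would fail if one pulled closures naively through the (generally discontinuous) maps $\{f\}^{\beta}$ and is rescued by $\{f\}^{\beta}=\{f\}^{\beta}\circ\{f\}^{1}$ for $\beta\ge\omega$ together with $f(\overline V)\subseteq V^{\circ}$. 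Everything else is bookkeeping with the composition law and the sequential description of $p\lambda$ in Proposition~\ref{th_plim_seq}.
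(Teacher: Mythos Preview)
Your proof is correct and follows essentially the same route as the paper's: Lemma~\ref{lambdaf_prop} at each limit level $\le\lambda$ for the $\lambda\{\mathcal{G}\}$-invariance (hence the right inclusion), and a backward $\{f\}^{\gamma}$-orbit plus compactness to produce a $(\gamma\cdot\omega)\{\mathcal{B}\}$-witness for the left inclusion. The only cosmetic difference is that you make the passage to a closed $V$ explicit via $\{f\}^{\beta}(\overline V)=\{f\}^{\beta}(f(\overline V))\subseteq\{f\}^{\beta}(V)$ for $\beta\ge\omega$, whereas the paper handles this upstream in Remark~\ref{rem_Vclosed}.
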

\begin{proof}
     Let us prove that $\lambda\{\mathcal{G}\}(E)\subseteq E$. Let $x\in E$ be such that $x \,\lambda\{\mathcal{G}\}\, y$. This means that there exists $\beta_0\le \lambda$ limit ordinal such that $x \,\beta_0\{\mathcal{F}\}\, y$. By Lemma \ref{lambdaf_prop}, since $x\in E\subseteq V$, it follows that $y\in p\beta_0(V)\subseteq E$. Since $E$ is $\lambda\{\mathcal{G}\}$-invariant, it follows that 
    \[
    \lambda\{\mathcal{G}\}(E\cap |\lambda\{\mathcal{G}\}|)\subseteq \lambda\{\mathcal{G}\}(E)\subseteq E.
    \]
    It remains to prove the other inclusion.
    Take $y\in E$ and assume that there exists $\gamma<\omega_1$ such that $\gamma\cdot \omega\le\lambda$ and $E\subseteq \{f\}^\gamma(E)$. 
    Therefore, we construct a sequence $\{y_n\}_{n\in\mathbb{N}_0}$ of points of $E$ such that 
    \[
    y_0=y \quad \text{and}\quad \{f\}^{\gamma}(y_n)=y_{n-1}\qquad \forall n\in\mathbb{N}.
    \]
    Then, there exists a subsequence $\{y_{n_k}\}_{k\in\mathbb{N}}$ converging to some $x\in X$. Since $E$ is closed, we have that $x\in E$. 
    We want to show that $x \,\lambda\{\mathcal{D}\}\, y$ and $x\in |\lambda\{\mathcal{D}\}|$. 
    
    Pick $\epsilon>0$ and $\beta<\gamma\cdot \omega$. 
    Let $k_1,k_2\in\mathbb{N}$ be sufficiently large that $y_{n_{k_1}},y_{n_{k_2}}\in B_\epsilon(x)$ and $\gamma\cdot (n_{k_2}-n_{k_1})>\beta$. Since
    \[
    \{f\}^{\gamma\cdot (n_{k_2}-n_{k_1})}(y_{n_{k_2}})=y_{n_{k_1}}\in B_\epsilon(x),
    \]
    it follows that $x\in |(\gamma\cdot\omega)\{\mathcal{B}\}|$, which implies that $x\in |\lambda\{\mathcal{D}\}|$.

    Pick $\epsilon>0$ and $\beta<\gamma\cdot \omega$, then there exists $k_0>0$ sufficiently large that $y_{n_{k_0}}\in B_\epsilon(x)$ and $\gamma\cdot n_{k_0}>\beta$. 
    Since $\{f\}^{\gamma\cdot n_{k_0}}(y_{n_{k_0}})=y$, it follows that $x\,\lambda\{\mathcal{D}\}\, y$.
\end{proof}
 Since $\lambda\{\mathcal{D}\}\subseteq \lambda\{\mathcal{F}\}$, the following result immediately follows from Theorem \ref{__akin_}:
\begin{thm*}\label{cor_akin_}
In the same assumptions of Theorem \ref{__akin_}, we have: $$E=\lambda\{\mathcal{G}\}(E\cap |\lambda\{\mathcal{G}\}|),$$
that is an extended uniform $\lambda$-attractor is determined by its $\lambda\{\mathcal{G}\}$-recurrent points.
\qed   
\end{thm*}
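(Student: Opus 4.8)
The plan is to deduce the claimed equality as a purely formal corollary of Theorem~\ref{__akin_}, using only the monotonicity of the operator $\mathcal{A}\mapsto\mathcal{A}(S)$ in both of its arguments. Recall that, by Theorem~\ref{__akin_}, the uniform extended $\lambda$-attractor $E$ is $\lambda\{\mathcal{G}\}$-invariant, which already gives $\lambda\{\mathcal{G}\}(E\cap|\lambda\{\mathcal{G}\}|)\subseteq\lambda\{\mathcal{G}\}(E)\subseteq E$. This is one of the two inclusions making up the asserted equality, so the remaining work is confined to establishing $E\subseteq\lambda\{\mathcal{G}\}(E\cap|\lambda\{\mathcal{G}\}|)$.

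For that inclusion I would start from the first inclusion supplied by Theorem~\ref{__akin_}, namely $E\subseteq\lambda\{\mathcal{D}\}(E\cap|\lambda\{\mathcal{D}\}|)$, and then compare its right-hand side with $\lambda\{\mathcal{G}\}(E\cap|\lambda\{\mathcal{G}\}|)$. Two elementary facts about the relation-image operator suffice: first, if $\mathcal{A}_1\subseteq\mathcal{A}_2\subseteq X^2$, then $\mathcal{A}_1(S)\subseteq\mathcal{A}_2(S)$ for every $S\subseteq X$ and $|\mathcal{A}_1|\subseteq|\mathcal{A}_2|$; second, if $S_1\subseteq S_2$, then $\mathcal{A}(S_1)\subseteq\mathcal{A}(S_2)$. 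By Proposition~\ref{inclusions_DGBF} we have $\lambda\{\mathcal{D}\}\subseteq\lambda\{\mathcal{G}\}$, hence $|\lambda\{\mathcal{D}\}|\subseteq|\lambda\{\mathcal{G}\}|$ and therefore $E\cap|\lambda\{\mathcal{D}\}|\subseteq E\cap|\lambda\{\mathcal{G}\}|$. Chaining the two monotonicity facts then yields
\[
\lambda\{\mathcal{D}\}(E\cap|\lambda\{\mathcal{D}\}|)\subseteq\lambda\{\mathcal{G}\}(E\cap|\lambda\{\mathcal{D}\}|)\subseteq\lambda\{\mathcal{G}\}(E\cap|\lambda\{\mathcal{G}\}|),
\]
whence $E\subseteq\lambda\{\mathcal{G}\}(E\cap|\lambda\{\mathcal{G}\}|)$, as desired.

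Putting the two inclusions together produces the chain $E\subseteq\lambda\{\mathcal{D}\}(E\cap|\lambda\{\mathcal{D}\}|)\subseteq\lambda\{\mathcal{G}\}(E\cap|\lambda\{\mathcal{G}\}|)\subseteq E$, which collapses to the equality $E=\lambda\{\mathcal{G}\}(E\cap|\lambda\{\mathcal{G}\}|)$ (and incidentally shows that every intermediate inclusion is an equality). There is essentially no obstacle here: the statement is a formal consequence of Theorem~\ref{__akin_}. The only points that deserve a moment's care are (i) invoking from Proposition~\ref{inclusions_DGBF} the correct inclusion $\lambda\{\mathcal{D}\}\subseteq\lambda\{\mathcal{G}\}$ (rather than $\lambda\{\mathcal{B}\}\subseteq\lambda\{\mathcal{F}\}$), and (ii) keeping the hypotheses of Theorem~\ref{__akin_}, in particular $\gamma\cdot\omega\le\lambda$ and $E\subseteq\{f\}^\gamma(E)$, since these are precisely what is needed for the inclusion $E\subseteq\lambda\{\mathcal{D}\}(E\cap|\lambda\{\mathcal{D}\}|)$ used above.
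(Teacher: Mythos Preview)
Your argument is correct and matches the paper's approach: the paper simply states that the result ``immediately follows from Theorem~\ref{__akin_}'' via the inclusion between the $\lambda\{\mathcal{D}\}$ and $\lambda\{\mathcal{G}\}$ relations, and you have made explicit the monotonicity chain that realizes this. Your care in invoking $\lambda\{\mathcal{D}\}\subseteq\lambda\{\mathcal{G}\}$ from Proposition~\ref{inclusions_DGBF} is exactly right (the paper's one-line justification writes ``$\lambda\{\mathcal{D}\}\subseteq\lambda\{\mathcal{F}\}$'', which appears to be a slip for $\lambda\{\mathcal{G}\}$, since $\beta\{\mathcal{F}\}$ for $\beta<\lambda$ need not imply $\lambda\{\mathcal{F}\}$).
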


Notice that, for a uniform proper $\lambda$-attractor $P$, a certain form of the previous Theorem holds. Indeed, if there exists a countable ordinal $\gamma$ such that $\gamma\cdot \omega\le\lambda$ and $P\subseteq \{f\}^\gamma(P)$, by Lemma \ref{lambdaf_prop} and an argument identical to that of the proof of Theorem \ref{__akin_}, we have that $P$ is $\lambda\{\mathcal{F}\}$-invariant, and 
    \begin{equation*}
    P\subseteq (\gamma\cdot\omega)\{\mathcal{B}\}(P\cap |(\gamma\cdot\omega)\{\mathcal{B}\}|)\quad ,\quad P\supseteq\lambda\{\mathcal{F}\}(P\cap |\lambda\{\mathcal{F}\}|).
    \end{equation*}
    This version is not very satisfactory, because a proper attractor is meaningful at its specific transfinite level, while the two inclusions above concern in general dynamical relations at two distinct ordinal levels: there is no suitable ``time-scale" at which the transfinite attractor is exactly determined by its recurrent points (whatever version of recurrence we use), so that the main feature of the classical Theorem by Akin is lost.

\begin{defi}
    For any $S\subseteq X$ and $\delta>0$ we set 
    $$\lambda\{\mathcal{C}\}_\delta(S):=\{ x\in X : \text{ there exists a $(\delta, \lambda)$-chain from a certain $y\in S$ to $x$} \}.$$
\end{defi}

\begin{defi}
    We say that a proper $\lambda$-attractor $P$ is \emph{minimal} if none of its proper subsets $Y\subsetneq P$ is a proper $\lambda$-attractor.
\end{defi}
The following Proposition generalizes to transfinite systems a classical theorem holding for finite systems. In that case, it can be simply stated as follows (see for instance \cite{kurka2003topological}, Proposition 2.69): 
\begin{fact}
Let $(X,f)$ be a topological dynamical system and $Y\subseteq X$ an attractor. Then $Y$ is chain transitive if and only if it is a minimal attractor.   
\end{fact}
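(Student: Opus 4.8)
The plan is to prove the transfinite analogue of the classical fact that an attractor is chain transitive iff it is minimal, phrasing it as: a proper $\lambda$-attractor $P$ is $\lambda\{\mathcal{F}\}$-transitive (i.e. $x\,\lambda\{\mathcal{F}\}\,y$ for all $x,y\in P$) if and only if $P$ is a minimal proper $\lambda$-attractor. The natural tool is Lemma \ref{lambdaf_prop}, which identifies $\lambda\{\mathcal{F}\}(V)$ with $p\lambda(V)$ for closed uniformly $\lambda$-inward $V$, together with the $\lambda\{\mathcal{F}\}$-invariance of uniform proper $\lambda$-attractors (Proposition \ref{lambdaF_inv}) and the closedness and transitivity of $\lambda\{\mathcal{F}\}$ (Proposition \ref{clo_tra_}).

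First I would prove the ``chain transitive $\Rightarrow$ minimal'' direction. Suppose $P=p\lambda(V)$ is $\lambda\{\mathcal{F}\}$-transitive and let $Y\subsetneq P$ be a proper $\lambda$-attractor, say $Y=p\lambda(W)$ for some ($\lambda$-closed, hence by Proposition \ref{inward_} if $\lambda<\omega^\omega$, or by hypothesis, uniformly) $\lambda$-inward $W$. Pick $y\in Y$ and $x\in P\setminus Y$. By chain transitivity of $P$ we have $y\,\lambda\{\mathcal{F}\}\,x$; but since $Y$ is a uniform proper $\lambda$-attractor it is $\lambda\{\mathcal{F}\}$-invariant by Proposition \ref{lambdaF_inv}, forcing $x\in Y$, a contradiction. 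Hence $P$ has no proper sub-attractor and is minimal. The only subtlety here is to make sure the sub-attractor $Y$ is of the uniform type so that Proposition \ref{lambdaF_inv} applies; I would either restrict the statement to uniform proper $\lambda$-attractors throughout (which is the natural hypothesis, cf. Theorem \ref{themattractors}) or invoke Proposition \ref{inward_} when $\lambda<\omega^\omega$.

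For the converse, ``minimal $\Rightarrow$ chain transitive'', I would argue contrapositively. Suppose $P=p\lambda(V)$ is not $\lambda\{\mathcal{F}\}$-transitive, so there are $x,y\in P$ with $(x,y)\notin\lambda\{\mathcal{F}\}$. Consider the set $P':=\lambda\{\mathcal{F}\}(P\cap|\lambda\{\mathcal{F}\}|)$, or more simply the $\lambda\{\mathcal{F}\}$-image $\lambda\{\mathcal{F}\}(x)\cap P$; I would show this is a closed (using Proposition \ref{clo_tra_}, $\lambda\{\mathcal{F}\}$ being closed in $X\times X$, so its sections composed with the closed set $P$ are closed), $\lambda$-invariant proper subset of $P$. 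The key point to establish is that it is again a proper $\lambda$-attractor: here I would build an inward set for it, mimicking the classical construction of an attractor block around $\lambda\{\mathcal{C}\}_\delta(P')$ — using the sets $\lambda\{\mathcal{C}\}_\delta(S)$ introduced in the definition just above — and take the $\lambda$-limit of a sufficiently small $\delta$-thickening, invoking Lemma \ref{lambdaf_prop} to identify the $\lambda$-limit with the $\lambda\{\mathcal{F}\}$-image. That it is a proper subset follows from $(x,y)\notin\lambda\{\mathcal{F}\}$ ensuring $y\notin\lambda\{\mathcal{F}\}(x)$.

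I expect the main obstacle to be precisely this last step: constructing, from the failure of chain transitivity, a genuinely \emph{uniformly} $\lambda$-inward set whose proper $\lambda$-limit is the proper sub-attractor, so that the resulting object qualifies as a proper $\lambda$-attractor in the sense of Definition \ref{defi_attractors} (not merely an invariant closed set). In the finite case one uses that for an attractor $A$ and small $\delta$, the set $\mathcal{C}_\delta(A')$ is an attractor block; transfinitely one must check that the $\delta$-chains of length $<\lambda$ do not ``leak'' out of the inward set $V$, which is exactly where uniform inwardness of $V$ (the strict inequality $\inf_{\beta<\lambda}d(\{f\}^\beta(\overline V),\partial V)>\delta$) is needed — and where, for $\lambda\ge\omega^\omega$, one genuinely cannot dispense with the uniformity hypothesis. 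Everything else (closedness, $\lambda$-invariance, the contradiction arguments) is routine given the cited propositions.
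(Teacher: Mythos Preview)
Your first direction ($\lambda\{\mathcal{F}\}$-transitive $\Rightarrow$ minimal) is exactly the paper's argument: assume a proper sub-$\lambda$-attractor $P'\subsetneq P$, take $y\in P'$, $x\in P\setminus P'$, and use $\lambda\{\mathcal{F}\}$-invariance of $P'$ (Proposition~\ref{lambdaF_inv}) to derive a contradiction from $y\,\lambda\{\mathcal{F}\}\,x$.

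The second direction, however, has a genuine gap, and this is precisely why the paper's Proposition is \emph{asymmetric}: it proves only the weaker conclusion that minimality implies $\lambda\{\mathcal{C}\}$-transitivity, not $\lambda\{\mathcal{F}\}$-transitivity. The paper stresses explicitly that ``the dynamical relation appearing in the two implications is not the same''. The obstruction is exactly the one you flag but do not resolve: if $(x,y)\notin\lambda\{\mathcal{F}\}$, all you know is that for some $\epsilon_0>0$ and some $\beta_0<\lambda$, every $(\epsilon_0,\lambda)$-chain from $x$ to $y$ has final iteration $\le\beta_0$. Chains from $x$ to $y$ may very well still exist, so the set $\lambda\{\mathcal{C}\}_\delta(x)$ (or $\lambda\{\mathcal{C}\}_\delta(P')$ with $P'=\lambda\{\mathcal{F}\}(x)\cap P$) will in general contain $y$, and you cannot extract a $\lambda$-inward set that misses $y$. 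By contrast, failure of $\lambda\{\mathcal{C}\}$ gives a concrete $\delta_0$ with \emph{no} $(\delta_0,\lambda)$-chain at all from $x_0$ to $x_1$; this is the ingredient the paper uses to build the separating inward set.

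Even once one accepts the weaker target $\lambda\{\mathcal{C}\}$, there is a transfinite subtlety your sketch does not address: unlike the finite case, the set $\lambda\{\mathcal{C}\}_\delta(Y)$ need not be $\lambda$-inward, because points on its boundary may admit $\{f\}^\beta$-iterations for $\beta\ge\omega$ that escape (there is no continuity argument available, since nearby points need not lie in $X^{\beta+1}$). The paper handles this by excising a neighbourhood of the boundary, defining $S_\delta(Y):=\lambda\{\mathcal{C}\}_\delta(Y)\setminus\overline{B_\epsilon(\partial\lambda\{\mathcal{C}\}_\delta(Y))}$ for $\epsilon<\delta/2$, and checking directly that this trimmed set is $\lambda$-inward; then $V':=\overline{S_\delta(x_0)}\subseteq V$ (using $\delta<\bar\delta$ from the uniform inwardness of $V$) yields the sub-attractor $p\lambda(V')\subsetneq P$.
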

In case of transfinite systems, the dynamical relation appearing in the two implications is not the same, but of course both have the standard chain relation as a particular case at level $\omega$. 
\begin{prop}
    Let $P$ be a uniform proper $\lambda$-attractor. The following hold:
    \begin{enumerate}
        \item If $P$ is $\lambda\{\mathcal{F}\}$-transitive, then $P$ is minimal;
        \item If $P$ is minimal, then $P$ is $\lambda\{\mathcal{C}\}$-transitive.
    \end{enumerate}  
\end{prop}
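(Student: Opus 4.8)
The proof follows the classical template relating chain transitivity and minimality of a finite attractor (the \textbf{Fact} recalled just before the statement), but now the two implications genuinely go in opposite directions because $\lambda\{\mathcal{F}\}\subseteq\lambda\{\mathcal{C}\}$ is a proper inclusion in general. Fix a uniformly $\lambda$-inward set $V$ with $P=p\lambda(V)$; we may assume $V$ closed (passing from $V$ to $\overline V$ preserves uniform $\lambda$-inwardness, with a possibly smaller margin $\delta$, and leaves the proper $\lambda$-limit unchanged). Then $P\subseteq V^\circ$, $\lambda\{\mathcal{F}\}(V)=p\lambda(V)=P$ by Lemma~\ref{lambdaf_prop}, and $\lambda\{\mathcal{F}\}(P)\subseteq P$ by Proposition~\ref{lambdaF_inv}. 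I read ``$P$ is $\lambda\{\mathcal{A}\}$-transitive'' as $(x,y)\in\lambda\{\mathcal{A}\}$ for every pair $x,y\in P$, in analogy with chain transitivity of an invariant set.

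\emph{Part 1.} Assume $P$ is $\lambda\{\mathcal{F}\}$-transitive and, for contradiction, that some proper $\lambda$-attractor $Y=p\lambda(W)\subsetneq P$ exists. The target contradiction is immediate once one knows $Y$ is $\lambda\{\mathcal{F}\}$-invariant: picking $x_0\in Y$ and $y_0\in P\setminus Y$, transitivity gives $x_0\,\lambda\{\mathcal{F}\}\,y_0$, while invariance forces $y_0\in Y$. To obtain the invariance I would upgrade the inward set defining $Y$ to a closed, uniformly $\lambda$-inward one. Since $Y\subseteq V^\circ$ is closed, for small $\delta>0$ the attractor block $W_\delta:=\overline{\lambda\{\mathcal{C}\}_\delta(Y)}\cap V$ is closed, contained in $V$, and — because appending one further transfinite iterate to a $(\delta,\lambda)$-chain issued from $Y$ is again a $(\delta,\lambda)$-chain — satisfies $\{f\}^\beta(W_\delta)\subseteq\bigl(\lambda\{\mathcal{C}\}_\delta(Y)\bigr)^{\circ}$ for all $\beta<\lambda$, with uniform margin coming from the $\delta$-room. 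Thus $W_\delta$ is uniformly $\lambda$-inward, and one checks that $p\lambda(W_\delta)=Y$ for all sufficiently small $\delta$; hence $Y$ is a uniform proper $\lambda$-attractor and Proposition~\ref{lambdaF_inv} gives $\lambda\{\mathcal{F}\}(Y)\subseteq Y$, completing the argument.

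\emph{Part 2.} Assume $P$ is minimal and, for contradiction, not $\lambda\{\mathcal{C}\}$-transitive: there are $x,y\in P$ and $\epsilon_0>0$ admitting no $(\epsilon_0,\lambda)$-chain from $x$ to $y$. Fix $0<\delta<\epsilon_0$ small and set $W:=\overline{\lambda\{\mathcal{C}\}_\delta(\{x\})\cup\{x\}}\cap V$ (note $x\in V$). As in Part~1, $W$ is a closed, uniformly $\lambda$-inward set, so $P':=p\lambda(W)$ is a uniform $\lambda$-attractor with $P'\subseteq p\lambda(V)=P$ because $W\subseteq V$. By Lemma~\ref{lambdaf_prop}, $P'=\lambda\{\mathcal{F}\}(W)$; if $y\in P'$ there would be $w\in W$ with $w\,\lambda\{\mathcal{F}\}\,y$, and, since $w$ lies in the closure of the set of points reached from $x$ by $(\delta,\lambda)$-chains, concatenating such a chain (adjusted near $w$) with a sufficiently fine chain from $w$ to $y$ would yield an $(\epsilon_0,\lambda)$-chain from $x$ to $y$, which is impossible. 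Hence $y\notin P'$, so $P'\subsetneq P$. Finally one must see that $P'$ is a \emph{proper} $\lambda$-attractor, i.e.\ $p\lambda(W)\neq p\beta(W)$ for $\beta<\lambda$; if this failed one would pass to the least such $\beta$ and rerun the construction, or propagate to $W$ the minimality of the level $\lambda$ for $P$. Either way $P'$ is a proper $\lambda$-attractor strictly inside $P$, contradicting minimality.

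\emph{Where the difficulty lies.} In both parts the delicate step is the attractor-block construction: verifying that $W_\delta$ (resp.\ $W$) is genuinely \emph{uniformly} $\lambda$-inward, so that Lemma~\ref{lambdaf_prop} and Proposition~\ref{lambdaF_inv} apply; and, in Part~1, that $p\lambda(W_\delta)$ collapses exactly to $Y$ rather than to some strictly larger chain-recurrent set accumulating on $Y$; in Part~2, that $p\lambda(W)$ is proper at level $\lambda$. This is precisely the point where the absence — for a possibly non-stable proper $\lambda$-attractor (Theorem~\ref{themattractors}) — of a neighbourhood basis of inward sets, a phenomenon with no finite counterpart, must be handled: concretely, by shrinking $\delta$ past the finitely many ``critical'' values governed, via the Cantor normal form of $\lambda$ (cf.\ Proposition~\ref{inward_}) and $V$'s uniformity margin, by the iterates $\{f\}^{\omega^{j}}(\overline V)$.
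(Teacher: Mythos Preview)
Your overall strategy matches the paper's (contrapositive in both parts, build an attractor block from a $\lambda\{\mathcal{C}\}_\delta$-neighbourhood, then invoke $\lambda\{\mathcal{F}\}$-invariance). The genuine gap is in the inwardness of the block, and it is \emph{not} the uniformity issue you flag at the end.

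Your claim that ``appending one further transfinite iterate'' shows $\{f\}^\beta\bigl(\overline{\lambda\{\mathcal{C}\}_\delta(Y)}\bigr)\subseteq\lambda\{\mathcal{C}\}_\delta(Y)$ fails on the boundary. A point $y\in\partial\lambda\{\mathcal{C}\}_\delta(Y)$ has \emph{no} $(\delta,\lambda)$-chain from $Y$ ending at it, only chains ending arbitrarily close; since $\{f\}^\beta$ is in general discontinuous and nearby points need not lie in $X^{\beta+1}$, nothing lets you push those chains to $\{f\}^\beta(y)$. (In the finite case $\mathcal{C}_\delta(Y)$ is automatically inward because every $f^k$ is continuous and globally defined; transfinitely this is precisely what breaks.) The paper isolates this obstruction explicitly and repairs it by shaving off a neighbourhood of the boundary: for $0<\epsilon<\delta/2$ it works with
\[
S_\delta(Y):=\lambda\{\mathcal{C}\}_\delta(Y)\setminus\overline{B_\epsilon\bigl(\partial\lambda\{\mathcal{C}\}_\delta(Y)\bigr)},
\]
so that $\overline{S_\delta(Y)}\subseteq\lambda\{\mathcal{C}\}_\delta(Y)$ and every $y\in\overline{S_\delta(Y)}$ genuinely has a $(\delta,\lambda)$-chain from $Y$. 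One then extends by the single step $\{f\}^\beta$ (error $0$) and uses the $\epsilon$-room, together with openness of $\lambda\{\mathcal{C}\}_\delta(Y)$, to show $\{f\}^\beta(y)\in S_\delta(Y)$. Your concluding remarks point to Proposition~\ref{inward_} and the Cantor normal form of $\lambda$, but those govern the passage inward $\Rightarrow$ uniformly inward; the obstruction here is one step earlier, at inwardness itself.

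For Part~1 the paper is essentially a one-liner: non-minimality gives a proper $\lambda$-attractor $P'\subsetneq P$, and Proposition~\ref{lambdaF_inv} makes $P'$ $\lambda\{\mathcal{F}\}$-invariant, killing transitivity. It does not discuss uniformity of $P'$, so your concern there is legitimate, but your proposed repair via $W_\delta$ inherits the same boundary problem. The ``proper at level $\lambda$'' issue you raise for $P'$ in Part~2 is also real; the paper does not address it either.
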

\begin{proof}
    \begin{enumerate}
        \item Suppose that $P$ is not minimal. Then there exists $P'\subsetneq P$ proper $\lambda$-attractor. By Proposition \ref{lambdaF_inv}, $P'$ is $\lambda\{\mathcal{F}\}$-invariant. Thus, if $y\in P'$ and $x\in P\setminus P'$, it follows that $(y,x)\notin \lambda\{\mathcal{F}\}$. Hence $P$ is not $\lambda\{\mathcal{F}\}$-transitive. 
        \item The proof of the second implication is slightly more complicated. A key point is that, for $Y\subseteq X$, a point $y$ in the boundary of $\lambda\{\mathcal{C}\}_\delta(Y)$ is arbitrarily close to points connected with $\lambda$-chains to $Y$ but, for $\beta\ge\omega$, $\{f\}^\beta(y)$ might not have points admitting $\{f\}^\beta$ iterations in its vicinity, which means that $\lambda\{\mathcal{C}\}_\delta(Y)$ is not necessarily $\lambda$-inward (notice instead that, in the finite case, $\mathcal{C}_\delta(Y)$ is always inward, because iterations of order $<\omega$ always exist). Therefore we have to take care of the boundary before proving $\lambda$-inwardness. 
        
        By uniformity of the attractor, there is a $\lambda$-inward set $V$ and $\bar{\delta}>0$ such that $P=p\lambda(V)$ and $d(\{f\}^\beta(V),\partial V)>\bar{\delta}$ for every $\beta<\lambda$.
        For any $Y\subseteq X$, assume $0<\epsilon<\delta/2$ and let us show that the set 
    $$S_\delta(Y):=\lambda\{\mathcal{C}\}_\delta(Y)\setminus \overline{B_\epsilon(\partial\lambda\{\mathcal{C}\}_\delta(Y))}$$
    is $\lambda$-inward.  
    Notice that $\lambda\{\mathcal{C}\}_\delta(Y)$ is an open set, so that $S_\delta(Y)$ is an open set too.
    Let $y\in \overline{S_\delta(Y)}$ and assume $\beta<\lambda$. If $\{f\}^\beta(y)=\emptyset$, then inwardness is vacuously verified. If $\{f\}^\beta(y)\neq \emptyset$, since 
    $$
    y\,\lambda\{\mathcal{H}\}\,\{f\}^\beta(y),
    $$ 
    we have also, by Proposition \ref{incl_top_rel} and observing that $\lambda\{{C}\}\subseteq\lambda\{{C}\}_\delta$,  
    $$
    y\,\lambda\{\mathcal{C}\}_\delta\, \{f\}^\beta(y),
    $$ 
    so that $\{f\}^\beta(y)\in \lambda\{\mathcal{C}\}_\delta(Y)$. Suppose that $\{f\}^\beta(y)\in \overline{B_\epsilon(\partial\lambda\{\mathcal{C}\}_\delta(Y))}\cap \lambda\{\mathcal{C}\}_\delta(Y)$. 
    Since $y\in \lambda\{\mathcal{C}\}_\delta(Y)$, there exist $n\in\mathbb{N}$ and a $(\delta,\lambda)$-chain 
    $$(\{y_0,\ldots,y_{n-1},y\}, \{\lambda_0,\ldots,\lambda_{n-1}\})$$ from a certain $y_0\in Y$ to $y$. 
    Notice that $d(\{f\}^\beta(y),z)\le\epsilon<\delta/2$ for some $z\in \partial\lambda\{\mathcal{C}\}_\delta(Y)$. 
    It follows that  $$(\{y_0,\ldots,y_{n-1},y,z\},\{\lambda_0,\ldots,\lambda_{n-1},\beta\})$$ is a $(\delta,\lambda)$-chain from $y_0$ to $z$. 
    Then $z\in \lambda\{\mathcal{C}\}_\delta(Y)$, which is impossible since $z\in \partial\lambda\{\mathcal{C}\}_\delta(Y)$ and $\lambda\{\mathcal{C}\}_\delta(Y)$ is an open set. It follows that $\{f\}^\beta(y)\in S_\delta(Y)=S_\delta(Y)^\circ$, so that $S_\delta(Y)$ is $\lambda$-inward.

    Now suppose that $P$ is minimal but is not $\lambda\{\mathcal{C}\}$-transitive, that is there exist $x_0,x_1\in P$ such that $(x_0,x_1)\notin \lambda\{\mathcal{C}\}$. Then there exists $\delta_0>0$ such that there is not a $(\delta_0,\lambda)$-chain from $x_0$ to $x_1$.
    We have that $P=p\lambda(V)$ for some uniformly $\lambda$-inward set $V$, and there is $\bar{\delta}>0$ such that     
    \[
    \inf_{\beta<\lambda} d\left(\{f\}^\beta(\overline{V}),\partial V\right)>\bar{\delta}.
    \]
    Therefore, we can take $0<\delta<\min\{\delta_0,\bar{\delta}\}$ to have that 
    \[
    V':=\overline{S_\delta(x_0)}\subseteq V.
    \]
    Moreover, since $S_\delta(x_0)$ is $\lambda$-inward, $V'$ is $\lambda$-inward too, and since $V'\subseteq \lambda\{\mathcal{C}\}_{\delta_0}(x_0)$ we have that $x_1\notin V'$. It follows that $P'=p\lambda(V')\subsetneq P$ is a proper $\lambda$-attractor, which is a contradiction because $P$ was minimal.
    \end{enumerate}
\end{proof}

\vspace{0.2cm}

Let us now address more closely the differences between transfinite attractors and finite attractors. Some of them can be seen as a result of the differences between $\lambda$-inwardness and finite inwardness. In particular, when $V\subseteq X$ is an inward set in the topological dynamical system $(X,f)$, the following properties hold (assuming only $f$ continuous and $X$ compact):
\begin{itemize}
\item $f^k(\overline{V})$ is a closed set for every $k\in\mathbb{N}$;
    \item $\exists\delta>0$ such that, for every $k\in\mathbb{N}$, $ d(\partial V,f^k(\overline{V}))>\delta$;
    \item $f^k(\overline{V})\subseteq f^h(\overline{V})$ whenever $k\ge h$.
\end{itemize}
The straightforward generalization of these properties, for $V$ a $\lambda$-inward set in the transfinite system $(X,\{f\})$, is:
\begin{itemize}
\item $\{f\}^\beta(\overline{V})$ is a closed set for every $\beta<\lambda$;
    \item $\exists\delta>0$ such that, for every $\beta<\lambda$, $ d(\partial V,\{f\}^\beta(\overline{V}))>\delta$;
    \item $\{f\}^\beta(\overline{V})\subseteq \{f\}^\eta(\overline{V})$ whenever $\lambda>\beta\ge \eta$.
\end{itemize}
None of these hold if we assume just that $X$ is  compact metric and $(X,\{f\})$ is $\lambda$-regular.

The objects that we introduce now are, roughly speaking, the finite attractors produced by maps $g$ of the form $g=\{f\}^{\beta}$ for some additively indecomposable ordinal $\beta<\omega_1$ (we say ``roughly speaking" because the closure in Eq. \eqref{partial__} is taken in $X$ rather than in the domain $X^{\beta+1}$ of the map $g$). Since each of them only depends on the dynamics of the iteration of order $\beta$, we call them \emph{partial attractors}. We will study some of their properties on our way towards the class of the ``truly well-behaved" transfinite attractors, i.e. perfect attractors, which we will encounter later. 
Partial attractors do behave more similarly to finite attractors with respect to the above mentioned three properties, but they fail in general to capture the whole transfinite asymptotic behavior of the system, as we will see. 

\begin{defi}[\textbf{Partial attractors}]
    Let $\beta$ be a countable additively indecomposable ordinal, that is $\beta=\omega^\eta$ for some $0\le\eta<\omega_1$. Set $\lambda:=\beta\cdot \omega=\omega^{\eta+1}$. Let $V\subseteq X$ be a closed $\lambda$-inward set.
    We say that the set
    \begin{equation}\label{partial__}
    \{\partial \beta\}(V):=\bigcap_{n=0}^\infty \overline{\bigcup_{k>n} \{f\}^{\beta\cdot k}(V)} 
    \end{equation}
    is the \emph{partial attractor} of $V$ generated by $\beta$.
    \end{defi}
    
 \begin{lem}\label{lem_part_attr}
     Let $\beta=\omega^\eta$ for some $\eta<\omega_1$ and $\lambda=\beta\cdot \omega$. If $V$ is $\lambda$-closed, then $$\{\partial\beta\}(V)=\bigcap_{k=1}^\infty \{f\}^{\beta \cdot k}(V).$$
 \end{lem}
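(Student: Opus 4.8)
The plan is to show that the decreasing intersection in \eqref{partial__} can be replaced by the intersection of the actual images $\{f\}^{\beta\cdot k}(V)$, exploiting that $V$ is $\lambda$-closed. First I would establish the monotonicity of the family $\{\{f\}^{\beta\cdot k}(V)\}_{k\ge 1}$: by the composition rule \eqref{composition1}–\eqref{composition2}, for $h\le k$ we have $\{f\}^{\beta\cdot k}=\{f\}^{\beta\cdot(k-h)}\circ\{f\}^{\beta\cdot h}$ on the appropriate saturated subspace, hence $\{f\}^{\beta\cdot k}(V)\subseteq\{f\}^{\beta\cdot h}(V)$. (Here one has to be a little careful with the convention $\{f\}^{\gamma}(A)=\{f\}^\gamma(X^{\gamma+1}\cap A)$ from Remark \ref{emptysetnotation}, but the inclusion still holds since applying a further iteration can only shrink the image.) In particular, the sequence $\{f\}^{\beta\cdot k}(V)$ is nested and decreasing in $k$. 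Moreover, because $\beta=\omega^\eta$ is additively indecomposable, every ordinal of the form $\beta\cdot k$ is $<\lambda=\beta\cdot\omega$, so $\lambda$-closedness of $V$ guarantees that each $\{f\}^{\beta\cdot k}(\overline V)$ — and a fortiori $\{f\}^{\beta\cdot k}(V)$, once we note $V$ itself is closed — is a closed set.

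Next I would rewrite the right-hand side of \eqref{partial__}. Since for each fixed $n$ the union $\bigcup_{k>n}\{f\}^{\beta\cdot k}(V)$ is, by the monotonicity just established, a nested increasing-in-the-index-but-decreasing-in-the-set family whose union equals $\{f\}^{\beta\cdot(n+1)}(V)$ — wait, more precisely: the sets decrease with $k$, so $\bigcup_{k>n}\{f\}^{\beta\cdot k}(V)=\{f\}^{\beta\cdot(n+1)}(V)$ because the $k=n+1$ term already contains all later terms. Then its closure is $\overline{\{f\}^{\beta\cdot(n+1)}(V)}=\{f\}^{\beta\cdot(n+1)}(V)$ by $\lambda$-closedness. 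Therefore
\[
\{\partial\beta\}(V)=\bigcap_{n=0}^\infty\{f\}^{\beta\cdot(n+1)}(V)=\bigcap_{k=1}^\infty\{f\}^{\beta\cdot k}(V),
\]
which is exactly the claim.

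The main obstacle I anticipate is purely bookkeeping rather than conceptual: making the identity $\bigcup_{k>n}\{f\}^{\beta\cdot k}(V)=\{f\}^{\beta\cdot(n+1)}(V)$ fully rigorous in the presence of the domain-restriction conventions. One must confirm that $X^{\beta\cdot k+1}\subseteq X^{\beta\cdot h+1}$ for $k\ge h$ (which follows from Def. \ref{Xbeta}, as smaller ordinals impose weaker existence requirements), so that the composition $\{f\}^{\beta\cdot k}(V)=\{f\}^{\beta\cdot(k-h)}\big(\{f\}^{\beta\cdot h}(V)\big)$ is legitimate and the resulting inclusion $\{f\}^{\beta\cdot k}(V)\subseteq\{f\}^{\beta\cdot h}(V)$ is genuine. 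Once monotonicity is pinned down, everything else is immediate, since $\lambda$-closedness is being used precisely to turn "closure of the image" into "image", and additive indecomposability of $\beta$ is what keeps all the relevant ordinals $\beta\cdot k$ strictly below $\lambda$ so that $\lambda$-closedness applies to every term.
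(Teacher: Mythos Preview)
Your proposal is correct and follows essentially the same route as the paper's proof: establish the nesting $\{f\}^{\beta\cdot k}(V)\subseteq\{f\}^{\beta\cdot h}(V)$ for $k\ge h$ via the composition law and $\lambda$-inwardness of $V$, collapse the union $\bigcup_{k>n}\{f\}^{\beta\cdot k}(V)$ to its largest term $\{f\}^{\beta\cdot(n+1)}(V)$, and then drop the closures using that $V$ is closed and $\lambda$-closed. One minor remark: the additive indecomposability of $\beta$ is not actually what ensures $\beta\cdot k<\lambda$ (that holds for any $\beta$ with $\lambda=\beta\cdot\omega$); indecomposability is part of the ambient definition of partial attractors but is not used in this lemma.
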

 \begin{proof}
     Since $V$ is a closed $\lambda$-inward set, it follows from the composition rules \eqref{composition1}-\eqref{composition2} that 
     \begin{equation}\label{inclusions__}
         \{f\}^{\beta\cdot k}(V)=\{f\}^{\beta\cdot j}(\{f\}^{\beta\cdot h}(V))\subseteq \{f\}^{\beta\cdot h}(V)
     \end{equation}
     whenever $k=h+j$ for non-negative integers $h,j$. Thus $\{\partial\beta\}(V)=\bigcap_{k=1}^\infty \overline{\{f\}^{\beta \cdot k}(V)}.$
     Finally, observe that $X^{\beta\cdot k+1}\cap V$ is a closed set in the relative topology for all $k\in\mathbb{N}_0$, and since $V$ is $\lambda$-closed we have that $\{f\}^{\beta\cdot k}(V)$ is closed in $X$.
 \end{proof}

\begin{prop}\label{partial_1_}
    Let $(X,\{f\})$ be a $\lambda$-TDS with $\lambda=\beta\cdot\omega$, where $\beta=\omega^\eta$ for some $\eta<\omega_1$. Let $V$ be a closed, $\lambda$-closed, $\lambda$-inward set and let $S:=\{\partial\beta\}(V)$ be the partial attractor of $V$ generated by $\beta$.
    If $S$ is $(\beta+1)$-saturated, then $\{f\}^{\beta}(S)=S$.
\end{prop}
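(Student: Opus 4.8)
The plan is to exploit Lemma \ref{lem_part_attr}, which identifies $S=\{\partial\beta\}(V)$ with the nested intersection $\bigcap_{k\ge 1}\{f\}^{\beta\cdot k}(V)$, together with the composition rules \eqref{composition1}--\eqref{composition2} and the $(\beta+1)$-saturation of $S$. The inclusion $\{f\}^\beta(S)\subseteq S$ is the easy half: since $S$ is $(\beta+1)$-saturated, $S\subseteq X^{\beta+1}$, so $\{f\}^\beta$ is genuinely defined on all of $S$; and for any $x\in S=\bigcap_k\{f\}^{\beta\cdot k}(V)$ we have, for each $k\ge 1$, a point $y_k\in X^{\beta\cdot k+1}\cap V$ with $\{f\}^{\beta\cdot k}(y_k)=x$, hence $\{f\}^\beta(x)=\{f\}^\beta(\{f\}^{\beta\cdot k}(y_k))=\{f\}^{\beta\cdot k+\beta}(y_k)=\{f\}^{\beta\cdot(k+1)}(y_k)\in\{f\}^{\beta\cdot(k+1)}(V)\subseteq\{f\}^{\beta\cdot k}(V)$ for all $k$, using \eqref{inclusions__}; so $\{f\}^\beta(x)\in S$.

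For the reverse inclusion $S\subseteq\{f\}^\beta(S)$, take $x\in S$. For each $k\ge 1$, choose $y_k\in X^{\beta\cdot k+1}\cap V$ with $\{f\}^{\beta\cdot k}(y_k)=x$, and set $z_k:=\{f\}^{\beta\cdot(k-1)}(y_k)$. By the composition rules $z_k\in\{f\}^{\beta\cdot(k-1)}(V)\subseteq V$ (interpreting $\{f\}^{0}(V)=V$), and $\{f\}^\beta(z_k)=x$. The idea is to extract a convergent subsequence $z_{k_j}\to z$; since $V$ is closed, $z\in V$, and since $(X,\{f\})$ is $\lambda$-regular and $\beta<\lambda$, the map $\{f\}^\beta$ is continuous on $X^{\beta+1}$, so — provided $z\in X^{\beta+1}$ — we get $\{f\}^\beta(z)=\lim_j\{f\}^\beta(z_{k_j})=x$. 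To see $z\in S$ (not merely $z\in V$), note that for fixed $m$ and all $k>m$ we have $z_k=\{f\}^{\beta\cdot(k-1)}(y_k)=\{f\}^{\beta\cdot(k-1-m)}\big(\{f\}^{\beta\cdot m}(y_k)\big)\in\{f\}^{\beta\cdot m}(V)$, and this latter set is closed (Lemma \ref{lem_part_attr}, using $\lambda$-closedness), so $z\in\{f\}^{\beta\cdot m}(V)$ for every $m$, i.e. $z\in S$. Hence $x=\{f\}^\beta(z)\in\{f\}^\beta(S)$.

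The main obstacle is the continuity step: to conclude $\{f\}^\beta(z)=x$ from $\{f\}^\beta(z_{k_j})=x$ one needs $z\in X^{\beta+1}$, i.e. that the limit point retains the iteration of order $\beta$. This is exactly where $(\beta+1)$-saturation of $S$ does the work: we have just shown $z\in S$, and $S$ being $(\beta+1)$-saturated gives $S\subseteq X^{\eta}$ for all $\eta<\beta+1$, so in particular $z\in X^{\beta+1}$ is not automatic from $X^\eta$ for $\eta<\beta+1$ — here one should be careful and instead argue directly: $z\in X^{\beta}$ since $z\in X^\eta$ for every $\eta<\beta$, and then $\{f\}^\beta(z)\ne\emptyset$ follows because $z\in S$ and $S$ is $(\beta+1)$-saturated (every point of $S$ admits all iterations of order $\le\beta$). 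Thus $z\in X^{\beta+1}$, continuity applies, and the proof closes. A secondary point to handle cleanly is the bookkeeping of the composition rules when $k-1-m=0$ or when $\beta\cdot(k-1)$ is a limit of the relevant form; these are routine given associativity of ordinal addition and the additive indecomposability $\beta=\omega^\eta$, which guarantees $\beta\cdot j+\beta=\beta\cdot(j+1)$ and that all the orders $\beta\cdot k$ appearing are genuinely reachable by finitely many applications of \eqref{composition1}--\eqref{composition2}.
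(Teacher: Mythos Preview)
Your approach is essentially the paper's: write $S=\bigcap_{k}\{f\}^{\beta\cdot k}(V)$ via Lemma~\ref{lem_part_attr}, get $\{f\}^\beta(S)\subseteq S$ by composition, and for the reverse inclusion pick $\{f\}^\beta$-preimages $z_k$ of a given $x\in S$ lying in the nested closed sets $\{f\}^{\beta\cdot(k-1)}(V)$, extract a limit $z\in S$, and use $(\beta+1)$-saturation together with $\lambda$-regularity to conclude $\{f\}^\beta(z)=x$. One small slip: the decomposition $z_k=\{f\}^{\beta\cdot(k-1-m)}\big(\{f\}^{\beta\cdot m}(y_k)\big)$ does not by itself place $z_k$ in $\{f\}^{\beta\cdot m}(V)$---either reverse the order of the composition, or simply note $z_k\in\{f\}^{\beta\cdot(k-1)}(V)\subseteq\{f\}^{\beta\cdot m}(V)$ by~\eqref{inclusions__}.
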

\begin{proof}
By Lemma \ref{lem_part_attr} and by the composition rules \eqref{composition1}-\eqref{composition2}, we have that
\begin{equation}\label{inv_S}
  \{f\}^\beta(S)=\{f\}^\beta\big(\bigcap_{k=1}^\infty \{f\}^{\beta\cdot k}(V)\big)\subseteq \bigcap_{k=1}^\infty \{f\}^{\beta\cdot k+\beta}(V)= \bigcap_{k=2}^\infty \{f\}^{\beta\cdot k}(V)=S.
\end{equation}
We want to prove the converse inclusion. 
Take $y\in\ S$ and assume that $y\notin \{f\}^\beta(S)$. 
If there exists $K\ge 0$ such that $y\notin \{f\}^\beta \big(\{f\}^{\beta\cdot K}(V)\big)= \{f\}^{\beta\cdot (K+1)}(V)$, we have a contradiction, since $y\in S$. 

Thus, suppose that there exist a sequence $\{x_n\}_{n\in\mathbb{N}}\subseteq V$ and a strictly increasing sequence $\{k_n\}_{n\in\mathbb{N}}$ of positive integers such that $\{f\}^\beta(x_n)=y$ and $x_n\in \{f\}^{\beta\cdot k_n}(V)$ for every $n\in\mathbb{N}$. 
Then there exists a subsequence $\{x_{n_j}\}_{j\in\mathbb{N}}$ such that $x_{n_j}\to x$ as $j\to\infty$. 
Notice that $S=\cap_{k\to\infty} S_k$ where $S_k=\cap_{i=1}^k \{f\}^{\beta\cdot i}(V)$. Therefore, for every $k\in\mathbb{N}$, we have that $x_{n_j}\in S_k$ for $j$ large enough, which implies that $x\in S_k$. Then $x\in S$, and since $S$ is $(\beta+1)$-saturated and the system is $\lambda$-regular, we have that $\{f\}^\beta(x)=y$.
\end{proof}
 Notice, that if $\{f\}^\beta$ is injective, then strong $\beta$-invariance of $S$ is trivial, since the inclusion in Eq. \eqref{inv_S} becomes an equality.

\begin{prop}\label{lem_preatt_proper}
    Let $\beta=\omega^\eta$ for some $\eta<\omega_1$. If $\lambda=\beta\cdot \omega$, then \begin{equation}\{\partial\beta\}\label{inclus_part_}
    (V)\subseteq p\lambda(V).
    \end{equation}
\end{prop}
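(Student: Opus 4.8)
The plan is to show directly that any point $y \in \{\partial\beta\}(V)$ satisfies the sequential characterization of $p\lambda(V)$ given in Proposition \ref{th_plim_seq}. Recall $\lambda = \beta\cdot\omega$ with $\beta = \omega^\eta$ additively indecomposable. By definition,
\[
\{\partial\beta\}(V) = \bigcap_{n=0}^\infty \overline{\bigcup_{k>n}\{f\}^{\beta\cdot k}(V)},
\]
so if $y$ belongs to this set, then for every $n \in \mathbb{N}$ and every $\epsilon > 0$ there is some $k > n$ and some point $w_{n,\epsilon} \in \{f\}^{\beta\cdot k}(V)$ with $d(w_{n,\epsilon}, y) < \epsilon$. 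That is, $y$ is a limit of points of the form $\{f\}^{\beta\cdot k}(x)$ with $x \in V$ (more precisely $x \in X^{\beta\cdot k+1}\cap V$) and with $k$ arbitrarily large.

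First I would extract, by a standard diagonal choice, a sequence of indices $k_j \to \infty$, points $x_j \in X^{\beta\cdot k_j + 1} \cap V$, such that $\{f\}^{\beta\cdot k_j}(x_j) \to y$ as $j \to \infty$; one can pass to a subsequence to make $\{k_j\}_{j\in\mathbb{N}}$ strictly increasing. Set $\gamma_j := \beta\cdot k_j$. Then $\{\gamma_j\}_{j\in\mathbb{N}}$ is a strictly increasing sequence of ordinals, each $<\lambda$ since $\beta\cdot k_j < \beta\cdot\omega = \lambda$, and $\sup_{j}\gamma_j = \sup_j \beta\cdot k_j = \beta\cdot\omega = \lambda$ because multiplication by $\beta$ on the left is continuous in the right argument (equivalently, $\beta\cdot\omega$ is the supremum of the $\beta\cdot k$). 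Moreover $\{f\}^{\gamma_j}(x_j) \to y$ with all $x_j \in V$. This is exactly the hypothesis of the "if" direction of Proposition \ref{th_plim_seq}, applied with the set $Y = V$, and it yields $y \in p\lambda(V)$, completing the inclusion.

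The only subtlety — and the step I expect to require the most care — is the bookkeeping at the start: verifying that membership in the nested-intersection-of-closures set $\{\partial\beta\}(V)$ really does produce a sequence $\{f\}^{\beta\cdot k_j}(x_j)$ (with $x_j$ in $V$ itself, not merely in $\overline{V}$) converging to $y$ with $k_j$ unbounded. This follows by choosing, for each $j$, a radius $\epsilon_j = 1/j$ and a lower index bound $n_j = j$, then using $y \in \overline{\bigcup_{k>n_j}\{f\}^{\beta\cdot k}(V)}$ to pick $k_j > n_j$ and $x_j \in X^{\beta\cdot k_j+1}\cap V$ with $d(\{f\}^{\beta\cdot k_j}(x_j), y) < \epsilon_j$; since we may further insist $k_{j+1} > k_j$ at each stage (the union ranges over all $k > n_j$, and $n_j \to \infty$), strict monotonicity and unboundedness of $\{k_j\}$ are automatic. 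Everything else is the verbatim application of Proposition \ref{th_plim_seq}, so no genuinely new argument is needed; the result is essentially a repackaging of the definition of the partial attractor into the sequential form of the proper $\lambda$-limit. Note that no regularity or $\lambda$-closedness hypothesis on $V$ is required for this inclusion, consistent with the spirit of Remark \ref{__refin__}.
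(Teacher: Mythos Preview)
Your proposal is correct and follows essentially the same approach as the paper's own proof: extract from the nested-intersection definition a sequence $x_j\in V$ and strictly increasing $k_j$ with $\{f\}^{\beta\cdot k_j}(x_j)\to y$, then invoke Proposition~\ref{th_plim_seq} with $\gamma_j=\beta\cdot k_j$ and $\sup_j\gamma_j=\beta\cdot\omega=\lambda$. Your version simply spells out the diagonal extraction in more detail than the paper does.
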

\begin{proof}
    Assume $y\in \{\partial\beta\}(V)$. Then, from the fact that 
    $$\forall n,\ y\in \overline{\bigcup_{k>n} \{f\}^{\beta\cdot k}(V)},$$ 
    it follows that there exist a sequence of  points $\{x_n\}_{n\in\mathbb{N}}\subseteq V$ and an increasing sequence of positive integers $\{k_n\}_{n\in\mathbb{N}}$ such that $\{f\}^{\beta\cdot k_n}(x_n)\to y$. 
    Since $\{\beta\cdot k_n\}_{n\in\mathbb{N}}$ is a strictly increasing sequence of ordinals and $\sup_{n\in\mathbb{N}}\{\beta\cdot k_n\}=\beta\cdot \omega$, by Proposition \ref{th_plim_seq}, it follows that $y\in p\lambda(V)$.
\end{proof}

In general, the inclusion \eqref{inclus_part_} can be strict. A simple case of this kind is Example \ref{ex_quad}, where we can see that, for instance, $(1,1/2)\in p\omega^2(X)$ and at the same time $(1,1/2)\notin \{\partial\omega\}(X)$. Indeed, 
    $$\{\partial\omega\}(X)=\bigcap_{n=0}^\infty \overline{\bigcup_{k>n} \{f\}^{\omega\cdot k}(X)}=\bigcap_{n=0}^\infty \overline{\bigcup_{k>n} \{(x_k,1)\}}=\{(1,1)\}.$$

\begin{prop}\label{extended_preacc}
    Let $\lambda$ be a countable limit ordinal. Set 
    $
    \Gamma=\{\omega^\eta: \eta<\omega_1\text{ and } \omega^{\eta+1}\le\lambda\}.
    $ 
    We have  
    \begin{equation}\label{inclus_part___}
    \bigcup_{\beta\in\Gamma}\{\partial\beta\}(V)\subseteq e\lambda(V).
    \end{equation}
\end{prop}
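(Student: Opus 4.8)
The plan is to reduce this statement to Proposition \ref{lem_preatt_proper}, which already gives $\{\partial\beta\}(V)\subseteq p(\beta\cdot\omega)(V)$ for each additively indecomposable $\beta$, together with the trivial inclusion $p\mu(V)\subseteq e\lambda(V)$ valid for every limit ordinal $\mu\le\lambda$ (which follows directly from Definition \ref{def_lambda_limits}, since $e\lambda(V)=\bigcup_{\mu\le\lambda}p\mu(V)$). First I would fix $\beta\in\Gamma$, so that $\beta=\omega^\eta$ for some $\eta<\omega_1$ with $\omega^{\eta+1}\le\lambda$. Setting $\mu:=\beta\cdot\omega=\omega^{\eta+1}$, we have that $\mu$ is a limit ordinal with $\mu\le\lambda$. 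Proposition \ref{lem_preatt_proper} then gives $\{\partial\beta\}(V)\subseteq p\mu(V)$, and since $\mu\le\lambda$ is a limit ordinal, the union defining $e\lambda(V)$ in Eq. \eqref{ext_limit} includes the term $p\mu(V)$, whence $\{\partial\beta\}(V)\subseteq p\mu(V)\subseteq e\lambda(V)$.

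Taking the union over all $\beta\in\Gamma$ then yields $\bigcup_{\beta\in\Gamma}\{\partial\beta\}(V)\subseteq e\lambda(V)$, which is exactly \eqref{inclus_part___}. The only point deserving a little care is the verification that $p\mu(V)$ genuinely appears as one of the sets whose union forms $e\lambda(V)$: in Eq. \eqref{ext_limit} the outer union is written as $\bigcup_{\beta\le\lambda}$, and for $\beta=\mu$ a limit ordinal the inner expression $\bigcap_{\eta<\mu}\overline{\bigcup_{\eta<\iota<\mu}\{f\}^\iota(V)}$ is precisely $p\mu(V)$ by \eqref{prop_limit}. So there is nothing to prove beyond unpacking definitions once Proposition \ref{lem_preatt_proper} is in hand.

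I do not expect any serious obstacle here: the statement is a bookkeeping consequence of an already-established inclusion plus the definition of the extended limit, and the hypothesis defining $\Gamma$ (namely $\omega^{\eta+1}\le\lambda$) is exactly what is needed to guarantee $\beta\cdot\omega\le\lambda$ so that Proposition \ref{lem_preatt_proper} applies with its conclusion landing inside the range of ordinals summed over in $e\lambda(V)$. One should perhaps note explicitly that no assumption on $V$ beyond those already present in Proposition \ref{lem_preatt_proper} (namely that $V$ is a closed $\lambda$-inward set, or at least $(\beta\cdot\omega)$-inward for each relevant $\beta$) is needed; if one wants the cleanest statement, one assumes $V$ closed and $\lambda$-inward, which makes it $(\beta\cdot\omega)$-inward for every $\beta\in\Gamma$ automatically, since $\beta\cdot\omega\le\lambda$.
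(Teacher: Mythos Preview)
Your proposal is correct and follows essentially the same approach as the paper: apply Proposition~\ref{lem_preatt_proper} to get $\{\partial\beta\}(V)\subseteq p(\beta\cdot\omega)(V)$ for each $\beta\in\Gamma$, then use $\beta\cdot\omega\le\lambda$ to conclude $p(\beta\cdot\omega)(V)\subseteq e\lambda(V)$ and take the union. The paper's proof is simply the one-line display $\bigcup_{\beta\in\Gamma}\{\partial\beta\}(V)\subseteq \bigcup_{\beta\in\Gamma} p(\beta\cdot\omega)(V)\subseteq e\lambda(V)$, which is exactly your argument in compressed form.
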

\begin{proof}
    By Lemma \ref{lem_preatt_proper}, it follows that 
    \[  \bigcup_{\beta\in\Gamma}\{\partial\beta\}(V)\subseteq \bigcup_{\beta\in\Gamma} p(\beta\cdot \omega)(V)\subseteq e\lambda(V).
    \]
\end{proof}
Propositions \ref{partial_1_} and \ref{lem_preatt_proper} hold only when $\lambda$ has a special form, so that it can be limit of a sequence of type $\{\omega^\eta\cdot k\}_{k\in\mathbb{N}}$. If $\lambda$ is the limit of a sequence of a more general type, as for instance $\lambda=\sup_{k\in\mathbb{N}}(\omega^\eta)^k$, then one cannot use the composition rules \eqref{composition1}-\eqref{composition2} to make sure that further iterations in the sequence are nested subsets of $V$ like done in Eq. \eqref{inclusions__}.
Moreover, in general both inclusions \eqref{inclus_part_} and \eqref{inclus_part___} are strict. 
This tells us that partial attractors, although rather well-behaved, capture too little of the transfinite dynamics to be considered satisfactory from the point of view of a general theory of transfinite attractors. 
To proceed in this direction, we need to make some further assumptions in addition to uniformity, but not \emph{as strong} as those leading to partial attractors. 

\begin{defi}\label{complet__}
    We say that $V\subseteq X$ is \emph{completely $\lambda$-inward} if $\{f\}(\overline{V})\subseteq V^\circ$ and for every $\beta<\lambda$ we have
    \begin{equation}\label{inclusion_comp}
    \{f\}^\beta(\overline{V})\subseteq\bigcap_{\eta<\beta}\{f\}^\eta(\overline{V}).
    \end{equation}
\end{defi}

It is easily seen that, for $\lambda<\omega_1$, every completely $\lambda$-inward set is uniformly $\lambda$-inward and therefore $\lambda$-inward, and that every $\omega$-inward set in $(X,\{f\})$ is completely $\omega$-inward and inward in the usual sense for the finite system $(X,f)$.

\begin{rem}\label{rem_Vclosed}
    Let $V$ be a uniformly $\lambda$-inward set for some limit ordinal $\lambda<\omega_1$. Since there is $\delta>0$ such that $d(\cup_{\beta<\lambda}\{f\}^\beta(\overline{V}),\partial V)>\delta$, there is a closed set $C$ such that $\{f\}^\beta(\overline{V})\subsetneq C \subsetneq V$ (for instance the closed ball $\overline{B}_{\delta/2}(\cup_{\beta<\lambda}\{f\}^\beta(\overline{V}))$).
    Moreover, $p\lambda(V)=p\lambda(C)$ and $e\lambda(V)=e\lambda(C)$. 
\end{rem}

By Remark \ref{rem_Vclosed}, every uniform $\lambda$-attractor is the $\lambda$-limit of a \emph{closed} uniformly $\lambda$-inward set, and therefore, in the following we will assume that every uniformly $\lambda$-inward set is closed. 

\begin{defi}
   We say that a proper $\lambda$-attractor $Y$ is \emph{complete} if $Y$ is the proper $\lambda$-limit of a $\lambda$-closed, completely $\lambda$-inward set.

   We say that a proper $\lambda$-attractor $Y$ is \emph{algebraically closed} if the countable ordinal $\lambda$ is additively indecomposable, that is $\lambda=\omega^\beta$ for some $\beta<\omega_1$.

   We say that $Y$ is \emph{almost perfect} if it is both complete and algebraically closed.
\end{defi}

Note that the definition of a complete attractor $p\lambda(V)$ requires not only complete $\lambda$-inwardness of $V$ but also its $\lambda$-closedness, which is a priori not guaranteed.  

\begin{defi}[\textbf{Perfect attractors}]\label{perf_attr__}
    We say that a proper $\lambda$-attractor $Y$ is \emph{perfect} if it is both almost perfect and strongly $\lambda$-invariant. 
\end{defi}

\begin{lem}\label{caract_com_attr}
    Let $(X,\{f\})$ be a $\lambda$-TDS and $P$ a complete $\lambda$-attractor. Then, 
    $P=\bigcap_{\eta<\lambda}\{f\}^\eta(V)$.
\end{lem}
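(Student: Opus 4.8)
The plan is to show the two inclusions $P\subseteq\bigcap_{\eta<\lambda}\{f\}^\eta(V)$ and $\bigcap_{\eta<\lambda}\{f\}^\eta(V)\subseteq P$ separately. Here $P=p\lambda(V)$ where $V$ is a $\lambda$-closed, completely $\lambda$-inward set, so in particular the maps $\{f\}^\eta$ are continuous for $\eta<\lambda$ (the system being a $\lambda$-TDS) and each $\{f\}^\eta(V)=\{f\}^\eta(X^{\eta+1}\cap V)$ is a closed subset of $X$ by $\lambda$-closedness.

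For the inclusion $P\subseteq\bigcap_{\eta<\lambda}\{f\}^\eta(V)$, I would fix $y\in P$ and $\eta<\lambda$, and aim to exhibit $y$ as $\{f\}^\eta(x)$ for some $x\in V$. By Proposition~\ref{th_plim_seq} there is a strictly increasing sequence $\{\beta_j\}_{j\in\mathbb{N}}$ with $\sup_j\beta_j=\lambda$ and points $\{y_j\}_{j\in\mathbb{N}}\subseteq V$ with $\{f\}^{\beta_j}(y_j)\to y$. Discarding finitely many terms, assume $\beta_j>\eta$ for all $j$, and write $\beta_j=\eta+\gamma_j$ for suitable ordinals $\gamma_j$ (with $\gamma_j<\lambda$, since $\beta_j<\lambda$); note $\sup_j\gamma_j=\lambda$ as well because $\eta<\lambda$ and $\lambda$ is a limit ordinal. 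Set $x_j:=\{f\}^{\gamma_j}(y_j)$, which lies in $\{f\}^{\gamma_j}(\overline V)\subseteq\{f\}^\eta(\overline V)\subseteq V^\circ$ by complete $\lambda$-inwardness (using $\gamma_j\ge\eta$ when needed, or simply that the intersection in \eqref{inclusion_comp} is over all smaller indices — I would check the index bookkeeping carefully here). By the composition rule \eqref{composition1}-\eqref{composition2}, $\{f\}^\eta(x_j)=\{f\}^{\eta+\gamma_j}(y_j)=\{f\}^{\beta_j}(y_j)\to y$. By compactness of $X$ pass to a subsequence with $x_{j_k}\to x$; since $V$ is closed, $x\in V$, and since $x\in\{f\}^{\gamma_{j_k}}(\overline V)\subseteq\{f\}^\eta(\overline V)\subseteq X^{\eta+1}$ (using $\lambda$-closedness to know this set is closed), continuity of $\{f\}^\eta$ gives $\{f\}^\eta(x)=y$. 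Hence $y\in\{f\}^\eta(V)$ for every $\eta<\lambda$.

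For the reverse inclusion, fix $y\in\bigcap_{\eta<\lambda}\{f\}^\eta(V)$. Then for each $\eta<\lambda$ there is $x_\eta\in V$ with $\{f\}^\eta(x_\eta)=y$. Choose a strictly increasing sequence $\eta_j\uparrow\lambda$ and the corresponding points $x_{\eta_j}\in V$; then $\{f\}^{\eta_j}(x_{\eta_j})=y\to y$ trivially, so the sequences $\{x_{\eta_j}\}$ and $\{\eta_j\}$ satisfy the hypotheses of Proposition~\ref{th_plim_seq} (with the constant ``convergent'' sequence $y$), giving $y\in p\lambda(V)=P$. This direction is essentially immediate once the witnesses are named.

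The main obstacle I anticipate is the ordinal bookkeeping in the first inclusion: verifying that $\gamma_j$ can be taken with $\gamma_j<\lambda$ and $\sup_j\gamma_j=\lambda$, and that $x_j=\{f\}^{\gamma_j}(y_j)$ genuinely lands in $\{f\}^\eta(\overline V)$ — this last step needs $\gamma_j\ge\eta$, which may fail for small $j$ even after discarding finitely many terms if $\eta$ itself is large; one may instead need to invoke \eqref{inclusion_comp} directly to get $x_j\in\{f\}^{\gamma_j}(\overline V)\subseteq\bigcap_{\iota<\gamma_j}\{f\}^\iota(\overline V)$ and then choose $j$ large enough that $\gamma_j>\eta$. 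A secondary subtlety is making sure the limit point $x$ lies in the closed set $X^{\eta+1}$ so that continuity of the partial map $\{f\}^\eta$ in \eqref{continuity__} applies; this is where $\lambda$-closedness of $V$ (hence closedness of $\{f\}^\eta(\overline V)\subseteq X^{\eta+1}$) is essential, and I would state that dependence explicitly.
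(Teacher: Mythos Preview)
There is a genuine error in your argument for $P\subseteq\bigcap_{\eta<\lambda}\{f\}^\eta(V)$. You write $\beta_j=\eta+\gamma_j$, set $x_j:=\{f\}^{\gamma_j}(y_j)$, and then claim that the composition rule gives $\{f\}^\eta(x_j)=\{f\}^{\eta+\gamma_j}(y_j)$. But the composition rule \eqref{composition1}--\eqref{composition2} reads $\{f\}^\eta(\{f\}^{\gamma_j}(y_j))=\{f\}^{\gamma_j+\eta}(y_j)$, and ordinal addition is not commutative: in general $\gamma_j+\eta\ne\eta+\gamma_j$. Concretely, with $\lambda=\omega\cdot2$, $\eta=\omega$, $\beta_j=\omega+j$, you get $\gamma_j=j$, $x_j=f^j(y_j)$, and $\{f\}^\omega(x_j)=\{f\}^{j+\omega}(y_j)=\{f\}^\omega(y_j)$, which is unrelated to $\{f\}^{\omega+j}(y_j)$. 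The alternative decomposition $\beta_j=\alpha_j+\eta$ would give the correct composition, but such a right subtraction need not exist (e.g.\ $\omega=\alpha+1$ has no solution). Your subsequent claim that $\{f\}^\eta(\overline V)\subseteq X^{\eta+1}$ is also unjustified: the image of $\{f\}^\eta$ has no reason to lie in its domain, so even if the composition were repaired you could not guarantee that the limit point $x$ lies in $X^{\eta+1}$.

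The paper avoids all of this with a one-line computation. Complete $\lambda$-inwardness makes the family $\{\{f\}^\eta(V)\}_{\eta<\lambda}$ decreasing in $\eta$ (by \eqref{inclusion_comp}), so $\bigcup_{\eta<\beta<\lambda}\{f\}^\beta(V)=\{f\}^{\eta+1}(V)$; since $V$ is closed and $\lambda$-closed, each $\{f\}^\eta(V)$ is already closed, and the defining intersection for $p\lambda(V)$ collapses directly to $\bigcap_{\eta<\lambda}\{f\}^\eta(V)$. Your reverse inclusion via Proposition~\ref{th_plim_seq} is correct, but the forward inclusion should be replaced by this nesting argument rather than the sequence-and-continuity route.
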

\begin{proof}
    Since $V$ is a $\lambda$-closed, completely $\lambda$-inward set, it follows that 
    $$P=\bigcap_{\eta<\lambda}\overline{\bigcup_{\eta<\beta<\lambda}\{f\}^\beta(V)}= \bigcap_{\eta<\lambda}\overline{\{f\}^\eta(V)}=\bigcap_{\eta<\lambda}\{f\}^\eta(V),$$ where the last equality follows by the fact that $V$ is a closed and $\lambda$-closed set.
\end{proof}

\begin{prop}\label{inv_perfattr}
Let $(X,\{f\})$ be a $\lambda$-TDS and $Y$ a complete $\lambda$-attractor with $$\lambda=\omega^{\alpha_1}+\ldots + \omega^{\alpha_k},$$ where $k\in\mathbb{N}$ and $\alpha_1\ge \ldots\ge \alpha_k$. Then $Y$ is $\omega^{\alpha_k}$-invariant.
\end{prop}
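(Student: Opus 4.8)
The plan is to use the characterization of a complete $\lambda$-attractor given by Lemma~\ref{caract_com_attr}, namely $Y=\bigcap_{\eta<\lambda}\{f\}^\eta(V)$ for a suitable $\lambda$-closed, completely $\lambda$-inward set $V$, and then to show that applying $\{f\}^{\omega^{\alpha_k}}$ to this intersection brings it back inside itself. First I would fix $\beta:=\omega^{\alpha_k}$ and observe that, by the Cantor Normal Form of $\lambda$ and the fact that $\alpha_k$ is the smallest exponent appearing, for every $\eta<\lambda$ we have $\eta+\beta<\lambda$. Indeed, writing $\eta$ in Cantor Normal Form, adding $\beta=\omega^{\alpha_k}$ can only increase the coefficient of $\omega^{\alpha_k}$ or introduce such a term, and since $\lambda$ has $\omega^{\alpha_1}$ as leading term with $\alpha_1\ge\alpha_k$, the sum stays below $\lambda$ (the only way $\eta+\beta$ could reach or exceed $\lambda$ would be if $\eta$ were already cofinal in $\lambda$ in a way incompatible with $\eta<\lambda$ and $\beta$ additively indecomposable; I would spell this out carefully using that $\beta$ is additively indecomposable so $\eta+\beta=\sup_{\iota<\beta}(\eta+\iota)$, and $\eta+\iota<\lambda$ for every $\iota<\beta$).

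Next I would show the inclusion $\{f\}^{\beta}(Y)\subseteq Y$. Take $y\in Y$ with $\{f\}^{\beta}(y)\ne\emptyset$ (if it is empty the claim for that point is vacuous by the conventions of Remark~\ref{emptysetnotation}). Since $Y=\bigcap_{\eta<\lambda}\{f\}^\eta(V)$, for each $\eta<\lambda$ there is $v_\eta\in V\cap X^{\eta+1}$ with $\{f\}^\eta(v_\eta)=y$. Actually a cleaner route: for any $\eta<\lambda$, by the composition rule \eqref{composition1}-\eqref{composition2} applied to $y=\{f\}^\eta(v_\eta)$ we get $\{f\}^{\eta+\beta}(v_\eta)=\{f\}^\beta(y)$, and by the previous paragraph $\eta+\beta<\lambda$, so $\{f\}^\beta(y)\in\{f\}^{\eta+\beta}(V)$. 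But I want to conclude $\{f\}^\beta(y)\in\{f\}^{\gamma}(V)$ for \emph{every} $\gamma<\lambda$; since $\{f\}^{\gamma}(V)\supseteq\{f\}^{\gamma'}(V)$ for $\gamma\le\gamma'<\lambda$ in a completely $\lambda$-inward set (this is exactly the nestedness built into Definition~\ref{complet__}, via Lemma~\ref{caract_com_attr}'s proof), it suffices to produce, for each $\gamma<\lambda$, some $\eta$ with $\gamma\le\eta+\beta<\lambda$, which is immediate by taking $\eta=\gamma$. Hence $\{f\}^\beta(y)\in\bigcap_{\gamma<\lambda}\{f\}^\gamma(V)=Y$. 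This handles one inclusion of the definition of $\omega^{\alpha_k}$-invariance; for the iterates $\{f\}^{\iota}(Y)$ with $\iota<\beta$ the same argument applies verbatim since $\iota<\beta$ is additively indecomposable-free but still satisfies $\eta+\iota<\lambda$ for all $\eta<\lambda$ (as $\iota<\beta=\omega^{\alpha_k}$ and $\lambda$'s tail exponent is $\alpha_k$), so in fact one proves $\{f\}^\iota(Y)\subseteq Y$ for every $\iota<\omega^{\alpha_k}$, which is precisely $\omega^{\alpha_k}$-invariance.

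The main obstacle I anticipate is the ordinal-arithmetic bookkeeping in the first paragraph: one must be genuinely careful that $\eta+\iota<\lambda$ holds for \emph{all} $\eta<\lambda$ and \emph{all} $\iota<\omega^{\alpha_k}$, using both the additive indecomposability of $\omega^{\alpha_k}$ and the fact that $\alpha_k$ is the minimal exponent in the Cantor Normal Form of $\lambda$ — if $\lambda$ had a smaller tail this would fail. A secondary subtlety is making sure the "nestedness" $\{f\}^{\gamma}(V)\supseteq\{f\}^{\gamma'}(V)$ for $\gamma\le\gamma'<\lambda$ genuinely follows from complete $\lambda$-inwardness and $\lambda$-closedness; this is essentially the content of \eqref{inclusion_comp} combined with the reasoning in the proof of Lemma~\ref{caract_com_attr}, and I would cite it rather than reprove it. Once these two points are nailed down, the rest is a direct application of the composition rule and the closed-form description of $Y$.
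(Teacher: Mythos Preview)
Your overall approach is the same as the paper's --- use Lemma~\ref{caract_com_attr} to write $Y=\bigcap_{\eta<\lambda}\{f\}^\eta(V)$, then apply the composition rule and the nestedness coming from complete $\lambda$-inwardness. However, there is a genuine error at the start. You set $\beta:=\omega^{\alpha_k}$ and claim that $\eta+\beta<\lambda$ for every $\eta<\lambda$; this is false. Take $\eta=\omega^{\alpha_1}+\cdots+\omega^{\alpha_{k-1}}$ (with the convention $\eta=0$ if $k=1$): then, since $\omega^{\alpha_k}$ is additively indecomposable, any tail $\gamma<\omega^{\alpha_k}$ is absorbed, and one gets $\eta+\omega^{\alpha_k}=\lambda$ exactly, not $<\lambda$. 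So the argument you sketch for $\{f\}^{\omega^{\alpha_k}}(Y)\subseteq Y$ does not go through as written.

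Fortunately this error is irrelevant to the statement being proved: $\omega^{\alpha_k}$-invariance only requires $\{f\}^\iota(Y)\subseteq Y$ for $\iota<\omega^{\alpha_k}$, not for $\iota=\omega^{\alpha_k}$ itself. You do arrive at this by the end of your proposal, and the ordinal claim you state there --- that $\eta+\iota<\lambda$ for all $\eta<\lambda$ and all $\iota<\omega^{\alpha_k}$ --- is correct and is precisely what the paper uses. The paper's proof simply fixes $\beta<\omega^{\alpha_k}$ from the outset, notes $\eta+\beta<\lambda$ for every $\eta<\lambda$, and concludes in one line:
\[
\{f\}^\beta(Y)=\{f\}^\beta\Big(\bigcap_{\eta<\lambda}\{f\}^\eta(V)\Big)\subseteq\bigcap_{\eta<\lambda}\{f\}^{\eta+\beta}(V)\subseteq\bigcap_{\eta<\lambda}\{f\}^\eta(V)=Y,
\]
the last inclusion by $\{f\}^{\eta+\beta}(V)\subseteq\{f\}^\eta(V)$ from complete $\lambda$-inwardness. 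Drop the spurious case $\beta=\omega^{\alpha_k}$ and your argument becomes exactly the paper's.
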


\begin{proof}
Assume $\beta<\omega^{\alpha_k}$. Notice that, if $\eta<\lambda$, we have that $\eta+\beta<\lambda$. By Lemma \ref{caract_com_attr} and by the composition law \eqref{composition1}-\eqref{composition2}, it follows that 
\[
\{f\}^\beta(Y)=\{f\}^\beta\big( \bigcap_{\eta<\lambda}\{f\}^{\eta}(V) \big)\subseteq \bigcap_{\eta<\lambda}\{f\}^{\eta+\beta}(V) \subseteq \bigcap_{\eta<\lambda}\{f\}^{\eta}(V)=Y, 
\]
where the last inclusion follows by the fact that $\{f\}^{\eta+\beta}(V)\subseteq \{f\}^\eta(V)$ for every $\eta<\lambda$.
\end{proof}
Our next goal is to prove a sufficient condition for perfectness of transfinite attractors. To achieve this, let us give first the following
\begin{defi}\label{reachable_}
    We say that $S\subseteq X$ is \emph{$\lambda$-reachable} for some $\lambda<\omega_1$ limit ordinal, if whenever $x\in S\cap \overline{X^{\beta+1}}$ we have $x\in X^{\beta+1}$.
\end{defi}
\begin{prop}\label{cor_lambda_inv}
   Let $(X,\{f\})$ be a $\lambda$-TDS and $Y$ a complete $\lambda$-attractor with $$\lambda=\omega^{\alpha_1}+\ldots + \omega^{\alpha_k},$$ where $k\in\mathbb{N}$ and $\alpha_1\ge \ldots\ge \alpha_k$. If $Y$ is $\omega^{\alpha_k}$-reachable, then $Y$ is strongly $\omega^{\alpha_k}$-invariant.
\end{prop}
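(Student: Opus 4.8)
The plan is to combine Proposition~\ref{inv_perfattr}, which already gives $\{f\}^\beta(Y)\subseteq Y$ for every $\beta<\omega^{\alpha_k}$, with a compactness argument producing preimages; the role of $\omega^{\alpha_k}$-reachability will be precisely to guarantee that the limit point of these preimages again lies in the domain of $\{f\}^\beta$, so that continuity can be applied.

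First I would fix the structural data: write $Y=p\lambda(V)$ with $V$ a $\lambda$-closed, completely $\lambda$-inward set, closed by the standing convention following Remark~\ref{rem_Vclosed}. By Lemma~\ref{caract_com_attr}, $Y=\bigcap_{\eta<\lambda}\{f\}^\eta(V)$. Complete $\lambda$-inwardness \eqref{inclusion_comp} makes the family $\{\{f\}^\eta(\overline V)\}_{\eta<\lambda}$ decreasing, while $\lambda$-closedness together with $\overline V=V$ makes each $\{f\}^\eta(V)=\{f\}^\eta(\overline V)$ closed, hence compact. Finally, exactly as noted in the proof of Proposition~\ref{inv_perfattr}, since $\beta<\omega^{\alpha_k}$ is smaller than the last term of the Cantor normal form of $\lambda$, one has $\eta+\beta<\lambda$ for every $\eta<\lambda$.

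Next, fix $y\in Y$ and $\beta<\omega^{\alpha_k}$. For each $\eta<\lambda$ we have $y\in Y\subseteq\{f\}^{\eta+\beta}(V)$, so there is $v\in X^{\eta+\beta+1}\cap V$ with $\{f\}^{\eta+\beta}(v)=y$; setting $w_\eta:=\{f\}^\eta(v)$ and using the composition rules \eqref{composition1}--\eqref{composition2}, we get $w_\eta\in X^{\beta+1}\cap\{f\}^\eta(V)$ and $\{f\}^\beta(w_\eta)=\{f\}^{\eta+\beta}(v)=y$. Choose an increasing sequence $\eta_j\uparrow\lambda$ (possible since $\lambda$ has countable cofinality) and, by compactness of $X$, a subsequence with $w_{\eta_{j_m}}\to x$. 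For any fixed $\eta<\lambda$, eventually $\eta_{j_m}>\eta$, so $w_{\eta_{j_m}}\in\{f\}^{\eta_{j_m}}(V)\subseteq\{f\}^\eta(V)$, which is closed; hence $x\in\{f\}^\eta(V)$ for every $\eta<\lambda$, i.e. $x\in Y$. Since each $w_{\eta_{j_m}}\in X^{\beta+1}$, we get $x\in Y\cap\overline{X^{\beta+1}}$, so $\omega^{\alpha_k}$-reachability forces $x\in X^{\beta+1}$. As $(X,\{f\})$ is $\lambda$-regular and $\beta<\lambda$, the map $\{f\}^\beta$ is continuous on $X^{\beta+1}$, whence $y=\{f\}^\beta(w_{\eta_{j_m}})\to\{f\}^\beta(x)$ and $\{f\}^\beta(x)=y$. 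Thus $y\in\{f\}^\beta(Y)$; since $y$ and $\beta<\omega^{\alpha_k}$ were arbitrary, $Y\subseteq\{f\}^\beta(Y)$, and combining with Proposition~\ref{inv_perfattr} we obtain $\{f\}^\beta(Y)=Y$ for all $\beta<\omega^{\alpha_k}$, that is, $Y$ is strongly $\omega^{\alpha_k}$-invariant.

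The main obstacle is the final passage to the limit: $\{f\}^\beta$ is continuous only on the generally non-closed, non-compact subspace $X^{\beta+1}$, so a priori nothing guarantees that the limit $x$ of the preimages $w_{\eta_{j_m}}$ still admits the iteration $\{f\}^\beta$; supplying exactly this is the point of the reachability hypothesis, and it is the only place where it is used. A secondary but necessary care point is the ordinal bookkeeping — the inequality $\eta+\beta<\lambda$ and the composition rules \eqref{composition1}--\eqref{composition2} — which is what makes all the iterates well-defined and legitimizes the factorization $w_\eta=\{f\}^\eta(v)$, $\{f\}^\beta(w_\eta)=y$.
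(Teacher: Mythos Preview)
Your proof is correct and follows essentially the same approach as the paper's: invoke Proposition~\ref{inv_perfattr} for one inclusion, then for the reverse produce $\beta$-preimages of $y$ lying in $\{f\}^\eta(V)$ for cofinally many $\eta$, extract a convergent subsequence, use $\omega^{\alpha_k}$-reachability to keep the limit in the domain of $\{f\}^\beta$, and apply $\lambda$-regularity. The only notable difference is organizational: the paper argues by contradiction and splits into two cases according to whether $S:=\{\eta<\lambda:y\notin\{f\}^\beta(\{f\}^\eta(V))\}$ is empty or not (the nonempty case being dispatched by the identity $\{f\}^\beta(\{f\}^\eta(V))=\{f\}^{\eta+\beta}(V)$), whereas you go directly via $y\in\{f\}^{\eta+\beta}(V)$ and factor through $\{f\}^\eta$, which in effect shows the first case is vacuous and subsumes both at once. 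Your route to $x\in Y$ via the decreasing, closed family $\{\{f\}^\eta(V)\}$ and Lemma~\ref{caract_com_attr} is also a minor variant of the paper's appeal to Proposition~\ref{th_plim_seq}.
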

\begin{proof}
    Suppose that $Y$ is $\omega^{\alpha_k}$-reachable and assume $\beta<\omega^{\alpha_k}$. 
    By Proposition \ref{inv_perfattr} we have that $\{f\}^\beta(Y)\subseteq Y$. 
    Let $y\in Y$ and assume that $y\notin \{f\}^\beta(Y)$. 
    By Lemma \ref{caract_com_attr}, this means that $y\notin \{f\}^\beta\big( \bigcap_{\eta<\lambda}\{f\}^{\eta}(V) \big)$. 
    
    Suppose that $S:=\{\eta<\lambda : y\notin \{f\}^\beta\big(\{f\}^\eta(V)\big)\}$ is nonempty and let $\gamma$ be the minimum of $S$. 
    Then,
    \[
    y\notin \{f\}^\beta\big(\{f\}^\eta(V)\big)=\{f\}^{\eta+\beta}(V)\quad \text{ for all $\gamma\le\eta<\lambda$}.
    \]
    Since $\eta+\beta< \lambda$ and $y\in  \bigcap_{\eta<\lambda}\{f\}^{\eta}(V)$, we have a contradiction.
    
    Suppose now that $S=\emptyset$. Therefore, there exist a sequence $\{x_n\}_{n\in\mathbb{N}}\subseteq V$ and a sequence $\{\beta_n\}_{n\in\mathbb{N}}$ of ordinals such that $\sup_{n\in\mathbb{N}}\beta_n =\lambda$, and 
    \[
    \{f\}^\beta(x_n)=y, \qquad x_n\in \{f\}^{\beta_n}(V) \quad\text{for all } n\in\mathbb{N}.
    \]
    Then, there exists a subsequence $\{x_{n_j}\}_{j\in\mathbb{N}}$ such that $x_{n_j}\to x$ as $j\to\infty$. 
    By Proposition \ref{th_plim_seq} it follows that $x\in Y$.  
    Since $Y$ is $\omega^{\alpha_k}$-reachable and $x\in Y\cap \overline{X^{\beta+1}}$, we have that $x$ belongs to $X^{\beta+1}$, and thus 
    $\{f\}^\beta(x)\neq \emptyset$.
    Since the system is $\lambda$-regular, it follows that $\{f\}^\beta(x)=y$. 
\end{proof}

\begin{thm}\label{strong_inv_perf_}
     Let $(X,\{f\})$ be a $\lambda$-TDS and $Y$ an almost perfect $\lambda$-attractor. If $Y$ is $\lambda$-reachable, then $Y$ is perfect.
\end{thm}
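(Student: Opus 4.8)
The statement to prove is: if $(X,\{f\})$ is a $\lambda$-TDS and $Y$ is an almost perfect $\lambda$-attractor which is $\lambda$-reachable, then $Y$ is perfect. By Definition \ref{perf_attr__}, perfectness means ``almost perfect and strongly $\lambda$-invariant'', so since $Y$ is already assumed almost perfect, the only thing left to prove is that $Y$ is strongly $\lambda$-invariant, i.e.\ that $\{f\}^\beta(Y)=Y$ for every $\beta<\lambda$.

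\textbf{The approach.} Because $Y$ is almost perfect, $\lambda$ is additively indecomposable, say $\lambda=\omega^{\alpha}$ for some $\alpha<\omega_1$. The plan is to invoke Proposition \ref{cor_lambda_inv} with the Cantor Normal Form of $\lambda$ being the single term $\omega^{\alpha}$, i.e.\ $k=1$ and $\alpha_1=\alpha=\alpha_k$. With that reading, $\omega^{\alpha_k}=\omega^\alpha=\lambda$, so $\lambda$-reachability of $Y$ is precisely $\omega^{\alpha_k}$-reachability, and Proposition \ref{cor_lambda_inv} yields that $Y$ is strongly $\omega^{\alpha_k}$-invariant, i.e.\ strongly $\lambda$-invariant. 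First I would record that $Y$ being complete (part of being almost perfect) means $Y=p\lambda(V)$ for a $\lambda$-closed, completely $\lambda$-inward set $V$, so the hypotheses of Proposition \ref{cor_lambda_inv} (a complete $\lambda$-attractor in a $\lambda$-TDS) are met. Then I would note $\{f\}^0(Y)=Y$ trivially covers $\beta=0$, and for $0<\beta<\lambda$ apply the cited proposition. Combining ``$Y$ strongly $\lambda$-invariant'' with ``$Y$ almost perfect'' gives, by Definition \ref{perf_attr__}, that $Y$ is perfect.

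\textbf{What needs care.} The one genuine point to verify is the bookkeeping about the Cantor Normal Form: Proposition \ref{cor_lambda_inv} and Proposition \ref{inv_perfattr} are stated for $\lambda=\omega^{\alpha_1}+\dots+\omega^{\alpha_k}$ with $\alpha_1\ge\dots\ge\alpha_k$, where strong invariance is obtained only up to level $\omega^{\alpha_k}$. When $\lambda$ is additively indecomposable this last exponent coincides with $\lambda$ itself, so the partial conclusion of those propositions becomes the full strong $\lambda$-invariance — this is exactly why the algebraically closed hypothesis (built into ``almost perfect'') is what upgrades the generic ``$\omega^{\alpha_k}$-invariant'' to ``strongly $\lambda$-invariant''. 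I would make this explicit: for a general complete $\lambda$-attractor one only gets invariance at the smallest CNF exponent, but additive indecomposability forces $k=1$, closing the gap. No separate argument is needed because the heavy lifting — the compactness/subsequence extraction, the use of Lemma \ref{caract_com_attr} to write $Y=\bigcap_{\eta<\lambda}\{f\}^\eta(V)$, and the appeal to $\lambda$-regularity plus reachability to conclude $\{f\}^\beta(x)=y$ — is entirely contained in the proof of Proposition \ref{cor_lambda_inv}. Thus the proof of this theorem is essentially a one-line specialization, and the only ``obstacle'' is correctly citing the right special case of the already-proved machinery.

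\begin{proof}
Since $Y$ is almost perfect, it is complete and algebraically closed; thus $\lambda=\omega^{\alpha}$ for some $\alpha<\omega_1$, which is the Cantor Normal Form of $\lambda$ with a single term (in the notation of Proposition \ref{cor_lambda_inv}, $k=1$ and $\alpha_1=\alpha$, so $\omega^{\alpha_k}=\omega^{\alpha}=\lambda$). As $Y$ is a complete $\lambda$-attractor in the $\lambda$-TDS $(X,\{f\})$ and $Y$ is $\lambda$-reachable, i.e.\ $\omega^{\alpha_k}$-reachable, Proposition \ref{cor_lambda_inv} gives that $Y$ is strongly $\omega^{\alpha_k}$-invariant, that is, strongly $\lambda$-invariant. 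Therefore $Y$ is both almost perfect and strongly $\lambda$-invariant, so by Definition \ref{perf_attr__} it is perfect.
\end{proof}
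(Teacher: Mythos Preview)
Your proof is correct and follows exactly the same route as the paper: the paper's proof is the single sentence ``It follows from Proposition \ref{cor_lambda_inv} assuming $\lambda=\omega^\beta$ for some ordinal $\beta<\omega_1$'', which is precisely your argument of specializing Proposition \ref{cor_lambda_inv} to the case $k=1$, $\omega^{\alpha_k}=\lambda$, so that $\lambda$-reachability yields strong $\lambda$-invariance and hence perfectness.
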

\begin{proof}
It follows from Proposition \ref{cor_lambda_inv} assuming $\lambda=\omega^\beta$ for some ordinal $\beta<\omega_1$.
\end{proof}

\begin{defi}
    Let $\lambda$ be a limit ordinal. We say a set $S\subseteq X$ is \emph{$\lambda$-stable}, if for every $\epsilon>0$ there exists $\delta>0$ such that, if $x\in B_\delta(S)$, then $d(\{f\}^{\beta}(x),S)<\epsilon$ for all $\beta<\lambda$.
\end{defi}
Of course if $\lambda=\omega$ we have the classical definition of a stable set (see for instance \cite[Def. 1.18]{kurka2003topological}).

\begin{prop}
    Let $(X,\{f\})$ be a $\lambda$-TDS and $Y$ be a complete $\lambda$-attractor. Then the following hold:
    \begin{enumerate}
     \item $Y$ is $\lambda$-attractive.
        \item Write $\lambda$ as $$\lambda=\omega^{\alpha_1}+\ldots + \omega^{\alpha_k},$$ where $k\in\mathbb{N}$ and $\alpha_1\ge \ldots\ge \alpha_k$; then $Y$ is  $\omega^{\alpha_k}$-stable.
          In particular, if $Y$ is algebraically closed then $Y$ is $\lambda$-stable.
    \end{enumerate} 
  
\end{prop}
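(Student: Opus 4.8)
The plan is to establish the two claims separately, both by exploiting Lemma~\ref{caract_com_attr}, which tells us that $Y=\bigcap_{\eta<\lambda}\{f\}^\eta(V)$ for a $\lambda$-closed, completely $\lambda$-inward set $V$, together with the uniform $\lambda$-inwardness that complete $\lambda$-inwardness entails.

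\emph{Claim 1 ($\lambda$-attractiveness).} First I would observe that, since $V$ is completely $\lambda$-inward, we have $\{f\}(\overline V)\subseteq V^\circ$, and since $Y=\bigcap_{\eta<\lambda}\{f\}^\eta(V)\subseteq V^\circ$ is closed, there is $\delta>0$ with $B_\delta(Y)\subseteq V^\circ$. Now take $x\in B_\delta(Y)$ with $\{f\}^\beta(x)\neq\emptyset$ for all $\beta<\lambda$; then $x\in V^\circ\subseteq V=X^{1}\cap V$, so $\{f\}^\beta(x)\in\{f\}^\beta(V)$ for every $\beta<\lambda$. By complete $\lambda$-inwardness, the family $\{\{f\}^\eta(V)\}_{\eta<\lambda}$ is decreasing along the ordinal order in the sense of~\eqref{inclusion_comp}; hence for any $\eta<\lambda$ and any $\beta$ with $\eta<\beta<\lambda$ we get $\{f\}^\beta(x)\in\{f\}^\eta(V)$. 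Letting $\eta\to\lambda$ and using that $Y=\bigcap_{\eta<\lambda}\{f\}^\eta(V)$ with each $\{f\}^\eta(V)$ closed and contained in the compact space $X$, a standard argument (if $d(\{f\}^{\beta_j}(x),Y)\geq\varepsilon$ along $\beta_j\to\lambda$, pass to a convergent subsequence of $\{f\}^{\beta_j}(x)$, whose limit lies in every $\{f\}^\eta(V)$ and hence in $Y$, contradiction) yields $d(\{f\}^\beta(x),Y)\to 0$ as $\beta\to\lambda$. This is exactly $\lambda$-attractiveness; note it is essentially the same mechanism as in the proof of claim~3 of Theorem~\ref{themattractors}, combined with Lemma~\ref{caract_com_attr}.

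\emph{Claim 2 ($\omega^{\alpha_k}$-stability).} Here I would fix $\varepsilon>0$ and argue by contradiction: suppose for every $\delta_n=1/n$ there is $x_n\in B_{\delta_n}(Y)$ and $\beta_n<\omega^{\alpha_k}$ with $d(\{f\}^{\beta_n}(x_n),Y)\geq\varepsilon$. For $n$ large, $x_n\in V^\circ$, so (using that $\beta_n<\omega^{\alpha_k}$ and that for any $\eta<\lambda$ one has $\eta+\beta_n<\lambda$, exactly as in the proof of Proposition~\ref{inv_perfattr}) we get $\{f\}^{\beta_n}(x_n)\in\{f\}^{\beta_n}(V)\subseteq\bigcap_{\eta<\beta_n}\{f\}^\eta(V)$ by~\eqref{inclusion_comp}. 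The key point, to be handled carefully, is that $\{f\}^{\beta_n}$ might not be uniformly continuous; but we do not need uniform continuity — instead pass to a subsequence along which $x_n\to x\in Y$ (since $d(x_n,Y)\to 0$ and $Y$ is closed) and along which $\{f\}^{\beta_n}(x_n)\to z$ with $d(z,Y)\geq\varepsilon$. One then has to split on whether $\{\beta_n\}$ is bounded or unbounded in $\omega^{\alpha_k}$. If bounded, some $\bar\beta$ recurs infinitely often and $\lambda$-continuity (the system is a $\lambda$-TDS, so $\{f\}^{\bar\beta}$ is continuous at $x\in Y\subseteq X^\lambda\subseteq X^{\bar\beta+1}$) gives $z=\{f\}^{\bar\beta}(x)\in\{f\}^{\bar\beta}(Y)\subseteq Y$ by $\omega^{\alpha_k}$-invariance (Proposition~\ref{inv_perfattr}), a contradiction. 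If unbounded, extract $\beta_{n_j}\uparrow\omega^{\alpha_k}$ and, writing $x_{n_j}\in V$, apply Proposition~\ref{th_plim_seq} to the sequences $\{x_{n_j}\}\subseteq V$ and $\{\beta_{n_j}\}$ to conclude $z\in p(\omega^{\alpha_k})(V)$; but $p(\omega^{\alpha_k})(V)\subseteq\bigcap_{\eta<\omega^{\alpha_k}}\overline{\{f\}^\eta(V)}\subseteq\bigcap_{\eta<\lambda}\{f\}^\eta(V)=Y$ (using $\omega^{\alpha_k}\leq\lambda$, the nesting~\eqref{inclusion_comp}, and Lemma~\ref{caract_com_attr}), again contradicting $d(z,Y)\geq\varepsilon$.

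\emph{The special case.} If $Y$ is algebraically closed, then $\lambda=\omega^\beta$ is additively indecomposable, so $k=1$ and $\alpha_k=\alpha_1=\beta$, whence $\omega^{\alpha_k}=\lambda$ and $\omega^{\alpha_k}$-stability \emph{is} $\lambda$-stability. The main obstacle I anticipate is the unbounded-$\beta_n$ branch of Claim~2: one must be sure that Proposition~\ref{th_plim_seq} applies with the $x_{n_j}$ lying in $V$ (which forces replacing $V$ by a closed uniformly $\lambda$-inward set as in Remark~\ref{rem_Vclosed}, already assumed in the running conventions) and that the resulting limit genuinely lands in $Y$ rather than merely in some larger proper $\beta$-limit — this is why the additive indecomposability of each $\omega^{\alpha_j}$ and the precise nesting property~\eqref{inclusion_comp} of completely inward sets are both essential, and it is the place where mere uniform inwardness would not suffice.
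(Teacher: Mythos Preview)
Your proof of Claim~1 is fine; the paper simply cites Theorem~\ref{themattractors}, and you have essentially reproved its relevant part.

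For Claim~2 there is a genuine gap, and it occurs precisely where you anticipated trouble. In the unbounded branch you assert
\[
p(\omega^{\alpha_k})(V)\;\subseteq\;\bigcap_{\eta<\omega^{\alpha_k}}\overline{\{f\}^\eta(V)}\;\subseteq\;\bigcap_{\eta<\lambda}\{f\}^\eta(V)=Y,
\]
but the second inclusion points the wrong way: since the family $\{\{f\}^\eta(V)\}_{\eta<\lambda}$ is decreasing, intersecting over the \emph{smaller} index set $\{\eta<\omega^{\alpha_k}\}$ produces a \emph{larger} set. When $k\ge 2$ (so $\omega^{\alpha_k}<\lambda$) one has in fact $p(\omega^{\alpha_k})(V)=\bigcap_{\eta<\omega^{\alpha_k}}\{f\}^\eta(V)\supsetneq Y$ strictly, because $Y$ is a \emph{proper} $\lambda$-attractor and hence not equal to $p\beta(V)$ for any $\beta<\lambda$. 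The underlying problem is that in this branch you only use $x_{n_j}\in V$; the hypothesis $d(x_{n_j},Y)\to 0$ has been discarded. A secondary issue: in the bounded branch, ``bounded in $\omega^{\alpha_k}$'' does \emph{not} force any single ordinal to recur whenever $\alpha_k\ge 2$ (e.g.\ $\beta_n=n$ is bounded by $\omega<\omega^2$ yet no value repeats), so that case is also incomplete.

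The paper's argument is direct rather than by contradiction, and it uses the closeness to $Y$ in an essential way: given $\epsilon>0$, it locates $\eta_0<\lambda$ with $Y\subsetneq\{f\}^{\eta_0}(V)\subseteq B_\epsilon(Y)$ (the nested closed sets $\{f\}^\eta(V)$ descend to $Y$ without ever equaling it, else $Y$ would be a $\beta$-attractor for some $\beta<\lambda$), then chooses $\delta>0$ with $B_\delta(Y)\subseteq\{f\}^{\eta_0}(V)$. Every $x\in B_\delta(Y)$ is thus $\{f\}^{\eta_0}(z)$ for some $z\in V$, and for $\beta<\omega^{\alpha_k}$ one gets $\{f\}^\beta(x)=\{f\}^{\eta_0+\beta}(z)\in\{f\}^{\eta_0}(V)\subseteq B_\epsilon(Y)$, since $\eta_0+\beta<\lambda$ by the Cantor normal form of $\lambda$. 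The key idea your approach is missing is that closeness to $Y$ places $x$ inside $\{f\}^{\eta_0}(V)$ for a \emph{large} $\eta_0$, not merely inside $V=\{f\}^0(V)$.
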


\begin{proof}
   Item 1. follows by Theorem \ref{themattractors}. Let us prove item 2. by considering the following two cases:
   \begin{itemize}
       \item Suppose that there exists a sequence $\{\eta_i\}_{i\in\mathbb{N}}$ of limit ordinals such that 
   $\sup_{i\in\mathbb{N}}\eta_i=\lambda$. 
   By Lemma \ref{caract_com_attr}, since $Y=\bigcap_{\eta<\lambda}\{f\}^\eta(V)$ and $V$ is completely $\lambda$-inward, it follows that $Y=\bigcap_{i\in\mathbb{N}} \{f\}^{\eta_i}(V)$. 
   If there exists $m\in\mathbb{N}$ such that $Y=\bigcap_{i=1}^m\{f\}^{\eta_i}(V)=\{f\}^{\eta_m}(V)$, then $Y=p\eta_{m+1}(V)$, which is impossible, since $Y$ is a proper $\lambda$-attractor and $\eta_{m+1}<\lambda$. 
   It follows that there exists a strictly decreasing sequence $\{r_i\}_{i\in\mathbb{N}}$ of positive real numbers such that 
   $$
   Y\subsetneq\{f\}^{\eta_i}(V)\subseteq B_{r_i}(Y)
   $$ 
   for every $i\in\mathbb{N}$, and $r_i\to 0$ as $i\to\infty$. 
   Pick $\epsilon>0$. 
   Then, there exists $i_\epsilon\in\mathbb{N}$ such that $r_{i_\epsilon}<\epsilon$ and $Y\subsetneq \{f\}^{\eta_{i_\epsilon}}(V)\subseteq B_{r_{i_\epsilon}}(Y)$. Hence, there exists $0<\delta_\epsilon<\epsilon$ such that $B_{\delta_\epsilon}(Y)\subseteq \{f\}^{\eta_{i_\epsilon}}(V)$. In particular, for every $x\in B_{\delta_\epsilon}(Y)$, there exists $z\in V$ such that $\{f\}^{\eta_{i_\epsilon}}(z)=x$. Then, for every $\beta<\omega^{\alpha_k}$, we have that 
   $$
   \{f\}^\beta(x)\in \{f\}^{\eta_{i_\epsilon}+\beta}(V)\subseteq \{f\}^{\eta_{i_\epsilon}}(V)\subseteq B_{r_{i_\epsilon}}(Y)\subseteq B_\epsilon(Y),
   $$ 
   where the first inclusion follows by the fact that $\eta_{i_\epsilon}+\beta<\lambda$ and $V$ is completely $\lambda$-inward. By the arbitrariness of $\epsilon$ we have the claim.

   \item Otherwise, there there exists $0\le\beta<\lambda$ such that $\lambda=\beta+\omega=\sup_{k\in\mathbb{N}} (\beta+k)$. This means that $\alpha_k=1$, thus we have to prove that $Y$ is $\omega$-stable. 
   By Lemma \ref{caract_com_attr} and since $V$ is completely $\lambda$-inward, we have that $Y=\bigcap_{k\in\mathbb{N}}\{f\}^{\beta+k}(V)$. 
   Pick $\epsilon>0$, then there exists 
   $$k_\epsilon=\min\{ k\in\mathbb{N}:\{f\}^{\beta+k}(V)\subseteq B_\epsilon(Y)\}.$$
   If $Y\subsetneq \{f\}^{\beta+k_\epsilon}(V)$, then there exists $\delta>0$ such that $B_\delta(Y)\subseteq \{f\}^{\beta+k_\epsilon}(V)$. Hence, since $V$ is completely $\lambda$-inward, for every $x\in B_\delta(Y)$ we have that 
   $$f^n(x)\in \{f\}^{\beta+k_\epsilon+n}(V)\subseteq \{f\}^{\beta+k_\epsilon}(V)\subseteq B_\epsilon(Y) \quad \forall n\in\mathbb{N}.$$
   If $Y=\{f\}^{\beta+k_\epsilon}(V)$, then $B_\epsilon(Y)\subseteq \{f\}^{\beta+k_\epsilon-1}(V)$. This implies that for every $x\in B_\epsilon(Y)$, we have that $f^n(x)\in \{f\}^{\beta+k_\epsilon+n-1}(V)\subseteq \{f\}^{\beta+k_\epsilon}(V)=Y$ for all $n\in\mathbb{N}$.
   \end{itemize}
   
\end{proof}

\begin{prop}\label{akin_compl_attr}
    Let $(X,\{f\})$ be a $\lambda$-TDS and $Y$ a complete $\lambda$-attractor. Then $Y$ is $\lambda\{\mathcal{F}\}$-invariant. Moreover, if writing $\lambda$ as $$\lambda=\omega^{\alpha_1}+\ldots + \omega^{\alpha_k},$$ where $k\in\mathbb{N}$ and $\alpha_1\ge \ldots\ge \alpha_k$ are countable ordinals, $Y$ is strongly $\omega^{\alpha_k}$-invariant, then the following holds 
    \begin{equation}\label{eq_akin_compl_attr}
    Y\subseteq \omega^{\alpha_k}\{\mathcal{B}\}(Y\cap |\omega^{\alpha_k}\{\mathcal{B}\}|)\quad ,\quad Y\supseteq \lambda\{\mathcal{F}\}(Y\cap |\lambda\{\mathcal{F}\}|).
    \end{equation}
\end{prop}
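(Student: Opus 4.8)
The statement splits into an easy part, concerning $\lambda\{\mathcal{F}\}$, and the genuinely transfinite first inclusion in \eqref{eq_akin_compl_attr}. Write $Y=p\lambda(V)$ with $V$ a $\lambda$-closed, completely $\lambda$-inward set, which by the standing convention we may take closed and which, being completely $\lambda$-inward, is uniformly $\lambda$-inward. By Lemma \ref{caract_com_attr}, $Y=\bigcap_{\eta<\lambda}\{f\}^\eta(V)\subseteq\{f\}^1(V)\subseteq V$, so Lemma \ref{lambdaf_prop} gives $\lambda\{\mathcal{F}\}(Y)\subseteq\lambda\{\mathcal{F}\}(V)=p\lambda(V)=Y$; that is, $Y$ is $\lambda\{\mathcal{F}\}$-invariant (equivalently, one may invoke Proposition \ref{lambdaF_inv} directly). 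Intersecting with $|\lambda\{\mathcal{F}\}|$ immediately yields $\lambda\{\mathcal{F}\}(Y\cap|\lambda\{\mathcal{F}\}|)\subseteq\lambda\{\mathcal{F}\}(Y)\subseteq Y$, the second inclusion.

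For the first inclusion I would use the extra hypothesis that $Y$ is strongly $\omega^{\alpha_k}$-invariant, so $\{f\}^\delta(Y)=Y$ for all $\delta<\omega^{\alpha_k}$. Fix $y\in Y$. Choose a strictly increasing sequence of ordinals $\tau_1<\tau_2<\cdots<\omega^{\alpha_k}$ with $\sup_i\tau_i=\omega^{\alpha_k}$ (possible since $\omega^{\alpha_k}$ is a countable limit ordinal, hence of cofinality $\omega$), put $\tau_0:=0$, and let $\delta_i$ be the unique ordinal with $\tau_{i-1}+\delta_i=\tau_i$; then $0<\delta_i<\omega^{\alpha_k}$, because $\omega^{\alpha_k}$ is additively indecomposable. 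Using strong $\omega^{\alpha_k}$-invariance, construct inductively $y_0:=y,y_1,y_2,\dots\in Y$ with $\{f\}^{\delta_i}(y_i)=y_{i-1}$. Iterating the composition rule \eqref{composition1}--\eqref{composition2} and using associativity of ordinal sum, one gets $\{f\}^{\rho_{ij}}(y_i)=y_j$ for $i>j$, where $\rho_{ij}:=\delta_i+\delta_{i-1}+\cdots+\delta_{j+1}$; in particular $\{f\}^{\tau_i}(y_i)=y$ and $\rho_{ij}+\tau_j=\tau_i$. Since a finite sum of ordinals below $\omega^{\alpha_k}$ stays below $\omega^{\alpha_k}$, every $\rho_{ij}<\omega^{\alpha_k}$; and since the map $\rho\mapsto\rho+\tau_j$ is monotone, the strict increase of $(\tau_i)$ forces $i\mapsto\rho_{ij}$ to be strictly increasing with $\sup_i\rho_{ij}=\omega^{\alpha_k}$, for otherwise $\tau_i\le(\sup_i\rho_{ij})+\tau_j<\omega^{\alpha_k}$, a contradiction.

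Now pass to a convergent subsequence $y_{i_m}\to x$; as $Y$ is closed, $x\in Y$. I then check $x\in Y\cap|\omega^{\alpha_k}\{\mathcal{B}\}|$ and $x\,\omega^{\alpha_k}\{\mathcal{B}\}\,y$. For the latter: given open neighborhoods $U(y),V(x)$ and $\beta<\omega^{\alpha_k}$, pick $m$ with $y_{i_m}\in V(x)$ and $\tau_{i_m}>\beta$, and take $z:=y_{i_m}$, which satisfies $\{f\}^{\tau_{i_m}}(z)=y\in U(y)$. For $x\in|\omega^{\alpha_k}\{\mathcal{B}\}|$: given $\epsilon>0$ and $\beta<\omega^{\alpha_k}$, pick $m$ with $y_{i_m}\in B_\epsilon(x)$, then $m'>m$ with $y_{i_{m'}}\in B_\epsilon(x)$ and $\rho_{i_{m'},i_m}>\beta$, and take $z:=y_{i_{m'}}$, which satisfies $\{f\}^{\rho_{i_{m'},i_m}}(z)=y_{i_m}\in B_\epsilon(x)$. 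Hence $y\in\omega^{\alpha_k}\{\mathcal{B}\}(\{x\})$ with $x\in Y\cap|\omega^{\alpha_k}\{\mathcal{B}\}|$, and since $y\in Y$ was arbitrary, $Y\subseteq\omega^{\alpha_k}\{\mathcal{B}\}(Y\cap|\omega^{\alpha_k}\{\mathcal{B}\}|)$.

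The routine part is the $\lambda\{\mathcal{F}\}$ half. The delicate point is the first inclusion when $\alpha_k$ is a limit ordinal: then $\omega^{\alpha_k}$ is not of the form $\gamma\cdot\omega$, so the ``iterate one fixed $\gamma$-step preimage'' device used in the proof of Theorem \ref{__akin_} is unavailable. The substitute is the left-cancellation identity $\rho_{ij}+\tau_j=\tau_i$ together with additive indecomposability of $\omega^{\alpha_k}$, which simultaneously keeps all iteration orders below $\omega^{\alpha_k}$ and makes the backward gaps $\rho_{ij}$ cofinal in it; that all the relevant iterates are actually defined follows automatically from the composition rule, since each single step $\{f\}^{\delta_i}(y_i)=y_{i-1}$ is.
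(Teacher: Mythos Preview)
Your treatment of the $\lambda\{\mathcal{F}\}$ half is fine and matches the paper. The first inclusion, however, contains a genuine error stemming from the non-commutativity of ordinal addition. From $\{f\}^{\delta_i}(y_i)=y_{i-1}$ and the composition rule one gets $\{f\}^{\rho_{ij}}(y_i)=y_j$ with $\rho_{ij}=\delta_i+\delta_{i-1}+\cdots+\delta_{j+1}$, as you write; but then your claimed identities $\{f\}^{\tau_i}(y_i)=y$ and $\rho_{ij}+\tau_j=\tau_i$ are false in general. What the defining recursion $\tau_{i-1}+\delta_i=\tau_i$ gives is the \emph{forward} identity $\tau_j+(\delta_{j+1}+\cdots+\delta_i)=\tau_i$, not one involving the reversed sum $\rho_{ij}$. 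For a concrete failure take $\delta_1=1$, $\delta_2=\omega$: then $\tau_2=1+\omega=\omega$ while $\rho_{2,0}=\omega+1$, so $\rho_{2,0}+\tau_0=\omega+1\neq\tau_2$; likewise $\{f\}^{\tau_2}(y_2)$ is not $y$ but some other point of $Y$. Your monotonicity claim also fails: $\rho_{i+1,j}=\delta_{i+1}+\rho_{ij}$ can equal $\rho_{ij}$ when $\delta_{i+1}$ is absorbed by the leading term of $\rho_{ij}$. Consequently, your justification that the backward gaps $\rho_{ij}$ are cofinal in $\omega^{\alpha_k}$ collapses, and both verifications ($x\,\omega^{\alpha_k}\{\mathcal{B}\}\,y$ and $x\in|\omega^{\alpha_k}\{\mathcal{B}\}|$) rest on it.

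The paper's proof sidesteps this difficulty with a small but decisive change: instead of fixing targets $\tau_i$ and taking \emph{differences} $\delta_i$ as step sizes, it takes the cofinal sequence $\{\beta_n\}$ itself as the step sizes, i.e.\ $\{f\}^{\beta_n}(y_n)=y_{n-1}$. Then the iterate carrying $y_{n_{k_2}}$ to $y_{n_{k_1}}$ is $\gamma=\beta_{n_{k_2}}+\beta_{n_{k_2}-1}+\cdots+\beta_{n_{k_1}+1}$, and one uses only the trivial inequality $\gamma\ge\beta_{n_{k_2}}$ (first summand) together with $\gamma<\omega^{\alpha_k}$ by indecomposability: since $\sup_n\beta_n=\omega^{\alpha_k}$, choosing $k_2$ large makes $\gamma>\beta$. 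No ``left-cancellation'' identity is needed. Your argument is repairable---one can show, via Cantor normal forms, that reversed partial sums are cofinal in an indecomposable $\mu$ whenever the forward ones are---but that is a separate, nontrivial lemma; the cleaner fix is simply to drop the differences $\delta_i$ and use the cofinal sequence directly as steps, exactly as the paper does.
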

\begin{proof}
Since $Y$ is a proper $\lambda$-attractor, by Proposition \ref{lambdaF_inv} we have that $Y$ is $\lambda\{\mathcal{F}\}$-invariant. 

Take $y\in Y$ and let $\{\beta_n\}_{n\in\mathbb{N}}$ be such that $\sup_{n\in\mathbb{N}}\beta_n=\omega^{\alpha_k}$. 
Suppose that $Y$ is strongly $\omega^{\alpha_k}$-invariant. Therefore, $\{f\}^{\beta_n}(Y)=Y$ for all $n\in\mathbb{N}$. 
Thus, we can construct a sequence $\{y_n\}_{n\in\mathbb{N}_0}$ of points of $Y$ such that 
    \[
    y_0=y \quad \text{and}\quad \{f\}^{\beta_n}(y_n)=y_{n-1}\qquad \forall n\in\mathbb{N}.
    \]
    Then, there exists a subsequence $\{y_{n_k}\}_{k\in\mathbb{N}}$ that converges to some point  $x$. 
    Since $Y$ is closed, we have that $x\in Y$. 
    We want to show that $x\, \omega^{\alpha_k}\{\mathcal{B}\}\, y$ and $x\in |\omega^{\alpha_k}\{\mathcal{B}\}|$. 
    
    Pick $\epsilon>0$ and $\beta<\omega^{\alpha_k}$.
    Let $p,q\in\mathbb{N}$ be sufficiently large that $y_{n_p},y_{n_q}\in B_\epsilon(x)$ and $\beta_{n_q}>\beta$. 
    Set $\gamma:=\sum_{i=0}^{n_q-n_p-1} \beta_{n_q-i}$. We have $\{f\}^\gamma(y_{n_q})=y_{n_p}\in B_\epsilon(x)$ and $\gamma\ge \beta_{n_q}$.
    Since $\omega^{\alpha_k}$ is additively indecomposable, we have $\gamma<\omega^{\alpha_k}$.
    By the arbitrariness of $\epsilon$ and $\beta<\omega^{\alpha_k}$, it follows that $x\in |\omega^{\alpha_k}\{\mathcal{B}\}|$.
    
    Pick $\epsilon>0$ and $\beta<\lambda$, then there exists $q>0$ so large that $y_{n_q}\in B_\epsilon(x)$ and $\gamma:= \sum_{i=0}^{n_q-1}\beta_{n_q-i}$ is such that $\beta<\gamma<\omega^{\alpha_k}$. 
    Since 
    $
    \{f\}^{\gamma}(y_{n_q})=y,
    $
    we conclude that $x\, \omega^{\alpha_k}\{\mathcal{B}\}\, y$.
\end{proof}

\begin{rem}\label{corol_perfect_}
  Notice that, by Proposition \ref{cor_lambda_inv}, Proposition \ref{akin_compl_attr} holds if we replace the assumption that $Y$ is strongly $\omega^{\alpha_k}$-invariant by the assumption that $Y$ is $\omega^{\alpha_k}$-reachable.
\end{rem}

\begin{rem}
    Notice that the left inclusion in \eqref{eq_akin_compl_attr} also holds with $\lambda\{\mathcal{N}\}$ instead of $\omega^{\alpha_k}\{\mathcal{B}\}$. Indeed,
    $$\omega^{\alpha_k}\{\mathcal{B}\}(Y\cap |\omega^{\alpha_k}\{\mathcal{B}\}|)\subseteq \lambda\{\mathcal{N}\}(Y\cap |\lambda\{\mathcal{N}\}|).$$
\end{rem}
\begin{thm}
\label{akin_perf_}
  Let $(X,\{f\})$ be a $\lambda$-TDS and $Y$ be a perfect $\lambda$-attractor. Then $Y$ is $\lambda\{\mathcal{F}\}$-invariant. Moreover, we have
  \begin{equation}\label{akin_tran}
    Y\subseteq \lambda\{\mathcal{B}\}(Y\cap |\lambda\{\mathcal{B}\}|)\quad ,\quad Y\supseteq \lambda\{\mathcal{F}\}(Y\cap |\lambda\{\mathcal{F}\}|).
    \end{equation}
\end{thm}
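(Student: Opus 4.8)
The plan is to derive Theorem \ref{akin_perf_} as a specialization of Proposition \ref{akin_compl_attr}. Recall that a perfect $\lambda$-attractor is, by Definition \ref{perf_attr__}, almost perfect (hence complete and algebraically closed, i.e. $\lambda=\omega^\alpha$ for some $\alpha<\omega_1$) and strongly $\lambda$-invariant. So the hypotheses of Proposition \ref{akin_compl_attr} are all met: $Y$ is a complete $\lambda$-attractor, and in the Cantor Normal Form $\lambda=\omega^{\alpha_1}+\dots+\omega^{\alpha_k}$ of an additively indecomposable ordinal we have $k=1$ and $\alpha_1=\alpha$, so $\omega^{\alpha_k}=\omega^{\alpha}=\lambda$. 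The strong $\omega^{\alpha_k}$-invariance required by Proposition \ref{akin_compl_attr} is exactly strong $\lambda$-invariance, which holds by the definition of perfectness.

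First I would invoke Proposition \ref{akin_compl_attr} to get that $Y$ is $\lambda\{\mathcal{F}\}$-invariant (this part of that proposition needs only that $Y$ is a proper $\lambda$-attractor, via Proposition \ref{lambdaF_inv}, and is unconditional). Then, since $Y$ is strongly $\omega^{\alpha_k}=\lambda$-invariant, the second conclusion of Proposition \ref{akin_compl_attr} yields
\[
Y\subseteq \omega^{\alpha_k}\{\mathcal{B}\}(Y\cap |\omega^{\alpha_k}\{\mathcal{B}\}|)\quad,\quad Y\supseteq \lambda\{\mathcal{F}\}(Y\cap |\lambda\{\mathcal{F}\}|).
\]
Substituting $\omega^{\alpha_k}=\lambda$ into the left inclusion gives precisely $Y\subseteq \lambda\{\mathcal{B}\}(Y\cap |\lambda\{\mathcal{B}\}|)$, which is the left half of \eqref{akin_tran}; the right inclusion is already in the desired form. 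That completes the proof.

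**The only subtlety** — and it is minor — is to make explicit why algebraic closedness forces the Cantor Normal Form to be a single term, so that the ``last exponent'' $\omega^{\alpha_k}$ appearing in Proposition \ref{akin_compl_attr} collapses to $\lambda$ itself. This is immediate from the stated fact that the additively indecomposable countable ordinals are exactly those of the form $\omega^\eta$. I do not anticipate any real obstacle: the theorem is essentially a corollary packaging, where perfectness has been defined precisely so as to make both the completeness hypothesis and the strong-invariance hypothesis of Proposition \ref{akin_compl_attr} available simultaneously, while algebraic closedness aligns the two ordinal levels $\omega^{\alpha_k}$ and $\lambda$ that otherwise differ in the general complete case.

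\begin{proof}
Since $Y$ is perfect, by Definition \ref{perf_attr__} it is almost perfect and strongly $\lambda$-invariant; in particular $Y$ is a complete $\lambda$-attractor and $\lambda$ is algebraically closed, i.e. $\lambda=\omega^\alpha$ for some $\alpha<\omega_1$. Writing $\lambda$ in Cantor Normal Form as $\lambda=\omega^{\alpha_1}+\ldots+\omega^{\alpha_k}$ with $\alpha_1\ge\ldots\ge\alpha_k$, the additive indecomposability of $\lambda$ forces $k=1$ and $\alpha_1=\alpha$, so that $\omega^{\alpha_k}=\omega^\alpha=\lambda$.

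By Proposition \ref{akin_compl_attr}, $Y$ is $\lambda\{\mathcal{F}\}$-invariant. Moreover, $Y$ is strongly $\omega^{\alpha_k}$-invariant (this is exactly strong $\lambda$-invariance, which holds by perfectness), so the second part of Proposition \ref{akin_compl_attr} applies and gives
\[
Y\subseteq \omega^{\alpha_k}\{\mathcal{B}\}(Y\cap |\omega^{\alpha_k}\{\mathcal{B}\}|)\quad,\quad Y\supseteq \lambda\{\mathcal{F}\}(Y\cap |\lambda\{\mathcal{F}\}|).
\]
Replacing $\omega^{\alpha_k}$ by $\lambda$ in the first inclusion yields
\[
Y\subseteq \lambda\{\mathcal{B}\}(Y\cap |\lambda\{\mathcal{B}\}|)\quad,\quad Y\supseteq \lambda\{\mathcal{F}\}(Y\cap |\lambda\{\mathcal{F}\}|),
\]
which is \eqref{akin_tran}.
\end{proof}
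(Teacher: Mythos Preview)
Your proposal is correct and follows exactly the paper's approach: the paper's proof is the single line ``It follows from Proposition \ref{akin_compl_attr} assuming $\lambda=\omega^\beta$ for some ordinal $\beta<\omega_1$,'' and you have simply unpacked this in detail, making explicit why algebraic closedness forces $\omega^{\alpha_k}=\lambda$ and why strong $\lambda$-invariance supplies the needed hypothesis.
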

\begin{proof}
It follows from Proposition \ref{akin_compl_attr} assuming $\lambda=\omega^\beta$ for some ordinal $\beta<\omega_1$.
\end{proof}

Let us state explicitly the particular case $\lambda=\omega$, in which, similarly to what happens with Theorem \ref{__akin_}, the first inclusion is a refinement of the corresponding inclusion in Akin's Theorem:
\begin{cor}\label{akin_ref1}  Let $(X,f)$ be a topological dynamical system, with $X$ compact metric and $f$ continuous. Let $Y\subseteq X$ be an attractor. Then $Y$ is $\omega\{\mathcal{F}\}$-invariant. Moreover, we have
  \begin{equation}\label{akin_tran_omega}
    Y\subseteq \omega\{\mathcal{B}\}(Y\cap |\omega\{\mathcal{B}\}|)\quad ,\quad Y\supseteq \omega\{\mathcal{F}\}(Y\cap |\omega\{\mathcal{F}\}|).
    \end{equation}
    \qed
\end{cor}

As we saw, perfect attractors behave almost like $\omega$-attractors (i.e., attractors in finite dynamical systems), but we have to emphasize that they are \emph{very special cases} in transfinite systems. 
Indeed, to have a perfect attractor, the transfinite dynamics of $\{f\}$ has to ensure that three a priori independent conditions are verified: all the iterations of a certain inward set verify the inclusion property characterizing complete inwardness, the ordinal at which the attractor emerges has to have a certain form, and we also need strong transfinite invariance. 
This means that, unless we make some rather strong assumptions, transfinite attractors are significantly ``wilder" than finite ones. 
None of the transfinite attractors we saw up to this point (except, of course, $\omega$-attractors) were perfect. In the next example we describe a TDS in which a perfect $\lambda$-attractor emerges at level $\lambda=\omega^2$.

\begin{exa}\label{exa_perfect}
Let $\{x_i\}_{i\in\mathbb{N}}\subseteq \mathbb{I}$ be a strictly increasing sequence such that $x_1=0$ and $x_i\longrightarrow 1$ as $i\to\infty$. Set 
    $$I_i:=\{x_i\}\times \mathbb{I}\quad \forall\, i\in\mathbb{N},\qquad I_\infty:=\{1\}\times\mathbb{I},$$
    and
    $$
    X:=I_\infty \cup \bigcup_{ i\in\mathbb{N}}I_i    \subseteq \mathbb{I}^2.
    $$
    Assume on $X$ the Euclidean metric inherited from $\mathbb{R}^2$, so that $X$ is a compact metric space.
    Take $m,z\in (0,1)$. Let $g:\mathbb{I}\circlearrowleft$ be the function given by 
    \[
    g(x)=
    \begin{cases}
       m x \quad& \text{if } x\in [0,z]\\
       \frac{1-mz}{1-z}(x-1)+1 \quad& \text{if } x\in (z,1]
    \end{cases}
    \]
    Let us define the map $f:X\circlearrowleft$ as 
    $$f(x,y)=(x,g(y)).$$ 
    Let $U_0:=[a_0,b_0]\subseteq (0,z)$ and $V_0:=[c_0,d_0]\subseteq (0,z)$ be such that $g^k(U_0)\cap V_0=\emptyset$ for every $k\in\mathbb{Z}$. 
    For every $k\in\mathbb{Z}$, we set $U_k:=[a_k,b_k]=g^k(U_0)$, $V_k:=[c_k,d_k]=g^k(V_0)$ and we denote by
    \[
    \mathcal{U}=\bigcup_{k\in\mathbb{Z}}U_k, \qquad  \mathcal{V}=\bigcup_{k\in\mathbb{Z}}V_k.
    \] 
    Let $\{u_n\}_{n\in\mathbb{N}_0}$ and $\{v_n\}_{n\in\mathbb{N}_0}$ be two sequences of functions $u_n:U_n\rightarrow\mathbb{I}$ and $v_n:V_n\rightarrow\mathbb{I}$ given by
    \[
    u_n(x)=\frac{x-a_n}{b_n-a_n}, \qquad  
    v_n(x)=\frac{x-c_n}{d_n-c_n}.
    \]
    Let us now define the sequence $\{f_n\}_{n\in\mathbb{N}}$ of functions $f_n:X\circlearrowleft$ as follows (see Fig. \ref{fig_perf_att}):
    \[
    f_n(x,y)=
    \begin{cases}
       f(x,y) \quad& \text{} y\in \mathbb{I}\setminus (U_n\cup V_n) \\
       
       f\left( x_{i+1}, u_n(y)\right) \quad& \text{if } y\in U_n \text{ and } x=x_i \text{ for some }i\in\mathbb{N}\\

        f\left( 1, u_n(y)\right) \quad& \text{if } y\in U_n \text{ and } x=1 \\
       
       f\left(1, v_n(y)\right) \quad& \text{if } y\in V_n 
    \end{cases}
    \]
    Since the endpoints (and thus the diameters) of $U_n$ and $V_n$ go to zero as $n$ goes to $\infty$, we have that $f_n \dot{\longrightarrow}  f$, with the limit map $f$ continuous, and therefore we have defined the TDS $(X,\{f\})$. 

    Since the map $g$ is surjective, it follows that $f^k(X)=X$ for all $k\in\mathbb{N}_0$. 
    Moreover, observing that 1 is a repulsive fixed point for $g$ and $(0,1]$ is the basin of attraction of 1 for $g^{-1}$, we have $\omega_g(\mathbb{I})=\mathbb{I}$, which implies that $\omega_f(X)=X$. 
    
    Notice that, if $y\in \mathbb{I}\setminus (\mathcal{U}\cup\mathcal{V})$, we have that $\{f\}^\omega(x,y)=\emptyset$. 
    On the other hand, if $(x,y)\in \{x_i\}\times U_k$ or $(x,y)\in \{1\}\times U_k$ for some $i\in\mathbb{N}$ and $k\in\mathbb{Z}$, then $\{f\}^\omega(x,y)\neq \emptyset$. 
    Indeed, there exists $y_0\in U_0$ such that $g^k(y_0)=y$, and if $x=x_i$ for some $i\in\mathbb{N}$, we have:
    $$
        \mathcal{O}_n(x,y)=\left \{f(x,y),\ldots,f^{n-k-1}(x,y),\left(x_{i+1},u_n(y)\right),\ldots \right \} \quad \forall\,n>k.
    $$
    Since $u_n(y)=\frac{y_0-a}{b-a}=u_0(y_0)$ and it does not depend by $n$, it follows that $\{f\}^\omega(x,y)=\left(x_{i+1},u_0(y_0)\right)$. 
    Since the map $u_0$ is surjective, we have:
    \begin{equation}\label{it_om_U_1}
    \{ \{f\}^\omega(x_i,y) : y\in \mathcal{U}\}=I_{i+1}\qquad \forall i\in\mathbb{N}.
    \end{equation}
    Moreover, if $x=1$, with the same argument, we have: 
     \[
    \mathcal{O}_n(x,y)=\left \{f(1,y),\ldots,f^{n-k-1}(1,y),\left(1,u_n(y)\right),\ldots \right \} \quad \forall\,n>k,
    \]
    which means that
    \begin{equation}\label{it_om_U_2}
        \{ \{f\}^\omega(1,y) : y\in \mathcal{U}\}=I_\infty.
    \end{equation}
    With an analogous argument, we have:
    \begin{equation}\label{it_om_V}
        \{ \{f\}^\omega(x,y) : y\in \mathcal{V}\}=I_\infty \qquad \forall\, x\in \{1,x_0,x_1,\ldots\}.
    \end{equation}
    By \eqref{it_om_U_1}, \eqref{it_om_U_2}, and \eqref{it_om_V}, it follows that $\{f\}^\omega(X)=X\setminus I_1$ and since the map $u_0$ is continuous we have that $\{f\}^\omega$ is continuous as well. In addition, since $X$ is strongly $f$-invariant, we have $\{f\}^{\omega+k}(X)=\{f\}^\omega(X)$ for every $k\in\mathbb{N}$. 
    This implies that $(\omega\cdot 2)(X)=X\setminus I_1$. 
    With the same argument, it follows that 
    \begin{equation}\label{eq_ite_ex_attper}
         \{f\}^{\omega\cdot h+k}(X)=X\setminus (I_1\cup\ldots\cup I_h)= (\omega \cdot(h+1))_f(X)\qquad
    \forall \,h\in\mathbb{N},\ \forall\,k\in\mathbb{N}_0,
    \end{equation}
    and that $\{f\}$ is $\omega^2$-regular. 
    Moreover, Eq. \eqref{eq_ite_ex_attper} implies that 
    $$
    p\omega^2(X)=\bigcap_{\eta<\omega^2}\,\,\overline{\bigcup_{\eta<\beta<\omega^2}\{f\}^\beta(X)}=\bigcap_{h\in\mathbb{N}}\overline{X\setminus (I_1\cup\ldots\cup I_h)}=I_\infty.
    $$ 
    is a proper $\omega^2$-attractor.
    
     
    By construction, $X$ is a completely $\omega^2$-inward set and, by Eq. \eqref{eq_ite_ex_attper}, $X$ is $\omega^2$-closed, which means that $I_\infty$ is an almost perfect $\omega^2$-attractor. Moreover, since $I_\infty$ is strongly $f$-invariant, by \ref{it_om_V} it follows that $I_\infty$ is strongly $\omega^2$-invariant. Then $I_\infty$ is a perfect $\omega^2$-attractor.
    Notice that $I_\infty$ is not $\omega^2$-reachable. For instance, the point $(1,1)$ does not belong to $X^{\omega+1}$, although it is the limit of the sequence of points $\{(x_i,b_{-i})\}_{i\in \mathbb{N}}$, all admitting iterations of order $\omega$. Therefore, the example shows that the converse of Theorem \ref{strong_inv_perf_} is not true, in the sense that $\lambda$-reachability is not a necessary condition for perfectness.

\vspace{0.2cm}

    A schematic classification of transfinite attractors is given in Figs. \ref{scheme_attrac_1}-\ref{scheme_attrac_2}.

    \vspace{0.5cm}

\begin{figure}[H]
 \centering
 \fbox{\includegraphics[width=10cm]{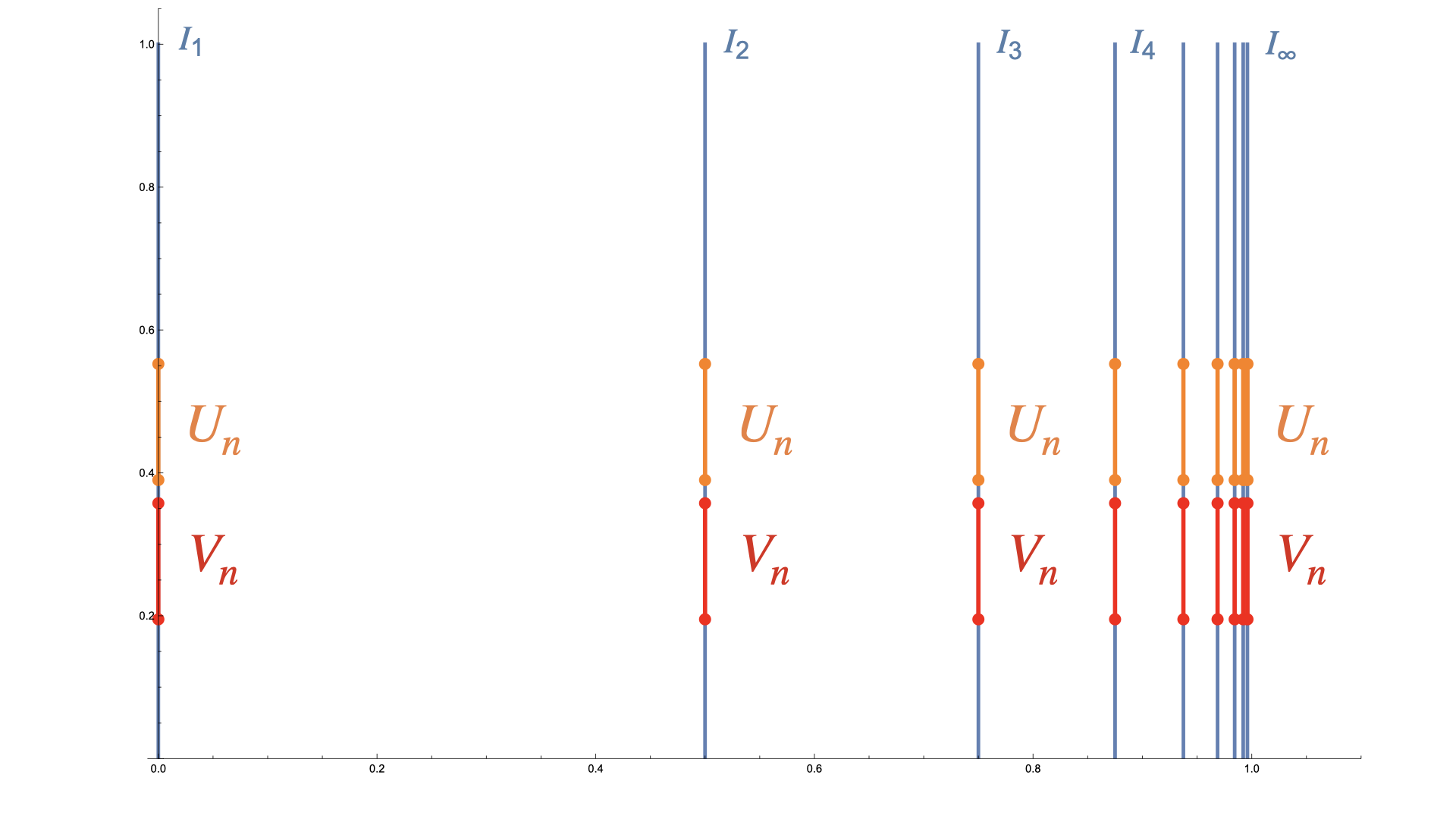}}
 \caption{The system defined in Example \ref{exa_perfect}. Each sub-interval $U_n$ in $I_k$ is mapped onto $I_{k+1}$ by $f_n$. Each sub-interval $V_n$ in $I_k$ is mapped onto $I_\infty$ by $f_n$. We have thus, for every $k$, $\{f\}^\omega(I_k)=I_{k+1}\cup I_\infty$.}\label{fig_perf_att}
 \end{figure}
\end{exa}

\vspace{0.5cm}

\begin{figure}[H]
    \centering
    \includegraphics[width=0.9\linewidth]{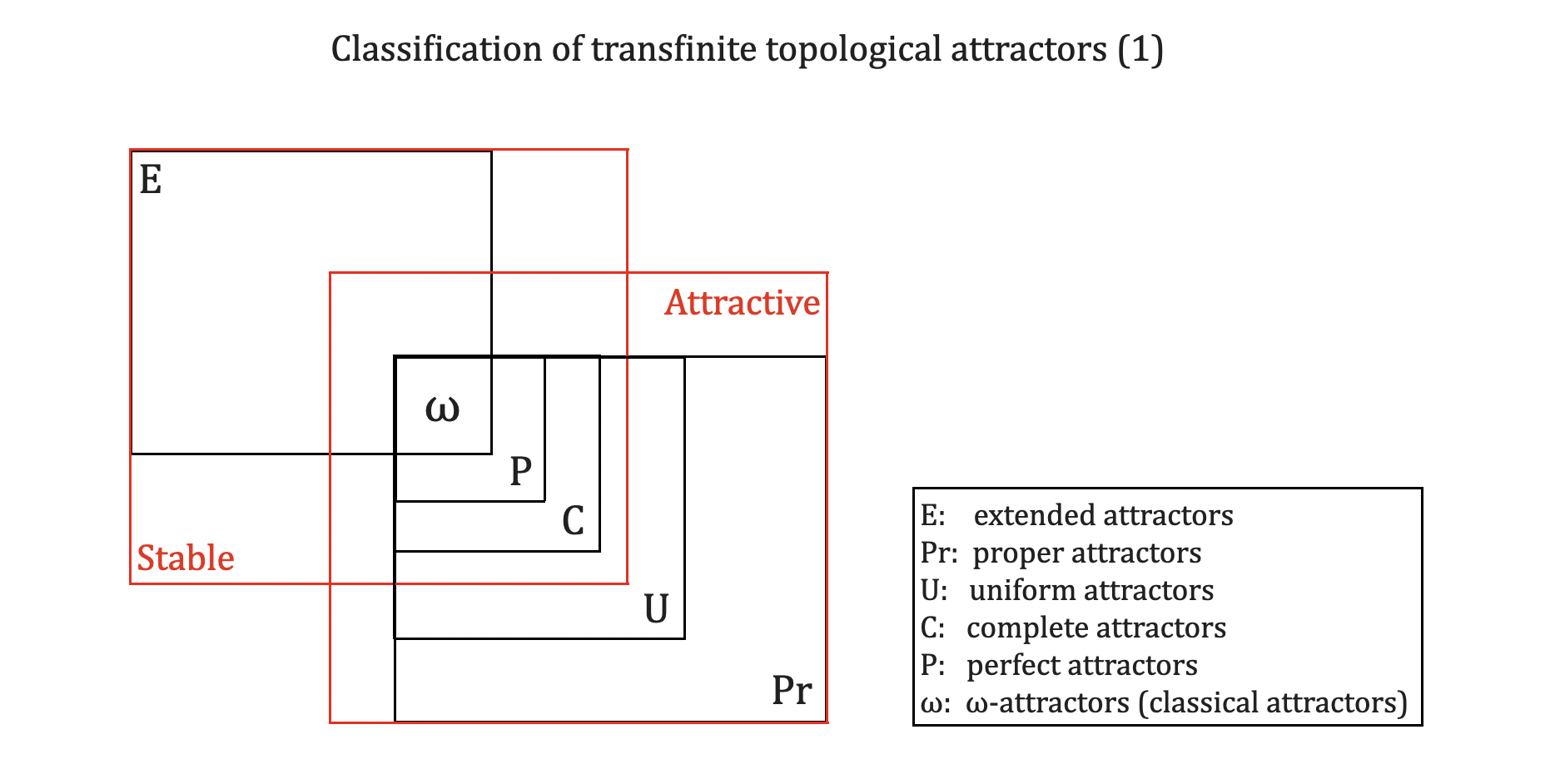}
   \caption{Proper and extended transfinite attractors intersect at $\omega$-attractors, that is attractors in finite systems; proper attractors are progressively particularized by uniform, complete and perfect attractors. Attractive and stable objects are highlighted.}
    \label{scheme_attrac_1}
\end{figure}

\vspace{0.5cm}

\begin{figure}[H]
    \centering
    \includegraphics[width=0.9\linewidth]{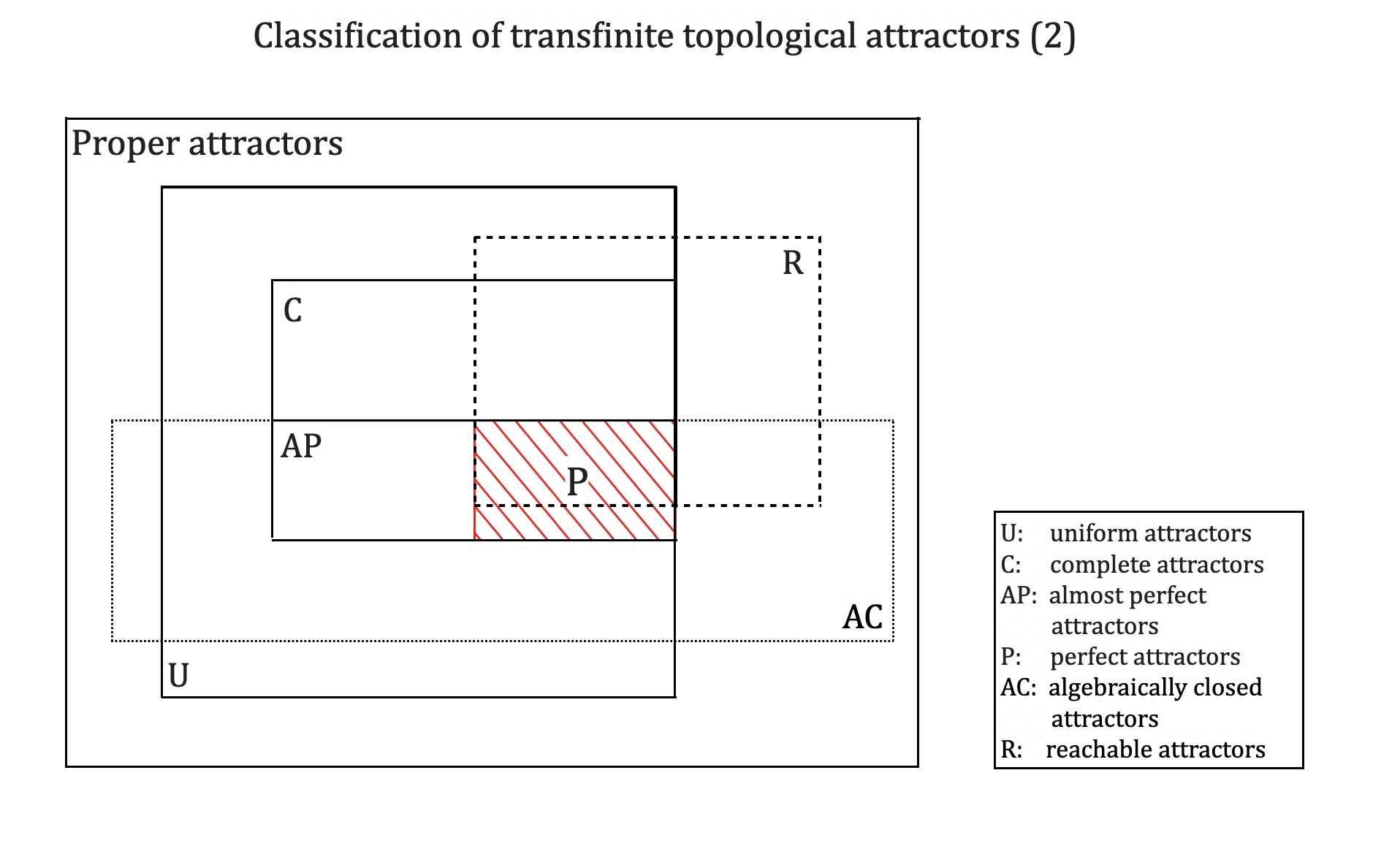}
    \caption{Scheme of proper transfinite attractors. Perfect attractors are attractive, stable, algebraically closed and transfinitely strongly invariant.}
    \label{scheme_attrac_2}
\end{figure}

\section{Transfinite conjugacy}\label{sec8}
In this Section, we generalize the basic ideas and results about conjugacy and topological conjugacy to TDSs. 
Considering two compact metric spaces $X$, $Y$, whose metric is denoted respectively by $d_X$ and $d_Y$, we start by giving the following definition:
\begin{defi}[\textbf{Transfinite conjugacy}]
\label{conjugacy}
    Let $\omega\le\lambda\le\omega_1$ be a limit ordinal. We say that two TDSs, $(X,\{f\})$ and $(Y,\{g\})$, are \emph{$\lambda$-conjugate} if there exists a bijection $h:X\to Y$ such that, for every $\beta<\lambda$, we have 
    \begin{equation}\label{conj___}
     h(X^{\beta+1})=Y^{\beta+1}   
    \end{equation} and, for every $\beta<\lambda$,
    \begin{equation}\label{conj_}
        \{f\}^\beta=h^{-1}\circ \{g\}^\beta\circ h. 
    \end{equation}

    We say that $(X,\{f\})$ and $(Y,\{g\})$ are \emph{topologically $\lambda$-conjugate} if they are conjugate and the map $h$ is a homeomorphism.
    \\
    We say that $(X,\{f\})$ and $(Y,\{g\})$ are (topologically) \emph{completely conjugate} if there is $\lambda\le\omega_1$ such that $\lambda=\mathfrak{D}(X,\{f\})=\mathfrak{D}(Y,\{g\})$ and that $(X,\{f\})$ and $(Y,\{g\})$ are (topologically) $\lambda$-conjugate.
\end{defi}
Both being $\lambda$-conjugate and completely conjugate are equivalence relations. The equivalence classes of these relations having representative $(X,\{f\})$  will be indicated respectively by $\lambda[X,\{f\}]$ and $[X,\{f\}]$.

    Since for every TDS $(X,\{f\})$ we have $X^\omega=X$, the systems $(X,\{f\})$ and $(Y,\{g\})$ are (topologically) $\omega$-conjugate if and only if the finite systems $(X,f)$ and $(Y,g)$ are (topologically) conjugate in the ordinary sense. It is also immediate to see that, for $\lambda \geq \beta$, $\lambda$-conjugacy implies $\beta$-conjugacy. 
    
    \begin{rem}
        
Since two TDSs with the same limit map can have  totally different transfinite dynamics, it is immediate to see that $\lambda$-conjugacy distinguishes sequences $\{f_n\}$ and $\{g_n\}$ in cases where their pointwise limit maps $f$ and $g$ are topologically conjugate in the classical sense. 
  \end{rem}

\begin{rem}
    We could have assumed that the equality \eqref{conj_} be valid just for ordinals of the form $\beta=\omega^\eta<\lambda$ for some countable ordinal $\eta$. Indeed, every ordinal can be written as a finite sum of ordinals of type $\omega^\eta$, so every transfinite iteration is a finite composition, by means of the rules \eqref{composition1}-\eqref{composition2}, of iterations of type $\omega^\eta$. From this, equality \eqref{conj_} for arbitrary ordinals smaller than $\lambda$ follows by induction on the number of terms in the composition.
\end{rem}

\begin{thm}\label{_conj__}
    If $(X,\{f\})$ and $(Y,\{g\})$ are topologically $\lambda$-conjugate TDSs and $\beta\le\lambda$, then:
    \begin{itemize}
        \item[1.] for every $x\in X$, we have $h(\{\mathcal{O}\}(x))=\{\mathcal{O}\}(h(x))$ and the sets $\{\mathcal{O}\}(x)$ and $\{\mathcal{O}\}(h(x))$ are order-isomorphic with respect to $<_\infty$; in particular, every transfinite cycle $\mathcal{C}\subseteq X$ is homeomorphic to a transfinite cycle $\mathcal{C}'\subseteq Y$ of the same order.
        \item[2.] for every (uniformly, completely) $\beta$-inward set $V\subseteq X$ we have that $h(V)\subseteq Y$ is a (uniformly, completely) $\beta$-inward set;
        \item[3.] for every pair of subsets $V\subseteq X$ and $U\subseteq Y$ such that $U=h(V)$, we have $$p\beta(U)=h(p\beta(V))\text{ and }e\beta(U)=h(e\beta(V)).$$ In particular, every proper (extended) $\beta$-attractor in $(X,\{f\})$ is homeomorphic to a proper (extended) $\beta$-attractor in $(Y,\{g\}$; moreover, for attractors, both being uniform complete are poreserved by conjugacy;
        \item[4.] If $A\subset X$ is a proper $\beta$-attractor in $X$, then $h(\mathcal{B}(A))=\mathcal{B}(h(A))$. 
        \item[5.] $A\subseteq X$ is a perfect $\lambda$-attractor for $(X,\{f\})$ if and only if $h(A)$ is a perfect $\lambda$-attractor for $(Y,\{g\})$; 
        \item [6.] For $\beta$ a limit ordinal smaller than $\lambda$, a set $S\subset X$ is $\beta$-reachable in $(X,\{f\})$ if and only if $h(S)$ is $\beta$-reachable in $(Y,\{g\})$.
        \item[7.] $\lambda$-minimality and $\lambda$-transitivity are preserved by topological $\lambda$-conjugacy. 
    \end{itemize}
\end{thm}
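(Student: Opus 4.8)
The plan is to reduce all seven items to a single \emph{transport lemma} plus routine rephrasings that make the relevant notions manifestly invariant under the conjugating homeomorphism $h$. First I would prove: for every $\beta<\lambda$ and every $A\subseteq X$, $h(\{f\}^\beta(A))=\{g\}^\beta(h(A))$ and $\{f\}^\beta(x)\ne\emptyset\iff\{g\}^\beta(h(x))\ne\emptyset$. These follow from \eqref{conj___}--\eqref{conj_}, the convention $\{f\}^\beta(A)=\{f\}^\beta(X^{\beta+1}\cap A)$, and bijectivity of $h$ (so $h(A\cap B)=h(A)\cap h(B)$). Since $h$ is a homeomorphism it moreover commutes with closures, interiors, arbitrary unions and arbitrary intersections. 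As every item concerns levels $\le\lambda$ (and the orbit/minimality statements only involve $\mathcal{O}_{\{\lambda\}}$), the transport lemma is the only mechanism needed.

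For item 1, the transport lemma makes $h$ restrict to a bijection $\{\mathcal{O}\}(x)\to\{\mathcal{O}\}(h(x))$ carrying $\{f\}^\alpha(x)$ to $\{g\}^\alpha(h(x))$; combined with the fact that on a transfinite orbit the relation $<_\infty$ is exactly the well-ordering by iteration index (remark following Def. \ref{transfinite} together with Prop. \ref{orbit_eq}) and with $\{f\}^\alpha(x)=\{f\}^{\alpha'}(x)\iff\{g\}^\alpha(h(x))=\{g\}^{\alpha'}(h(x))$, this bijection is an order-isomorphism; a transfinite cycle of $x$-order $\beta$ maps to one of $h(x)$-order $\beta$, since $\{f\}^\beta(x)=x\iff\{g\}^\beta(h(x))=h(x)$ and the minimality in Def. \ref{cycles} transports, and the subspace homeomorphism is inherited from $h$. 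For item 2 the key observation is that, for a $\beta$-inward $V$, uniform $\beta$-inwardness is \emph{equivalent} to the purely topological condition $\overline{\bigcup_{\alpha<\beta}\{f\}^\alpha(\overline V)}\subseteq V^\circ$ (forward: every point of $\bigcup_{\alpha<\beta}\{f\}^\alpha(\overline V)$, hence every point of its closure, stays at distance $\ge\delta$ from the closed set $\partial V$, and $\overline V\setminus\partial V=V^\circ$; backward: two disjoint compacta in the compact space $X$ have positive distance), while complete and plain $\beta$-inwardness are already set-theoretic; all three are therefore preserved by $h$ via the transport lemma.

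Items 3--7 then follow immediately. In item 3, $p\beta$ and $e\beta$ are assembled from $\{f\}^\alpha$ ($\alpha<\beta$), closures, unions and intersections, so $h(p\beta(V))=p\beta(h(V))$ and $h(e\beta(V))=e\beta(h(V))$; minimality of the level defining a proper attractor transports by applying $h^{-1}$, and uniformity/completeness transport by item 2. In item 4 the basin condition $d(\{f\}^\gamma(x),A)\to0$ as $\gamma\to\beta$ is equivalent to ``$\{f\}^\gamma(x)$ eventually lies in every open set containing $A$'' (the $\epsilon$-neighbourhoods forming a base at the closed set $A$), which is topological; the extended case follows because the proper $\beta_i$-attractors transport. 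In item 5, perfectness $=$ complete $+$ algebraically closed $+$ strongly $\lambda$-invariant: the first is item 3, the second is a property of $\lambda$ alone, and the third holds because $h(\{f\}^\alpha(A))=\{g\}^\alpha(h(A))$ carries equalities to equalities. In item 6, $\beta$-reachability reads $S\cap\overline{X^{\gamma+1}}\subseteq X^{\gamma+1}$ for all $\gamma<\beta$, and $h(\overline{X^{\gamma+1}})=\overline{Y^{\gamma+1}}$. In item 7, $\lambda$-minimality ($\overline{\mathcal{O}_{\{\lambda\}}(x)}=X$ for every $x$) and $\lambda$-transitivity (for all nonempty open $U,V$ some $x\in U$ and $\eta<\lambda$ with $\{f\}^\eta(x)\in V$) transport because $h$ is an open bijection onto $Y$ intertwining the iterations.

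I expect the only genuinely delicate point to be the clause in item 1 that $\{\mathcal{O}\}(x)$ and $\{\mathcal{O}\}(h(x))$ are order-isomorphic \emph{with respect to} $<_\infty$: the relation $<_\infty$ is manufactured from the approximating sequences $\{f_n\},\{g_n\}$, which a topological $\lambda$-conjugacy does \emph{not} relate (cf. Theorem \ref{sequenc_conj__}), so it cannot be transported directly. The remedy is to first identify $<_\infty$ on $\{\mathcal{O}\}(x)$ with the intrinsic iteration-index well-ordering, which depends on $\{f\}$ alone and is hence conjugacy-invariant; pinning this identification down cleanly (from the remark after Def. \ref{transfinite} and Prop. \ref{orbit_eq}) is the step deserving care, the rest being bookkeeping around the transport lemma.
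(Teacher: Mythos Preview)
Your proposal is correct and follows essentially the same item-by-item transport strategy as the paper, with the same ingredients (the identities \eqref{conj___}--\eqref{conj_}, preservation of closures/interiors/unions/intersections under $h$, and Proposition~\ref{th_plim_seq} for the limit sets). Two points are worth flagging as genuine improvements over the paper's argument. First, for uniform $\beta$-inwardness (item~2) you recast the metric condition as the purely topological statement $\overline{\bigcup_{\alpha<\beta}\{f\}^\alpha(\overline V)}\subseteq V^\circ$, which transports immediately; the paper instead argues by contradiction, extracting sequences in $\partial V$ and using its compactness, which is correct but less transparent and more dependent on the metric. Second, you explicitly isolate the one delicate step in item~1: the order $<_\infty$ is defined from the approximating sequences $\{f_n\},\{g_n\}$, which the conjugacy does \emph{not} relate, so it cannot be transported naively; your remedy---identifying $<_\infty$ on $\{\mathcal{O}\}(x)=[\mathcal{O}](x)$ with the intrinsic iteration-index well-order via the remark after Definition~\ref{transfinite} and Proposition~\ref{orbit_eq}---is exactly what is needed, whereas the paper asserts the conclusion without making this identification explicit.
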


\begin{proof}
\begin{itemize}
    \item[1.] First of all, since for every countable ordinal $\beta<\lambda$ we have $h(X^{\beta+1})=Y^{\beta+1}$, it follows: 
    $$
    \{f\}^\beta(x)\ne\emptyset\iff x\in X^{\beta+1}\iff h(x)\in Y^{\beta+1}\iff \{g\}^\beta(h(x))\ne\emptyset,
    $$ 
    so that \begin{equation}\label{order_iso_orbit}
   \eta:=D(x)=D\big(h(x)\big).
    \end{equation}
    Therefore we have:
    $$
    h\left(\{\mathcal{O}\}(x)\right)=h\left(\Big\{\{f\}^\beta(x)\Big\}_{\beta<\eta}\right)=\Big\{\{g\}^\beta\left(h(x)\right)\Big\}_{\beta<\eta}=\{\mathcal{O}\}(h(x)).
    $$
    Since $\{\mathcal{O}\}(x)$ and $\{\mathcal{O}\}(h(x))$ are both strictly well-ordered by $<_\infty$, the last chain of equalities and \eqref{order_iso_orbit} imply that they are order-isomorphic with respect to $<_\infty$. Notice that here we only used $\lambda$-conjugacy and not topological $\lambda$-conjugacy.
    \item[2.] i) The equivalence: $$V \text{ $\lambda$-inward}\iff h(V)\text{ $\lambda$-inward}$$ follows from the fact that both the interior and the closure are preserved by homeomorphisms, so, for $\beta<\lambda$, we have $\{g\}^\beta(\overline{h(V)})=\{g\}^\beta(h(\overline{V}))$ and $(h(V))^\circ=h(V^\circ)$. Thus,
    $$\{f\}^\beta(\overline{V})=h^{-1}(\{g\}^\beta(\overline{h(V)}))\subseteq V^\circ\iff \{g\}^\beta(\overline{h(V)})\subseteq (h(V))^\circ.$$
    
    ii) Let us prove the equivalence: $$V \text{ uniformly $\lambda$-inward}\iff h(V)\text{ uniformly $\lambda$-inward}.$$
    Set $h(V)=U$ and $S:=\{f^\eta(\overline{V}),\ \eta<\lambda\}$. Suppose that $V$ is $\lambda$-inward but not uniformly $\lambda$-inward, so that $ d_X(\partial V, S)=0$. Notice that $h(\partial V)=\partial h(V)=\partial U$ and that 
    $$
    T:=h(S)=\{g^\eta(\overline{U}),\ \eta<\lambda\}.
    $$  
    Then there are two sequences of points $\{x_n\}_{n\in\mathbb{N}}\subseteq S$ and $\{y_n\}_{n\in\mathbb{N}}\subseteq \partial V$ such that $\lim_{n\to\infty} d_X(x_n,y_n)=0$. 
    Since $\partial V$ is compact, $y_n$ converges to some $y\in \partial V$ up to a subsequence, so we have as well $x_{n_m}\longrightarrow y$ as $m$ goes to $\infty$ for a suitable subsequence $\{x_{n_m}\}_{m\in\mathbb{N}}$. 
    Then 
    $$
    \{h(x_{n_m})\}_{m\in\mathbb{N}}\subseteq T\ \text{  and  }\  h(x_{n_m})\xrightarrow{m\to\infty} h(y)\in \partial U
    $$ 
    which implies $d_Y(\partial U, T)=0$. The argument applied conversely (using $h^{-1}$ instead of $h$) completes the proof.
    \\
    iii) To derive the equivalence $$V \text{ completely $\lambda$-inward}\iff h(V)\text{ completely $\lambda$-inward}$$ observe that, assuming $V$ completely inward, for $\eta\le\beta<\lambda$ we have 
    $$
    h^{-1}(\{g\}^\beta(h(\overline{V})))=\{f\}^\beta(\overline{V})\subseteq\bigcap_{\eta<\beta}\{f\}^\eta (\overline{V})=\bigcap_{\eta<\beta}h^{-1}(\{g\}^\eta(h(\overline{V}))),
    $$
    so that
    $$
    \{g\}^\beta(h(\overline{V}))\subseteq h\left(\cap_{\eta<\beta}h^{-1}(\{g\}^\eta(h(\overline{V})))\right)\subseteq \bigcap_{\eta<\beta}\{g\}^\eta(h(\overline{V})).
    $$
    \item [3.] Assume $\beta\le\lambda$, $V\subseteq X$ and $h(V)=U$.
    
    By Proposition \ref{th_plim_seq}, $x\in p\beta(V)$ if and only if there is a sequence $\{x_n\}_{n\in\mathbb{N}}$ and a strictly increasing sequence of ordinals $\{\beta_n\}_{n\in\mathbb{N}}$ converging to $\beta$ such that  $\lim_{n\to\infty}\{f\}^{\beta_n}(x_n)=x.$
    In this case, setting $y_n=h(x_n)$ for every $n\in\mathbb{N}$, by conjugacy and exploiting the continuity of $h$, we have 
    $$
    \lim_{n\to\infty}h(\{f\}^{\beta_n}(x_n))= \lim_{n\to\infty}\{g\}^{\beta_n}(y_n)=h(x). 
    $$
    This proves the inclusion $h(p\beta(V))\subseteq p\beta(U)$ , while the converse inclusion follows from an analogous argument applied using $h^{-1}$ instead of $h$. Therefore, proper $\beta$-limits are preserved by conjugacy. By point 2., and since $\lambda$-closedness is preserved by homeomorphisms,  this implies that the $h$-images of proper (uniform, complete) $\beta$-attractors in $(X,\{f\})$ are proper (uniform, complete) $\beta$-attractors in $(Y,\{g\})$. 
    
    The equality $e\beta(U)=h(e\beta(V))$ follows from the result just proven recalling that the extended $\beta$-limit is defined as a union of proper $\beta$-limits (see Def. \ref{def_lambda_limits}). 

    \item[4.] By point 3., we know that $h(A)$ is a proper $\beta$-attractor in $Y$. We have $$x\in\mathcal{B}(A)\iff \lim_{\eta\to\beta}d_X(\{f\}^\eta(x),A)=0,$$
    which implies by conjugacy, and exploiting compactness of $A$: 
    $$ \lim_{\eta\to\beta}d_Y(h(\{f\}^\eta(x)),h(A))=\lim_{\eta\to\beta}d_Y(\{g\}^\eta(h(x)),h(A))=0$$ and thus $h(x)\in \mathcal{B}(h(A))$. This proves $h(\mathcal{B}(A))\subseteq \mathcal{B}(h(A))$. The converse inclusion follows analogously with the argument applied to $h^{-1}$.
    
    \item[5.]  By item 3. above, $h(A)$ is a proper attractor in $Y$ and, by item 2., it is complete.  Moreover, $A$ algebraically closed means that $\lambda=\omega^\eta$ for a suitable $\eta<\omega_1$, and this implies that $h(A)$ is also algebraically closed. 

    Take $\beta<\lambda$. Since $A$ is strongly $\lambda$-invariant, by $\lambda$-conjugacy we have:

    $$ h(A)=h(\{f\}^\beta(A))=\{g\}^\beta(h(A)),$$

    so that the $\lambda$-attractor $h(A)$ is strongly $\lambda$-invariant and therefore perfect. 

    \item[6.] Take $\beta<\lambda$.  We have $x\in S\cap \overline{X^{\beta+1}}$ if and only if $$h(x)\in h(S)\cap h(\overline{X^{\beta+1}})= h(S)\cap \overline{h(X^{\beta+1})}=  h(S)\cap\overline{Y^{\beta+1}}.$$

    Moreover, $x\in X^{\beta+1}$ if and only if $h(x)\in h(X^{\beta+1})=Y^{\beta+1}$.
    
    \item[7.] Assume that $(X,\{f\})$ is $\lambda$-minimal, so that every point in $X$ has a dense $\lambda$-orbit. Since being dense is preserved by homeomorphisms, it follows from item 1. above that every $\lambda$-orbit in $Y$ is dense as well, which implies that $(Y,\{g\})$ is $\lambda$-minimal.
    
    Assume now that $(X,\{f\})$ is $\lambda$-transitive. 
    Pick $(y,y')\in Y^2$ and two nonempty open neighbors $V(y), V'(y')$. Then, setting $$h^{-1}(y)=x,\ h^{-1}(y')=x',\ h^{-1}(V)=U,\ h^{-1}(V')=U',$$ we have that $U, U'$ are nonempty open neighbors of respectively $x$ and $x'$, and that there are two points $z\in U$ and $z'\in U'$ and some $\beta<\lambda$ such that $\{f\}^\beta(z)=z'$. Then, setting $h(z)=:w$ and $h(z')=:w'$, we have $w\in h(U)$, $w'\in h(U')$ and, recalling item 1. above,
    $$
    \{g\}^\beta(w)=h(\{f\}^\beta(z))=h(z')=w'.
    $$
\end{itemize}

\end{proof}
We now give a stronger notion of transfinite conjugacy, that simply requires that two TDSs consist of step-by-steo conjugate finite systems, so that we have in fact a sequence of pairs of conjugate systems in the standard sense.

More precisely, let us give the following Definition.
\begin{defi}\label{seq_conj___}
    We say that two TDSs $(X,\{f\})$ and $(Y,\{g\})$ are \emph{sequentially conjugate} if there is a bijection $h:X\to Y$ such that, for every $n\in\mathbb{N}$, we have $f_n=h^{-1}\circ g_n \circ h$. We say that $(X,\{f\})$ and $(Y,\{g\})$ are \emph{topologically sequentially conjugate} if they are completely conjugate and the map $h$ is a homeomorphism.
\end{defi}
    Being sequentially conjugate is an equivalence relation. The equivalence class of this relation having representative $(X,\{f\})$ will be indicated by $S[X,\{f\}]$.

    To see that sequential conjugacy is stronger than topological $\lambda$-conjugacy, consider again Example \ref{exa1}. Suppose that, for every $n\in\mathbb{N}$, we can replace $f_n$ with a map $g_n$ such that: 
    \begin{enumerate} 
    \item $g_n=f_n$ for $x\in\mathbb{I}\setminus(u_n,v_n)$; 
    \item 
    $g_n(z_n)=f_n(z_n)=h$. 
    \end{enumerate}
    We have $g_n\dot{\longrightarrow}f$ and, in the TDS $(X,\{g\})$ we have $\{g\}^\beta(x)=\{f\}^\beta(x)$ for every $x\in\mathbb{I}$ and every $\beta<\omega_1$, so the systems $(X,\{f\})$ and $(X,\{g\})$ are $(\omega\cdot 2)$-conjugate and also completely conjugate by the identity map. However, it is clear that we can define $g_n$ on $(u_n,z_n)$ and $(z_n,v_n)$ in such a way that it cannot be conjugate with $f_n$ (for instance because it has a different number of fixed points). The system $(X,\{g\})$ can of course be made sequentially continuous if the maps $g_n$ are chosen to be continuous. 

    The next result shows that the ordinal degree (locally at each $x$, and therefore globally) is an invariant in the standard sense, being preserved under sequential conjugacy. 

\begin{thm}\label{sequenc_conj__}
If $(X,\{f\})$ and $(Y,\{g\})$ are (topologically) sequentially conjugate, then they are (topologically) completely conjugate.
\end{thm}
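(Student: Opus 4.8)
The plan is to show that the conjugating bijection $h$ itself witnesses complete conjugacy, by tracking it through the successive layers of the construction of transfinite iterations. First I would record the elementary consequences of sequential conjugacy. From $f_n=h^{-1}\circ g_n\circ h$ one gets $h\circ f_n^k=g_n^k\circ h$ for all $k,n$ by induction on $k$, hence $h(\mathcal{O}_{f_n}(x))=\mathcal{O}_{g_n}(h(x))$ for every $x$ and $n$. Since $h$ is a bijection it commutes with arbitrary unions and intersections, hence with the set-theoretic $\liminf$, so $h(\mathcal{O}_\infty(x))=\mathcal{O}^g_\infty(h(x))$ (the analogous set computed in $(Y,\{g\})$) and $h(\textswab{O}(x))=\textswab{O}^g(h(x))$. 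Passing to $n$ large enough that $f_n(x)=f(x)$ and $g_n(h(x))=g(h(x))$ simultaneously yields $g=h\circ f\circ h^{-1}$, so the limit maps are conjugate as well (both continuous since $(X,\{f\})$ and $(Y,\{g\})$ are TDSs). Finally, injectivity of $h$ gives $f_n^k(x)=y\iff g_n^k(h(x))=h(y)$, so $\min\{k:f_n^k(x)=y\}=\min\{k:g_n^k(h(x))=h(y)\}$; consequently $y<_{x,n}z\iff h(y)<_{h(x),n}h(z)$, and taking the eventual behaviour in $n$, $y<_{x,\infty}z\iff h(y)<_{h(x),\infty}h(z)$. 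In other words $h$ restricts to an isomorphism of the relational structures $(\mathcal{O}_\infty(x),<_{x,\infty})$ and $(\mathcal{O}^g_\infty(h(x)),<_{h(x),\infty})$.

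The core of the argument is a transfinite induction on $\beta$ proving: $[f]^\beta(x)$ is defined if and only if $[g]^\beta(h(x))$ is defined, and in that case $h([f]^\beta(x))=[g]^\beta(h(x))$. The base case $\beta=1$ is just $h(f(x))=g(h(x))$. For the inductive step, the inductive hypothesis gives $h(\mathcal{O}_{[\beta]}(x))=\mathcal{O}^g_{[\beta]}(h(x))$; since $h$ is an isomorphism for $<_\infty$ with respect to the relevant base points and carries $\mathcal{O}_{[\beta]}(x)$ onto its $(Y,\{g\})$-counterpart, it carries $\mathcal{O}_\infty(x)\setminus\mathcal{O}_{[\beta]}(x)$ onto $\mathcal{O}^g_\infty(h(x))\setminus\mathcal{O}^g_{[\beta]}(h(x))$. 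Hence one of these sets has a $<_\infty$-least element exactly when the other does, and $h$ sends one least element to the other; by Def.~\ref{transfinite} this is precisely the claim. Having settled basic iterations, I would extend to the iterations of Def.~\ref{cycle} by a second induction, on the number of applications of the composition rules \eqref{composition1}--\eqref{composition2}: if $h(\{f\}^\beta(x))=\{g\}^\beta(h(x))$ and $h(\{f\}^\eta(\{f\}^\beta(x)))=\{g\}^\eta(h(\{f\}^\beta(x)))$, then $h(\{f\}^{\beta+\eta}(x))=\{g\}^{\beta+\eta}(h(x))$; applying the same reasoning to $h^{-1}$ shows that $\{f\}^\beta(x)$ exists in the sense of Def.~\ref{cycle} iff $\{g\}^\beta(h(x))$ does, with matching values. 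In particular $h(\{\mathcal{O}\}(x))=\{\mathcal{O}\}(h(x))$ and $D(x)=D(h(x))$ for every $x$.

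Finally I would assemble the conclusion. From $D(x)=D(h(x))$ for all $x$ and $h$ bijective, $\mathfrak{D}(X,\{f\})=\sup_{x\in X}D(x)=\sup_{y\in Y}D(y)=\mathfrak{D}(Y,\{g\})=:\lambda$. For each $\beta<\omega_1$ (in particular each $\beta<\lambda$) we have $x\in X^{\beta+1}\iff\{f\}^\beta(x)\neq\emptyset\iff\{g\}^\beta(h(x))\neq\emptyset\iff h(x)\in Y^{\beta+1}$, so $h(X^{\beta+1})=Y^{\beta+1}$, and on $X^{\beta+1}$ the identity $h\circ\{f\}^\beta=\{g\}^\beta\circ h$ was proven above (with both sides the empty function off $X^{\beta+1}$). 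Hence $h$ is a $\lambda$-conjugacy with $\lambda=\mathfrak{D}(X,\{f\})=\mathfrak{D}(Y,\{g\})$, i.e.\ $(X,\{f\})$ and $(Y,\{g\})$ are completely conjugate; if moreover $h$ is a homeomorphism they are topologically completely conjugate. The only step requiring genuine care is the interaction of $h$ with the two ``limit'' ingredients of the construction --- the $\liminf$ defining $\mathcal{O}_\infty$ and the ``eventually in $n$'' defining $<_\infty$ --- but bijectivity of $h$ makes both transparent, and the rest is bookkeeping through the definitions.
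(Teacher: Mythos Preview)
Your proposal is correct and follows essentially the same route as the paper's proof: both show that $h$ carries $\mathcal{O}_{f_n}(x)$ to $\mathcal{O}_{g_n}(h(x))$, hence $\mathcal{O}_\infty(x)$ to its counterpart, and is an order-isomorphism for $<_n$ and thus $<_\infty$; from this both deduce that $h$ intertwines basic transfinite iterations, extend to the iterations of Def.~\ref{cycle} via the composition rules, and conclude $D(x)=D(h(x))$ and $h(X^{\beta+1})=Y^{\beta+1}$. Your write-up is somewhat more explicit about the mechanics (e.g.\ spelling out the transfinite induction on $\beta$ and the induction on the number of applications of \eqref{composition1}--\eqref{composition2}), but the underlying argument is the same.
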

\begin{proof}
Pick $x\in X$ and set $h(x)=y$. Let us observe first that, if $h$ is a conjugacy, then, for every $n\in\mathbb{N}$, we have $h(\mathcal{O}_{f_n}(x))=\mathcal{O}_{g_n}(y)$, so that 
$
h(\mathcal{O}_\infty(x))=\mathcal{O}_\infty(y).
$
Moreover, $h$ is also an order-isomorphism with respect to $<_n$ between $\mathcal{O}_{f_n}(x)$ and $\mathcal{O}_{g_n}(y)$, that is, for every $a,b\in \mathcal{O}_\infty(x)$ and every $n\in\mathbb{N}$, we have $a<_{x,n}b \iff h(a)<_{y,n} h(b)$ and this implies 
$
a<_{x,\infty}b \iff h(a)<_{y,\infty} h(b).
$
Therefore, $h([\mathcal{O}](x))=[\mathcal{O}](y)$, with $h$  an order-isomorphism with respect to the order induced by ordinal iteration index, so that, for every $\beta<\omega_1$ such that $[f]^\beta(x)$ is defined, we have:
\begin{equation}\label{basic_conj}      
[f]^\beta(x)=(h^{-1}\circ [g]^\beta \circ h)(x)=(h^{-1}\circ [g]^\beta)(y). 
\end{equation}
Moreover, assuming that, for some $a,b,c\in X$ and $\beta,\eta<\omega_1$ we have $[f]^\beta(a)=b$ and $[f]^\eta(b)=c$, then, using \eqref{basic_conj}, it follows: 
\[
h([f]^{\eta+\beta}(a))=h([f]^\beta([f]^\eta(a)))=[g]^\beta([g]^\eta(h(a)))=[g]^{\eta+\beta}(h(a)),
\]
whenever $[f]^{\eta+\beta}$ exists. It follows that also the transfinite iterations defined in Def. \ref{cycle}, and in particular the composition law \eqref{composition1}-\eqref{composition2}, are preserved by conjugacy. 
Therefore, it follows that $$h(\{\mathcal{O}\}(x))=\{\mathcal{O}\}(y),$$ again with $h$ being an order-isomorphism with respect to ordinal iteration index, so that the conjugacy law \eqref{conj_} follows. 
In particular, we have $D(x)=D(y)$. 
Since $x$ was arbitrary, this means that $\mathfrak{D}(X,\{f\})=\mathfrak{D}(Y,\{g\})=:\lambda$. 
Moreover,  for every $\beta<\lambda$, 
$$
h(X^{\beta+1})=Y^{\beta+1}\  \text{ and }\ \   h(\{f\}^\beta(x))=\{g\}^\beta(y).
$$ 
If $h$ is a homeomorphism, the same reasoning shows that $(X,\{f\})$ and $(Y,\{g\})$ are topologically completely conjugate.
\end{proof}
Summarizing, we have always:
$$\text{Sequential conjugacy $\implies$ Complete conjugacy $\implies\ \lambda$-conjugacy}$$
so that, for every TDS and every limit ordinal $\lambda<\omega_1$, 
$$
S[X,\{f\}]\subseteq [X,\{f\}]\subseteq \lambda[X,\{f\}].
$$

\clearpage 

\section*{Main definitions}\label{main_def_}
\begin{itemize}
\item Transfinite Dynamical System (TDS) -- Def. \ref{defi_tds}
\item Transfinite cycle -- Def. \ref{cycles}
\item Ordinal Degree of a point and of a TDS  -- Def. \ref{degree_}

\item $\lambda$-continuous function -- Def. \ref{cont___}
    \item $\lambda$-regular and $\lambda^*$-regular system -- Def. \ref{regularity_} and Def. \ref{lambda_star}
    \item $\lambda$-normal and $\lambda^*$-normal system -- Def. \ref{regularity_} and Def. \ref{lambda_star}
\item Transfinite dynamical relations --  Def. \ref{def_tdr}
\item Proper and extended $\lambda$-limit -- Def. \ref{def_lambda_limits}
\item $\lambda$-inward set and uniformly $\lambda$-inward set -- Def. \ref{inward__}
\item $\lambda$-attractor and uniform $\lambda$-attractor-- Def. \ref{defi_attractors} 
\item Transfinite equicontinuity -- Def. \ref{def_equi_trans}
\item Transfinite sensitivity -- Def. \ref{transsens__}
\item Completely $\lambda$-inward set -- Def. \ref{complet__}
\item Perfect $\lambda$-attractor -- Def. \ref{perf_attr__}
    \item $\lambda$-conjugacy, complete conjugacy -- Def. \ref{conjugacy}
    \item Sequential conjugacy -- Def. \ref{seq_conj___}

\end{itemize}

\clearpage

\bibliographystyle{plain}
\bibliography{transfinite}

@book{akin1993general,
  title={The general topology of dynamical systems},
  author={Akin, E.},
  volume={1},
  year={1993},
  publisher={American Mathematical Soc.}
}

@article{ding2008chain,
  title={Chain prolongation and chain stability},
  author={Ding, Changming},
  journal={Nonlinear Analysis: Theory, Methods \& Applications},
  volume={68},
  number={9},
  pages={2719--2726},
  year={2008},
  publisher={Elsevier}
}

@article{lalwani2022attractors,
  title={Attractors and chain recurrence for semigroup of continuous maps},
  author={Lalwani, Kushal},
  year={2022}
}

@article{auslander1964generalized,
  title={Generalized recurrence in dynamical systems.},
  author={Auslander, J.},
  journal={Contributions to differential equations},
  publisher={Wiley},
  address={New York},
  volume={3},
  pages={55--74},
  year={1964}
}

@book{kurka2003topological,
  title={Topological and symbolic dynamics},
  author={Kurka, P.},
  year={2003},
  publisher={Soci{\'e}t{\'e} math{\'e}matique de France Paris, France}
}

@article{evans2013transfinite,
  title={Transfinite game values in infinite chess},
  author={Evans, C. D. A. and Hamkins, J. D.},
  journal={arXiv preprint arXiv:1302.4377},
  year={2013}
}

@book{willard2012general,
  title={General topology},
  author={Willard, S.},
  year={2012},
  publisher={Courier Corporation}
}

@article{leonessi2021transfinite,
  title={Transfinite game values in infinite games},
  author={Leonessi, D.},
  journal={arXiv preprint arXiv:2111.01630},
  year={2021}
}

@article{evans2015position,
  title={A position in infinite chess with game value $\omega^4$},
  author={Evans, C. D. A. and Hamkins, J. D. and Perlmutter, N. L.},
  journal={arXiv preprint arXiv:1510.08155},
  year={2015}
}

@article{silverman1992maps,
  title={On maps with dense orbits and the definition of chaos},
  author={Silverman, S.},
  journal={The Rocky Mountain Journal of Mathematics},
  volume={22},
  number={1},
  pages={353--375},
  year={1992},
  publisher={JSTOR}
}

@article{ellis2001topological,
  title={The topological dynamics of semigroup actions},
  author={Ellis, D. B. and Ellis, R. and Nerurkar, M.},
  journal={Transactions of the American Mathematical Society},
  pages={1279--1320},
  year={2001},
  publisher={JSTOR}
}

@article{richeson2008chain,
  title={Chain recurrence rates and topological entropy},
  author={Richeson, D. and Wiseman, J.},
  journal={Topology and its Applications},
  volume={156},
  number={2},
  pages={251--261},
  year={2008},
  publisher={Elsevier}
}

@book{ciesielski1997set,
  title={Set theory for the working mathematician},
  author={Ciesielski, K.},
  number={39},
  year={1997},
  publisher={Cambridge University Press}
}

@book{munkres2000topology,
  title={Topology (2nd edn)},
  author={Munkres, J. R.},
  year={2000},
  publisher={Pearson Modern Classic}
}

@book{jech2003set,
  title={Set theory: The third millennium edition, revised and expanded},
  author={Jech, T.},
  year={2003},
  publisher={Springer}
}

@article{alseda1999structure,
  title={On the structure of the $\omega$-limit sets for continuous maps of the interval},
  author={Alsed{\`a}, L. and Chas, M. and Sm{\'\i}tal, J.},
  journal={International Journal of Bifurcation and Chaos},
  volume={9},
  number={09},
  pages={1719--1729},
  year={1999},
  publisher={World Scientific}
}

@article{birkhoff1928structure,
  title={Structure analysis of surface transformations},
  author={Birkhoff, G. D. and Smith, P. A.},
  journal={Journal de Math{\'e}matiques pures et appliqu{\'e}es},
  volume={7},
  pages={345--379},
  year={1928}
}

@article{downarowicz2005entropy,
  title={Entropy structure},
  author={Downarowicz, T.},
  journal={Journal d’Analyse Mathematique},
  volume={96},
  number={1},
  pages={57--116},
  year={2005},
  publisher={Springer}
}

@article{DavidBurguet2012,
abstract = {For a continuous map T of a compact metrizable space X with finite topological entropy, the order of accumulation of entropy of T is a countable ordinal that arises in the context of entropy structures and symbolic extensions. We show that every countable ordinal is realized as the order of accumulation of some dynamical system. Our proof relies on functional analysis of metrizable Choquet simplices and a realization theorem of Downarowicz and Serafin. Further, if M is a metrizable Choquet simplex, we bound the ordinals that appear as the order of accumulation of entropy of a dynamical system whose simplex of invariant measures is affinely homeomorphic to M. These bounds are given in terms of the Cantor-Bendixson rank of $\overline\{ex(M)\}$, the closure of the extreme points of M, and the relative Cantor-Bendixson rank of $\overline\{ex(M)\}$ with respect to ex(M). We also address the optimality of these bounds.},
author = {Burguet, D. and McGoff, K.},
journal = {Fundamenta Mathematicae},
keywords = {entropy structure; symbolic extension; Choquet simplex; Cantor-Bendixson rank},
language = {eng},
number = {1},
pages = {1-53},
title = {Orders of accumulation of entropy},
url = {http://eudml.org/doc/283175},
volume = {216},
year = {2012},
}

@article{mcgoff2011orders,
  title={Orders of accumulation of entropy on manifolds},
  author={McGoff, K.},
  journal={Journal d'Analyse Math{\'e}matique},
  volume={114},
  number={1},
  pages={157--206},
  year={2011},
  publisher={Springer}
}

@article{mathias1996long,
  title={Long delays in dynamics},
  author={Mathias, A. R. D.},
  journal={CRM Preprint},
  number={334},
  year={1996}
}

\end{document}